\definecolor{labelkey}{rgb}{0,0.08,0.45}
\definecolor{refkey}{rgb}{0,0.6,0.0}
\definecolor{Brown}{rgb}{0.45,0.0,0.05}
\definecolor{lime}{rgb}{0.00,0.8,0.0}
\definecolor{lblue}{rgb}{0.5,0.5,0.99}
\definecolor{lblue}{rgb}{0.8,0.85,1.00}
\definecolor{anotherblue}{rgb}{.8, .8,1}
\definecolor{violet}{rgb}{0.9,0.6,0.9}
\definecolor{greenyellow}{rgb}{0.53,0.99,0.18}
\definecolor{Lyellow}{rgb}{0.99,0.99,0.87}
\definecolor{Lgray}{rgb}{0.93,0.93,0.93}
\newcommand{\vast}{\bBigg@{4}}
\newcommand{\Vast}{\bBigg@{5}}
\newcommand{\nnn}{\ensuremath{{n\in{\mathbb N}}}}
\newcommand{\thalb}{\ensuremath{\tfrac{1}{2}}}
\newcommand{\menge}[2]{\big\{{#1}~\big |~{#2}\big\}}
\newcommand{\mmenge}[2]{\bigg\{{#1}~\bigg |~{#2}\bigg\}}
\newcommand{\Menge}[2]{\left\{{#1}~\Big|~{#2}\right\}}
\newcommand{\To}{\ensuremath{\rightrightarrows}}
\newcommand{\fenv}[1]%
{\ensuremath{\,\overrightarrow{\operatorname{env}}_{#1}}}
\newcommand{\benv}[1]%
{\ensuremath{\,\overleftarrow{\operatorname{env}}_{#1}}}
\newcommand{\scal}[2]{\left\langle{#1},{#2}  \right\rangle}
\newcommand{\tscal}[2]{\langle{#1},{#2}  \rangle}
\newcommand{\bscal}[2]{\big\langle{#1},{#2}  \big\rangle}
\newcommand{\exi}{\ensuremath{\exists\,}}
\newcommand{\zeroun}{\ensuremath{\left]0,1\right[}}
\newcommand{\RR}{\ensuremath{\mathbb R}}
\newcommand{\ZZ}{\ensuremath{\mathbb Z}}
\newcommand{\RP}{\ensuremath{\mathbb R}_+}
\newcommand{\RM}{\ensuremath{\mathbb R}_-}
\newcommand{\RPP}{\ensuremath{\mathbb R}_{++}}
\newcommand{\RMM}{\ensuremath{\mathbb R}_{--}}
\newcommand{\RX}{\ensuremath{\,\left]-\infty,+\infty\right]}}
\newcommand{\NN}{\ensuremath{\mathbb N}}
\newcommand{\argmin}{\ensuremath{\operatorname*{argmin}}}
\newcommand{\gr}{\ensuremath{\operatorname{gr}}}
\newcommand{\reli}{\ensuremath{\operatorname{ri}}}
\newcommand{\inte}{\ensuremath{\operatorname{int}}}
\newcommand{\bd}{\ensuremath{\operatorname{bdry}}}
\newcommand{\epi}{\ensuremath{\operatorname{epi}}}
\newcommand{\ran}{\ensuremath{\operatorname{ran}}}
\newcommand{\conv}{\ensuremath{\operatorname{conv}}}
\newcommand{\cone}{\ensuremath{\operatorname{cone}}}
\newcommand{\lspan}{\ensuremath{\operatorname{span}}}
\newcommand{\aff}{\ensuremath{\operatorname{aff}}}
\newcommand{\pa}{\ensuremath{\operatorname{par}}}
\newcommand{\cconv}{\ensuremath{\overline{\operatorname{conv}}\,}}
\newcommand{\ccone}{\ensuremath{\overline{\operatorname{cone}}\,}}
\newcommand{\Id}{\ensuremath{\operatorname{Id}}}
\newcommand{\minf}{\ensuremath{-\infty}}
\newcommand{\pinf}{\ensuremath{+\infty}}
\newcommand{\ball}[2]{\operatorname{ball}({#1};{#2})}
\newcommand{\sphere}[2]{\operatorname{sphere}({#1};{#2})}
\newcommand{\wt}[1]{\widetilde{#1}}
\newcommand{\nc}[2]{N^{#2}_{#1}}
\newcommand{\fnc}[1]{N^{\text{\rm Fr\'e}}_{#1}}
\newcommand{\cnc}[1]{N^{\text{\rm conv}}_{#1}}
\newcommand{\pnX}[1]{N^{\text{\rm prox}}_{#1}} 
\newcommand{\pn}[2]{\widehat{N}^{#2}_{#1}} 
\renewcommand\p@enumii{}
\def\ve{\varepsilon}
\def\dd{\delta}
\def\k{{\sigma}}
\def\dn{\downarrow}
\newcommand{\mcA}{\ensuremath{\mathcal A}}
\newcommand{\mcB}{\ensuremath{\mathcal B}}
\newcommand{\sgn}{\ensuremath{\operatorname{sgn}}}
\newtheorem{theorem}{Theorem}[section]
\newtheorem{lemma}[theorem]{Lemma}
\newtheorem{corollary}[theorem]{Corollary}
\newtheorem{proposition}[theorem]{Proposition}
\newtheorem{definition}[theorem]{Definition}
\theoremstyle{plain}{\theorembodyfont{\rmfamily}
}
\theoremstyle{plain}{\theorembodyfont{\rmfamily}
}
\theoremstyle{plain}{\theorembodyfont{\rmfamily}
}
\theoremstyle{plain}{\theorembodyfont{\rmfamily}
\newtheorem{example}[theorem]{Example}}
\newtheorem{fact}[theorem]{Fact}
\theoremstyle{plain}{\theorembodyfont{\rmfamily}
\newtheorem{remark}[theorem]{Remark}}
\renewcommand{\emptyset}{\varnothing}
\renewcommand{\limsup}{\varlimsup}
\newcommand{\boxedeqn}[1]{%
    \[\fbox{%
        \addtolength{\linewidth}{-2\fboxsep}%
        \addtolength{\linewidth}{-2\fboxrule}%
        \begin{minipage}{\linewidth}%
        \begin{equation}#1\\[+4mm]\end{equation}%
        \end{minipage}%
      }\]%
  }
\newcounter{count}
\begin{document}

\title{\textrm{
Restricted normal cones and the \\method of alternating
projections}}

\author{
Heinz H.\ Bauschke\thanks{Mathematics, University of British
Columbia, Kelowna, B.C.\ V1V~1V7, Canada. E-mail:
\texttt{heinz.bauschke@ubc.ca}.},
D.\ Russell Luke\thanks{Institut f\"ur Numerische und Angewandte Mathematik,\
Universit\"at G\"ottingen,\ Lotzestr.~16--18, 37083 G\"ottingen, Germany. E-mail: \texttt{r.luke@math.uni-goettingen.de}.}, Hung M.\
Phan\thanks{Mathematics, University of British Columbia, Kelowna,
B.C.\ V1V~1V7, Canada. E-mail:  \texttt{hung.phan@ubc.ca}.}, ~and
Xianfu\ Wang\thanks{Mathematics, University of British Columbia,
Kelowna, B.C.\ V1V~1V7, Canada. E-mail:
\texttt{shawn.wang@ubc.ca}.}}

\date{May 2, 2012}

\maketitle

\vskip 8mm

\begin{abstract} \noindent
The method of alternating projections (MAP) is a common 
method for solving feasibility problems. While employed traditionally
to subspaces or to convex sets, little was known about the
behavior of the MAP in the nonconvex case until 2009, when Lewis,
Luke, and Malick derived local linear convergence results provided
that a condition involving normal cones holds and at least one of the
sets is superregular (a property less restrictive than convexity).
However, their results failed to capture very simple classical convex
instances such as two lines in three-dimensional space.

In this paper, we extend and develop the Lewis-Luke-Malick framework
so that not only any two linear subspaces but also any two closed
convex sets whose relative interiors meet are covered. We also allow
for sets that are more structured such as unions of convex sets.
The key tool required is the restricted normal cone, which is a
generalization of the classical Mordukhovich normal cone. We
thoroughly study restricted normal cones from the viewpoint of
constraint qualifications and regularity. Numerous examples are provided
to illustrate the theory.
\end{abstract}

{\small \noindent {\bfseries 2010 Mathematics Subject
Classification:} {Primary 49J52, 49M20;
Secondary 47H09, 65K05, 65K10, 90C26.
}}

\noindent {\bfseries Keywords:}
Constraint qualification,
convex set,
Friedrichs angle,
linear convergence,
method of alternating projections,
normal cone,
projection operator,
restricted normal cone,
superregularity.


\section{Introduction}

Throughout this paper, we assume that

\boxedeqn{
\text{$X$ is a Euclidean space
}
}
(i.e., finite-dimensional real Hilbert space)
with inner product $\scal{\cdot}{\cdot}$, induced norm $\|\cdot\|$,
and induced metric $d$.

Let $A$ and $B$ be nonempty closed subsets of $X$.
We assume first that $A$ and $B$ are additionally \emph{convex}
and that $A\cap B\neq\varnothing$.
In this case, the \emph{projection operators} $P_A$ and $P_B$
(a.k.a.\ projectors or nearest point mappings)
corresponding to $A$ and $B$, respectively,
are single-valued with full domain.
In order to find a point in the intersection $A$ and $B$,
it is very natural to simply alternate the operator $P_A$ and $P_B$
resulting in the famous \emph{method of alternating projections
(MAP)}. Thus, given a starting point $b_{-1}\in X$,
sequences $(a_n)_\nnn$ and $(b_n)_\nnn$ are generated as
follows:
\begin{equation}
(\forall\nnn)\qquad
a_{n} := P_Ab_{n-1},\quad b_n := P_Ba_n.
\end{equation}
In the present consistent convex setting, both sequences have
a common limit in $A\cap B$.
Not surprisingly, because of its elegance and usefulness,
the MAP has attracted many famous mathematicians, including
John~von~Neumann and Norbert~Wiener and it has been independently
rediscovered repeatedly. It is out of scope of this article to
review the history of the MAP, its many extensions, and its rich
and convergence theory;
the interested reader is referred to, e.g.,
\cite{BC2011},
\cite{CensorZenios},
\cite{Deutsch},
and the references therein.

Since $X$ is finite-dimensional and $A$ and $B$ are closed,
the convexity of $A$ and $B$ is actually not needed in order
to guarantee existence of nearest points. This gives rise
to \emph{set-valued} projection operators which for convenience
we also denote by $P_A$ and $P_B$. Dropping the convexity assumption,
the MAP now generates sequences via
\begin{equation}
(\forall\nnn)\qquad
a_{n} \in P_Ab_{n-1},\quad b_n \in P_Ba_n.
\end{equation}
This iteration is much less understood than its much older convex cousin.
For instance, global convergence to a point in $A\cap B$ cannot be
guaranteed anymore \cite{CombTrus90}.
Nonetheless, the MAP is widely applied to applications in engineering 
and the physical sciences for finding a point in $A\cap B$
(see, e.g., \cite{StarkYang}).
Lewis, Luke, and Malick achieved a break-through result in 2009,
when there are no normal vectors that are opposite
and at least one of the sets is superregular
(a property less restrictive than convexity).
Their proof techniques were quite different from the well known convex
approaches; in fact, the Mordukhovich normal cone was a central tool
in their analysis.
However, their results were not strong enough to handle
well known convex and linear scenarios. For instance, the linear
convergence of the MAP for two lines in $\RR^3$ cannot be obtained
in their framework.

\emph{The goal of this paper is to extend the results by Lewis, Luke
and Malick to make them applicable in more general settings. We unify their
theory with classical convex convergence results. Our principal tool
is a new normal cone which we term the \emph{restricted normal cone}.
A careful study of restricted normal cones and their applications is
carried out.
We also allow for constraint sets that are \emph{unions} of
superregular (or even convex) sets. We shall  recover the known
optimal convergence rate for the MAP when studying two linear subspaces.
}  In a parallel paper \cite{BLPW12b} we apply the tools developed here
to the important problem of sparsity optimization with affine constraints.  

The remainder of the paper is organized as follows.
In Section~\ref{s:prelim}, we collect various auxiliary results
that are useful later and to make the later analysis less cluttered.
The restricted normal cones are introduced in
Section~\ref{s:normalcone}.
Section~\ref{s:normalandaffine} focuses on normal cones
that are restricted by affine subspaces; the results achieved
are critical in the inclusion of convex settings to the linear
convergence framework.
Further examples and results are provided
in Section~\ref{s:further} and Section~\ref{s:nosuper},
where we illustrate that the restricted normal cone cannot be obtained
by intersections with various natural conical supersets.
Section~\ref{s:CQ1} and Section~\ref{s:CQ2} are devoted
to constraint qualifications which describe how
well the sets $A$ and $B$ relate to each other.
In Section~\ref{s:superregularity}, we discuss regularity and
superregularity, notions that extend the idea of convexity,
for sets and collections of sets.
We are then in a position to provide
in Section~\ref{s:application}
our main results dealing with
the local linear convergence of the MAP.

\subsection*{Notation}

The notation employed in this article
is quite standard and follows largely
\cite{Borzhu05},
\cite{Boris1},
\cite{Rock70},
and \cite{Rock98};
these books also provide exhaustive information on variational
analysis.
The real numbers are $\RR$,
the integers are $\ZZ$, and
$\NN := \menge{z\in\ZZ}{z\geq 0}$.
Further, $\RP := \menge{x\in\RR}{x\geq 0}$,
$\RPP := \menge{x\in \RR}{x>0}$ and
$\RM$ and $\RMM$ are defined analogously.
Let $R$ and $S$ be subsets of $X$.
Then the closure of $S$ is $\overline{S}$,
the interior of $S$ is $\inte(S)$,
the boundary of $S$ is $\bd(S)$,
and the smallest affine and linear subspaces containing $S$
are $\aff S$ and $\lspan S$, respectively.
The linear subspace parallel to $\aff S$ is
$\pa S := (\aff S)-S=(\aff S)-s$, for every $s\in S$.
The relative interior of $S$, $\reli(S)$, is the interior of $S$
relative to $\aff(S)$.
The negative polar cone of $S$ is
$S^\ominus=\menge{u\in X}{\sup\scal{u}{S}\leq 0}$.
We also set $S^\oplus := -S^\ominus$
and $S^\perp := S^\oplus \cap S^\ominus$.
We also write
$R\oplus S$ for $R+S:=\menge{r+s}{(r,s)\in R\times S}$
provided that $R\perp S$, i.e., $(\forall (r,s)\in R\times S)$
$\scal{r}{s}=0$.
We write $F\colon X\To X$, if $F$ is a mapping from $X$ to its power set,
i.e., $\gr F$, the graph of $F$, lies in $X\times X$.
Abusing notation slightly, we will write $F(x) = y$ if $F(x)=\{y\}$.
A nonempty subset $K$ of $X$ is a cone
if $(\forall\lambda\in\RP)$ $\lambda K := \menge{\lambda k}{k\in
K}\subseteq K$.
The smallest cone containing $S$ is denoted $\cone(S)$;
thus, $\cone(S) := \RP\cdot S := \menge{\rho s}{\rho\in\RP,s\in S}$
if $S\neq\varnothing$ and $\cone(\varnothing):=\{0\}$.
The smallest convex and closed and convex subset containing $S$
are $\conv(S)$ and $\cconv(S)$, respectively.
If $z\in X$ and $\rho\in\RPP$, then
$\ball{z}{\rho} := \menge{x\in X}{d(z,x)\leq \rho}$ is
the closed ball centered at $z$ with radius $\rho$ while
$\sphere{z}{\rho} := \menge{x\in X}{d(z,x)= \rho}$ is
the (closed) sphere centered at $z$ with radius $\rho$.
If $u$ and $v$ are in $X$,
then $[u,v] := \menge{(1-\lambda)u+\lambda v}{\lambda\in [0,1]}$ is
the line segment connecting $u$ and $v$.

\section{Auxiliary results}

\label {s:prelim}

In this section, we fix some basic notation used throughout this
article. We also collect several auxiliary results that will be
useful in the sequel.

\subsection*{Projections}

\begin{definition}[distance and projection]
Let $A$ be a nonempty subset of $X$.
Then
\begin{equation}
d_A \colon X\to\RR\colon x\mapsto \inf_{a\in A}d(x,a)
\end{equation}
is the \emph{distance function} of the set $A$ and
\begin{equation}
P_A\colon X\To X\colon
x\mapsto \menge{a\in A}{d_A(x)=d(x,a)}
\end{equation}
is the corresponding \emph{projection}.
\end{definition}

\begin{proposition}[existence]
\label{p:0224a}
Let $A$ be a nonempty closed subset of $X$.
Then $(\forall x\in X)$ $P_A(x)\neq\emptyset$.
\end{proposition}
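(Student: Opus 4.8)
The plan is to produce an explicit nearest point by extracting a limit from a minimizing sequence, using the finite-dimensionality of $X$ to supply the required compactness. Fix $x\in X$. Since $A\neq\emptyset$, choose some $a_0\in A$ and put $r:=d(x,a_0)$, so that $0\le d_A(x)\le r$. By the definition of the infimum in $d_A(x)=\inf_{a\in A}d(x,a)$, there is a sequence $(a_n)_{n\in\NN}$ in $A$ with $d(x,a_n)\to d_A(x)$; discarding finitely many terms, we may assume $d(x,a_n)\le r+1$ for every $n$, i.e., $(a_n)_{n\in\NN}$ lies in the closed ball $\ball{x}{r+1}$.

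Next, since $X$ is a Euclidean space, $\ball{x}{r+1}$ is closed and bounded, hence compact by the Heine--Borel theorem; therefore $(a_n)_{n\in\NN}$ admits a subsequence $(a_{k_n})_{n\in\NN}$ converging to some $\bar a\in X$. Because $A$ is closed, $\bar a\in A$. Finally, continuity of the map $y\mapsto d(x,y)$ yields $d(x,\bar a)=\lim_n d(x,a_{k_n})=d_A(x)$, so that $\bar a\in P_A(x)$; in particular $P_A(x)\neq\emptyset$.

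There is no genuine obstacle in the argument: beyond elementary estimates, the only ingredient is that closed bounded subsets of a finite-dimensional normed space are compact, which is precisely where the standing hypothesis that $X$ is Euclidean is used (the conclusion can fail in infinite dimensions without further assumptions such as convexity together with reflexivity). One may equivalently avoid sequences altogether by observing that $A\cap\ball{x}{r}$ is nonempty and compact and that the continuous function $a\mapsto d(x,a)$ attains its minimum over this set by the extreme value theorem; the minimizer then lies in $P_A(x)$.
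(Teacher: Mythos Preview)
Your proof is correct and follows essentially the same approach as the paper: take a minimizing sequence for $d(x,\cdot)$ over $A$, use finite-dimensionality (coercivity/Heine--Borel) to obtain boundedness and hence a convergent subsequence, and conclude via closedness of $A$ and continuity of the distance that the limit lies in $P_A(x)$. The only differences are cosmetic (you work with $d(x,\cdot)$ while the paper uses $\|\cdot - z\|^2$, and you extract a subsequence explicitly while the paper speaks of cluster points).
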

\begin{proof}
Let $z\in X$. The function
$f\colon X\to\RR\colon x\mapsto \|x-z\|^2$ is continuous and
$\lim_{\|x\|\to\pinf} f(x)=\pinf$.
Let $(x_n)_\nnn$ be a sequence in $A$ such that
$f(x_n)\to\inf f(A)$. Then $(x_n)_\nnn$ is bounded.
Since $A$ is closed and $f$ is continuous, every cluster point of
$(x_n)_\nnn$ is a minimizer of $f$ over the set $A$, i.e., an element in
$P_Az$.
\end{proof}
\begin{example}[sphere]
\label{ex:projS}
Let $z\in X$ and $\rho\in\RPP$.
Set $S := \sphere{z}{\rho}$.
Then
\begin{equation}
\label{e:0308a}
(\forall x\in X)\quad P_S(x)=\begin{cases}
z+\rho\frac{x-z}{\|x-z\|},&\text{if $x\neq z$;}\\
S, &\text{otherwise.}
\end{cases}
\end{equation}
\end{example}
\begin{proof} 
Let $x\in X$. The formula is clear when $x=z$, so we assume $x\neq z$.
Set
\begin{equation}
c := z+\rho\frac{x-z}{\|x-z\|} \in S,
\end{equation}
and let $s = z+\rho b\in S\smallsetminus \{c\}$,
i.e., $\|b\|=1$ and $b\neq (x-z)/\|x-z\|$.
Hence, using that $|\|u\|-\|v\||<\|u-v\|$
$\Leftrightarrow$ $\scal{u}{v}<\|u\|\|v\|$ and because of Cauchy-Schwarz,
we obtain
\begin{subequations}
\begin{align}
\|x-c\| &= \big|\|x-z\|-\rho\big|
=\big|\|x-z\|-\|\rho b\|\big|
=\big|\|x-z\|-\|s-z\|\big|\\
&<\|x-s\|.
\end{align}
\end{subequations}
We have thus established \eqref{e:0308a}.
\end{proof}

In view of Proposition~\ref{p:0224a},
the next result is in particular applicable to
the union of finitely many nonempty closed subsets of $X$.

\begin{lemma}[union]
\label{l:unionproj}
Let $(A_i)_{i\in I}$ be a collection of nonempty subsets of $X$,
set $A := \bigcup_{i\in I}A_i$,
let $x\in X$, and suppose that $a\in P_A(x)$.
Then there exists $i\in I$ such that $a\in P_{A_i}(x)$.
\end{lemma}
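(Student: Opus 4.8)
The plan is to argue directly from the definition of the projection. Fix $x\in X$ and suppose $a\in P_A(x)$, where $A=\bigcup_{i\in I}A_i$. Since $a\in A$, there exists some index $i\in I$ with $a\in A_i$; fix such an $i$. It then suffices to show that $d_{A_i}(x)=d(x,a)$. The inequality $d_{A_i}(x)\le d(x,a)$ is immediate because $a\in A_i$. For the reverse inequality, observe that $A_i\subseteq A$, hence $d_A(x)=\inf_{y\in A}d(x,y)\le \inf_{y\in A_i}d(x,y)=d_{A_i}(x)$. Combining with $a\in P_A(x)$, which gives $d(x,a)=d_A(x)$, we obtain $d(x,a)=d_A(x)\le d_{A_i}(x)\le d(x,a)$, so all quantities coincide and $a\in P_{A_i}(x)$.

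The only subtlety worth flagging is purely bookkeeping: nothing in the statement asserts that $P_{A_i}(x)$ is nonempty for every $i$ (indeed some $A_i$ need not be closed), but this is harmless — we only claim membership for the particular index $i$ we selected, and for that one the chain of inequalities already exhibits $a$ as a nearest point. Thus there is essentially no obstacle here; the lemma is a one-line consequence of monotonicity of $d_{(\cdot)}(x)$ under set inclusion together with the defining property of $P_A$. When $I$ is finite and each $A_i$ is closed and nonempty, Proposition~\ref{p:0224a} guarantees $A$ is closed with $P_A(x)\neq\emptyset$, so the lemma then genuinely decomposes projections onto the union into projections onto the pieces, which is the form in which it will be used later.
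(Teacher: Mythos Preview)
Your proof is correct and follows essentially the same approach as the paper: pick an index $i$ with $a\in A_i$, then use the chain $d(x,a)=d_A(x)\leq d_{A_i}(x)\leq d(x,a)$ to conclude that $a$ realizes the distance to $A_i$. The additional commentary you provide is fine but not needed for the argument itself.
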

\begin{proof}
Indeed, since $a\in A$, there exists $i\in I$ such that
$a\in A_i$.
Then
$d(x,a)=d_A(x)\leq d_{A_i}(x)\leq d(x,a)$.
Hence $d(x,a)=d_{A_i}(x)$, as claimed.
\end{proof}

The following result is well known.

\begin{fact}[projection onto closed convex set]
\label{f:convproj}
Let $C$ be a nonempty closed convex subset of $X$,
and let $x$, $y$ and $p$ be in $X$.
Then the following hold:
\begin{enumerate}
\item
\label{f:convproj1}
$P_C(x)$ is a singleton.
\item
\label{f:convproj2}
$P_C(x)=p$ if and only if $p\in C$ and $\sup\scal{C-p}{x-p}\leq 0$.
\item
\label{f:convproj3}
$\|P_C(x)-P_C(y)\|^2 + \|(\Id-P_C)(x)-(\Id-P_C)(y)\|^2\leq\|x-y\|^2$.
\item
\label{f:convproj4}
$\|P_C(x)-P_C(y)\|\leq\|x-y\|$.
\end{enumerate}
\end{fact}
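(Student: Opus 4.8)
\emph{Proof proposal.} The plan is to establish the four parts in the order stated, since each feeds the next, and the only tools needed are elementary Hilbert-space identities. For part~\ref{f:convproj1}, existence of a nearest point is already Proposition~\ref{p:0224a}, so only uniqueness requires an argument: I would take $p_0,p_1\in P_C(x)$, use convexity to place $\tfrac12(p_0+p_1)$ in $C$, and apply the parallelogram law to $x-p_0$ and $x-p_1$ to get $\|x-\tfrac12(p_0+p_1)\|^2=\tfrac12\|x-p_0\|^2+\tfrac12\|x-p_1\|^2-\tfrac14\|p_0-p_1\|^2=d_C(x)^2-\tfrac14\|p_0-p_1\|^2$; since the left-hand side is at least $d_C(x)^2$, this forces $p_0=p_1$.

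For part~\ref{f:convproj2}, suppose first that $P_C(x)=p$. For every $c\in C$ and $\lambda\in\left]0,1\right]$ the point $(1-\lambda)p+\lambda c$ lies in $C$, so minimality gives $\|x-p\|^2\leq\|x-p-\lambda(c-p)\|^2=\|x-p\|^2-2\lambda\scal{c-p}{x-p}+\lambda^2\|c-p\|^2$; cancelling, dividing by $\lambda$, and letting $\lambda\downarrow0$ yields $\scal{c-p}{x-p}\leq0$, hence $\sup\scal{C-p}{x-p}\leq0$. Conversely, if $p\in C$ and $\sup\scal{C-p}{x-p}\leq0$, then for each $c\in C$ the expansion $\|x-c\|^2=\|x-p\|^2-2\scal{c-p}{x-p}+\|c-p\|^2\geq\|x-p\|^2$ shows $p\in P_C(x)$, and part~\ref{f:convproj1} upgrades this to $p=P_C(x)$.

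For parts~\ref{f:convproj3} and~\ref{f:convproj4}, write $p:=P_C(x)$ and $q:=P_C(y)$. I would invoke part~\ref{f:convproj2} at $x$ with $c=q$ and at $y$ with $c=p$, then add the two inequalities to obtain $\scal{q-p}{(x-p)-(y-q)}\leq0$, which rearranges to the firm-nonexpansiveness bound $\|p-q\|^2\leq\scal{p-q}{x-y}$. Setting $u:=x-y$ and $v:=p-q$, so that $(\Id-P_C)(x)-(\Id-P_C)(y)=u-v$, the identity $\|v\|^2+\|u-v\|^2=\|u\|^2+2\big(\|v\|^2-\scal{u}{v}\big)$ together with $\scal{u}{v}\geq\|v\|^2$ gives part~\ref{f:convproj3} at once; part~\ref{f:convproj4} follows by discarding the nonnegative term $\|(\Id-P_C)(x)-(\Id-P_C)(y)\|^2$. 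The argument is entirely classical; the only step needing even a moment of care is the limit $\lambda\downarrow0$ in part~\ref{f:convproj2}, so I anticipate no genuine obstacle, and it is unsurprising that the paper records the statement merely as a \textbf{Fact} (presumably citing a standard reference such as \cite{BC2011}).
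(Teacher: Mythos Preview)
Your argument is correct in every part and is exactly the standard elementary Hilbert-space proof. As you anticipated, the paper does not prove this Fact at all but simply cites \cite[Theorem~3.14 and Proposition~4.8]{BC2011} for \ref{f:convproj1}--\ref{f:convproj3} and notes that \ref{f:convproj4} is immediate from \ref{f:convproj3}.
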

\begin{proof}
\ref{f:convproj1}\&\ref{f:convproj2}:
\cite[Theorem~3.14]{BC2011}.
\ref{f:convproj3}:
\cite[Proposition~4.8]{BC2011}.
\ref{f:convproj4}:
Clear from \ref{f:convproj3}.
\end{proof}


\subsection*{Miscellany}

\begin{lemma}\label{l:cone01}
Let $A$ and $B$ be subsets of $X$, and let
$K$ be a cone in $X$.
Then the following hold:
\begin{enumerate}
\item\label{l:cone01i}
$\cone(A\cap B)\subseteq\cone A\cap \cone B$.
\item\label{l:cone01ii} $\cone(K\cap B)= K \cap \cone B$.
\end{enumerate}
\end{lemma}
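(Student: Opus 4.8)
The plan is to prove each inclusion directly from the definition $\cone(S) = \menge{\rho s}{\rho\in\RP,\; s\in S}$ (with $\cone(\varnothing) = \{0\}$), handling the trivial empty cases first so that afterwards I may assume all relevant sets are nonempty.

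For \ref{l:cone01i}: if $A\cap B = \varnothing$ the left side is $\{0\}$, which lies in $\cone A\cap\cone B$ since $0$ belongs to every cone (both $\cone A$ and $\cone B$ are cones, hence contain $0\cdot s = 0$ as soon as they are nonempty; and if, say, $A = \varnothing$ then $\cone A = \{0\}$ and the inclusion still holds because $\{0\}\subseteq\{0\}\cap\cone B = \{0\}$). Otherwise, take $x\in\cone(A\cap B)$, write $x = \rho c$ with $\rho\in\RP$ and $c\in A\cap B$. Then $c\in A$ gives $x = \rho c\in\cone A$, and $c\in B$ gives $x = \rho c\in\cone B$; hence $x\in\cone A\cap\cone B$.

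For \ref{l:cone01ii}: the inclusion ``$\subseteq$'' is immediate from \ref{l:cone01i} together with the fact that $\cone K = K$ because $K$ is already a cone (indeed $K\subseteq\cone K$ always, and $\rho k\in K$ for all $\rho\in\RP$, $k\in K$ by the cone property; the case $K = \varnothing$ is excluded since a cone is nonempty by the paper's convention). For the reverse inclusion ``$\supseteq$'', let $x\in K\cap\cone B$. If $x = 0$, then $0\in K$ (as $K$ is a cone) and $0\in B$ would not be needed — instead note $0 = 0\cdot b$ for any $b\in B$ if $B\neq\varnothing$, so $0\in\cone(K\cap B)$ provided $K\cap B\neq\varnothing$; one must check that $K\cap B\neq\varnothing$ here, which follows because $x = 0\in\cone B$ forces $B\neq\varnothing$ and $0\in K$, but $0$ need not lie in $B$. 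This is the one subtle point, so I would instead argue uniformly: write $x = \rho b$ with $\rho\in\RP$, $b\in B$; if $\rho = 0$ then $x = 0\in\cone(K\cap B)$ whenever $K\cap B\neq\varnothing$, and separately if $\rho > 0$ then $b = \rho^{-1}x\in K$ (since $x\in K$ and $K$ is a cone closed under the nonnegative scalar $\rho^{-1}$), so $b\in K\cap B$ and $x = \rho b\in\cone(K\cap B)$; finally the case $x = 0$ with $K\cap B = \varnothing$ cannot occur, since then $\cone B$ contains $0$ only if $B\neq\varnothing$, and... — here I would simply observe that if $K\cap B = \varnothing$ then $K\cap\cone B$ consists only of elements $\rho b$ with $\rho > 0$, each of which the previous paragraph places in $\cone(K\cap B)$, contradiction unless $K\cap\cone B = \varnothing$ too, in which case there is nothing to prove.

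The only mild obstacle is this bookkeeping around $0$ and the possibility that $0\in K\cap\cone B$ while $0\notin B$: the clean fix is to case-split on whether the scalar $\rho$ in the representation $x=\rho b$ is zero, as above, and to dispatch the degenerate $K\cap B=\varnothing$ situation by noting it forces every element of $K\cap\cone B$ to have a strictly positive representing scalar, which then lands it in $K$ by the cone property and hence in $K\cap B$, a contradiction. Everything else is a one-line verification.
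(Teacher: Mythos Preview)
Your argument for \ref{l:cone01i} and the ``$\subseteq$'' direction of \ref{l:cone01ii} are fine and match the paper. For the ``$\supseteq$'' direction of \ref{l:cone01ii}, your treatment of the case $\rho>0$ is exactly the paper's argument: write $x=\rho b$ with $b\in B$, use the cone property of $K$ to get $b=\rho^{-1}x\in K$, hence $b\in K\cap B$ and $x\in\cone(K\cap B)$.

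The trouble is your handling of $x=0$. Your claimed ``clean fix'' asserts that when $K\cap B=\varnothing$, every element of $K\cap\cone B$ admits a representation with strictly positive scalar, leading to a contradiction, and hence $K\cap\cone B=\varnothing$. Both assertions are false: $0$ always lies in $K\cap\cone B$ (since $0\in K$ for any cone $K$ and $0\in\cone B$ for any $B$), yet $0$ need not have a positive-scalar representation (take $K=\{0\}$, $B=\{1\}$ in $\RR$: then $K\cap B=\varnothing$ while $K\cap\cone B=\{0\}$). So the contradiction argument collapses.

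The actual fix is much simpler than the bookkeeping you attempt: by the paper's definition, $\cone(S)$ contains $0$ for \emph{every} subset $S$ (either $S\neq\varnothing$ and $0=0\cdot s$, or $S=\varnothing$ and $\cone(\varnothing)=\{0\}$). Hence $0\in\cone(K\cap B)$ unconditionally, and the case $x=0$ is immediate. This is exactly what the paper does: it restricts attention to $x\in(K\cap\cone B)\smallsetminus\{0\}$ from the outset, tacitly using that $0$ lies in the right-hand side automatically.
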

\begin{proof}
\ref{l:cone01i}: Clear.
\ref{l:cone01ii}:
By \ref{l:cone01i}, $\cone(K\cap B)
\subseteq (\cone K) \cap (\cone B) = K \cap \cone B$.
Now assume that $x\in (K\cap \cone B)\smallsetminus\{0\}$.
Then there exists $\beta>0$ such that $x/\beta \in B$.
Since $K$ is a cone, $x/\beta\in K$.
Thus $x/\beta\in K\cap B$ and therefore $x \in \cone(K\cap B)$.
\end{proof}

Note that the inclusion in
Lemma~\ref{l:cone01}\ref{l:cone01i} may be strict:
indeed, consider the case when
$X=\RR$, $A := \{1\}$, and $B=\{2\}$.


\begin{lemma}[a characterization of convexity]
\label{l:PA&cvex}
Let $A$ be a nonempty closed subset of $X$.
Then the following are equivalent:
\begin{enumerate}
\item\label{l:PA&cvex-i} $A$ is convex.
\item\label{l:PA&cvex-ii} $P^{-1}_A(a)-a$ is a cone,
for every $a\in A$.
\item\label{l:PA&cvex-iii}
$P_A(x)$ is a singleton, for every $x\in X$.
\end{enumerate}
\end{lemma}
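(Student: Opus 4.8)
The plan is to prove the implications (i)$\Rightarrow$(ii)$\Rightarrow$(iii)$\Rightarrow$(i). The first is a one-line computation: if $A$ is convex, then $P_A$ is single-valued by Fact~\ref{f:convproj}\ref{f:convproj1}, and by Fact~\ref{f:convproj}\ref{f:convproj2} we have $P^{-1}_A(a)=\menge{x\in X}{\sup\scal{A-a}{x-a}\le 0}$ for every $a\in A$, so that $P^{-1}_A(a)-a=(A-a)^\ominus$, which is a cone.

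For (ii)$\Rightarrow$(iii), I would fix $x\in X$ (so $P_A(x)\neq\emptyset$ by Proposition~\ref{p:0224a}), take $a_1,a_2\in P_A(x)$, and show $a_1=a_2$. Since $x\in P^{-1}_A(a_1)$ and $P^{-1}_A(a_1)-a_1$ is a cone, $a_1+\lambda(x-a_1)\in P^{-1}_A(a_1)$ for all $\lambda\ge 0$; as $a_2\in A$, this yields $\lambda\|x-a_1\|\le\|a_1-a_2+\lambda(x-a_1)\|$, and after squaring, dividing by $\lambda>0$, and letting $\lambda\to\pinf$ one obtains $\scal{a_1-a_2}{x-a_1}\ge 0$. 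Interchanging $a_1$ and $a_2$ gives $\scal{a_1-a_2}{x-a_2}\le 0$, and subtracting the two inequalities produces $\|a_1-a_2\|^2\le 0$; hence $a_1=a_2$.

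The real content is (iii)$\Rightarrow$(i), which is the classical theorem of Bunt and Motzkin that a Chebyshev set in a Euclidean space is convex (and where finite-dimensionality is essential). Assuming $P_A$ is single-valued on $X$, I would first record that $P_A$ is continuous (for $x_n\to x$ the points $P_A(x_n)$ stay bounded, since $\|P_A(x_n)-x_n\|=d_A(x_n)\to d_A(x)$, and each cluster point lies in $A$ at distance $d_A(x)$ from $x$, hence equals $P_A(x)$) and that $\ran P_A=A$. Then I would consider
\[
\varphi\colon X\to\RR\colon x\mapsto\tfrac12\|x\|^2-\tfrac12 d_A^2(x)=\sup_{a\in A}\big(\scal{x}{a}-\tfrac12\|a\|^2\big),
\]
which is finite-valued and convex, being a pointwise supremum of affine functions. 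The key step is to show that $\varphi$ is Fr\'echet differentiable at every $x$ with $\nabla\varphi(x)=P_A(x)$: the inclusion $P_A(x)\in\partial\varphi(x)$ (equivalently, $\varphi(x+h)-\varphi(x)\ge\scal{h}{P_A(x)}$ for every $h\in X$) supplies one side, while the reverse estimate follows by bounding $\varphi(x+h)-\varphi(x)$ from above via $\tfrac12 d_A^2(x)\le\tfrac12\|x-p\|^2$ with $p\in P_A(x+h)$ and using $p\to P_A(x)$ as $h\to 0$---this last fact being exactly where uniqueness of nearest points is used.

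Granting the differentiability, $A=\ran P_A=\ran\nabla\varphi=\ran\partial\varphi$; and since $\varphi$ is a proper lower semicontinuous convex function on the Euclidean space $X$, the closure of $\ran\partial\varphi$ is convex (it coincides with $\overline{\dom\varphi^{*}}$; see, e.g., \cite{Rock70}). As $A$ is closed, $A$ itself is convex, which closes the cycle. I expect the main obstacle to be precisely this implication (iii)$\Rightarrow$(i)---in particular, establishing everywhere-differentiability of $\varphi$ and then extracting convexity of $A$ from the range of $\partial\varphi$---whereas (i)$\Rightarrow$(ii) and (ii)$\Rightarrow$(iii) are short.
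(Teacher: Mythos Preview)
Your proposal is correct. The implication \ref{l:PA&cvex-i}$\Rightarrow$\ref{l:PA&cvex-ii} is essentially identical to the paper's; both identify $P_A^{-1}(a)-a$ with the convex normal cone $(A-a)^\ominus$.

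For \ref{l:PA&cvex-ii}$\Rightarrow$\ref{l:PA&cvex-iii}, however, you take a different and somewhat cleaner route. The paper fixes $a_1,a_2\in P_A(x)$, uses the cone property once to produce the specific point $y:=2x-a_1\in P_A^{-1}(a_1)$, verifies by direct expansion that $\scal{y-a_2}{a_1-a_2}=0$, and then exploits $a_1\in P_Ay$ together with the identity $\|y-a_1\|^2=\|y-a_2\|^2+2\scal{y-a_2}{a_2-a_1}+\|a_1-a_2\|^2$ to force $a_1=a_2$. Your argument instead pushes along the entire ray $a_1+\lambda(x-a_1)$, extracts the variational inequality $\scal{a_1-a_2}{x-a_1}\geq 0$ in the limit $\lambda\to\pinf$, and concludes by symmetry. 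Your version is arguably more transparent and uses only that $P_A^{-1}(a_1)-a_1$ is closed under arbitrary (not just bounded) positive scaling, while the paper's computation is self-contained and avoids any limiting process.

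For \ref{l:PA&cvex-iii}$\Rightarrow$\ref{l:PA&cvex-i}, the paper does not give an argument at all: it simply invokes the classical Bunt--Motzkin theorem on Chebyshev sets, with references to \cite[Chapter~12]{Deutsch} and \cite[Corollary~21.13]{BC2011}. Your sketch via the Asplund function $\varphi(x)=\tfrac12\|x\|^2-\tfrac12 d_A^2(x)$, its convexity, differentiability with $\nabla\varphi=P_A$, and the convexity of $\overline{\ran\partial\varphi}=\overline{\dom\varphi^*}$ is one of the standard proofs of that theorem, so you are supplying strictly more detail than the paper here. The step where single-valuedness of $P_A$ is used to obtain continuity and then differentiability is correctly identified as the crux.
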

\begin{proof}
``\ref{l:PA&cvex-i}$\Rightarrow$\ref{l:PA&cvex-ii}'':
Indeed, it is well known in convex analysis (see, e.g.,
\cite[Proposition~6.17]{Rock98}) that for
every $a\in A$,
$P_A^{-1}(a)-a$ is equal to the normal cone (in the sense of convex
analysis) of $A$ at $a$.

``\ref{l:PA&cvex-ii}$\Rightarrow$\ref{l:PA&cvex-iii}'':
Let $x\in X$.
By Proposition~\ref{p:0224a}, $P_Ax\neq\varnothing$.
Take $a_1$ and $a_2$ in $P_Ax$.
Then $\|x-a_1\|=\|x-a_2\|$ and
$x-a_1\in P_A^{-1}a_1-a_1$.
Since $P_A^{-1}a-a$ is a cone, we have
$2(x-a_1)\in P_A^{-1}a_1-a_1$.
Hence $y := 2x-a_1\in P_A^{-1}a_1$
and $y-x=x-a_1$.
Thus,
\begin{subequations}
\begin{align}
\scal{y-a_2}{a_1-a_2}&=\scal{(y-x)+(x-a_2)}{(a_1-x)+(x-a_2)}\\
&=\scal{y-x}{a_1-x}+\scal{y-x}{x-a_2}+\scal{x-a_2}{a_1-x}+\|x-a_2\|^2\\
&=\scal{x-a_1}{a_1-x}+\scal{x-a_1}{x-a_2}+\scal{x-a_2}{a_1-x}+\|x-a_2\|^2\\
&=-\|x-a_1\|^2 + \|x-a_2\|^2\\
&=0.
\end{align}
\end{subequations}
Since $a_1\in P_Ay$, it follows that
\begin{subequations}
\label{e:0224a}
\begin{align}
\|y-a_1\|^2&=\|y-a_2\|^2+2\scal{y-a_2}{a_2-a_1}+\|a_1-a_2\|^2\\
&=\|y-a_2\|^2+\|a_1-a_2\|^2\\
&\geq \|y-a_2\|^2\\
&\geq\|y-a_1\|^2.
\end{align}
\end{subequations}
Hence equality holds throughout \eqref{e:0224a}.
Therefore, $a_1=a_2$.

``\ref{l:PA&cvex-iii}$\Rightarrow$\ref{l:PA&cvex-i}``:
This classical result due to Bunt and to Motzkin
on the convexity of Chebyshev sets is well known;
for proofs, see, e.g., \cite[Chapter~12]{Deutsch} or
\cite[Corollary~21.13]{BC2011}.
\end{proof}



\begin{proposition}
\label{p:onecone} Let $S$ be a convex set. Then the following are
equivalent.
\begin{enumerate}
\item
\label{p:oneconei} $0\in\reli S$.
\item
\label{p:oneconeii} $\cone S = \lspan S$.
\item
\label{p:oneconeiii} $\ccone S = \lspan S$.
\end{enumerate}
\end{proposition}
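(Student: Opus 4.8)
The plan is to prove the cycle (i) $\Rightarrow$ (ii) $\Rightarrow$ (iii) $\Rightarrow$ (i), and I assume throughout that $S\neq\emptyset$ (which (i) forces in any case). First I would record that the inclusion $\subseteq$ in both (ii) and (iii) is automatic: $\lspan S$ is a linear subspace containing $S$, hence contains $\RP\cdot S=\cone S$; and (as $X$ is finite-dimensional) $\lspan S$ is closed, so $\ccone S=\overline{\cone S}\subseteq\overline{\lspan S}=\lspan S$ as well. In particular (ii) $\Rightarrow$ (iii) is then immediate: $\cone S=\lspan S$ gives $\ccone S=\overline{\lspan S}=\lspan S$.

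For (i) $\Rightarrow$ (ii): assuming $0\in\reli S$, the fact that $0\in S$ makes $\aff S$ a linear subspace, so $\aff S=\lspan S=:L$. By definition of relative interior there is $\delta\in\RPP$ with $\ball{0}{\delta}\cap L\subseteq S$. Given $x\in L\setminus\{0\}$, the point $s:=\tfrac{\delta}{2\|x\|}\,x$ lies in $\ball{0}{\delta}\cap L\subseteq S$, whence $x=\tfrac{2\|x\|}{\delta}\,s\in\cone S$; and $0\in\cone S$ trivially. Thus $L\subseteq\cone S$, giving equality and hence (ii).

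For (iii) $\Rightarrow$ (i), I would argue by contraposition. Suppose $0\notin\reli S$. Then $\reli\{0\}=\{0\}$ is disjoint from $\reli S$, so the proper separation theorem for convex sets in $\RR^n$ (see, e.g., \cite{Rock70}) yields some $h\neq 0$ with $\scal{h}{s}\geq 0$ for all $s\in S$ and $\scal{h}{s_1}>0$ for some $s_1\in S$. Then $\scal{h}{x}\geq 0$ for every $x\in\cone S$, hence, by continuity, for every $x\in\ccone S$; but $-s_1\in\lspan S$ while $\scal{h}{-s_1}<0$, so $-s_1\notin\ccone S$. Therefore $\ccone S\subsetneq\lspan S$, i.e., (iii) fails.

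The main obstacle is this last implication, the only step that requires a nontrivial input. The delicate point is that disjointness of the relative interiors gives \emph{proper} separation rather than mere separation: plain separation would permit $\scal{h}{\cdot}\equiv 0$ on $S$, which is useless, whereas properness guarantees the strict inequality $\scal{h}{s_1}>0$ that is exactly what pushes $-s_1$ out of $\ccone S$. The remaining two implications are routine computations and I foresee no difficulty there.
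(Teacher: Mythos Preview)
Your proof is correct. The implication (i)~$\Rightarrow$~(ii) matches the paper's argument almost verbatim, and your (ii)~$\Rightarrow$~(iii) is the obvious closure step. The one substantive difference is in closing the cycle: you argue (iii)~$\Rightarrow$~(i) by contraposition via the proper separation theorem (\cite[Theorem~11.3]{Rock70}), producing a linear functional $h$ that is nonnegative on $\ccone S$ but strictly positive somewhere on $S$, which forces $\ccone S\subsetneq\lspan S$. The paper instead establishes (ii)~$\Rightarrow$~(i) directly from \cite[Corollary~6.4.1]{Rock70} (a set whose conical hull is the whole space contains $0$ in its interior, applied inside $\lspan S$), and handles (ii)~$\Leftrightarrow$~(iii) separately using \cite[Corollary~6.3.1]{Rock70} (two convex sets have the same closure iff they have the same relative interior). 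Both routes lean on standard results from \cite{Rock70}; yours is arguably more transparent because separation is a single well-known hammer, while the paper's route is slightly shorter once those two corollaries are on the shelf. Neither approach dominates the other.
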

\begin{proof}
Set $Y = \lspan S$. Then \ref{p:oneconei} $\Leftrightarrow$ $0$
belongs to the interior of $S$ relative to $Y$.

``\ref{p:oneconei}$\Rightarrow$\ref{p:oneconeii}'': There exists
$\dd>0$ such that for every $y\in Y\smallsetminus\{0\}$, $\dd
y/\|y\| \in S$. Hence $y\in\cone S$.

``\ref{p:oneconeii}$\Rightarrow$\ref{p:oneconei}'': For every $y\in
Y$, there exists $\dd>0$ such that $\dd y\in S$. Now
\cite[Corollary~6.4.1]{Rock70} applies in $Y$.

``\ref{p:oneconeii}$\Leftrightarrow$\ref{p:oneconeiii}'': Set
$K=\cone S$, which is convex. By \cite[Corollary~6.3.1]{Rock70}, we
have $\reli K = \reli Y$ $\Leftrightarrow$ $\overline{K} =
\overline{Y}$ $\Leftrightarrow$ $\reli Y \subseteq K \subseteq
\overline{Y}$. Since $\reli Y = Y = \overline{Y}$, we obtain the
equivalences: $\reli K = Y$ $\Leftrightarrow$ $\overline{K}=Y$
$\Leftrightarrow$ $K=Y$.
\end{proof}

\section{Restricted normal cones: basic properties}
\label{s:normalcone}

Normal cones are fundamental objects in variational analysis;
they are used to construct subdifferential operators, and they
have found many applications in
optimization, optimal control, nonlinear analysis, convex analysis,
etc.; see, e.g.,
\cite{BC2011},
\cite{Borzhu05},
\cite{clsw98},
\cite{Loewenbook}, 
\cite{Boris1},
\cite{Rock70},
\cite{Rock98}.
One of the key building blocks is the Mordukhovich (or limiting)
normal cone $N_{A}$, which is obtained by limits of
proximal normal vectors.
In this section, we propose a new, very flexible,
normal cone of $A$, denoted by $\nc{A}{B}$, by
constraining the proximal normal vectors to a set $B$.

\begin{definition}[normal cones]
\label{d:NCone}
Let $A$ and $B$ be nonempty subsets of $X$, and let $a$ and
$u$ be in $X$.
If $a\in A$, then various normal cones of $A$ at $a$ are defined as follows:
\begin{enumerate}
\item
\label{d:pnB} The \emph{$B$-restricted proximal normal cone} of $A$
at $a$ is
\begin{equation}\label{e:pnB}
\pn{A}{B}(a):= \cone\Big(\big(B\cap P_A^{-1}a\big)-a\Big) =
\cone\Big(\big(B-a\big)\cap \big(P_A^{-1}a-a\big)\Big).
\end{equation}
\item
\label{d:pnX} The (classical) \emph{proximal normal cone} of $A$ at
$a$ is
\begin{equation}\label{e:pnX}
\pnX{A}(a):=\pn{A}{X}(a)= \cone\big(P_A^{-1}a-a\big).
\end{equation}
\item\label{d:nc}
The \emph{$B$-restricted normal cone} $\nc{A}{B}(a)$ is implicitly
defined by $u\in\nc{A}{B}(a)$ if and only if there exist sequences
$(a_n)_\nnn$ in $A$ and $(u_n)_\nnn$ in $\pn{A}{B}(a_n)$ such that
$a_n\to a$ and $u_n\to u$.
\item\label{d:fnc} The \emph{Fr\'{e}chet normal cone}
$\fnc{A}(a)$ is implicitly defined by $u\in \fnc{A}(a)$ if and only
if $(\forall \ve>0)$ $(\exi\dd>0)$ $(\forall x\in A\cap
\ball{a}{\delta})$ $\scal{u}{x-a}\leq\ve\|x-a\|$.
\item\label{d:cnc}
The \emph{normal convex from convex analysis} $\cnc{A}(a)$ is
implicitly defined by $u\in\cnc{A}(a)$ if and only if
$\sup\scal{u}{A-a}\leq 0$.
\item\label{d:Mnc}
The \emph{Mordukhovich normal cone} $N_A(a)$ of $A$ at $a$ is
implicitly defined by $u\in N_A(a)$ if and only if there exist
sequences $(a_n)_\nnn$ in $ A$ and $(u_n)_\nnn$ in $\pnX{A}(a_n)$
such that $a_n\to a$ and $u_n\to u$.
\end{enumerate}
If $a\notin A$, then all normal cones are defined to be empty.
\end{definition}

\bigskip

\begin{center}
\psset{xunit=1.2cm, yunit=1.0cm}
\begin{pspicture}
(-3,-4)(3.2,1.5)

\pscustom{
\psline[showpoints=false]{-}(-3,1.5)(0,0)(3,1.5)
\gsave
\psline(0,1.5)
\fill[fillstyle=solid,fillcolor=Lyellow]
\grestore}
\rput(-0.5,1){\psframebox*[framearc=.4]{$A$}}
\rput(0,0.3){\psframebox*[framearc=.4]{$a$}}

\pscustom{
\psline[showpoints=false]{<->}(-2,-4)(0,0)(2,-4)
\gsave
\psline(1,-4)
\fill[fillstyle=solid,opacity=0.2,fillcolor=Lgray]
\grestore}
\rput(0,-2.5){\psframebox*[framearc=.4]{$\pnX{A}(a)$}}

\psline[linecolor=gray,showpoints=false]{-}(-0.3,0.15)(-0.45,-0.15)(-0.15,-0.3)
\psline[linecolor=gray,showpoints=false]{-}(0.3,0.15)(0.45,-0.15)(0.15,-0.3)


\rput(-2,-1){\begin{tabular}{c} The proximal\\ normal cone \end{tabular}}
\psarcn{->}(-1,-1.6){1}{90}{0}

\end{pspicture}
\begin{pspicture}
(-3,-4)(4,1.5)

\pscustom{
\psline[showpoints=false]{-}(-3,1.5)(0,0)(3,1.5)
\gsave
\psline(0,1.5)
\fill[fillstyle=solid,fillcolor=Lyellow]
\grestore}
\rput(-0.5,1){\psframebox*[framearc=.4]{$A$}}
\rput(0,0.3){\psframebox*[framearc=.4]{$a$}}

\pscustom{
\psline[showpoints=false]{<->}(-2,-4)(0,0)(2,-4)
\gsave
\psline(1,-4)
\fill[fillstyle=solid,opacity=0.2,fillcolor=Lgray]
\grestore}

\psline[linecolor=gray,showpoints=false]{-}(-0.3,0.15)(-0.45,-0.15)(-0.15,-0.3)
\psline[linecolor=gray,showpoints=false]{-}(0.3,0.15)(0.45,-0.15)(0.15,-0.3)

\pscustom{
\psline[linecolor=red,showpoints=false]{<->}(0.5,-4)(0.02,-0.05)(2,-4)

\gsave
\psline(1,-4)
\fill[fillstyle=solid,opacity=0.2,fillcolor=lightgray,linestyle=none]
\grestore}
\rput(1.1,-3.5){\psframebox*[framearc=.4]{$\pn{A}{B}(a)$}}

\psellipse[fillstyle=solid,fillcolor=greenyellow,opacity=0.2](1.565,-2.5)(1.25,0.3)
\rput(2,-2.5){\psframebox*[framearc=.4]{$B$}}
\psline[linecolor=red,showpoints=false]{<->}(0.5,-4)(0.02,-0.05)(2,-4)


\rput(2.3,-0.5){\begin{tabular}{c} The restricted\\ proximal normal cone \end{tabular}}
\psarc{->}(1.1,-1){1}{90}{180}

\rput(2.50,-1.7){$P^{-1}_A(a)\cap B$}
\psarc{->}(1.5,-2.5){1}{90}{180}

\end{pspicture}
\end{center}



\begin{remark}
Some comments regarding Definition~\ref{d:NCone} are in order.
\begin{enumerate}
\item Clearly, the restricted proximal normal cone generalizes
the notion of the classical proximal normal cone. The name
``restricted'' stems from the fact that the pre-image $P_A^{-1}a$ is
restricted to the set $B$.
\item See \cite[Example~6.16]{Rock98} and
\cite[Subsection~2.5.2.D on page~240]{Boris1} for further
information regarding the classical proximal normal cone, including
the fact that
\begin{equation}
\label{e:120209a} u\in\pnX{A}(a) \quad\Leftrightarrow\quad a\in
A\;\text{and}\; (\exi\delta>0) (\forall x\in A)\;\;
\scal{u}{x-a}\leq\delta\|x-a\|^2.
\end{equation}
This also implies that: $\pnX{A}(a)+(A-a)^\ominus\subseteq\pnX{A}(a)$.
\item
Note that $\gr\nc{A}{B} = (A\times X)\cap \overline{\gr\pn{A}{B}}$.
Put differently, $\nc{A}{B}(a)$ is the outer (or upper Kuratowski)
limit of $\pn{A}{B}(x)$ as $x\to a$ in $A$, written
\begin{equation}
\label{e:0224b}
\nc{A}{B}(a)= \limsup_{x\to a\atop x\in A}\pn{A}{B}(x).
\end{equation}
See also \cite[Chapter~4]{Rock98}.
\item
See \cite[Definition~1.1]{Boris1} or \cite[Definition~6.3]{Rock98}
(where this is called the regular normal cone) for further
information regarding $\fnc{A}(a)$.
\item
The Mordukhovich normal cone 
is also known as
the basic or limiting normal cone. Note that $N_A=\nc{A}{X}$ and
$\gr N_A = (A\times X)\cap \overline{\gr \pn{A}{X}} = (A \times
X)\cap \overline{\gr\pnX{A}}$ and once again $N_A(a)$ is the outer
(or upper Kuratowski) limit of $\pn{A}{X}(x)$ or $\pnX{A}(x)$ as
$x\to a$ in $A$. See also \cite[page~141]{Boris1} for historical
notes.
\end{enumerate}
\end{remark}

The next result presents useful characterizations of the Mordukhovich
normal cone.

\begin{proposition}[characterizations of the Mordukhovich normal cone]
\label{p:Nequi} Let $A$ be a nonempty closed subset of $X$, let
$a\in A$, and let $u\in X$. Then the following are equivalent:
\begin{enumerate}
\item\label{p:Ne-i}
$u\in N_A(a)$.
\item\label{p:Ne-ii}
There exist sequences $(\lambda_n)_\nnn$ in $\RP$, $(b_n)_\nnn$ in
$X$, $(a_n)_\nnn$ in $A$ such that $a_n\to a$,
$\lambda_n(b_n-a_n)\to u$, and $(\forall\nnn)$ $a_n\in P_Ab_n$.
\item
\label{p:Ne-iii} There exist sequences $(\lambda_n)_\nnn$ in $\RP$,
$(x_n)_\nnn$ in $X$, $(a_n)_\nnn$ in $A$ such that $x_n\to a$,
$\lambda_n(x_n-a_n)\to u$, and $(\forall\nnn)$ $a_n\in P_Ax_n$.
(This also implies $a_n\to a$.)
\item
\label{p:Ne-iv} There exist sequences $(a_n)_\nnn$ in $A$ and
$(u_n)_\nnn$ in $X$ such that $a_n\to a$, $u_n\to u$, and
$(\forall\nnn)$ $u_n\in\fnc{A}(a_n)$.
\end{enumerate}
\end{proposition}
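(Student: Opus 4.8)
The plan is to prove the equivalences by establishing the cycle \ref{p:Ne-i}$\Rightarrow$\ref{p:Ne-iv}$\Rightarrow$\ref{p:Ne-iii}$\Rightarrow$\ref{p:Ne-ii}$\Rightarrow$\ref{p:Ne-i}, with a side observation that \ref{p:Ne-ii} and \ref{p:Ne-iii} are almost interchangeable. Throughout I will use the defining description of $N_A(a)$ from Definition~\ref{d:NCone}\ref{d:Mnc}: $u \in N_A(a)$ iff there are $a_n \to a$ in $A$ and $u_n \to u$ with $u_n \in \pnX{A}(a_n)$, together with the characterization \eqref{e:120209a} of the proximal normal cone and the elementary fact that $v \in \pnX{A}(a) = \cone(P_A^{-1}a - a)$ precisely when $v = \lambda(b - a)$ for some $\lambda \in \RP$ and some $b$ with $a \in P_A b$.

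First, \ref{p:Ne-i}$\Rightarrow$\ref{p:Ne-iii}: given $a_n \to a$ and $u_n \to u$ with $u_n \in \pnX{A}(a_n)$, write each $u_n = \lambda_n(x_n - a_n)$ with $\lambda_n \geq 0$, $a_n \in P_A x_n$; but one must be careful that $x_n \to a$, not just $a_n \to a$. This is handled by the standard trick of rescaling: if $\|x_n - a_n\| > 1$ one replaces $x_n$ by $a_n + (x_n - a_n)/\|x_n-a_n\|$ and $\lambda_n$ by $\lambda_n\|x_n-a_n\|$, which preserves $a_n \in P_A x_n$ (projections are unchanged along the segment) and $\lambda_n(x_n-a_n)$, while forcing $\|x_n - a_n\| \leq 1$; combined with $a_n \to a$ this gives $x_n \to a$. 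The parenthetical claim in \ref{p:Ne-iii} that $x_n \to a$ forces $a_n \to a$ is immediate since $\|a_n - x_n\| = d_A(x_n) \leq d_A(a) + \|x_n - a\| = \|x_n - a\| \to 0$. The implication \ref{p:Ne-iii}$\Rightarrow$\ref{p:Ne-ii} is then purely a relabeling ($b_n := x_n$), and \ref{p:Ne-ii}$\Rightarrow$\ref{p:Ne-i} is again relabeling together with the observation that $\lambda_n(b_n - a_n) \in \pnX{A}(a_n)$ by definition of the proximal normal cone, so the sequences $(a_n)$, $(\lambda_n(b_n-a_n))$ witness $u \in N_A(a)$.

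The substantive content is the equivalence with \ref{p:Ne-iv}, i.e.\ that the limiting proximal normal cone coincides with the limiting Fréchet normal cone. The direction \ref{p:Ne-i}$\Rightarrow$\ref{p:Ne-iv} uses the easy inclusion $\pnX{A}(a) \subseteq \fnc{A}(a)$: if $u \in \pnX{A}(a)$ then by \eqref{e:120209a} there is $\delta > 0$ with $\scal{u}{x-a} \leq \delta\|x-a\|^2$ for all $x \in A$, and for any $\ve > 0$ the choice of radius $\ve/\delta$ in the definition of $\fnc{A}(a)$ works; hence the approximating proximal normals $u_n \in \pnX{A}(a_n)$ are automatically Fréchet normals $u_n \in \fnc{A}(a_n)$, and the same sequences witness \ref{p:Ne-iv}. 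The reverse direction \ref{p:Ne-iv}$\Rightarrow$\ref{p:Ne-i} is the main obstacle: one has $a_n \to a$, $u_n \to u$, $u_n \in \fnc{A}(a_n)$, and must produce a (possibly different) sequence of \emph{proximal} normals converging to $u$ at points converging to $a$. The standard route is to show that each Fréchet normal can be approximated arbitrarily well, in a neighborhood, by proximal normals at nearby points: fix $n$ and, for the point $a_n$ and vector $u_n$, use a diagonal/quadratic-penalty argument — minimize $x \mapsto \|x - (a_n + t u_n)\|^2$ over $A$ for small $t > 0$; a minimizer $a_n'$ lies in $P_A(a_n + t u_n)$, so $(a_n + t u_n) - a_n' \in \pnX{A}(a_n')$, and the Fréchet condition at $a_n$ forces $a_n' \to a_n$ and the direction of this proximal normal to approach $u_n$ as $t \downarrow 0$. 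Running this for each $n$ with $t = t_n \downarrow 0$ chosen small enough, and diagonalizing, yields points $a_n' \to a$ and proximal normals $u_n' \in \pnX{A}(a_n')$ with $u_n' \to u$, which is exactly \ref{p:Ne-i}. The delicate point to get right is the quantitative estimate showing $\|a_n' - a_n\| = o(t_n)$ and that the normalized proximal normal $(a_n + t_nu_n - a_n')/t_n$ stays close to $u_n$; this is where the $\ve$–$\delta$ content of the Fréchet normal cone is really used, and it is essentially the finite-dimensional case of the well-known density of proximal normals in Fréchet normals (cf.\ the references \cite{Rock98}, \cite{Boris1} cited in the paper).
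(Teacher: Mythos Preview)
Your overall strategy is sound and the substantive part --- the penalty/diagonal argument for \ref{p:Ne-iv}$\Rightarrow$\ref{p:Ne-i} --- is correct and nicely sketched: from $\|a_n'-(a_n+tu_n)\|^2\le t^2\|u_n\|^2$ one gets $\|a_n'-a_n\|^2\le 2t\scal{u_n}{a_n'-a_n}$, and the Fr\'echet inequality then yields $\|a_n'-a_n\|\le 2\ve t$ for all small $t$, hence $\|a_n'-a_n\|/t\to 0$ and $(a_n+t_nu_n-a_n')/t_n\to u_n$. The paper, by contrast, does not prove this step at all: it simply observes that \ref{p:Ne-iv} is Mordukhovich's definition of $N_A(a)$ and invokes \cite[Theorem~1.6]{Boris1} for the equivalence \ref{p:Ne-iii}$\Leftrightarrow$\ref{p:Ne-iv}. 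Your argument is therefore more self-contained, at the price of reproving a standard density result.

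There is, however, a genuine slip in your \ref{p:Ne-i}$\Rightarrow$\ref{p:Ne-iii} rescaling. Capping $\|x_n-a_n\|$ at $1$ and using $a_n\to a$ gives only that $(x_n)$ is bounded, not that $x_n\to a$; your sentence ``combined with $a_n\to a$ this gives $x_n\to a$'' is false as written. The fix is exactly what the paper does in its proof of \ref{p:Ne-ii}$\Rightarrow$\ref{p:Ne-iii}: rescale more aggressively, replacing $x_n$ by a point on the segment $[a_n,x_n]$ at distance $\le 1/n$ (say) from $a_n$, so that $\|x_n-a_n\|\to 0$; the fact that $a_n\in P_A[a_n,x_n]$ (see \cite[Example~6.16]{Rock98}) guarantees the projection relation survives. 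With that correction your argument goes through. Also note a minor mismatch between your announced cycle \ref{p:Ne-i}$\Rightarrow$\ref{p:Ne-iv}$\Rightarrow$\ref{p:Ne-iii}$\Rightarrow$\ref{p:Ne-ii}$\Rightarrow$\ref{p:Ne-i} and what you actually prove (you establish \ref{p:Ne-iv}$\Rightarrow$\ref{p:Ne-i} rather than \ref{p:Ne-iv}$\Rightarrow$\ref{p:Ne-iii}); the equivalences still follow, but the write-up should be aligned.
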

\begin{proof}
``\ref{p:Ne-i}$\Leftrightarrow$\ref{p:Ne-ii}'': Clear from
Definition~\ref{d:NCone}\ref{d:Mnc}.

``\ref{p:Ne-iii}$\Leftrightarrow$\ref{p:Ne-iv}'': Noting that the
definition of $N_A(a)$ in \cite{Boris1} is the one given in
\ref{p:Ne-iv}, we see that this equivalence follows from
\cite[Theorem~1.6]{Boris1}.

``\ref{p:Ne-ii}$\Rightarrow$\ref{p:Ne-iii}'': Let
$(\lambda_n)_\nnn$, $(a_n)_\nnn$, and $(b_n)_\nnn$ be as in
\ref{p:Ne-ii}. For every $\nnn$, since $a_n\in P_Ab_n$,
\cite[Example~6.16]{Rock98} implies that $a_n\in P_A[a_n,b_n]$. Now
let $(\ve_n)_\nnn$ be a sequence in $\zeroun$ such that $\ve_n
a_n\to 0$ and $\ve_n b_n\to 0$. Set
\begin{equation}
(\forall\nnn)\quad x_n=(1-\ve_n)a_n+\ve_nb_n=a_n+\ve_n(b_n-a_n) \in
[a_n,b_n].
\end{equation}
Then $x_n\to a$ and $(\forall\nnn)$ $a_n\in P_Ax_n$. Furthermore,
$(\lambda_n/\ve_n)_\nnn$ lies in $\RP$ and
\begin{equation}
(\lambda_n/\ve_n)(x_n-a_n) = \lambda_n(b_n-a_n)\to u.
\end{equation}

``\ref{p:Ne-iii}$\Rightarrow$\ref{p:Ne-ii}'': Let
$(\lambda_n)_\nnn$, $(x_n)_\nnn$, and $(a_n)_\nnn$ be as in
\ref{p:Ne-iii}. Since $x_n\to a$ and $a\in A$, we deduce that $0\leq
\|x_n-a_n\|=d_A(x_n)\leq \|x_n-a\|\to 0$. Hence $x_n-a_n\to 0$ which
implies that $a_n-a=a_n-x_n+x_n-a\to 0+0=0$. Therefore,
\ref{p:Ne-ii} holds with $(b_n)_\nnn=(x_n)_\nnn$.
\end{proof}

Here are some basic properties of the restricted normal cone and its
relation to various classical cones.

\begin{lemma}[basic inclusions among the normal cones]
\label{l:NsubsetN}
Let $A$ and $B$ be nonempty subsets of $X$, and let $a\in A$.
Then the following hold:
\begin{enumerate}
\item
\label{l:NsubsetNi} $\cnc{A}(a)\subseteq\pnX{A}(a)$.
\item
\label{l:NsubsetNi+}
$\pn{A}{B}(a) =
\cone((B-a)\cap(P_A^{-1}a-a))\subseteq
(\cone(B-a))\cap\pnX{A}(a)$.
\item
\label{l:NsubsetNii} $\pn{A}{B}(a)\subseteq\pn{A}{X}(a)=\pnX{A}(a)$
and $\nc{A}{B}(a)\subseteq N_A(a)$.
\item
\label{l:NsubsetNiv} $\pn{A}{B}(a)\subseteq\nc{A}{B}(a)$.
\item
\label{l:NsubsetNiii} If $A$ is closed, then $\pnX{A}(a) \subseteq
\fnc{A}(a)$.
\item
\label{l:NsubsetNv} If $A$ is closed, then $\fnc{A}(a)\subseteq
N_A(a)$.
\item
\label{l:NsubsetNvi} If $A$ is closed and convex, then
$\pn{A}{X}(a)=\pnX{A}(a)=\fnc{A}(a)=\cnc{A}(a)=N_A(a)$.
\item
\label{l:NsubsetNri} If $a\in\reli(A)$, then
$\pn{A}{\aff(A)}(a)=\nc{A}{\aff(A)}(a)=\{0\}$.
\item
\label{l:NsubsetNvii} $(\aff(A)-a)^\bot\subseteq (A-a)^\ominus$.
\item
\label{l:NsubsetNviii} $(A-a)^\ominus\cap \cone(B-a) \subseteq
\pn{A}{B}(a)\subseteq \cone(B-a)$.
\end{enumerate}
\end{lemma}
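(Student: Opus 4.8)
The statement to prove is Lemma~\ref{l:NsubsetN}, a list of ten basic inclusions among the various normal cones. I'll sketch how to prove each part.

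\textbf{Plan.} Most of these are immediate from the definitions once one unwinds them; the plan is to treat them in an order that lets later parts reuse earlier ones, and I expect the only genuinely substantive items to be \ref{l:NsubsetNiii}, \ref{l:NsubsetNv}, and \ref{l:NsubsetNvi}.

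For \ref{l:NsubsetNi}: if $u\in\cnc{A}(a)$ then $\sup\scal{u}{A-a}\le 0$, so in particular $\scal{u}{x-a}\le 0\le\delta\|x-a\|^2$ for every $x\in A$ and any $\delta>0$; by the characterization \eqref{e:120209a} this gives $u\in\pnX{A}(a)$. For \ref{l:NsubsetNi+}: the displayed equality is just Definition~\ref{d:NCone}\ref{d:pnB}; for the inclusion, use Lemma~\ref{l:cone01}\ref{l:cone01i} together with the fact that $\pnX{A}(a)=\cone(P_A^{-1}a-a)$ is already a cone, so $\cone((B-a)\cap(P_A^{-1}a-a))\subseteq\cone(B-a)\cap\cone(P_A^{-1}a-a)=\cone(B-a)\cap\pnX{A}(a)$. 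Part \ref{l:NsubsetNii} follows: the first inclusion is \ref{l:NsubsetNi+} (drop the $\cone(B-a)$ factor), the equality $\pn{A}{X}(a)=\pnX{A}(a)$ is Definition~\ref{d:NCone}\ref{d:pnX}, and the normal-cone inclusion then follows by passing to outer limits in \eqref{e:0224b} and the analogous formula for $N_A$. Part \ref{l:NsubsetNiv} is immediate from \eqref{e:0224b} by taking the constant sequences $a_n\equiv a$, $u_n\equiv u$.

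For \ref{l:NsubsetNiii}: let $u\in\pnX{A}(a)$, so by \eqref{e:120209a} there is $\delta>0$ with $\scal{u}{x-a}\le\delta\|x-a\|^2$ for all $x\in A$; given $\ve>0$, choose $\delta'>0$ with $\delta\delta'\le\ve$, so that for $x\in A\cap\ball{a}{\delta'}$ we get $\scal{u}{x-a}\le\delta\|x-a\|^2\le\delta\delta'\|x-a\|\le\ve\|x-a\|$, i.e.\ $u\in\fnc{A}(a)$. Part \ref{l:NsubsetNv} is exactly the equivalence \ref{p:Ne-i}$\Leftrightarrow$\ref{p:Ne-iv} of Proposition~\ref{p:Nequi} applied with the constant sequences. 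Part \ref{l:NsubsetNvi}: by Fact~\ref{f:convproj}\ref{f:convproj2}, for convex closed $A$ one has $P_A^{-1}a-a=\cnc{A}(a)$, which is already a convex cone, so $\pnX{A}(a)=\cone(P_A^{-1}a-a)=\cnc{A}(a)$; combined with \ref{l:NsubsetNi}, \ref{l:NsubsetNiii}, \ref{l:NsubsetNv} this sandwiches all four cones between $\cnc{A}(a)$ and $N_A(a)$, and since $\cnc{A}(a)$ is closed the outer-limit definition of $N_A(a)$ also collapses to it (any convergent sequence $u_n\in\pnX{A}(a_n)=\cnc{A}(a_n)$ has limit satisfying $\scal{u}{x-a}\le 0$ for all $x\in A$, using $a_n\to a$).

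For \ref{l:NsubsetNri}: when $a\in\reli(A)$, applying Proposition~\ref{p:onecone} in $\aff(A)$ with the (relatively open at $a$) set $A$ shows that $\aff(A)$ is, near $a$, on ``both sides'' of $a$; concretely, since $P_A^{-1}a-a\subseteq\cnc{A}(a)$ and $\cnc{A}(a)\cap(-\cnc{A}(a))=(\aff(A)-a)^\perp$ while $(B-a)\cap(P_A^{-1}a-a)$ with $B=\aff(A)$ is symmetric about $0$, the cone it generates must be $\{0\}$; the same persists under the outer limit because $\reli(A)$ is relatively open, so $\nc{A}{\aff(A)}(a)=\{0\}$ too. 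Part \ref{l:NsubsetNvii} is elementary: if $u\perp\aff(A)-a$ then $\scal{u}{x-a}=0\le 0$ for all $x\in A\subseteq\aff(A)$, so $u\in(A-a)^\ominus$. Part \ref{l:NsubsetNviii}: the right inclusion is the already-noted $\pn{A}{B}(a)\subseteq\cone(B-a)$ from \ref{l:NsubsetNi+}; for the left, if $u\in(A-a)^\ominus\cap\cone(B-a)$, write $u=\rho(b-a)$ with $\rho\ge 0$, $b\in B$; since $\scal{u}{x-a}\le 0\le\delta\|x-a\|^2$ for all $x\in A$, \eqref{e:120209a} gives $b-a\in\pnX{A}(a)=\cone(P_A^{-1}a-a)$, and because $P_A^{-1}a-a$ is invariant under $(A-a)^\ominus$-shifts (second remark after Definition~\ref{d:NCone}: $\pnX{A}(a)+(A-a)^\ominus\subseteq\pnX{A}(a)$) one can arrange $b-a\in P_A^{-1}a-a$ after rescaling, hence $u\in\cone((B-a)\cap(P_A^{-1}a-a))=\pn{A}{B}(a)$.

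\textbf{Main obstacle.} The delicate point is \ref{l:NsubsetNviii} (and the related careful bookkeeping in \ref{l:NsubsetNri}): one must show that membership in $(A-a)^\ominus$ upgrades a vector pointing ``into $B$'' to an actual restricted proximal normal vector, which requires genuinely using the translation-invariance property $\pnX{A}(a)+(A-a)^\ominus\subseteq\pnX{A}(a)$ rather than just set inclusions, and then chasing through the $\cone$ of an intersection. Everything else is a direct unwinding of the definitions and \eqref{e:120209a}.
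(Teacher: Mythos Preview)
Your arguments for \ref{l:NsubsetNi}--\ref{l:NsubsetNvi} and \ref{l:NsubsetNvii} are correct and essentially match the paper's (for \ref{l:NsubsetNvi} you argue directly for $N_A(a)\subseteq\cnc{A}(a)$ where the paper simply cites \cite[Proposition~1.5]{Boris1}). Your treatment of \ref{l:NsubsetNri} is muddled---Proposition~\ref{p:onecone} requires convexity, which is not assumed, and the symmetry claim is unclear---but the paper's argument is a one-liner you could substitute: since $a\in\reli(A)$, a small $\aff(A)$-ball about $a$ lies in $A$, so $\aff(A)\cap P_A^{-1}a=\{a\}$, and the same persists for nearby $x\in A$, giving both the proximal and the limiting statement.

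The real gap is in \ref{l:NsubsetNviii}. You write $u=\rho(b-a)$ with $\rho>0$ and $b\in B$, note that $b-a\in\pnX{A}(a)=\cone(P_A^{-1}a-a)$, and then claim one can ``arrange $b-a\in P_A^{-1}a-a$ after rescaling''. This fails: rescaling $b-a$ replaces $b$ by $a+t(b-a)$, which need not lie in $B$, so you lose precisely the membership $b\in B$ you need for the intersection $(B-a)\cap(P_A^{-1}a-a)$. The remark you cite, $\pnX{A}(a)+(A-a)^\ominus\subseteq\pnX{A}(a)$, is about the cone $\pnX{A}(a)$ and does not by itself upgrade cone membership to membership in $P_A^{-1}a-a$. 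What is actually needed is the direct implication $b-a\in(A-a)^\ominus\Rightarrow a\in P_Ab$ (note $(A-a)^\ominus$ is a cone and $\rho>0$, so $b-a\in(A-a)^\ominus$). The paper obtains this cleanly: from $\sup\scal{b-a}{A-a}\le 0$ one has $\sup\scal{b-a}{\cconv A-a}\le 0$, so $a=P_{\cconv A}b$ by Fact~\ref{f:convproj}\ref{f:convproj2}, hence $a\in P_Ab$; then $b\in B\cap P_A^{-1}a$ and $u\in\pn{A}{B}(a)$.
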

\begin{proof}
\ref{l:NsubsetNi}: Take $u\in\cnc{A}(a)$ and fix an arbitrary
$\delta>0$. Then $(\forall x\in A)$ $\scal{u}{x-a}\leq 0 \leq
\delta\|x-a\|^2$. In view of \eqref{e:120209a}, $u\in\pnX{A}(a)$.

\ref{l:NsubsetNi+}:
In view of Lemma~\ref{l:cone01}, the definitions yield
\begin{subequations}
\begin{align}
\pn{A}{B}(a) &= \cone\big((B\cap P_A^{-1}a)-a\big)
= \cone\big((B-a)\cap (P_A^{-1}a-a)\big)\\
&\subseteq \cone\big((B-a)\cap \cone(P_A^{-1}a-a)\big)
=\cone\big((B-a)\cap\pnX{A}(a)\big)\\
&=\cone(B-a)\cap \pnX{A}(a).
\end{align}
\end{subequations}

\ref{l:NsubsetNii}, \ref{l:NsubsetNiv} and \ref{l:NsubsetNvii}: This
is obvious.

\ref{l:NsubsetNiii}: Assume that $A$ is closed and take
$u\in\pnX{A}(a)$. By \eqref{e:120209a}, there exists $\rho>0$ such
that $(\forall x\in A)$ $\scal{u}{x-a}\leq\rho\|x-a\|^2$. Now let
$\ve>0$ and set $\delta = \ve/\rho$.
If $x\in A\cap \ball{a}{\delta}$,
then $\scal{u}{x-a}\leq\rho\|x-a\|^2 \leq \rho\delta
\|x-a\|=\ve\|x-a\|$. Thus, $u\in\fnc{A}(a)$.

\ref{l:NsubsetNv}: This follows from Proposition~\ref{p:Nequi}.

\ref{l:NsubsetNvi}: Since $A$ is closed, it follows from
\ref{l:NsubsetNi}, \ref{l:NsubsetNiii}, and \ref{l:NsubsetNv} that
\begin{equation}
\cnc{A}(a) \subseteq \pnX{A}(a) \subseteq \fnc{A}(a) \subseteq
N_A(a).
\end{equation}
On the other hand, by \cite[Proposition~1.5]{Boris1},
$N_A(a)\subseteq \cnc{A}(a)$ because $A$ is convex.

\ref{l:NsubsetNri}: By assumption, $(\exi\dd>0)$
$\ball{a}{\dd}\cap \aff(A)\subseteq A$.
Hence $\aff(A)\cap P_A^{-1}a = \{a\}$ and thus
$\pn{A}{\aff(A)}(a)=\{0\}$. Since $a\in\reli(A)$, it follows that
$(\forall x\in \ball{a}{\dd/2}\cap\aff(A))$ $ \pn{A}{\aff(A)}(x)=\{0\}$.
Therefore, $\nc{A}{\aff(A)}(a)=\{0\}$.

\ref{l:NsubsetNviii}: Take $u\in
((A-a)^\ominus\cap\cone(B-a))\smallsetminus\{0\}$, say
$u=\lambda(b-a)$, where $b\in B$ and $\lambda>0$. Then $0\geq
\sup\scal{A-a}{u}=\lambda\sup\scal{A-a}{b-a}=
\sup\lambda\scal{\cconv A-a}{b-a}$.
By Fact~\ref{f:convproj}\ref{f:convproj2},
$a=P_{\cconv A}b$ and hence $a=P_Ab$. It follows that
$u\in\cone((B\cap P_A^{-1}a)-a)$. The left inclusion thus holds. The
right inclusion is clear.
\end{proof}



\begin{remark}[on closedness of normal cones]
Let $A$ be a nonempty subset of $X$, let $a\in A$, and let $B$ be a
subset of $X$. Then $\nc{A}{B}(a)$, $N_A(a)$, and $\cnc{A}(a)$ are
obviously closed---this is also true for  $\fnc{A}(a)$ but requires
some work (see \cite[Proposition~6.5]{Rock98}). On the other hand,
the classical proximal normal cone $\pnX{A}(a) = \pn{A}{X}(a)$ is
not necessarily closed (see, e.g., \cite[page~213]{Rock98}), and
hence neither is $\pn{A}{B}(a)$. For a concrete example, suppose
that $X=\RR^2$, that $A=\{(0,0)\}$, that $B=\RR\times\{1\}$ and that
$a=(0,0)$. Then $\pn{A}{B}(a)=\big(\RR\times\RPP\big)\cup\{(0,0)\}$, which is
not closed; however, the classical proximal normal cone
$\pnX{A}(a)=\RR^2$ is closed.
\end{remark}


The sphere is a nonconvex set for which all classical normal cones
coincide:

\begin{example}[classical normal cones of the sphere]
\label{ex:ncS}
Let $z\in X$ and $\rho\in\RPP$.
Set $S := \sphere{z}{\rho}$
and let $s\in S$.
Then
$\pnX{S}(s) = \pn{S}{X}(s)=\fnc{S}(s)=N_S(s)=\RR(s-z)$.
\end{example}
\begin{proof}
By Example~\ref{ex:projS}, we have
$P_S^{-1}(s) = z + \RP(s-z)$ and
so $P_S^{-1}(s)-s = \left[-1,\pinf\right[\cdot(s-z)$.
Hence, using Lemma~\ref{l:NsubsetN}\ref{l:NsubsetNiii}\&\ref{l:NsubsetNv},
we have
\begin{subequations}
\begin{align}
\pnX{S}(s) &= \pn{S}{X}(s)= \RR(s-z) \subseteq \fnc{S}(s) \subseteq
N_S(s)\\
&=\varlimsup_{s'\to S\atop s'\in S}\pnX{S}(s')
=\varlimsup_{s'\to S\atop s'\in S}\RR(s'-z) = \RR(s-z)\\
&=\pnX{S}(s),
\end{align}
\end{subequations}
as announced.
\end{proof}


Here are some elementary yet useful calculus rules.

\begin{proposition}
\label{p:elementary} Let $A$, $A_1$, $A_2$, $B$, $B_1$, and $B_2$ be
nonempty subsets of $X$, let $c\in X$,
and suppose that $a\in A\cap A_1\cap A_2$.
Then the following hold:
\begin{enumerate}
\item\label{p:ele-i}
If $A$ and $B$ are convex, then $\pn{A}{B}(a)$ is convex.
\item\label{p:ele-ii}
$\pn{A}{B_1\cup B_2}(a)=\pn{A}{B_1}(a)\cup\pn{A}{B_2}(a)$ and
$\nc{A}{B_1\cup B_2}(a)=\nc{A}{B_1}(a)\cup\nc{A}{B_2}(a)$.
\item\label{p:ele-iii}
If $B\subseteq A$, then $\pn{A}{B}(a)=\nc{A}{B}(a)=\{0\}$.
\item\label{p:ele-iv}
If $A_1\subseteq A_2$, then $\pn{A_2}{B}(a)\subseteq
\pn{A_1}{B}(a)$.
\item\label{p:ele-v}
$-\pn{A}{B}(a)=\pn{-A}{-B}(-a)$, $-\nc{A}{B}(a)=\nc{-A}{-B}(-a)$,
and $-N_{A}(a)=N_{-A}(-a)$.
\item\label{p:ele-vi}
$\pn{A}{B}(a)=\pn{A-c}{B-c}(a-c)$ and $\nc{A}{B}(a)=\nc{A-c}{B-c}(a-c)$.
\end{enumerate}
\end{proposition}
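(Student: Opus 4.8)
The plan is to obtain all six items from two ingredients: the way the inverse projection $P_A^{-1}(a)$ transforms under the relevant set operation, and routine algebraic properties of the cone hull $\cone(\cdot)=\RP\cdot(\cdot)$, namely $\cone(C_1\cup C_2)=\cone C_1\cup\cone C_2$, $\cone(-C)=-\cone C$, $\cone$ of a (possibly empty) convex set is convex, and $\cone\{0\}=\cone(\emptyset)=\{0\}$. For the assertions about $\nc{A}{B}$ and $N_A$ one additionally uses that the outer (upper Kuratowski) limit in \eqref{e:0224b}, equivalently the sequential limit in Definition~\ref{d:NCone}\ref{d:nc}, commutes with finite unions and with homeomorphisms of $X$; these are standard facts, checked directly from the sequential description.

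For \ref{p:ele-iii}: if $B\subseteq A$ and $x\in A$, then any $b\in B\cap P_A^{-1}(x)$ satisfies $d(x,b)=d_A(x)=0$, so $b=x$; hence $(B\cap P_A^{-1}(x))-x\subseteq\{0\}$ and $\pn{A}{B}(x)=\{0\}$ for \emph{every} $x\in A$, and then \eqref{e:0224b} gives $\nc{A}{B}(a)=\{0\}$. For \ref{p:ele-iv}: if $A_1\subseteq A_2$ and $a\in A_1$, then $d_{A_2}\le d_{A_1}$, so $a\in P_{A_2}^{-1}(x)$ forces $d(x,a)=d_{A_2}(x)\le d_{A_1}(x)\le d(x,a)$, i.e.\ $a\in P_{A_1}^{-1}(x)$; thus $B\cap P_{A_2}^{-1}(a)\subseteq B\cap P_{A_1}^{-1}(a)$, and applying $\cone$ to the $(-a)$-translates yields the inclusion. (Both arguments are variants of Lemma~\ref{l:unionproj}.)

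For \ref{p:ele-i}: when $A$ is convex, $P_A^{-1}(a)$ is convex, since the characterization $x\in P_A^{-1}(a)\Leftrightarrow \scal{x-a}{a'-a}\le 0$ for all $a'\in A$ (valid for convex $A$; cf.\ Fact~\ref{f:convproj}\ref{f:convproj2}) passes to convex combinations of two such $x$'s; moreover $P_A^{-1}(a)-a=(A-a)^\ominus=\cnc{A}(a)$. Since $B-a$ is convex as well, $(B-a)\cap(P_A^{-1}(a)-a)$ is convex, and its cone hull is therefore convex, which is \ref{p:ele-i}. For \ref{p:ele-ii}: by \eqref{e:pnB} and distributivity of $\cap$ and of translation over $\cup$, $\pn{A}{B_1\cup B_2}(a)=\cone(C_1\cup C_2)=\cone C_1\cup\cone C_2$ with $C_j:=(B_j\cap P_A^{-1}a)-a$; the $\nc{}{}$-version follows since a sequence $(u_n)$ with $u_n\in\pn{A}{B_1}(x_n)\cup\pn{A}{B_2}(x_n)$, $x_n\to a$ in $A$, $u_n\to u$, has a subsequence lying entirely in $\pn{A}{B_1}(x_n)$ or entirely in $\pn{A}{B_2}(x_n)$, exhibiting $u\in\nc{A}{B_1}(a)$ or $u\in\nc{A}{B_2}(a)$.

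For \ref{p:ele-v} and \ref{p:ele-vi}: the crux is the identities $P_{-A}^{-1}(-a)=-P_A^{-1}(a)$ and $P_{A-c}^{-1}(a-c)=P_A^{-1}(a)-c$, both immediate from $P_{-A}(y)=-P_A(-y)$ and $P_{A-c}(y)=P_A(y+c)-c$; substituting into \eqref{e:pnB} and using $\cone(-C)=-\cone C$ (respectively translation-invariance of the construction) gives the $\pn{}{}$-identities, and then applying the homeomorphism $\bx\mapsto-\bx$ (respectively $\bx\mapsto\bx-c$), which carries $A$ onto $-A$ (respectively $A-c$), and passing to the limit in Definition~\ref{d:NCone}\ref{d:nc} gives the $\nc{}{}$-identities; the statement on $N_A$ is the instance $B=X$ together with $-X=X$. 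I do not expect a genuine obstacle here: everything is bookkeeping once the behaviour of $P_A^{-1}(a)$ is recorded. The only steps requiring mild care are the limit manipulations for $\nc{A}{B}$ in \ref{p:ele-ii}, \ref{p:ele-v}, and \ref{p:ele-vi} — that the upper Kuratowski limit defining $\nc{A}{B}$ passes through finite unions and through the maps $\bx\mapsto-\bx$ and $\bx\mapsto\bx-c$ — which are handled directly from Definition~\ref{d:NCone}\ref{d:nc}.
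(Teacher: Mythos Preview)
Your proposal is correct and follows essentially the same route as the paper: convexity of $P_A^{-1}(a)$ for convex $A$, distributivity of $\cap$ and $\cone$ over $\cup$, the behaviour of $P_A^{-1}$ under $-\Id$ and translations, and passage to outer limits for the $\nc{}{}$ statements. There is a harmless notational slip in \ref{p:ele-iv}: where you write ``$a\in P_{A_2}^{-1}(x)$'' and ``$a\in P_{A_1}^{-1}(x)$'' you mean $a\in P_{A_2}(x)$ and $a\in P_{A_1}(x)$ (equivalently $x\in P_{A_2}^{-1}(a)$, $x\in P_{A_1}^{-1}(a)$), which is exactly what your computation $d(x,a)=d_{A_2}(x)\le d_{A_1}(x)\le d(x,a)$ establishes.
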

\begin{proof}
It suffices to establish the conclusions for the restricted proximal
normal cones since the restricted normal cone results follows by
taking closures (or outer limits). \ref{p:ele-i}: We assume that
$B\cap P_A^{-1}a\neq\varnothing$, for otherwise the conclusion is
clear. Then $P_A^{-1}(a) = P_{\overline{A}}^{-1}a =
(\Id+N_{\overline{A}})a$ is convex (as the image of the maximally
monotone operator $\Id+N_{\overline{A}}$ at $a$). Hence $(B\cap
P_A^{-1}a)-a$ is convex as well, and so is its conical hull, which
is $\pn{A}{B}(a)$. \ref{p:ele-ii}: Since $((B_1\cup
B_2)\cap P_A^{-1}a)-a = ((B_1\cap P_A^{-1}a)-a)\cup((B_2\cap P_A^{-1}a)-a)$, the
result follows by taking the conical hull. \ref{p:ele-iii}: Clear,
because $(B\cap P_A^{-1}a)- a$ is either empty or equal to $\{0\}$.
\ref{p:ele-iv}: Suppose $\lambda(b-a)\in \pn{A_2}{B}(a)$, where
$\lambda\geq 0$, $b\in B$, and $a\in P_{A_2}b$. Since $a\in
A_1\subseteq A_2$, we have $a\in P_{A_1}b$. Hence $\lambda(b-a)\in
\pn{A_1}{B}(a)$. \ref{p:ele-v}: This follows by using elementary
manipulations and the fact that $P_{-A} = (-\Id)\circ P_A\circ
(-\Id)$. \ref{p:ele-vi}: This follows readily from the fact that
$P^{-1}_{A-c}(a-c) = P_A^{-1}(a)-c$.
\end{proof}

\begin{remark}
\label{r:120212a} The restricted normal cone counterparts of items
\ref{p:ele-i} and \ref{p:ele-iv} are false in general; see
Example~\ref{ex:120212a} (and also
Example~\ref{ex:ice}\ref{ex:iceiii}) below.
\end{remark}


The Mordukhovich normal cone (and hence also the Clarke normal cone
which contains the Mordukhovich normal cone) strictly contains
$\{0\}$ at boundary points (see \cite[Corollary~2.24]{Boris1} or
\cite[Exercise~6.19]{Rock98});
however, the restricted normal cone can be $\{0\}$ at boundary
points as we illustrate next.

\begin{example}[restricted normal cone at boundary points]
\label{ex:cf} Suppose that $X=\RR^2$, set
$A:=\ball{0}{1}=\menge{x\in\RR^2}{\|x\|\leq1}$ and
$B:=\RR\times\{2\}$, and let $a=(a_1,a_2)\in A$.
Then
\begin{equation}
\pn{A}{B}(a)=
\begin{cases}
\RP a,&\text{if $\|a\|=1$ and $a_2>0$;}\\
\{(0,0)\},&\text{otherwise.}
\end{cases}
\end{equation}
Consequently,
\begin{equation}
\nc{A}{B}(a)=
\begin{cases}
\RP a,&\text {if $\|a\|=1$ and $a_2\geq 0$;}\\
\{(0,0)\},&\text{otherwise.}
\end{cases}
\end{equation}
Thus the restricted normal cone is $\{(0,0)\}$ for all boundary
points in the lower half disk that do not ``face'' the set $B$.
\end{example}

\begin{remark}
In contrast to Example~\ref{ex:cf}, we shall see in
Corollary~\ref{c:N=0}\ref{c:N=0ii} below that if $A$ is closed, $B$
is the affine hull of $A$, and $a$ belongs to the relative boundary
of $A$, then the restricted normal cone $\nc{A}{B}(a)$ strictly
contains $\{0\}$.
\end{remark}


\section{Restricted normal cones and affine subspaces}

\label{s:normalandaffine}

In this section, we consider the case when
the restricting set is a suitable affine subspace.
This results in further calculus rules and a characterization
of interiority notions.

The following four lemmas are useful in the derivation of the main
results in this section.

\begin{lemma}\label{l:aff-lspan}
Let $A$ and $B$ be nonempty subsets of $X$,
and suppose that $c\in A\cap B$.
Then
\begin{equation}
\aff(A\cup B)-c=\lspan(B-A).
\end{equation}
\end{lemma}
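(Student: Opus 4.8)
The plan is to prove the set equality $\aff(A\cup B)-c=\lspan(B-A)$ by establishing both inclusions, using the hypothesis $c\in A\cap B$ to translate everything so that $0$ lies in both $A$ and $B$. First I would recall that since $c\in A\cup B$, the affine hull $\aff(A\cup B)$ contains $c$, so $\aff(A\cup B)-c$ is a \emph{linear} subspace, namely $\pa(A\cup B)$; similarly $\lspan(B-A)$ is a linear subspace, and since $c\in A$ and $c\in B$ we have $0=c-c\in B-A$, which will be used freely. The natural strategy is: (1) show each of $A-c$ and $B-c$ is contained in both sides (easy for the left side, and for the right side because $B-A\supseteq B-c$ and $B-A\supseteq c-A=-(A-c)$, hence $\lspan(B-A)\supseteq\lspan(A-c)\cup\lspan(B-c)$); (2) conclude $\aff(A\cup B)-c\subseteq\lspan(B-A)$ since the left-hand side is the linear span of $(A-c)\cup(B-c)$; (3) for the reverse inclusion, take a typical element $b-a$ with $b\in B$, $a\in A$, write $b-a=(b-c)-(a-c)$ with $b-c\in\aff(A\cup B)-c$ and $a-c\in\aff(A\cup B)-c$, and use that the latter is a linear subspace to conclude $b-a\in\aff(A\cup B)-c$, hence $\lspan(B-A)\subseteq\aff(A\cup B)-c$.

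More concretely, for the forward inclusion I would argue that $\aff(A\cup B)$ is the set of affine combinations of points of $A\cup B$; subtracting $c$ and using that affine combinations of $\{x_i\}$ minus $c$ equal linear combinations of $\{x_i-c\}$ whose coefficients need not sum to anything in particular once we have the single translation (this is the standard fact $\aff(S)-s_0=\lspan(S-s_0)$ for $s_0\in S$). Thus $\aff(A\cup B)-c=\lspan\big((A\cup B)-c\big)=\lspan\big((A-c)\cup(B-c)\big)$. Since $A-c=c-c-(c-A)\subseteq -(B-A)$... more directly: $A-c\subseteq A-B\subseteq\lspan(B-A)$ because $\lspan(B-A)=\lspan(A-B)$, and $B-c\subseteq B-A\subseteq\lspan(B-A)$ (here using $c\in A$ for the first and $c\in A$ again, or rather $c\in A$ gives $B-c\subseteq B-A$, and $c\in B$ gives $A-c\subseteq A-B$). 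Taking spans gives $\lspan\big((A-c)\cup(B-c)\big)\subseteq\lspan(B-A)$, which is the forward inclusion.

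For the reverse inclusion, any element of $\lspan(B-A)$ is a finite linear combination $\sum_i\lambda_i(b_i-a_i)$ with $b_i\in B$, $a_i\in A$; since $\aff(A\cup B)-c$ is a linear subspace it suffices to show each $b_i-a_i$ lies in it, and $b_i-a_i=(b_i-c)-(a_i-c)$ where $b_i-c\in(B-c)\subseteq\aff(A\cup B)-c$ and $a_i-c\in(A-c)\subseteq\aff(A\cup B)-c$; closure under subtraction finishes it. I do not expect a genuine obstacle here — the only thing to be careful about is invoking the right elementary fact ($\aff(S)-s_0=\lspan(S-s_0)$ when $s_0\in S$) and keeping straight which of ``$c\in A$'' and ``$c\in B$'' is needed where, so that $B-c\subseteq B-A$ and $A-c\subseteq A-B\subseteq\lspan(B-A)$; the argument is otherwise purely formal linear-algebra bookkeeping, so I would write it compactly rather than expanding every span manipulation.
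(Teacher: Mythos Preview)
Your proof is correct and follows essentially the same approach as the paper: both establish that $\aff(A\cup B)-c$ is a linear subspace, then verify the two inclusions by writing $b-a=(b-c)-(a-c)$ (for $B-A\subseteq\aff(A\cup B)-c$) and by observing $A-c\subseteq A-B$ and $B-c\subseteq B-A$ (for the reverse). The only cosmetic difference is that the paper writes out affine combinations explicitly rather than invoking the identity $\aff(S)-s_0=\lspan(S-s_0)$, and it handles the inclusions in the opposite order.
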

\begin{proof}
Since $c\in A\cap B \subseteq A\cup B$, it is clear that the
$\aff(A\cup B)-c$ is a subspace. On the one hand, if $a\in A$ and
$b\in B$, then $b-a = 1\cdot b + (-1)\cdot a + 1\cdot c - c
\in\aff(A\cup B)-c$. Hence $B-A\subseteq\aff(A\cup B)-c$ and thus
$\lspan(B-A)\subseteq\aff(A\cup B)-c$. On the other hand, if
$x\in\aff(A\cup B)$, say $x=\sum_{i\in I} \lambda_i a_i+\sum_{j\in
J} \mu_j b_j$, where each $a_i$ belongs to $A$, each $b_j$ belongs
to $B$, and $\sum_{i\in I}\lambda_i +\sum_{j\in J}\mu_j=1$, then
$x-c = \sum_{i\in I}(-\lambda_i)(c-a_i) + \sum_{j\in
I}\mu_j(b_j-c)\in\lspan(B-A)$. Thus $\aff(A\cup B)-c\subseteq
\lspan(B-A)$.
\end{proof}


\begin{lemma}\label{l:P(b+u)}
Let $A$ be a nonempty subset of $X$, let $a\in A$,
and let $u\in(\aff(A)-a)^\bot$.
Then
\begin{equation}
(\forall x\in X)\quad P_A(x+u)=P_A(x).
\end{equation}
\end{lemma}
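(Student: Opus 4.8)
Let $A$ be a nonempty subset of $X$, let $a\in A$, and let $u\in(\aff(A)-a)^\bot$. Then $(\forall x\in X)$ $P_A(x+u)=P_A(x)$.

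The plan is to reduce everything to the elementary fact that $u$ is orthogonal to all difference vectors of points of $A$, and then expand the squared norms defining the projection. First I would record the key consequence of the hypothesis: since $\aff(A)-a=\aff(A)-a'$ for every $a'\in A$ (translates of the parallel subspace $\pa A$), we have $\scal{u}{a'-a''}=0$ for all $a',a''\in A$; in particular, picking the fixed basepoint $a$, $\scal{u}{a'-a}=0$ for every $a'\in A$. I will use this repeatedly in the form $\scal{u}{a'}=\scal{u}{a}=:\gamma$, a constant independent of $a'\in A$.

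Next, fix $x\in X$ and let $a'\in A$ be arbitrary. The core computation is to expand
\begin{equation}
\|(x+u)-a'\|^2=\|x-a'\|^2+2\scal{u}{x-a'}+\|u\|^2 .
\end{equation}
Writing $\scal{u}{x-a'}=\scal{u}{x}-\scal{u}{a'}=\scal{u}{x}-\gamma$, the cross term $2\scal{u}{x-a'}+\|u\|^2$ equals $2\scal{u}{x}-2\gamma+\|u\|^2$, which does \emph{not} depend on $a'$. Hence, for all $a',a''\in A$,
\begin{equation}
\|(x+u)-a'\|^2-\|(x+u)-a''\|^2=\|x-a'\|^2-\|x-a''\|^2 .
\end{equation}
Thus $a'\mapsto\|(x+u)-a'\|$ and $a'\mapsto\|x-a'\|$ differ by an additive constant (in the squared sense), so they attain their infimum over $A$ at exactly the same points; that is, $\argmin_{a'\in A}\|(x+u)-a'\|=\argmin_{a'\in A}\|x-a'\|$, which is precisely $P_A(x+u)=P_A(x)$. (One can phrase this without invoking existence of minimizers at all: $a'\in P_A(x)$ iff $\|x-a'\|\le\|x-a''\|$ for all $a''\in A$ iff $\|(x+u)-a'\|\le\|(x+u)-a''\|$ for all $a''\in A$ iff $a'\in P_A(x+u)$, using the displayed identity and $a'\in A$.)

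There is essentially no obstacle here; the only point requiring the tiniest care is the passage from ``$u\perp(\aff(A)-a)$'' to ``$\scal{u}{a'-a}=0$ for all $a'\in A$,'' which holds because $A\subseteq\aff(A)$ forces $a'-a\in\aff(A)-a$. Everything else is the expansion of a squared norm and cancellation of an $a'$-independent term, so I would keep the write-up to just these few lines.
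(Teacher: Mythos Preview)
Your proof is correct and follows essentially the same approach as the paper: expand $\|(x+u)-a'\|^2$, observe that the cross term $2\scal{u}{x-a'}+\|u\|^2$ is independent of $a'\in A$ (because $\scal{u}{a'-a}=0$), and conclude that the argmins over $A$ coincide. The paper's write-up is slightly terser but the idea is identical.
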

\begin{proof}
Let $x\in X$. For every $b\in A$, we have
\begin{subequations}
\begin{align}
\|u+x-b\|^2&=\|u\|^2+2\scal{u}{x-b}+\|x-b\|^2\\
&=\|u\|^2+2\scal{u}{x-a}+2\scal{u}{a-b}+\|x-b\|^2\\
&=\|u\|^2+2\scal{u}{x-a}+\|x-b\|^2.
\end{align}
\end{subequations}
Hence $P_{A}(x+u)=\argmin_{b\in A}\|u+x-b\|^2=\argmin_{b\in
A}\|x-b\|^2 = P_Ax$, as announced.
\end{proof}
\begin{lemma}\label{l:PAPL}
Let $A$ be a nonempty subset of $X$, and let $L$ be an affine
subspace of $X$ containing $A$. Then
\begin{equation}
P_A=P_A\circ P_L.
\end{equation}
\end{lemma}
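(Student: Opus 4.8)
The plan is to use the splitting of $X$ induced by the affine subspace $L$, combined with Lemma~\ref{l:P(b+u)}. Let $V := \pa L = \aff(L) - \ell$ (for any $\ell \in L$), which is a linear subspace; since $A \subseteq L$, we also have $\aff(A) \subseteq L$ and hence $(\aff(A)-a)^\perp \supseteq V^\perp$ for any $a \in A$. Write $P_L$ explicitly: for $x \in X$, $P_L(x) = x - P_{V^\perp}(x - \ell)$, so that $x - P_L(x) \in V^\perp$. The point is that $P_L$ moves $x$ only by a vector orthogonal to $V$, hence orthogonal to $\aff(A) - a$ for every $a \in A$.

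The key step is then Lemma~\ref{l:P(b+u)}: fix $x \in X$, set $u := P_L(x) - x \in V^\perp \subseteq (\aff(A)-a)^\perp$ for every $a \in A$. Lemma~\ref{l:P(b+u)} gives $P_A(P_L(x)) = P_A(x + u) = P_A(x)$. Since $x$ was arbitrary, $P_A \circ P_L = P_A$, as claimed.

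One routine point to check is that Lemma~\ref{l:P(b+u)} is applicable: it requires $u \in (\aff(A) - a)^\perp$, and the hypothesis of that lemma mentions a specific $a \in A$, but its conclusion $P_A(x+u) = P_A(x)$ holds regardless. Since $A \neq \varnothing$ we may pick any $a \in A$, and then $u \in V^\perp \subseteq (\aff(A)-a)^\perp$ because $\aff(A) - a \subseteq \aff(L) - a = V$ (using $a \in A \subseteq L$). So the hypothesis of Lemma~\ref{l:P(b+u)} is met. There is no serious obstacle here; the only thing to be careful about is the bookkeeping of base points in the affine-to-linear translation, i.e.\ that $\aff(A) - a$ sits inside $V$ independently of which $a$ we chose. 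An alternative, essentially equivalent, route avoids naming $V$: directly observe that $x - P_L(x) \perp (L - P_L(x))$ by Fact~\ref{f:convproj}\ref{f:convproj2}, and $A - a \subseteq L - P_L(x)$ up to translation, so $x - P_L(x) \in (\aff(A) - a)^\perp$; then apply Lemma~\ref{l:P(b+u)}. Either way the proof is three lines.
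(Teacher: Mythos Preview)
Your proof is correct and follows essentially the same approach as the paper's own proof: both show that $x - P_L(x)$ lies in $(L-a)^\perp \subseteq (\aff(A)-a)^\perp$ (the paper via \cite[Corollary~3.20(i)]{BC2011}, you via the explicit description of $P_L$ and $V = \pa L$), and then invoke Lemma~\ref{l:P(b+u)} with $u = P_L(x)-x$ to conclude $P_A(P_L(x)) = P_A(x)$. Your write-up is a bit more verbose, but the argument is the same.
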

\begin{proof}
Let $a\in A$ and $x\in X$, and set $b=P_Lx$. Using
\cite[Corollary~3.20(i)]{BC2011}, we have $x-b\in (L-a)^\bot\subset
(\aff(A)-a)^\bot$. In view of Lemma~\ref{l:P(b+u)}, we deduce that
$(P_A\circ P_L)x=P_A(b)=P_A(b+(x-b))=P_Ax$.
\end{proof}

\begin{lemma}\label{l:per}
Let $A$ be a nonempty subset of $X$, let $a\in A$, and let $L$ be an
affine subspace of $X$ containing $A$. Then the following hold:
\begin{enumerate}
\item
\label{l:peri} $\pn{A}{L}(a) \bot (L-a)^\bot$.
\item
\label{l:perii} $\nc{A}{L}(a) \bot (L-a)^\bot$.
\end{enumerate}
\end{lemma}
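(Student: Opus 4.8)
The plan is to deduce part~\ref{l:perii} from part~\ref{l:peri} by a routine outer-limit argument, so the real work is establishing \ref{l:peri}. For \ref{l:peri}, since $\pn{A}{L}(a)=\cone\big((L\cap P_A^{-1}a)-a\big)$ and orthogonality to a subspace is preserved under nonnegative scaling, it suffices to show that every vector of the form $b-a$ with $b\in L$ and $a\in P_A b$ is orthogonal to $(L-a)^\bot$. But this is immediate: $b\in L$ and $a\in A\subseteq L$ force $b-a\in L-a\subseteq \pa L$, and $\pa L=\lspan(L-a)$ is by definition orthogonal to $(L-a)^\bot$. Hence $\pn{A}{L}(a)\subseteq \pa L$, which gives $\pn{A}{L}(a)\bot (L-a)^\bot$.

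For \ref{l:perii}, I would take $u\in\nc{A}{L}(a)$ and use the definition of the restricted normal cone (Definition~\ref{d:NCone}\ref{d:nc}): there are sequences $a_n\to a$ in $A$ and $u_n\to u$ with $u_n\in\pn{A}{L}(a_n)$. By part~\ref{l:peri} applied at each $a_n$, we have $u_n\in \pa L$ (note $L-a_n=L-a$ for all $n$ since $a,a_n\in L$, so $(L-a_n)^\bot=(L-a)^\bot$ and $\pa L$ is the same subspace throughout). Since $\pa L$ is closed, passing to the limit yields $u\in\pa L$, i.e.\ $u\bot(L-a)^\bot$.

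I do not anticipate a genuine obstacle here; the only point requiring a small amount of care is the bookkeeping that $(L-a_n)^\bot$ does not depend on $n$ (because all the base points lie in the affine subspace $L$), so that the orthogonality conclusions at the different points $a_n$ are statements about one fixed subspace and can be combined by taking a limit. Everything else is a direct unwinding of the definition of $\pn{A}{L}$ together with the elementary fact that $L-a$ spans $\pa L$.
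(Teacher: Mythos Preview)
Your proposal is correct and follows essentially the same approach as the paper: for \ref{l:peri} the paper invokes Lemma~\ref{l:NsubsetN}\ref{l:NsubsetNviii} to get $\pn{A}{L}(a)\subseteq\cone(L-a)\subseteq\lspan(L-a)$, which is exactly your direct unwinding of the definition, and for \ref{l:perii} the paper also passes to the limit through the (fixed) closed subspace $\pa L$. The only cosmetic difference is that the paper phrases \ref{l:perii} via ranges ($\ran\nc{A}{L}\subseteq\overline{\ran\pn{A}{L}}\subseteq\pa L$) rather than sequences.
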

\begin{proof}
Observe that $L-a=\pa(A)$ does not depend on the concrete choice of
$a\in A$. \ref{l:peri}: Using
Lemma~\ref{l:NsubsetN}\ref{l:NsubsetNviii}, we see that
$\pn{A}{L}(a)\subseteq\cone(L-a)\subseteq\lspan(L-a)\perp
(\lspan(L-a))^\perp = (L-a)^\perp = (\pa A)^\perp$. \ref{l:perii}:
By \ref{l:peri}, $\ran\pn{A}{L}\subseteq \pa A$. Since
$\ran\nc{A}{L}\subseteq\overline{\ran\pn{A}{L}}$, it follows that
$\ran\nc{A}{L}\subseteq \pa A = L-a$.
\end{proof}



For a normal cone restricted to certain affine subspaces, it is
possible to derive precise relationships to the Mordukhovich normal
cone.

\begin{theorem}[restricted vs Mordukhovich normal cone]
\label{p:pnA(L)} Let $A$ and $B$ be nonempty subsets of $X$,
suppose that $a\in A$,
and let $L$ be an affine subspace of $X$ containing $A$.
Then the following hold:
\begin{subequations}
\label{e:pnAL}
\begin{align}
\pn{A}{X}(a)&=\pn{A}{L}(a)\oplus(L-a)^\bot=\pn{A}{X}(a)+(L-a)^\bot,\label{e:pnALa}\\
\pn{A}{L}(a)&=\pn{A}{X}(a)\cap (L-a),\label{e:pnALb}\\
N_A(a)&=\nc{A}{L}(a)\oplus(L-a)^\bot=N_A(a)+(L-a)^\bot,\label{e:pnALc}\\
\nc{A}{L}(a)&=N_A(a)\cap (L-a).\label{e:pnALd}
\end{align}
\end{subequations}
Consequently, the following hold as well:
\begin{subequations}
\begin{align}
\pn{A}{X}(a)&=\pn{A}{\aff(A)}(a)\oplus(\aff(A)-a)^\bot=\pn{A}{X}(a)+(\aff(A)-a)^\bot,\label{e:pnALipa}\\
\pn{A}{\aff(A)}(a)&=\pn{A}{X}(a)\cap (\aff(A)-a),\label{e:pnALipb}\\
N_A(a)&=\nc{A}{\aff(A)}(a)\oplus\big(\aff(A)-a\big)^\bot=N_A(a)+\big(\aff(A)-a\big)^\bot,\label{e:pnALipc}\\
\nc{A}{\aff(A)}(a)&=N_A(a)\cap \big(\aff(A)-a\big),\label{e:pnALipd}\\
a\in A\cap B\;\;\Rightarrow\;\;
\nc{A}{\aff(A\cup B)}(a)&=N_A(a)\cap \lspan(A-B)\label{e:pnALipe}.
\end{align}
\end{subequations}
\end{theorem}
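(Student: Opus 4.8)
The plan is to derive all the displayed identities from the two ``master'' identities \eqref{e:pnALa} and \eqref{e:pnALc}, since the remaining ones are either immediate specializations (taking $L=\aff(A)$, which is legitimate because $\aff(A)$ is an affine subspace containing $A$) or follow by a short intersection argument. So the real work is to prove \eqref{e:pnALa}--\eqref{e:pnALd} for a general affine subspace $L\supseteq A$.

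First I would set $V := (L-a)^\perp = (\pa A)^\perp$ and note, via Lemma~\ref{l:per}, that $\pn{A}{L}(a)\subseteq L-a$ and $\nc{A}{L}(a)\subseteq L-a$, so that the sums $\pn{A}{L}(a)+V$ and $\nc{A}{L}(a)+V$ are genuinely orthogonal direct sums; this justifies the $\oplus$ notation once the set equalities are in hand. The inclusion $\pn{A}{L}(a)\oplus V\subseteq \pn{A}{X}(a)$ in \eqref{e:pnALa} follows from Lemma~\ref{l:NsubsetN}\ref{l:NsubsetNii} together with the observation that $V\subseteq (A-a)^\ominus$ (this is Lemma~\ref{l:NsubsetN}\ref{l:NsubsetNvii}) and the absorption property $\pnX{A}(a)+(A-a)^\ominus\subseteq\pnX{A}(a)$ recorded in the remark after Definition~\ref{d:NCone}. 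For the reverse inclusion: take $u\in\pnX{A}(a)$, so $a\in P_A b$ for some $b$ with $u\in\cone(b-a)$; decompose $b-a = p + v$ with $p\in L-a$ and $v\in V$, and use Lemma~\ref{l:P(b+u)} (with $-v\in V$) to conclude $a\in P_A(b-v)=P_A(a+p)$, hence $p\in P_A^{-1}a - a$ with $p\in L-a$, giving $b-a\in \big(\pnX{A}(a)\cap(L-a)\big)+V\subseteq \pn{A}{L}(a)+V$ — where the last inclusion is exactly \eqref{e:pnALb}, which I would therefore prove in tandem. The equality $\pn{A}{X}(a)=\pn{A}{X}(a)+V$ is then the same absorption fact restated, and \eqref{e:pnALb} follows since $\pn{A}{L}(a)=\cone\big((L-a)\cap(P_A^{-1}a-a)\big)$ and Lemma~\ref{l:cone01}\ref{l:cone01ii} lets one pull the cone operation inside: $\cone\big((L-a)\cap(P_A^{-1}a-a)\big)$ sits between $(L-a)\cap\pnX{A}(a)$ and itself once one checks $L-a$ is a subspace hence a cone.

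Next I would promote these proximal identities to the Mordukhovich level by taking outer limits as $x\to a$ in $A$. Since $\gr\nc{A}{L} = (A\times X)\cap\overline{\gr\pn{A}{L}}$ and likewise $\gr N_A=(A\times X)\cap\overline{\gr\pnX{A}}$, and since $V$ is a fixed subspace independent of the base point (because $\pa A$ does not depend on $a\in A$), the limsup commutes with adding $V$ and with intersecting with $L-a$: from $\pnX{A}(x)=\pn{A}{L}(x)\oplus V$ for all nearby $x\in A$ we get $N_A(a)=\limsup_{x\to a} \pnX{A}(x)=\big(\limsup_{x\to a}\pn{A}{L}(x)\big)+V=\nc{A}{L}(a)+V$, which is \eqref{e:pnALc}; and intersecting both sides with $L-a$ and using $\nc{A}{L}(a)\subseteq L-a$, $V\cap(L-a)=\{0\}$ yields \eqref{e:pnALd}. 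The consequences \eqref{e:pnALipa}--\eqref{e:pnALipd} are just the special case $L=\aff(A)$. For the final line \eqref{e:pnALipe}, I would invoke Lemma~\ref{l:aff-lspan}: when $a\in A\cap B$ we have $\aff(A\cup B)-a=\lspan(B-A)=\lspan(A-B)$, so $\aff(A\cup B)$ is an affine subspace containing $A$, and \eqref{e:pnALd} applied with $L=\aff(A\cup B)$ gives $\nc{A}{\aff(A\cup B)}(a)=N_A(a)\cap\big(\aff(A\cup B)-a\big)=N_A(a)\cap\lspan(A-B)$.

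The main obstacle I anticipate is the reverse inclusion in \eqref{e:pnALa}, specifically the orthogonal-decomposition step: one must be careful that decomposing a single witness vector $b-a$ as $p+v$ and shifting by $-v$ genuinely lands back in $P_A^{-1}a$, which is where Lemma~\ref{l:P(b+u)} does the heavy lifting (it says $P_A(x+v)=P_A(x)$ for $v\perp\pa A$), and that this survives the passage to conical hulls and to the closure defining $\nc{}{}$. A secondary technical point is justifying that the outer limit operation distributes over ``$+V$'' and ``$\cap(L-a)$'' — this is routine since $V$ and $L-a$ are closed and fixed, but should be stated cleanly rather than waved through, perhaps by the elementary fact that $\limsup_n (S_n+V) = (\limsup_n S_n)+V$ for a fixed finite-dimensional subspace $V$.
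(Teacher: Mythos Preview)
Your approach is correct and close to the paper's, with one genuine improvement and one mis-stated technical point.

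The improvement: your proof of \eqref{e:pnALb} via Lemma~\ref{l:cone01}\ref{l:cone01ii} is cleaner than the paper's. Since $L-a$ is a linear subspace and hence a cone, that lemma gives directly
\[
\pn{A}{L}(a)=\cone\big((L-a)\cap(P_A^{-1}a-a)\big)=(L-a)\cap\cone(P_A^{-1}a-a)=(L-a)\cap\pnX{A}(a),
\]
which is \eqref{e:pnALb} in one line. The paper instead first establishes \eqref{e:pnALa} (using Lemma~\ref{l:PAPL}, which is essentially your Lemma~\ref{l:P(b+u)} step) and then derives \eqref{e:pnALb} from the orthogonal decomposition in \eqref{e:pnALa}. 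Your ordering---prove \eqref{e:pnALb} first, then feed it into \eqref{e:pnALa}---is logically independent and slightly more economical. For \eqref{e:pnALc} and \eqref{e:pnALd} the two proofs coincide: the paper's explicit sequence-and-subsequence extraction is exactly what underlies your ``$\limsup$ commutes with $+V$'' claim.

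The caveat: the ``elementary fact'' you propose at the end, that $\limsup_n(S_n+V)=(\limsup_n S_n)+V$ for a fixed finite-dimensional subspace $V$, is \emph{false} in general (take $S_n=\{n\}\subseteq\RR$ and $V=\RR$). What makes it true here is precisely the orthogonality you already noted: each $\pn{A}{L}(x)$ lies in $L-a=V^\perp$, so in any decomposition $u_n=s_n+v_n$ with $s_n\in\pn{A}{L}(x_n)$ and $v_n\in V$ one has $\|u_n\|^2=\|s_n\|^2+\|v_n\|^2$, forcing both components to be bounded and allowing subsequence extraction. This is exactly how the paper argues. So keep your orthogonality observation and drop the general ``elementary fact''; the argument then goes through cleanly.
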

\begin{proof}
\eqref{e:pnALa}: Take $u\in\pn{A}{X}(a)$. Then there exist
$\lambda\geq 0$, $x\in X$, and $a\in P_Ax$ such that
$\lambda(x-a)=u$. Set $b=P_Lx$. By Lemma~\ref{l:PAPL}, we have $a\in
P_A x=(P_A\circ P_L)x= P_A b$. Using
\cite[Corollary~3.20(i)]{BC2011}, we thus deduce that
$\lambda(b-a)\in \pn{A}{L}(a)$ and $\lambda(x-b)\in
(L-b)^\perp=(L-a)^\perp$. Hence $u=\lambda(b-a)+\lambda(x-b)\in
\pn{A}{L}(a)+ (L-a)^\perp =\pn{A}{L}(a)\oplus (L-a)^\perp$ by
Lemma~\ref{l:per}\ref{l:peri}. We have thus shown that
\begin{equation}
\label{e:0211a} \pn{A}{X}(a) \subseteq \pn{A}{L}(a)\oplus
(L-a)^\perp.
\end{equation}
On the other hand, Lemma~\ref{l:NsubsetN}\ref{l:NsubsetNii} implies
that $\pn{A}{L}(a)\subseteq\pn{A}{X}(a)$ and thus
\begin{equation}
\pn{A}{L}(a)+(L-a)^\perp \subseteq \pn{A}{X}(a)+(L-a)^\perp.
\end{equation}
Altogether,
\begin{equation}
\pn{A}{X}(a) \subseteq \pn{A}{L}(a)\oplus (L-a)^\perp \subseteq
\pn{A}{X}(a)+(L-a)^\perp.
\end{equation}
To complete the proof of \eqref{e:pnALa}, it thus suffices to show
that $\pn{A}{X}(a)+ (L-a)^\bot\subseteq \pn{A}{X}(a)$. To this end,
let $u\in\pn{A}{X}(a)$ and
$v\in(L-a)^\bot\subseteq(\aff(A)-a)^\bot$. Then there exist
$\lambda\geq0$, $b\in X$, and $a\in P_Ab$ such that
$u=\lambda(b-a)$. If $\lambda=0$, then $u=0$ and
$u+v=v\in(\aff(A)-a)^\perp \subseteq (A-a)^\ominus = (A-a)^\ominus
\cap X = (A-a)^\ominus \cap \cone(X-a) \subseteq \pn{A}{X}(a)$ by
Lemma~\ref{l:NsubsetN}\ref{l:NsubsetNvii}\&\ref{l:NsubsetNviii}.
Thus, we assume that $\lambda>0$. By Lemma~\ref{l:P(b+u)}, we have
$a\in P_A b = P_A(b+\lambda^{-1}v)$. Hence
$b+\lambda^{-1}v-a\in\pn{A}{X}(a)$ and therefore
$\lambda(b+\lambda^{-1}v-a) = \lambda(b-a)+v=u+v\in\pn{A}{X}(a)$, as
required.

\eqref{e:pnALb}: By
Lemma~\ref{l:NsubsetN}\ref{l:NsubsetNii}\&\ref{l:NsubsetNviii},
$\pn{A}{L}(a)\subseteq\pn{A}{X}(a)\cap(L-a)$. Now let $u\in
\pn{A}{X}(a)\cap(L-a)$. By \eqref{e:pnALa}, we have $u=v+w$, where
$v\in\pn{A}{L}(a)\subseteq L-a$ and $w\in(L-a)^\perp$. On the other
hand, $w=u-v\in (L-a)-(L-a)=L-a$. Altogether $w\in
(L-a)\cap(L-a)^\perp=\{0\}$. Hence $u=v\in\pn{A}{L}(a)$.

\eqref{e:pnALc}: Let $u\in N_A(a)$. By definition, there exist
sequences $(a_n)_\nnn$ in $A$ and $(u_n)_\nnn$ in $X$ such that
$a_n\to a$, $u_n\to u$, and $(\forall\nnn)$ $u_n\in\pn{A}{X}(a_n)$.
By \eqref{e:pnALa}, there exists a sequence $(v_n,w_n)_\nnn$ such
that $(a_n,v_n)_\nnn$ lies in $\gr\pn{A}{L}$, $(w_n)_\nnn$ lies in
$(L-a)^\perp$, and $(\forall\nnn)$ $u_n=v_n+w_n$ and $v_n\perp w_n$.
Since $\|u\|^2 \leftarrow \|u_n\|^2 = \|v_n\|^2 + \|w_n\|^2$, the
sequences $(v_n)_\nnn$ and $(w_n)_\nnn$ are bounded. After passing
to subsequences and relabeling if necessary, we assume $(v_n)_\nnn$
and $(w_n)_\nnn$ are convergent, with limits $v$ and $w$,
respectively. It follows that $v\in\nc{A}{L}(a)$ and
$w\in(L-a)^\perp$; consequently, $u=v+w\in\nc{A}{L}(a) \oplus
(L-a)^\perp$ by Lemma~\ref{l:per}\ref{l:perii}. Thus
$N_A(a)\subseteq \nc{A}{L}(a) \oplus (L-a)^\perp$. On the other
hand, by Lemma~\ref{l:NsubsetN}\ref{l:NsubsetNii},
$\nc{A}{L}(a)\oplus(L-a)^\bot\subseteq N_A(a)+(L-a)^\bot$.
Altogether,
\begin{equation}
N_A(a)\subseteq \nc{A}{L}(a)\oplus(L-a)^\bot\subseteq
N_A(a)+(L-a)^\bot.
\end{equation}
It thus suffices to prove that $N_A(a)+ (L-a)^\bot\subseteq N_A(a)$.
To this end, take $u\in N_A(a)$ and $v\in (L-a)^\perp$. Then there
exist sequences $(a_n)_\nnn$ in $A$ and $(u_n)_\nnn$ in $X$ such
that $a_n\to a$, $u_n\to u$, and $(\forall\nnn)$
$u_n\in\pn{A}{X}(a_n)$. For every $\nnn$, we have $L-a=L-a_n$ and
hence $u_n+v\in \pn{A}{X}(a_n) + (L-a_n)^\perp = \pn{A}{X}(a_n)$ by
\eqref{e:pnALa}. Passing to the limit, we conclude that $u+v\in
N_A(a)$.

\eqref{e:pnALd}: First, take $u\in \nc{A}{L}(a)$. On the one hand,
by Lemma~\ref{l:NsubsetN}\ref{l:NsubsetNii}, $u\in N_A(a)$. On the
other hand, by Lemma~\ref{l:per}\ref{l:perii}, $u\in
(L-a)^{\perp\perp} = L-a$. Altogether, we have shown that
\begin{equation}
\label{e:0211b} \nc{A}{L}(a)\subseteq N_A(a)\cap(L-a).
\end{equation}
Conversely, take $u\in N_A(a)\cap(L-a)\subseteq N_A(a)$. By
\eqref{e:pnALc}, there exist $v\in \nc{A}{L}(a)$ and $w\in
(L-a)^\perp$ such that $u=v+w$ and $v\perp w$. By \eqref{e:0211b},
$v\in L-a$. Hence $w = u-v\in (L-a)-(L-a)=L-a$. Since $w\in
(L-a)^\perp$, we deduce that $w=0$. This implies $u=v\in
\nc{A}{L}(a)$. Therefore, $N_A(a)\cap (L-a)\subseteq \nc{A}{L}(a)$.

``Consequently'' part: Consider \eqref{e:pnAL} when $L=\aff(A)$ or
$L=\aff(A\cup B)$, and recall Lemma~\ref{l:aff-lspan} in the latter
case.
\end{proof}

An immediate consequence of Theorem~\ref{p:pnA(L)} (or of the definitions)
is the following result.

\begin{corollary}[the $X$-restricted and the Mordukhovich normal
cone coincide] \label{c:Boris=ncX} Let $A$ be a nonempty subset of
$X$, and let $a\in A$. Then
\begin{equation}
\nc{A}{X}(a)=N_A(a).
\end{equation}
\end{corollary}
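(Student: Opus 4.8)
The plan is to derive Corollary~\ref{c:Boris=ncX} as a direct specialization of Theorem~\ref{p:pnA(L)}. The statement to be proved is that $\nc{A}{X}(a)=N_A(a)$ for an arbitrary nonempty subset $A$ of $X$ and $a\in A$; note that here $A$ need not be closed, but this is harmless since both cones are by definition empty when $a\notin A$, and for $a\in A$ the relevant limiting construction only sees points of $A$ near $a$.

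First I would observe that $X$ is itself an affine subspace of $X$ containing $A$, so Theorem~\ref{p:pnA(L)} applies with the choice $L=X$. Then $L-a=X-a=X$, and consequently $(L-a)^\perp=X^\perp=\{0\}$. Plugging $L=X$ into \eqref{e:pnALd} of Theorem~\ref{p:pnA(L)} gives
\begin{equation}
\nc{A}{X}(a)=\nc{A}{L}(a)=N_A(a)\cap(L-a)=N_A(a)\cap X=N_A(a),
\end{equation}
which is exactly the claim. (Equivalently one could invoke \eqref{e:pnALc}, which reads $N_A(a)=\nc{A}{X}(a)\oplus\{0\}=\nc{A}{X}(a)$.)

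Alternatively, and perhaps more transparently, the result follows straight from Definition~\ref{d:NCone}: by item~\ref{d:pnX} we have $\pnX{A}=\pn{A}{X}$, and comparing item~\ref{d:nc} (with $B=X$) against item~\ref{d:Mnc}, both $\nc{A}{X}(a)$ and $N_A(a)$ are defined by the existence of sequences $a_n\to a$ in $A$ and $u_n\to u$ with $u_n\in\pn{A}{X}(a_n)=\pnX{A}(a_n)$; these are verbatim the same condition. This also matches the remark following Definition~\ref{d:NCone} that $N_A=\nc{A}{X}$.

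There is essentially no obstacle here: the only thing to watch is that the hypotheses of Theorem~\ref{p:pnA(L)} are met with $L=X$, which is immediate, and that $X^\perp=\{0\}$ in a Euclidean space, which is standard. I would present the proof in one line, either citing \eqref{e:pnALd} with $L=X$ or simply remarking that it is immediate from the definitions.
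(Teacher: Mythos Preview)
Your proposal is correct and matches the paper's own treatment: the paper states that the corollary is ``an immediate consequence of Theorem~\ref{p:pnA(L)} (or of the definitions),'' which is precisely the two routes you outline.
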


The next two results provide some useful calculus rules.

\begin{corollary}[restricted normal cone of a sum]\label{c:cal1}
Let $C_1$ and $C_2$ be nonempty closed convex subsets of $X$, let
$a_1\in C_1$, let $a_2\in C_2$, and let $L$ be an affine subspace of
$X$ containing $C_1+C_2$. Then
\begin{equation}
\nc{C_1+C_2}{L}(a_1+a_2)=\nc{C_1}{L-a_2}(a_1)\cap\nc{C_2}{L-a_1}(a_2).
\end{equation}
\end{corollary}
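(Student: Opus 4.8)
The plan is to reduce the identity, via three applications of Theorem~\ref{p:pnA(L)}\eqref{e:pnALd}, to the elementary (constraint-qualification-free) additivity of the convex normal cone of a sum.

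\emph{Set-up and reduction.} Write $C:=C_1+C_2$ and $a:=a_1+a_2$; then $C$ is convex and $a\in C\subseteq L$. Since $a_2\in C_2$, we have $C_1=C_1+\{a_2\}-a_2\subseteq C-a_2\subseteq L-a_2$, and $L-a_2$ is an affine subspace of $X$; symmetrically $C_2\subseteq L-a_1$ with $L-a_1$ affine. Hence \eqref{e:pnALd} applies to each of the pairs $(C,L)$, $(C_1,L-a_2)$, $(C_2,L-a_1)$, and, abbreviating $Y:=L-a=(L-a_2)-a_1=(L-a_1)-a_2$, it gives $\nc{C}{L}(a)=N_C(a)\cap Y$, $\nc{C_1}{L-a_2}(a_1)=N_{C_1}(a_1)\cap Y$, and $\nc{C_2}{L-a_1}(a_2)=N_{C_2}(a_2)\cap Y$. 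Intersecting the last two, the assertion reduces to $N_C(a)\cap Y=N_{C_1}(a_1)\cap N_{C_2}(a_2)\cap Y$.

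\emph{Passing to convex normal cones.} Since $C_1$ and $C_2$ are closed and convex, Lemma~\ref{l:NsubsetN}\ref{l:NsubsetNvi} gives $N_{C_i}(a_i)=\cnc{C_i}(a_i)$. One also has $N_C(a)=\cnc{C}(a)$, and this is the one step that needs care, because $C=C_1+C_2$ need not be closed, so Lemma~\ref{l:NsubsetN}\ref{l:NsubsetNvi} is not available for $C$ verbatim. The way around it: $P_C^{-1}a=P_{\overline{C}}^{-1}a$ (a nearest point of $C$, where it exists, is a nearest point of $\overline{C}$), so $\pnX{C}(a)=\pnX{\overline{C}}(a)=\cnc{\overline{C}}(a)=\cnc{C}(a)$ by Lemma~\ref{l:NsubsetN}\ref{l:NsubsetNvi} together with $\cnc{\overline{C}}(a)=\cnc{C}(a)$; then $N_C(a)$, being the outer limit of $\pnX{C}(c)=\cnc{C}(c)$ as $c\to a$ in $C$, is contained in $\cnc{C}(a)$ (pass to the limit in the inequalities $\scal{u_n}{x-c_n}\le 0$, $x\in C$), the reverse inclusion being trivial. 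Thus the assertion further reduces to $\cnc{C_1+C_2}(a_1+a_2)\cap Y=\cnc{C_1}(a_1)\cap\cnc{C_2}(a_2)\cap Y$.

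\emph{Additivity.} Finally, I would establish $\cnc{C_1+C_2}(a_1+a_2)=\cnc{C_1}(a_1)\cap\cnc{C_2}(a_2)$ directly from the definition: for $u\in X$,
\[
u\in\cnc{C_1+C_2}(a_1+a_2)\;\Longleftrightarrow\;\sup_{c_1\in C_1}\scal{u}{c_1-a_1}+\sup_{c_2\in C_2}\scal{u}{c_2-a_2}\le 0,
\]
and since each supremum is $\ge 0$ (value $0$ at $c_i=a_i$), the right-hand side holds if and only if both suprema are $\le 0$, i.e.\ if and only if $u\in\cnc{C_1}(a_1)$ and $u\in\cnc{C_2}(a_2)$. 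Intersecting with $Y$ completes the proof. I expect the only real obstacle to be the bookkeeping around the possible non-closedness of $C_1+C_2$ in the middle step; everything else is a mechanical combination of Theorem~\ref{p:pnA(L)} with this one-line additivity.
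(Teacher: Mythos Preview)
Your proof is correct and follows essentially the same route as the paper: reduce via three applications of \eqref{e:pnALd} to the identity $N_{C_1+C_2}(a_1+a_2)=N_{C_1}(a_1)\cap N_{C_2}(a_2)$, then verify the latter. The only difference is that the paper dispatches this last identity by citing \cite[Exercise~6.44]{Rock98}, whereas you supply a self-contained argument via convex normal cones and, in doing so, take explicit care of the fact that $C_1+C_2$ need not be closed (so Lemma~\ref{l:NsubsetN}\ref{l:NsubsetNvi} does not apply to it directly); that extra bookkeeping is a nice touch but not a genuinely different approach.
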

\begin{proof}
Set $C=C_1+C_2$ and $a=a_1+a_2$. Then \eqref{e:pnALd} and
\cite[Exercise~6.44]{Rock98} yield
\begin{subequations}
\label{e:0213d}
\begin{align}
\nc{C}{L}(a)&=\nc{C}{}(a)\cap(L-a)=\nc{C_1}{}(a_1)\cap\nc{C_2}{}(a_2)\cap(L-a)\\
&=\big(\nc{C_1}{}(a_1)\cap(L-a)\big)\cap\big(\nc{C_2}{}(a_2)\cap(L-a)\big).
\end{align}
\end{subequations}
Note that $L-a$ is a linear subspace of $X$ containing $C_1-a_1$ and
$C_2-a_2$. Thus, $L-a_2=L-a+a_1$ is an affine subspace of $X$
containing $C_1$, and $L-a_1=L-a+a_2$ is an affine subspace of $X$
containing $C_2$. By \eqref{e:pnALd},
\begin{equation}
\label{e:0213e} \nc{C_1}{L-a_2}(a_1)=\nc{C_1}{}(a_1)\cap(L-a)
\quad\text{and}\quad \nc{C_2}{L-a_1}(a_2)=\nc{C_2}{}(a_2)\cap(L-a).
\end{equation}
The conclusion follows by combining \eqref{e:0213d} and
\eqref{e:0213e}.
\end{proof}

\begin{corollary}[an intersection formula]\label{c:cal2}
Let $A$ and $B$ be nonempty closed convex subsets of $X$, and suppose that
$a\in A\cap B$. Let $L$ be an affine subspace of $X$ containing
$A\cup B$. Then
\begin{equation}
\nc{A}{L}(a)\cap\big(-\nc{B}{L}(a)\big)=\nc{A-B}{L-a}(0).
\end{equation}
\end{corollary}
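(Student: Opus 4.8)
The statement to prove is
\[
\nc{A}{L}(a)\cap\big(-\nc{B}{L}(a)\big)=\nc{A-B}{L-a}(0),
\]
for closed convex $A,B$ with $a\in A\cap B$ and $L$ an affine subspace containing $A\cup B$. The plan is to reduce everything to the Mordukhovich normal cone via \eqref{e:pnALd} and then apply the classical convex calculus. First I would invoke Lemma~\ref{l:NsubsetN}\ref{l:NsubsetNvi} to note that for convex sets the restricted-to-$X$ normal cone (equivalently $N_A$) coincides with $\cnc{A}$, so all objects in sight are the familiar convex normal cones, and the Mordukhovich normal cone of a convex set is additive/subtractive in the obvious ways.

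Second, I would rewrite the left-hand side using \eqref{e:pnALd} (with the same affine subspace $L\supseteq A\cup B$ serving for both $A$ and $B$): $\nc{A}{L}(a)=N_A(a)\cap(L-a)$ and $\nc{B}{L}(a)=N_B(a)\cap(L-a)$. Since $L-a$ is a linear subspace, $-\big(N_B(a)\cap(L-a)\big)=(-N_B(a))\cap(L-a)$, so the left-hand side becomes
\[
\big(N_A(a)\cap(-N_B(a))\big)\cap(L-a).
\]
Now I would identify $N_A(a)\cap(-N_B(a))$ with $N_{A-B}(0)$: for convex sets, $u\in N_A(a)$ means $\sup\scal{u}{A-a}\le0$, and $u\in -N_B(a)$ means $-u\in N_B(a)$, i.e. $\sup\scal{-u}{B-a}\le0$, i.e. $\sup\scal{u}{a-B}\le0$; adding, $\sup\scal{u}{(A-a)+(a-B)}=\sup\scal{u}{(A-B)-0}\le0$, which is exactly $u\in\cnc{A-B}(0)=N_{A-B}(0)$ (using that $A-B$ is closed convex and $0\in A-B$, plus Lemma~\ref{l:NsubsetN}\ref{l:NsubsetNvi} again). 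The reverse inclusion is the same computation read backwards: if $\sup\scal{u}{(A-B)}\le0$, evaluate on the slices $A-b$ (for fixed $b\in B$, in particular $b=a$) and $a-B$ to recover both $u\in N_A(a)$ and $-u\in N_B(a)$.

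Third, to finish I would apply \eqref{e:pnALd} once more, now to the set $A-B$: here the relevant affine subspace is $L-a$, which is a linear subspace of $X$ containing $(A-B)$ (since $A-B\subseteq (L-a)+(a-L)=L-L=(L-a)-(L-a)=\lspan(L-a)=L-a$, as $L-a$ is already linear) and containing the point $0$; hence $\nc{A-B}{L-a}(0)=N_{A-B}(0)\cap(L-a)$. Combining with the two previous displays gives
\[
\nc{A}{L}(a)\cap\big(-\nc{B}{L}(a)\big)=N_{A-B}(0)\cap(L-a)=\nc{A-B}{L-a}(0).
\]
The only mild subtlety — and the step I would be most careful about — is the bookkeeping on affine subspaces: one must check that $A-B$ really lies in the \emph{linear} subspace $L-a$ (not merely some affine translate), so that \eqref{e:pnALd} is legitimately applicable with restricting set $L-a$; this is immediate because $a\in A\cap B$ forces $0\in A-B$ and $L-a$ is a subspace. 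Everything else is the elementary support-function manipulation above, so there is no real obstacle.
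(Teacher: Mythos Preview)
Your proof is correct and follows essentially the same route as the paper: reduce both sides to Mordukhovich normal cones via \eqref{e:pnALd}, then identify $N_A(a)\cap(-N_B(a))$ with $N_{A-B}(0)$ --- the paper does this by invoking Proposition~\ref{p:elementary}\ref{p:ele-v} and \cite[Exercise~6.44]{Rock98}, while you carry out the equivalent support-function computation by hand. One small slip: $A-B$ need not be closed (so Lemma~\ref{l:NsubsetN}\ref{l:NsubsetNvi} does not literally apply there), but the identity $\cnc{A-B}(0)=N_{A-B}(0)$ holds for any convex set by the same first-order/outer-limit argument, so your conclusion stands.
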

\begin{proof}
Using \eqref{e:pnALd}, Proposition~\ref{p:elementary}\ref{p:ele-v},
\cite[Exercise~6.44]{Rock98}, and again \eqref{e:pnALd}, we obtain
\begin{subequations}
\begin{align}
\nc{A}{L}(a)\cap\big(-\nc{B}{L}(a)\big)&=N_A(a)\cap(L-a)\cap\big(-N_B(a)\big)\cap(L-a)\\
&=\Big(N_A(a)\cap\big(-N_B(a)\big)\Big)\cap(L-a)\\
&=\big(N_A(a)\cap N_{-B}(-a)\big)\cap(L-a)\\
&=N_{A-B}(0)\cap(L-a)\\
&=\nc{A-B}{L-a}(0),
\end{align}
\end{subequations}
as required.
\end{proof}

Let us now work towards relating the restricted normal cone to the
(relative and classical) interior and to the boundary of a given
set.

\begin{proposition}\label{p:L=aff(A)}
Let $A$ be a nonempty subset of $X$, let $a\in A$, let $L$ be an
affine subspace containing $A$, and suppose that
$\nc{A}{L}(a)=\{0\}$. Then $L=\aff(A)$.
\end{proposition}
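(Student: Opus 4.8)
The plan is to argue by contradiction. Suppose $L \neq \aff(A)$. Since $A \subseteq L$, we have $\aff(A) \subseteq L$, and the inclusion is strict, so the subspace $L - a$ strictly contains the subspace $\aff(A) - a = \pa(A)$. Consequently there exists a nonzero vector $v \in (L-a) \cap (\aff(A)-a)^{\perp}$; indeed, choose $v$ to be the orthogonal projection onto $L-a$ of any element of $(L-a)\smallsetminus(\aff(A)-a)$, which lies in $L-a$ but not in $\aff(A)-a$, and then correct it to be orthogonal to $\aff(A)-a$ — equivalently, $(L-a) = (\aff(A)-a) \oplus \big((L-a)\cap(\aff(A)-a)^{\perp}\big)$, and the second summand is nontrivial precisely because the inclusion is strict. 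Fix such a $v \neq 0$ with $v \in (L-a)\cap(\aff(A)-a)^{\perp}$.

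Next I would show $v \in \pn{A}{L}(a)$, which forces $v \in \nc{A}{L}(a)$ by Lemma~\ref{l:NsubsetN}\ref{l:NsubsetNiv}, contradicting $\nc{A}{L}(a) = \{0\}$ since $v \neq 0$. To see $v \in \pn{A}{L}(a)$: since $v \in (\aff(A)-a)^{\perp}$, Lemma~\ref{l:NsubsetN}\ref{l:NsubsetNvii} gives $v \in (A-a)^{\ominus}$. Also $v \in L - a = \cone(L-a)$ (as $a + v \in L$ and $L$ is affine containing $a$, any scalar multiple $a+\lambda v \in L$, so $v \in \cone(L-a)$; more simply $\cone(L-a) \supseteq L - a \ni v$). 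Therefore $v \in (A-a)^{\ominus} \cap \cone(L-a)$, and Lemma~\ref{l:NsubsetN}\ref{l:NsubsetNviii} with $B := L$ yields $v \in \pn{A}{L}(a)$. This completes the argument.

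The only point requiring a little care — and the main obstacle — is the first paragraph: producing a nonzero vector simultaneously in $L-a$ and orthogonal to $\aff(A)-a$. This is a routine linear-algebra fact (a proper subspace of a finite-dimensional space has nontrivial orthogonal complement within the larger subspace), so in the write-up I would state it briefly rather than belabor it. One must only make sure to pick $a \in A$ as the common basepoint throughout, so that $\aff(A) - a = \pa(A)$ and $L - a$ are genuine linear subspaces with $\pa(A) \subseteq L-a$; since $a \in A \subseteq \aff(A) \subseteq L$, this is legitimate. Everything else is a direct appeal to the inclusions already catalogued in Lemma~\ref{l:NsubsetN}.
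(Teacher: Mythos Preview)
Your proof is correct, and it takes a genuinely more elementary route than the paper's. The paper leverages the structural decomposition Theorem~\ref{p:pnA(L)}: since $\nc{A}{\aff(A)}(a)\subseteq\nc{A}{L}(a)=\{0\}$, both restricted normal cones are trivial, so \eqref{e:pnALc} and \eqref{e:pnALipc} give $N_A(a)=(L-a)^\perp=(\aff(A)-a)^\perp$, whence $L-a=\aff(A)-a$. Your argument bypasses that theorem entirely: you produce a nonzero $v\in(L-a)\cap(\aff(A)-a)^\perp$ directly from linear algebra, then feed it through the chain $(\aff(A)-a)^\perp\subseteq(A-a)^\ominus$ and $(A-a)^\ominus\cap\cone(L-a)\subseteq\pn{A}{L}(a)\subseteq\nc{A}{L}(a)$ from Lemma~\ref{l:NsubsetN}\ref{l:NsubsetNvii},\ref{l:NsubsetNviii},\ref{l:NsubsetNiv}. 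The trade-off: the paper's version is a two-line corollary once Theorem~\ref{p:pnA(L)} is in hand, while yours is self-contained and could in principle be placed earlier in the development, right after Lemma~\ref{l:NsubsetN}. Your remark that the existence of $v$ is the only nontrivial point is apt, and your justification via the orthogonal decomposition $(L-a)=(\aff(A)-a)\oplus\big((L-a)\cap(\aff(A)-a)^\perp\big)$ is the cleanest way to state it; the meandering parenthetical about projecting and ``correcting'' can be dropped in the write-up.
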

\begin{proof}
Using $0\in\nc{A}{\aff(A)}(a)\subseteq\nc{A}{L}(a)=\{0\}$ and
applying \eqref{e:pnALc} and \eqref{e:pnALipc}, we have
\begin{equation}
N_A(a)=0+(L-a)^\perp=0+(\aff(A)-a)^\perp.
\end{equation}
So $L-a=\aff(A)-a$, i.e., $L=\aff(A)$.
\end{proof}

\begin{theorem}
\label{t:ncEquiv} Let $A$ and $B$ be nonempty subsets of $X$, and
let $a\in A$. Then
\begin{equation}
\label{e:0212a} \nc{A}{B}(a)=\{0\} \quad\Leftrightarrow\quad
(\exi\dd>0)\big(\forall x\in A\cap \ball{a}{\dd}\big)\;\; P^{-1}_A(x)\cap
B \subseteq\{x\}.
\end{equation}
Furthermore, if $A$ is closed and $B$ is an affine subspace of $X$
containing $A$, then the following are equivalent:
\begin{enumerate}
\item
\label{t:ncEquivi} $\nc{A}{B}(a)=\{0\}$.
\item
\label{t:ncEquivii} $(\exi\rho>0)$ $\ball{a}{\rho}\cap B\subseteq A$.
\item
\label{t:ncEquiviii} $B=\aff(A)$ and $a\in \reli(A)$.
\end{enumerate}
\end{theorem}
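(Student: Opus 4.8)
The plan is to establish the general characterization \eqref{e:0212a} first, and then, under the extra hypotheses, to obtain the three-way equivalence via the cycle \ref{t:ncEquiviii}$\Rightarrow$\ref{t:ncEquivi}$\Rightarrow$\ref{t:ncEquivii}$\Rightarrow$\ref{t:ncEquiviii}. For \eqref{e:0212a} I would use the outer-limit description of $\nc{A}{B}(a)$ recorded in \eqref{e:0224b}. For ``$\Leftarrow$'': if $P_A^{-1}(x)\cap B\subseteq\{x\}$ for every $x\in A\cap\ball{a}{\delta}$, then $\big(B\cap P_A^{-1}x\big)-x\subseteq\{0\}$, hence $\pn{A}{B}(x)=\{0\}$ for each such $x$ (using the convention $\cone(\varnothing)=\{0\}$), and therefore the outer limit of $\pn{A}{B}(x)$ as $x\to a$ in $A$ equals $\{0\}$, i.e.\ $\nc{A}{B}(a)=\{0\}$. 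For ``$\Rightarrow$'' I argue contrapositively: if for every $\delta>0$ there exist $x\in A\cap\ball{a}{\delta}$ and $b\in\big(P_A^{-1}x\cap B\big)\smallsetminus\{x\}$, then taking $\delta=1/n$ produces $x_n\to a$ in $A$ together with the unit vectors $u_n:=(b_n-x_n)/\|b_n-x_n\|\in\pn{A}{B}(x_n)$; since $X$ is finite-dimensional, some subsequence of $(u_n)_\nnn$ converges to a vector $u$ with $\|u\|=1$, and then $0\neq u\in\nc{A}{B}(a)$.

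Now assume that $A$ is closed and that $B$ is an affine subspace of $X$ containing $A$. The implication \ref{t:ncEquiviii}$\Rightarrow$\ref{t:ncEquivi} is immediate from Lemma~\ref{l:NsubsetN}\ref{l:NsubsetNri}, since under \ref{t:ncEquiviii} one has $B=\aff(A)$ and $a\in\reli(A)$. For \ref{t:ncEquivi}$\Rightarrow$\ref{t:ncEquivii}: by \eqref{e:0212a} there is $\delta>0$ with $P_A^{-1}(x)\cap B\subseteq\{x\}$ for all $x\in A\cap\ball{a}{\delta}$, and I claim $\ball{a}{\delta/2}\cap B\subseteq A$. Indeed, given $y\in\ball{a}{\delta/2}\cap B$, pick $x\in P_A(y)$ (possible since $A$ is closed); then $\|x-y\|=d_A(y)\leq d(a,y)\leq\delta/2$ because $a\in A$, so $\|x-a\|\leq\delta$ and $x\in A\cap\ball{a}{\delta}$; since $y\in B$ and $y\in P_A^{-1}(x)$, the hypothesis forces $y=x\in A$. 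Hence \ref{t:ncEquivii} holds with $\rho=\delta/2$. Finally, for \ref{t:ncEquivii}$\Rightarrow$\ref{t:ncEquiviii}: from $\ball{a}{\rho}\cap B\subseteq A\subseteq B$, together with the fact that a ball of the affine subspace $B$ centered at $a\in B$ has affine hull $B$ (the case $\dim B=0$ being trivial, since then $A=B=\{a\}$), we get $B=\aff\big(\ball{a}{\rho}\cap B\big)\subseteq\aff(A)\subseteq B$, so $B=\aff(A)$; and since $\ball{a}{\rho}\cap B=\ball{a}{\rho}\cap\aff(A)$ is a relative neighborhood of $a$ contained in $A$, we conclude $a\in\reli(A)$.

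The step I expect to require the most care is \ref{t:ncEquivi}$\Rightarrow$\ref{t:ncEquivii}, where the pointwise condition on $P_A^{-1}$ delivered by \eqref{e:0212a} has to be promoted to the set inclusion $\ball{a}{\rho}\cap B\subseteq A$; projecting an arbitrary point of $\ball{a}{\delta/2}\cap B$ onto $A$ and using $a\in A$ to keep the nearest point inside $\ball{a}{\delta}$ handles this cleanly and, incidentally, does not even use that $B$ is affine (affineness is needed only in \ref{t:ncEquivii}$\Rightarrow$\ref{t:ncEquiviii}). All remaining steps are either routine unwindings of the definitions of $\nc{A}{B}$, $\reli$, and $\aff$, or a direct appeal to Lemma~\ref{l:NsubsetN}\ref{l:NsubsetNri}.
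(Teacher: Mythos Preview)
Your proof is correct and follows essentially the same approach as the paper: the characterization \eqref{e:0212a} via the outer-limit description of $\nc{A}{B}$, and the cycle \ref{t:ncEquiviii}$\Rightarrow$\ref{t:ncEquivi} (via Lemma~\ref{l:NsubsetN}\ref{l:NsubsetNri}), \ref{t:ncEquivi}$\Rightarrow$\ref{t:ncEquivii} (project a point of $\ball{a}{\delta/2}\cap B$ onto $A$), and \ref{t:ncEquivii}$\Rightarrow$\ref{t:ncEquiviii} (affine-hull argument) all match the paper's proof. Your treatment of the ``$\Rightarrow$'' direction in \eqref{e:0212a} via an explicit compactness/subsequence argument is slightly more detailed than the paper's one-line observation, but the idea is the same.
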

\begin{proof}
Note that $\nc{A}{B}(a)=\{0\}$ $\Leftrightarrow$ $(\exi\dd>0)$
$(\forall x\in A\cap \ball{a}{\dd})$ $\pn{A}{B}(x)=\{0\}$. Hence
\eqref{e:0212a} follows from the definition of $\pn{A}{B}(x)$.

Now suppose that $A$ is closed and $B$ is an affine subspace of $X$
containing $A$.

``\ref{t:ncEquivi}$\Rightarrow$\ref{t:ncEquivii}'':
Let $\delta>0$ be as in \eqref{e:0212a} and set $\rho := \delta/2$.
Let $b\in B(a;\rho)\cap B$, and take $x\in P_Ab$, which is possible
since $A$ is closed. Then $\|b-x\|=d_A(b)\leq\|b-a\|\leq\rho$ and
hence
\begin{equation}
\|x-a\|\leq\|x-b\|+\|b-a\| \leq\rho+\rho=2\rho = \delta.
\end{equation}
Using \eqref{e:0212a}, we deduce that $b\in P_A^{-1}(x)\cap
B\subseteq\{x\}\subseteq A$.

``\ref{t:ncEquivii}$\Rightarrow$\ref{t:ncEquiviii}'': It follows
that $B=\aff(B)\subseteq\aff(A)\subseteq B$; hence, $B=\aff(A)$.
Thus $\ball{a}{\rho}\cap\aff(A)\subseteq A$, which means that
$a\in\reli(A)$.

``\ref{t:ncEquiviii}$\Rightarrow$\ref{t:ncEquivi}'':
Lemma~\ref{l:NsubsetN}\ref{l:NsubsetNri}.
\end{proof}

\begin{corollary}[interior and boundary characterizations]
\label{c:N=0} Let $A$ be a nonempty closed subset of $X$, and let
$a\in A$. Then the following hold:
\begin{enumerate}
\item
\label{c:N=0i} $\nc{A}{\aff(A)}(a)=\{0\}$ $\Leftrightarrow$
$a\in\reli(A)$.
\item
\label{c:N=0ii} $\nc{A}{\aff(A)}(a)\neq\{0\}$ $\Leftrightarrow$
$a\in A\smallsetminus \reli(A)$.
\item
\label{c:N=0iii} $N_A(a)=\{0\}$ $\Leftrightarrow$ $a\in\inte(A)$.
\item
\label{c:N=0iv} $N_A(a)\neq\{0\}$ $\Leftrightarrow$ $a\in
A\smallsetminus\inte(A)$.
\end{enumerate}
\end{corollary}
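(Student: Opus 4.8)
The plan is to read off all four statements from Theorem~\ref{t:ncEquiv}, which already packages the essential content once the restricting set is chosen appropriately; essentially no new work is needed.

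For \ref{c:N=0i} I would apply the ``furthermore'' part of Theorem~\ref{t:ncEquiv} with $B:=\aff(A)$. This is legitimate since $A$ is closed and $\aff(A)$ is an affine subspace of $X$ containing $A$. The equivalence \ref{t:ncEquivi}$\Leftrightarrow$\ref{t:ncEquiviii} then reads: $\nc{A}{\aff(A)}(a)=\{0\}$ iff ($\aff(A)=\aff(A)$ and $a\in\reli(A)$). The first conjunct is vacuous, so this is precisely $a\in\reli(A)$. Statement \ref{c:N=0ii} is then just the contrapositive of \ref{c:N=0i}: since $a\in A$ throughout, the negation of ``$a\in\reli(A)$'' is exactly ``$a\in A\smallsetminus\reli(A)$''.

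For \ref{c:N=0iii} I would instead take $B:=X$ in Theorem~\ref{t:ncEquiv} (again $X$ is trivially an affine subspace of $X$ containing $A$) and recall from Corollary~\ref{c:Boris=ncX} that $\nc{A}{X}(a)=N_A(a)$. The equivalence \ref{t:ncEquivi}$\Leftrightarrow$\ref{t:ncEquiviii} then gives $N_A(a)=\{0\}$ iff $X=\aff(A)$ and $a\in\reli(A)$. The only point requiring more than quotation is the elementary observation that ``$\aff(A)=X$ and $a\in\reli(A)$'' is equivalent to ``$a\in\inte(A)$'': if $\aff(A)=X$ then $\reli(A)=\inte(A)$, and conversely any point of $\inte(A)$ forces $A$ to have full affine hull. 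Finally, \ref{c:N=0iv} is the contrapositive of \ref{c:N=0iii}.

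There is no genuine obstacle here; the difficulty has been front-loaded into Theorem~\ref{t:ncEquiv} (and, behind it, Lemma~\ref{l:NsubsetN} and Theorem~\ref{p:pnA(L)}). If one prefers not to invoke the choice $B=X$ in Theorem~\ref{t:ncEquiv}, an equally short route to \ref{c:N=0iii} is to use the orthogonal decomposition $N_A(a)=\nc{A}{\aff(A)}(a)\oplus(\aff(A)-a)^\perp$ from \eqref{e:pnALipc}: the left-hand side vanishes iff both summands do, i.e.\ iff $a\in\reli(A)$ (by \ref{c:N=0i}) and $\aff(A)=X$, i.e.\ iff $a\in\inte(A)$. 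Either way, the whole proof is a few lines of bookkeeping.
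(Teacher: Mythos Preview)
Your proposal is correct and follows essentially the same approach as the paper: apply Theorem~\ref{t:ncEquiv} with $B=\aff(A)$ for \ref{c:N=0i}, and with $B=X$ (together with Corollary~\ref{c:Boris=ncX}) for \ref{c:N=0iii}, with \ref{c:N=0ii} and \ref{c:N=0iv} being contrapositives. The only cosmetic difference is that for \ref{c:N=0iii} the paper implicitly uses the equivalence \ref{t:ncEquivi}$\Leftrightarrow$\ref{t:ncEquivii} of Theorem~\ref{t:ncEquiv} (since $\ball{a}{\rho}\cap X\subseteq A$ is literally $a\in\inte(A)$), whereas you go via \ref{t:ncEquivi}$\Leftrightarrow$\ref{t:ncEquiviii} and then translate; both are fine.
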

\begin{proof}
\ref{c:N=0i}: Apply Theorem~\ref{t:ncEquiv} with $B=\aff(A)$.
\ref{c:N=0ii}: Clear from \ref{c:N=0i}. \ref{c:N=0iii}: Apply
Theorem~\ref{t:ncEquiv} with $B=X$, and recall
Corollary~\ref{c:Boris=ncX}. \ref{c:N=0iv}: Clear from
\ref{c:N=0iii}.
\end{proof}


A second look at the proof of
\ref{t:ncEquivi}$\Rightarrow$\ref{t:ncEquivii} in
Theorem~\ref{t:ncEquiv} reveals that this implication does actually
not require the assumption that $B$ be an affine subspace of $X$
containing $A$. The following example illustrates that the converse
implication fails even when $B$ is a superset of $\aff(A)$.

\begin{example}
Suppose that $X=\RR^2$, and set $A := \RR\times\{0\}$, $a=(0,0)$,
and $B=\RR\times\{0,2\}$. Then $A=\aff(A)\subseteq B$ and
$\ball{a}{1}\cap B\subseteq A$; however, $(\forall x\in A)$
$\pn{A}{B}(x)=\{0\}\times\RP$ and therefore
$\nc{A}{B}(a)=\{0\}\times\RP\neq\{(0,0)\}$.
\end{example}


\subsection*{Two convex sets}

It is instructive to interpret the previous results for two convex sets:

\begin{theorem}[two convex sets:
restricted normal cones and relative interiors]
\label{t:compareCQ2}
Let $A$ and $B$ be nonempty convex subsets of $X$.
Then the following are equivalent:
\begin{enumerate}
\item\label{t:CQ2i1} $\reli A\cap\reli B\neq\emptyset$.
\item\label{t:CQ2i} $0\in\reli(B-A)$.
\item\label{t:CQ2i2} $\cone(B-A)=\lspan(B-A)$.
\item\label{t:CQ2ii} $N_A(c)\cap(-N_B(c))\cap\overline{\cone}(B-A)=\{0\}$ for some $c\in A\cap B$.
\item\label{t:CQ2iii} $N_A(c)\cap(-N_B(c))\cap\overline{\cone}(B-A)=\{0\}$ for every $c\in A\cap B$.
\item\label{t:CQ2iv} $N_A(c)\cap(-N_B(c))\cap\lspan(B-A)=\{0\}$ for some $c\in A\cap B$.
\item\label{t:CQ2v} $N_A(c)\cap(-N_B(c))\cap\lspan(B-A)=\{0\}$ for every $c\in A\cap B$.
\item\label{t:CQ2vi} $\nc{A}{\aff(A\cup B)}(c) \cap (-\nc{B}{\aff(A\cup
B)}(c)) = \{0\}$ for some $c\in A\cap B$.
\item\label{t:CQ2vii} $\nc{A}{\aff(A\cup B)}(c) \cap (-\nc{B}{\aff(A\cup B)}(c)) =\{0\}$ for every $c\in A\cap B$.
\item\label{t:CQ2viii} $\nc{A-B}{\lspan(B-A)}(0)=\{0\}$.
\end{enumerate}
\end{theorem}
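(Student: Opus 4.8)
The plan is to prove the equivalence of the ten statements in Theorem~\ref{t:compareCQ2} by establishing a suitable cycle of implications, leaning heavily on the affine-subspace calculus developed in Section~\ref{s:normalandaffine}. First I would handle the purely convex-geometric core: \ref{t:CQ2i1} $\Leftrightarrow$ \ref{t:CQ2i} is immediate from the standard fact that $\reli(B-A) = \reli(B) - \reli(A)$ (so $0 \in \reli(B-A)$ iff $\reli A \cap \reli B \neq \emptyset$), and \ref{t:CQ2i} $\Leftrightarrow$ \ref{t:CQ2i2} is exactly Proposition~\ref{p:onecone} applied to the convex set $S = B-A$ (which contains $0$ whenever $A \cap B \neq \emptyset$; one should note that all the later statements force $A \cap B \neq \emptyset$, so this hypothesis is harmless). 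This gives the ``intrinsic'' block \ref{t:CQ2i1}--\ref{t:CQ2i2}.

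Next I would connect this block to the normal-cone statements \ref{t:CQ2ii}--\ref{t:CQ2vii}. The key observation is that for convex sets, $N_A = \cnc{A}$ by Lemma~\ref{l:NsubsetN}\ref{l:NsubsetNvi}, and the polar/duality relation $\cone(S)^{\ominus\ominus} = \overline{\cone}(S)$ together with $(\overline{\cone}(B-A))^\ominus = N_A(c) \cap (-N_B(c))$ at any $c \in A \cap B$ (this last identity: $u \in N_A(c)$ iff $\langle u, A - c\rangle \leq 0$, and $u \in -N_B(c)$ iff $\langle u, B-c\rangle \geq 0$, so together iff $\langle u, (B-c)-(A-c)\rangle \geq 0 \geq \dots$, i.e. $\langle u, B - A\rangle$ has a consistent sign, which after a sign check on the cone gives $u \in (\overline{\cone}(B-A))^\ominus$ up to orientation). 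Then $\overline{\cone}(B-A) = \lspan(B-A)$ is equivalent to its polar being a linear subspace, namely $\bigl(\lspan(B-A)\bigr)^\perp$; intersecting back with $\overline{\cone}(B-A) = \lspan(B-A)$ in statements \ref{t:CQ2ii}--\ref{t:CQ2v} then collapses to $\{0\}$ precisely when the span condition holds. Concretely: \ref{t:CQ2i2} $\Rightarrow$ all of \ref{t:CQ2ii}--\ref{t:CQ2v} because then $N_A(c) \cap (-N_B(c)) \subseteq (\overline{\cone}(B-A))^\ominus = (\lspan(B-A))^\perp$, which meets $\lspan(B-A)$ only at $0$; conversely \ref{t:CQ2iv} (the weakest, ``for some $c$'' with $\lspan$) $\Rightarrow$ \ref{t:CQ2i2} because if $\overline{\cone}(B-A) \subsetneq \lspan(B-A)$ then, $\overline{\cone}(B-A)$ being a closed convex cone properly inside a subspace, its polar (relative to that subspace) is nontrivial, producing a nonzero $u \in (\overline{\cone}(B-A))^\ominus \cap \lspan(B-A) = N_A(c) \cap (-N_B(c)) \cap \lspan(B-A)$. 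The ``some $c$'' versus ``every $c$'' versions are reconciled by noting $\overline{\cone}(B-A)$ and $\lspan(B-A)$ do not depend on $c$, and the argument above never used a special $c$.

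For \ref{t:CQ2vi}, \ref{t:CQ2vii}, and \ref{t:CQ2viii} I would invoke the affine-restricted calculus directly. By \eqref{e:pnALipe}, for $c \in A \cap B$ we have $\nc{A}{\aff(A\cup B)}(c) = N_A(c) \cap \lspan(A-B)$ and similarly $-\nc{B}{\aff(A\cup B)}(c) = -N_B(c) \cap \lspan(A-B)$ (using $\lspan(A-B) = \lspan(B-A)$), so their intersection is exactly $N_A(c) \cap (-N_B(c)) \cap \lspan(B-A)$, identifying \ref{t:CQ2vi} with \ref{t:CQ2iv} and \ref{t:CQ2vii} with \ref{t:CQ2v}. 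For \ref{t:CQ2viii}, Corollary~\ref{c:cal2} applied with $L = \aff(A \cup B)$ gives $\nc{A}{L}(c) \cap (-\nc{B}{L}(c)) = \nc{A-B}{L-c}(0)$, and $L - c = \aff(A\cup B) - c = \lspan(B-A)$ by Lemma~\ref{l:aff-lspan}; also $A - B$ is convex with $0 \in A-B$, and $\lspan(B-A) = \lspan(A-B) = \aff(A-B)$, so $\nc{A-B}{L-c}(0) = \nc{A-B}{\aff(A-B)}(0)$, which matches statement \ref{t:CQ2viii} and equals $\nc{A}{\aff(A\cup B)}(c) \cap (-\nc{B}{\aff(A\cup B)}(c))$; hence \ref{t:CQ2viii} $\Leftrightarrow$ \ref{t:CQ2vi}. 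Assembling: \ref{t:CQ2i1} $\Leftrightarrow$ \ref{t:CQ2i} $\Leftrightarrow$ \ref{t:CQ2i2} $\Rightarrow$ \{\ref{t:CQ2ii}--\ref{t:CQ2v}\} $\Rightarrow$ \ref{t:CQ2iv} $\Rightarrow$ \ref{t:CQ2i2}, with \ref{t:CQ2vi} $\Leftrightarrow$ \ref{t:CQ2iv}, \ref{t:CQ2vii} $\Leftrightarrow$ \ref{t:CQ2v}, \ref{t:CQ2viii} $\Leftrightarrow$ \ref{t:CQ2vi} closes the loop.

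The main obstacle I anticipate is getting the polarity/sign bookkeeping exactly right in the step relating $N_A(c) \cap (-N_B(c))$ to the negative polar cone of $\overline{\cone}(B-A)$, and in particular making sure the ``proper subcone of a subspace has nontrivial polar within that subspace'' argument is airtight (this is where one genuinely uses that we are in a Euclidean space and that $\overline{\cone}(B-A)$ is closed convex). Everything else is either a direct citation of Section~\ref{s:normalcone}--\ref{s:normalandaffine} machinery or routine convex analysis; the $c$-independence of all the relevant cones makes the ``some'' $\Leftrightarrow$ ``every'' passages essentially free.
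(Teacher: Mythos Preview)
Your proposal is correct and follows essentially the same route as the paper: both establish the convex-geometric core \ref{t:CQ2i1}$\Leftrightarrow$\ref{t:CQ2i}$\Leftrightarrow$\ref{t:CQ2i2} via Proposition~\ref{p:onecone}, then pivot on the identity $N_A(c)\cap(-N_B(c))=(\ccone(B-A))^\oplus$ (the paper obtains this via Corollary~\ref{c:cal2} with $L=X$, you derive it directly; note the correct sign is $\oplus$, not $\ominus$, though when $\ccone(B-A)$ is a subspace both collapse to the orthogonal complement, so your argument survives), and finally handle \ref{t:CQ2vi}--\ref{t:CQ2viii} through \eqref{e:pnALipe}, Corollary~\ref{c:cal2}, and Lemma~\ref{l:aff-lspan} exactly as you describe. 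The paper closes the loop slightly differently, using Corollary~\ref{c:N=0}\ref{c:N=0i} to get \ref{t:CQ2viii}$\Leftrightarrow$\ref{t:CQ2i} directly, whereas you route \ref{t:CQ2viii} through \ref{t:CQ2vi}; both work.
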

\begin{proof}
By \cite[Corollary~6.6.2]{Rock70},
\ref{t:CQ2i}
$\Leftrightarrow$
$\reli A\cap \reli B\neq\varnothing$
$\Leftrightarrow$
$0\in \reli A - \reli B$
$\Leftrightarrow$
\ref{t:CQ2i}.

Applying Proposition~\ref{p:onecone} to $B-A$,
and \cite[Proposition~3.1.3]{BBL} to $\ccone(B-A)$,
we obtain
\begin{subequations}
\label{e:0302a}
\begin{align}
\text{\ref{t:CQ2i}}
&\Leftrightarrow
\text{\ref{t:CQ2i2}}
\;\Leftrightarrow\;
\ccone(B-A)=\lspan(B-A)\\
&\Leftrightarrow
\ccone(B-A)\cap\big(\ccone(B-A)\big)^\oplus = \{0\}.
\end{align}
\end{subequations}
Let $c\in A\cap B$. Then Corollary~\ref{c:cal2} (with $L=X$) yields
$N_A(c)\cap\big(-N_B(c)\big)=N_{A-B}(0)=
(A-B)^\ominus=(B-A)^\oplus=(\overline{\cone}(B-A))^\oplus$.
Hence
\begin{equation}
\label{e:0302b}
(\forall c\in C)\quad N_A(c)\cap\big(-N_B(c)\big)
\cap \ccone(B-A) = \big(\ccone(B-A)\big)^\oplus \cap \ccone(B-A)
\end{equation}
and
\begin{equation}
\label{e:0302c}
(\forall c\in C)\quad N_A(c)\cap\big(-N_B(c)\big)
\cap \lspan(B-A) = \big(\ccone(B-A)\big)^\oplus \cap \lspan(B-A).
\end{equation}
Combining \eqref{e:0302a}, \eqref{e:0302b}, and \eqref{e:0302c},
we see that \ref{t:CQ2i}--\ref{t:CQ2v} are equivalent.

Next, Lemma~\ref{l:aff-lspan} and Corollary~\ref{c:cal2} yield
the equivalence of \ref{t:CQ2vi}--\ref{t:CQ2viii}.

Finally,
\ref{t:CQ2viii}$\Leftrightarrow$\ref{t:CQ2i} by
Corollary~\ref{c:N=0}\ref{c:N=0i}.
\end{proof}

\begin{corollary}[two convex sets: normal cones and interiors]
\label{c:compareCQ3}
Let $A$ and $B$ be nonempty convex subsets of $X$.
Then the following are equivalent:
\begin{enumerate}
\item\label{c:CQ3i} $0\in\inte(B-A)$.
\item\label{c:CQ3i1} $\cone(B-A)=X$.
\item\label{c:CQ3ii} $N_A(c) \cap (-N_B(c)) = \{0\}$ for some $c\in A\cap B$.
\item\label{c:CQ3iii} $N_A(c) \cap (-N_B(c)) = \{0\}$ for every $c\in A\cap B$.
\item\label{c:CQ3iv} $N_{A-B}(0)=\{0\}$.
\end{enumerate}
\end{corollary}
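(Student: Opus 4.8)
The plan is to run a streamlined version of the proof of Theorem~\ref{t:compareCQ2}, the only genuinely new point being the passage from relative interiors to interiors. As there, when invoking Corollaries~\ref{c:N=0} and~\ref{c:cal2} (which are stated for closed convex sets) one replaces $A$ and $B$ by their closures: for a point $c$ of a convex set the Mordukhovich normal cone is unchanged under closure (combine $P_A^{-1}(c)=P_{\overline A}^{-1}(c)$ with Lemma~\ref{l:NsubsetN}), and neither $\inte(B-A)$ nor $\cone(B-A)$ is affected.

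First I would prove \ref{c:CQ3i}$\Leftrightarrow$\ref{c:CQ3i1} by applying Proposition~\ref{p:onecone} to the convex set $S:=B-A$. By definition, $0\in\inte S$ holds iff $0\in\reli S$ and $\aff S=X$. Since $0\in\reli S\subseteq S$ forces $\aff S=\lspan S$, and since Proposition~\ref{p:onecone} gives $0\in\reli S\Leftrightarrow\cone S=\lspan S$, this is in turn equivalent to the conjunction $\cone S=\lspan S$ and $\lspan S=X$, hence (using the always-valid inclusion $\cone S\subseteq\lspan S$) to $\cone S=X$. Next, \ref{c:CQ3i}$\Leftrightarrow$\ref{c:CQ3iv}: the interior commutes with negation, so $0\in\inte(B-A)\Leftrightarrow 0\in\inte(A-B)$, and Corollary~\ref{c:N=0}\ref{c:N=0iii} applied to $A-B$ rewrites the latter as $N_{A-B}(0)=\{0\}$.

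It remains to incorporate \ref{c:CQ3ii} and \ref{c:CQ3iii}. Since each of the (now equivalent) conditions \ref{c:CQ3i}, \ref{c:CQ3i1}, \ref{c:CQ3iv} entails $0\in B-A$, hence $A\cap B\neq\emptyset$, we may fix $c\in A\cap B$; then Corollary~\ref{c:cal2} with $L=X$, together with Corollary~\ref{c:Boris=ncX}, yields
\begin{equation}
N_A(c)\cap\big(-N_B(c)\big)=\nc{A-B}{X}(0)=N_{A-B}(0),
\end{equation}
and the right-hand side does not depend on $c$. Consequently \ref{c:CQ3ii} $\Leftrightarrow$ \ref{c:CQ3iii}, and both coincide with \ref{c:CQ3iv}. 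This closes the web of equivalences.

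I expect the only real obstacle to be the relative-interior/interior bookkeeping in the first step: Proposition~\ref{p:onecone} is phrased in terms of $\reli$ and $\lspan$ while \ref{c:CQ3i} and \ref{c:CQ3i1} are phrased in terms of $\inte$ and $X$, so one must track the extra condition $\aff(B-A)=X$ (which, once $0\in B-A$, is simply $\lspan(B-A)=X$) cleanly through the chain. Everything else is a direct appeal to the normal-cone calculus already established.
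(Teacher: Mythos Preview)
Your proof is correct and follows essentially the same route as the paper's: the paper derives the equivalences by invoking Theorem~\ref{t:compareCQ2} and Corollary~\ref{c:N=0}\ref{c:N=0iii}, while you unpack that reference and appeal directly to the underlying ingredients (Proposition~\ref{p:onecone}, Corollary~\ref{c:cal2} with $L=X$, and Corollary~\ref{c:N=0}\ref{c:N=0iii}). Your explicit handling of the closure issue is a nice touch, since both Corollary~\ref{c:N=0} and Corollary~\ref{c:cal2} are stated for closed sets.
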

\begin{proof}
We start by notating that if $C$ is a convex subset of $X$, then
$0\in\inte C$ $\Leftrightarrow$ $0\in\reli C$ and $\lspan C = X$.
Consequently,
\begin{equation}
\label{e:0302d}
\text{\ref{c:CQ3i}}
\quad\Leftrightarrow\quad
0\in\reli(B-A)\;\text{and}\;\lspan(B-A)=X.
\end{equation}
Assume that \ref{c:CQ3i} holds.
Then \eqref{e:0302d} and Theorem~\ref{t:compareCQ2} imply that
$\cone(B-A)=\ccone(B-A)=\lspan(B-A)=X$. Hence \ref{c:CQ3i1} holds,
and from Theorem~\ref{t:compareCQ2} we obtain that
\ref{c:CQ3i1}$\Rightarrow$\ref{c:CQ3ii}$\Leftrightarrow$\ref{c:CQ3iii}$\Leftrightarrow$\ref{c:CQ3iv}.
Finally, Corollary~\ref{c:N=0}\ref{c:N=0iii} yields the implication
\ref{c:CQ3iv}$\Rightarrow$\ref{c:CQ3i}.
\end{proof}

\section{Further examples}

\label{s:further}

In this section, we provide further examples that illustrate
particularities of restricted normal cones.

As announced in Remark~\ref{r:120212a},
when $a\in A_2\subsetneqq A_1$, it is possible that
the \emph{nonconvex} restricted normal cones
satisfy
$\nc{A_1}{B}(a)\not\subseteq\nc{A_2}{B}(a)$ even when $A_1$ and
$A_2$ are both \emph{convex}. This lack of inclusion is also known
for the Mordukhovich normal cone (see \cite[page~5]{Boris1}, where however
one of the sets is not convex).
Furthermore, the following example also shows that the restricted normal
cone cannot be derived from the Mordukhovich normal cone by
the simple relativization procedure of intersecting with naturally
associated cones and subspaces.

\begin{example}[lack of convexity, inclusion, and relativization]
\label{ex:120212a} Suppose that $X=\RR^2$, and define two nonempty
closed \emph{convex} sets by $A := A_1:=\epi(|\cdot|)$ and
$A_2:=\epi(2|\cdot|)$. Then $a := (0,0) \in A_2\subsetneqq A_1$.
Furthermore, set $B:=\RR\times\{0\}$. Then
\begin{subequations}
\begin{align}
\big(\forall x=(x_1,x_2)\in A_1\big)\quad \pn{A_1}{B}(x)&=
\begin{cases}
\RP(1,-1),& \text{if $x_2=x_1>0$;}\\
\RP(-1,-1),& \text{if $x_2=-x_1>0$;}\\
\{(0,0)\},&\text{otherwise,}
\end{cases}\\
\big(\forall x=(x_1,x_2)\in A_2\big)\quad \pn{A_2}{B}(x)&=
\begin{cases}
\RP(2,-1),& \text{if $x_2=2x_1>0$;}\\
\RP(-2,-1),& \text{if $x_2=-2x_1>0$;}\\
\{(0,0)\},&\text{otherwise.}
\end{cases}
\end{align}
\end{subequations}
Consequently,
\begin{subequations}
\begin{align}
\nc{A_1}{B}(a)&=\cone\big\{(1,-1),(-1,-1)\big\},\\
\nc{A_2}{B}(a)&=\cone\big\{(2,-1),(-2,-1)\big\}.
\end{align}
\end{subequations}
Note that $\nc{A_1}{B}(a)\not\subseteq\nc{A_2}{B}(a)$ and
$\nc{A_2}{B}(a)\not\subseteq\nc{A_1}{B}(a)$; in fact,
$\nc{A_1}{B}(a)\cap \nc{A_2}{B}(a)=\{(0,0)\}$.
Furthermore, neither
$\nc{A_1}{B}(a)$ nor $\nc{A_2}{B}(a)$ is convex even though $A_1$,
$A_2$, and $B$ are.
Finally, observe that
$\cone(B-a)=\lspan(B-a)=B$, that $\cone(B-A)=\RR\times\RM$,
that $\lspan(B-A)= X$, and that $N_A(a)=\cone[(1,-1),(-1,-1)]\neq
\nc{A}{B}(a)$.
Consequently,
$\cone(B-a)\cap N_A(a)=\lspan(B-a)\cap N_A(a)=\{(0,0)\}$,
$\cone(B-A)\cap N_A(a)= N_A(a)=\lspan(B-A)\cap N_A(a)$.
Therefore, $\nc{A}{B}(a)$ \emph{cannot be obtained by intersecting
the Mordukhovich normal cone} with one of the sets
$\cone(B-a)$, $\lspan(B-a)$, $\cone(B-A)$, and $\lspan(B-A)$.
\end{example}


We shall present some further examples. The proof of the following
result is straight-forward and hence omitted.

\begin{proposition}
\label{p:ncK} Let $K$ be a closed cone in $X$, and let $B$ be a
nonempty cone of $X$. Then
\begin{equation}
\nc{K}{B}(0)=\overline{\bigcup_{x\in K}\pn{K}{B}(x)}=
\overline{\bigcup_{x\in \bd K}\pn{K}{B}(x)}= \overline{\bigcup_{x\in
K}\nc{K}{B}(x)}= \overline{\bigcup_{x\in \bd K}\nc{K}{B}(x)}.
\end{equation}
\end{proposition}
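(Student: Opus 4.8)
The plan is to establish a chain of inclusions among the five displayed sets that forces all of them to coincide. Since $K$ is a cone and $0\in K$, the quantity $\nc{K}{B}(0)$ is what I want to identify; I will show
\[
\overline{\bigcup_{x\in\bd K}\pn{K}{B}(x)}
\;\subseteq\;
\overline{\bigcup_{x\in K}\pn{K}{B}(x)}
\;\subseteq\;
\overline{\bigcup_{x\in K}\nc{K}{B}(x)}
\;\subseteq\;
\nc{K}{B}(0),
\]
together with the companion chain through $\bd K$, and then close the loop with the reverse inclusion $\nc{K}{B}(0)\subseteq\overline{\bigcup_{x\in\bd K}\pn{K}{B}(x)}$. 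The first two inclusions in the displayed chain are immediate: $\bd K\subseteq K$ gives the first, and $\pn{K}{B}(x)\subseteq\nc{K}{B}(x)$ for every $x$ (Lemma~\ref{l:NsubsetN}\ref{l:NsubsetNiv}) gives the second. The third inclusion, $\overline{\bigcup_{x\in K}\nc{K}{B}(x)}\subseteq\nc{K}{B}(0)$, is the scaling step: I would use that $K$ is a cone so that for $x\in K$ and $\lambda>0$ one has $\lambda x\in K$, hence (since scaling by $\lambda^{-1}$ is a homeomorphism fixing $0$ and mapping $K$ to $K$ and $B\cdots$) $\nc{K}{B}(\lambda x)$ can be brought arbitrarily close to $0$ in the base point; combined with the fact that $\nc{K}{B}(0)$ is by definition the outer limit of $\pn{K}{B}(y)$ as $y\to 0$ in $K$ (see \eqref{e:0224b}), and that $\gr\nc{K}{B}\subseteq\overline{\gr\pn{K}{B}}$, one obtains that every $u\in\nc{K}{B}(x)$ is a limit of proximal normal vectors at points $\lambda_n x\to 0$, hence $u\in\nc{K}{B}(0)$. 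Here I should be slightly careful: $B$ need not be scaled along with $K$, so I cannot literally say $\pn{K}{B}(\lambda x)=\lambda^{-1}\pn{K}{B}(x)$; instead I argue directly from the graph characterization that a vector $u$ achieved as a limit of proximal normals at points $x_n\to x\in K$ is also, after appending the sequence $x_n/k\to 0$, a limit of proximal normals converging to $0$ in base point — but this needs $\pn{K}{B}$ to behave well under scaling of the base point alone.

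The cleanest route to the scaling step is to observe that for a cone $K$, $0\in P_K^{-1}(0)$ and more generally the proximal-normal structure is self-similar: if $u\in\pn{K}{B}(x)$ with $x\in K$, write $u=\sum$ of a conical combination of vectors $b-x$ with $b\in B\cap P_K^{-1}(x)$; then for $\lambda\in(0,1]$ one has $\lambda x\in K$, and although $B\cap P_K^{-1}(\lambda x)$ is a different set, the point is that we only need \emph{some} sequence of base points in $K$ tending to $0$ along which a sequence of proximal normals tends to $u$. Given $u\in\nc{K}{B}(x)$, by definition there are $x_n\to x$ in $K$ and $u_n\to u$ with $u_n\in\pn{K}{B}(x_n)$. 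I would then invoke that $\pn{K}{B}(x_n)$, being a conical hull of $(B\cap P_K^{-1}x_n)-x_n$, is unchanged under the assumption; the real content is to produce base points tending to $0$. For this I would instead run the argument the other way: show $\nc{K}{B}(0)\supseteq\nc{K}{B}(x)$ for all $x\in K$ by the outer-limit formula \eqref{e:0224b} applied with the self-map $t\mapsto tx$ — i.e.\ use that $\pn{K}{B}$ and hence $\nc{K}{B}$ at $0$ captures the limit along the ray $\RP x\subseteq K$. This is exactly where the cone hypothesis on $K$ is essential and is the main obstacle; everything else is bookkeeping.

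Finally, for the reverse inclusion $\nc{K}{B}(0)\subseteq\overline{\bigcup_{x\in\bd K}\pn{K}{B}(x)}$: by definition $u\in\nc{K}{B}(0)$ means there are $a_n\to 0$ in $K$ and $u_n\to u$ with $u_n\in\pn{K}{B}(a_n)$. For each $n$ with $u_n\neq 0$, the existence of a nonzero restricted proximal normal at $a_n$ forces $a_n$ to lie on $\bd K$ (a point in $\inte K$ has $\pn{K}{X}(a_n)=\{0\}$ since $P_K^{-1}(a_n)=\{a_n\}$ there, hence also $\pn{K}{B}(a_n)=\{0\}$ by Lemma~\ref{l:NsubsetN}\ref{l:NsubsetNii}); for the finitely-or-infinitely-many $n$ with $u_n=0$, replace $a_n$ by $0\in\bd K$ (since $K$ is a cone, $0\in\bd K$ unless $K=X$, and if $K=X$ then $\nc{K}{B}(0)=\{0\}$ trivially and the statement is vacuous). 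Thus $u$ is a limit of vectors $u_n\in\pn{K}{B}(a_n)$ with $a_n\in\bd K$, giving $u\in\overline{\bigcup_{x\in\bd K}\pn{K}{B}(x)}$. Combining all inclusions, the five sets are equal.

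\medskip

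As the proposition itself states, the argument is routine once the cone self-similarity is pinned down; I expect the only genuinely delicate point to be making precise that the outer limit defining $\nc{K}{B}(0)$ absorbs $\nc{K}{B}(x)$ for every $x\in K$, which is where one must exploit that $\RP x\subseteq K$.
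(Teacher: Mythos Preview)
There is a genuine gap in your scaling step, and it stems from misreading the hypotheses: you write ``$B$ need not be scaled along with $K$, so I cannot literally say $\pn{K}{B}(\lambda x)=\lambda^{-1}\pn{K}{B}(x)$,'' but the statement explicitly assumes that $B$ is a cone. This hypothesis is not decorative; the proposition is \emph{false} without it. For instance, take $K=\RR_+\times\{0\}$ in $\RR^2$ and $B=\{(1,1)\}$: then $\pn{K}{B}((1,0))=\{0\}\times\RP$ while $\pn{K}{B}(x)=\{(0,0)\}$ for every other $x\in K$, so $\overline{\bigcup_{x\in K}\pn{K}{B}(x)}=\{0\}\times\RP$ whereas $\nc{K}{B}(0)=\{(0,0)\}$.

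Once you use that $B$ is a cone, the scaling step is immediate and much cleaner than what you sketch. For $\lambda>0$ and $x\in K$ one has $P_K^{-1}(\lambda x)=\lambda P_K^{-1}(x)$ (since $K$ is a cone) and $B=\lambda B$ (since $B$ is a cone), hence
\[
\big(B\cap P_K^{-1}(\lambda x)\big)-\lambda x
=\lambda\Big(\big(B\cap P_K^{-1}(x)\big)-x\Big),
\]
and taking conical hulls gives the exact identity $\pn{K}{B}(\lambda x)=\pn{K}{B}(x)$. With this in hand, every $u\in\pn{K}{B}(x)$ also lies in $\pn{K}{B}(x/n)$ with $x/n\to 0$, so $u\in\nc{K}{B}(0)$; this dispatches the inclusion $\overline{\bigcup_{x\in K}\pn{K}{B}(x)}\subseteq\nc{K}{B}(0)$ directly, and the inclusion $\nc{K}{B}(x)\subseteq\nc{K}{B}(0)$ follows since each element of $\nc{K}{B}(x)$ is already a limit of vectors in $\bigcup_{y\in K}\pn{K}{B}(y)$. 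Your remaining inclusions (those coming from $\bd K\subseteq K$, from $\pn{K}{B}\subseteq\nc{K}{B}$, and your argument that nonzero proximal normals force the base point to lie in $\bd K$) are fine, so fixing this one point completes the proof.
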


\begin{example}
\label{ex:ncK} Let $K$ be a closed convex cone in $X$, suppose that
$u_0\in\inte(K)$ and that $K\subseteq\{u_0\}^\oplus$, and set $B :=
\{u_0\}^\perp$. Then:
\begin{enumerate}
\item
\label{ex:ncKi} $(\forall x\in K\cap B)$ $\pn{K}{B}(x)=\{0\}$.
\item
\label{ex:ncKii} $(\forall x\in K\smallsetminus B)$
$\pn{K}{B}(x)=\nc{K}{B}(x)=N_K(x)=K^\ominus\cap\{x\}^\perp$.
\item
\label{ex:ncKiii} $\nc{K}{B}(0)=\overline{\bigcup_{x\in
K}\pn{K}{B}(x)}= \overline{\bigcup_{x\in K\smallsetminus
B}(K^\ominus\cap \{x\}^\perp)}= \overline{K^\ominus\cap\bigcup_{x\in
K\smallsetminus
B}\{x\}^\perp}$.\\[+2mm]
If one of these unions is closed, then all closures may be omitted.
\end{enumerate}
\end{example}
\begin{proof}
\ref{ex:ncKi}: Let $x\in K\cap B$. It suffices to show that $B\cap
P_K^{-1}(x)=\{x\}$. To this end, take $y\in B\cap P_K^{-1}(x)$. By
definition of $B$, we have $\scal{u_{0}}{x}=0$ and
$\scal{u_{0}}{y}=0$. Hence
\begin{equation}\label{e:ontheplane}
\scal{u_{0}}{y-x}=0.
\end{equation}
Furthermore, $x=P_Ky$ and hence, using e.g.\
\cite[Proposition~6.27]{BC2011}, we have $y-x\in K^\ominus$. Since
$u_{0}\in\inte K$, there exists $\delta>0$ such that
$\ball{u_{0}}{\delta}\subseteq K$. Thus $y-x\in
(\ball{u_{0}}{\delta})^\ominus$. In view of \eqref{e:ontheplane},
$\delta\|y-x\|\leq 0$. Therefore, $y=x$.


\ref{ex:ncKii}: Let $x\in K\smallsetminus B$. Using
Lemma~\ref{l:NsubsetN}\ref{l:NsubsetNii}\&\ref{l:NsubsetNiv},
Corollary~\ref{c:Boris=ncX},
Lemma~\ref{l:NsubsetN}\ref{l:NsubsetNvi}, and
\cite[Example~6.39]{BC2011}, we have
\begin{equation}
\label{e:0212b} \pn{K}{B}(x)\subseteq \pn{K}{X}(x) \subseteq
\nc{K}{X}(x) = N_K(x) = \cnc{K}(x) = K^\ominus\cap \{x\}^\perp.
\end{equation}
Since $x\in K\subseteq\{u_0\}^\oplus$ and $x\notin B$, we have
$\scal{u_0}{x}>0$. Now take $u\in
(K^\ominus\cap\{x\}^\perp)\smallsetminus\{0\}$. Since $u\in
K^\ominus$ and $u_0\in\inte(K)$, we have $\scal{u}{u_0}<0$. Now set
\begin{equation}
b:=x-\frac{\scal{u_0}{x}}{\scal{u_0}{u}}u.
\end{equation}
Then $b\in B$ and $b-x = -\scal{u_0}{x}\scal{u_0}{u}^{-1}u\in\RPP u
\subseteq K^\ominus\cap\{x\}^\perp = \cnc{K}(x)$. By
\cite[Proposition~6.46]{BC2011}, $x=P_Kb$. Hence
$b-x\in\pn{K}{B}(x)$ and thus $u\in\pn{K}{B}(x)$. Therefore,
$K^\ominus\cap\{x\}^\perp \subseteq \pn{K}{B}(x)$. In view of
\eqref{e:0212b}, and since $\pn{K}{B}(x)\subseteq
\nc{K}{B}(x)\subseteq N_K(x)$ by
Lemma~\ref{l:NsubsetN}\ref{l:NsubsetNii}\&\ref{l:NsubsetNiv}, we
have established \ref{ex:ncKii}.

\ref{ex:ncKiii}: Combine \ref{ex:ncKi}, \ref{ex:ncKii}, and
Proposition~\ref{p:ncK}.
\end{proof}

\begin{example}[ice cream cone]
\label{ex:ice} Suppose that $X=\RR^m=\RR^{m-1}\times\RR$, where
$m\in\{2,3,4,\ldots\}$, and let $\beta>0$. Define the corresponding
closed convex \emph{ice cream cone} by
\begin{equation}
K:=\Menge{x\in\RR^m}{ \beta\sqrt{x_1^2+\cdots+x_{m-1}^2}\leq x_m},
\end{equation}
and set $B:=\RR^{m-1}\times\{0\}$.
Then the following hold:
\begin{enumerate}
\item
\label{ex:icei} $\pn{K}{B}(0,0)=\{(0,0)\}$.
\item
\label{ex:icei+}
$N_K(0,0)=\menge{y\in\RR^m}{\beta^{-1}\sqrt{y_1^2+\cdots+y_{m-1}^2}\leq
-y_m} = \bigcup_{z\in\RR^{m-1}\atop \|z\|\leq 1} \RP(\beta z,-1)$.
\item
\label{ex:iceii} $(\forall z\in\RR^{m-1}\smallsetminus\{0\})$
$\pn{K}{B}(z,\beta\|z\|)=\nc{K}{B}(z,\beta\|z\|)
=N_K(z,\beta\|z\|)=\RP(\beta{z},-\|z\|)$.
\item
\label{ex:iceiii} $\nc{K}{B}(0,0)=\bigcup_{z\in\RR^{m-1}\atop
\|z\|=1} \RP(\beta z,-1)$, which is a closed cone that is \emph{not
convex}.
\end{enumerate}
\end{example}
\begin{proof}
Clearly, $K$ is closed and convex. Note that $K$ is the lower level set
of height $0$ of the continuous convex function
\begin{equation}
f\colon \RR^m=\RR^{m-1}\times\RR\to \RR\colon x=(z,x_m)\mapsto
\beta\|z\|-x_m;
\end{equation}
hence , by \cite[Exercise~2.5(b) and its solution on
page~205]{Zalinescu},
\begin{equation}
\label{e:0213a}
\inte(K)=\menge{x=(z,x_m)\in\RR^{m-1}\times\RR}{\beta\|z\|<x_m}.
\end{equation}
Lemma~\ref{l:NsubsetN}\ref{l:NsubsetNii}\&\ref{l:NsubsetNiv},
Corollary~\ref{c:Boris=ncX}, and Corollary~\ref{c:N=0}\ref{c:N=0iii}
imply that
\begin{equation}
\label{e:0213b} \big(\forall x\in\inte(K)\big)\quad \pn{K}{B}(x)
\subseteq \pn{K}{X}(x) \subseteq \nc{K}{X}(x) = N_K(x) = \{0\}.
\end{equation}
Write $x=(z,x_m)\in \RR^{m-1}\times\RR=X$, and assume that $x\in K$.
We thus assume that $x\in\bd(K)$, i.e., $\beta\|z\|=x_m$ by
\eqref{e:0213a}, i.e., $x=(z,\beta\|z\|)$. Combining
\cite[Proposition~16.8]{BC2011} with
\cite[Corollary~2.9.5]{Zalinescu} (or \cite[Lemma~26.17]{BC2011})
applied to $f$, we obtain
\begin{equation}
N_K\big(z,\beta\|z\|\big) = \cone\big(
\beta\partial\|\cdot\|(z)\times\{-1\}\big),
\end{equation}
where $\partial\|\cdot\|$ denotes the subdifferential operator from
convex analysis applied to the Euclidean norm in $\RR^{m-1}$. In
view of \cite[Example~16.25]{BC2011} we thus have
\begin{equation}
\label{e:0213c} N_K\big(z,\beta\|z\|\big) =
\begin{cases}
\cone\big(\beta\|z\|^{-1}z\times\{-1\}\big), &\text{if $z\neq 0$;}\\
\cone\big(\ball{0}{\beta}\times\{-1\}\big), &\text{if $z=0$.}\\
\end{cases}
\end{equation}
The case $z=0$ in \eqref{e:0213c} readily leads to \ref{ex:icei+}.

Now set $u_0 := (0,1)\in\RR^{m-1}\times\RR$. Then $\{u_0\}^\perp =
B$ and $\{u_0\}^\oplus = \RR^{m-1}\times\RP \supseteq K$. Note that
$(0,0)\in K\cap B$ and thus $\pn{K}{B}(0,0)=\{(0,0)\}$ by
Example~\ref{ex:ncK}\ref{ex:ncKi}. We have thus established
\ref{ex:icei}.

Now assume that $z\neq 0$. Then $N_K(z,\beta\|z\|) = \RP(\beta
z,-\|z\|)$. Note that $\beta z\neq 0$ and so $(z,\beta\|z\|)\notin
B$. The formulas announced in \ref{ex:iceii} therefore follow from
Example~\ref{ex:ncK}\ref{ex:ncKii}.

Next, combining \eqref{e:0213a}, \eqref{e:0213b}, and
Example~\ref{ex:ncK}\ref{ex:ncKiii} as well as utilizing the
compactness of the unit sphere in $\RR^{m-1}$, we see that
\begin{equation}
\nc{K}{B}(0,0) =\overline{\bigcup_{z\in\RR^{m-1}\smallsetminus\{0\}}
\RP(\beta z,-\|z\|)} =\overline{\bigcup_{z\in\RR^{m-1}\atop \|z\|=1}
\RP(\beta z,-1)} ={\bigcup_{z\in\RR^{m-1}\atop \|z\|=1} \RP(\beta
z,-1)}.
\end{equation}
This establishes \ref{ex:iceiii}.
\end{proof}

\begin{remark}
Consider Example~\ref{ex:ice}. Note that $\nc{K}{B}(0,0)$ is
actually the boundary of $N_K(0,0)$. Furthermore, since
$N_K(0,0)=\cnc{K}(0,0)$ by Lemma~\ref{l:NsubsetN}\ref{l:NsubsetNvi},
the formulas in \ref{ex:icei+} also describe $K^\ominus$, which is
therefore an ice cream cone as well.
\end{remark}



\section{Cones containing restricted normal cones}

\label{s:nosuper}

In this section, we provide various examples illustrating that
the restricted (proximal) normal cone does not naturally arise
by considering various natural cones containing it.

Let $A$ and $B$ be nonempty subsets of $X$,
and let $a\in A$.
We saw in Lemma~\ref{l:NsubsetN}\ref{l:NsubsetNi+}
that
\begin{equation}\label{e:pn=cone(B-a)}
\pn{A}{B}(a)=\cone\big((B-a)\cap(P^{-1}_Aa-a)\big)\subseteq
\cone(B-a)\cap\pnX{A}(a).
\end{equation}
This raises the question whether or not the inclusion
in \eqref{e:pn=cone(B-a)} is strict.
It turns out and as we shall now illustrate,
both conceivable alternatives (equality and strict inclusion)
do occur. Therefore, $\pn{A}{B}(a)$ is a new construction.

We start with a condition sufficient for equality in \eqref{e:pn=cone(B-a)},
\begin{proposition}\label{p:pn=cone(B-a)}
Let $A$ and $B$ be nonempty subsets of $X$. Let $A$ be closed and $a\in A$.
Assume that one of the following holds:
\begin{enumerate}
\item
\label{p:pn=cone(B-a)i}
$P^{-1}_A(a)-a$ is a cone.
\item
\label{p:pn=cone(B-a)ii}
$A$ is convex.
\end{enumerate}
Then
$\pn{A}{B}(a)= \cone(B-a)\cap\pnX{A}(a)$.
\end{proposition}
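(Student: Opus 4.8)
The plan is to reduce to hypothesis \ref{p:pn=cone(B-a)i} and then read off the identity from Lemma~\ref{l:cone01}\ref{l:cone01ii}. First I would observe that \ref{p:pn=cone(B-a)ii} implies \ref{p:pn=cone(B-a)i}: if $A$ is closed and convex, then by the implication ``\ref{l:PA&cvex-i}$\Rightarrow$\ref{l:PA&cvex-ii}'' of Lemma~\ref{l:PA&cvex}, the set $P_A^{-1}(a)-a$ is a cone (for every $a\in A$, in particular for the given one). So from that point on one may assume outright that $K:=P_A^{-1}(a)-a$ is a cone.

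Next I would record two easy facts. Since $a\in P_A(a)$ (because $d_A(a)=0=d(a,a)$), we have $0\in K$, so $K\neq\varnothing$; and for any cone $K$ one has $\rho K\subseteq K$ for every $\rho\in\RP$, while trivially $K\subseteq\cone K$, whence $\cone K=K$. In particular, by \eqref{e:pnX},
\[
\pnX{A}(a)=\pn{A}{X}(a)=\cone\big(P_A^{-1}(a)-a\big)=P_A^{-1}(a)-a=K.
\]

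Finally I would apply Lemma~\ref{l:cone01}\ref{l:cone01ii} with the cone $K$ and with the (nonempty) set $B-a$ playing the role of ``$B$'' in that lemma: this yields $\cone\big(K\cap(B-a)\big)=K\cap\cone(B-a)$. By the defining formula \eqref{e:pnB}, the left-hand side is exactly $\pn{A}{B}(a)$, and by the displayed identity the right-hand side equals $\pnX{A}(a)\cap\cone(B-a)=\cone(B-a)\cap\pnX{A}(a)$. This is the asserted equality, and it subsumes the inclusion ``$\subseteq$'' already established in Lemma~\ref{l:NsubsetN}\ref{l:NsubsetNi+}.

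There is essentially no real obstacle: the whole content is the reduction via Lemma~\ref{l:PA&cvex}, the remark that $\cone K=K$ for a cone $K$ (which is what collapses $\pnX{A}(a)$ to $P_A^{-1}(a)-a$), and the one-line appeal to Lemma~\ref{l:cone01}\ref{l:cone01ii}. The only point requiring mild care is noting that $K$ is nonempty, so that the convention $\cone(\varnothing)=\{0\}$ does not intervene; in fact Lemma~\ref{l:cone01}\ref{l:cone01ii} already covers that degenerate case, so even this caveat is dispensable.
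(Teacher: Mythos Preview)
Your proof is correct and follows essentially the same route as the paper's own proof: for \ref{p:pn=cone(B-a)i} the paper simply cites Lemma~\ref{l:cone01}\ref{l:cone01ii}, and for \ref{p:pn=cone(B-a)ii} it combines \ref{p:pn=cone(B-a)i} with Lemma~\ref{l:PA&cvex}. You have merely spelled out the details more explicitly, including the observation that $\cone K=K$ when $K$ is a cone (which is implicit in the paper's one-word citation).
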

\begin{proof}
\ref{p:pn=cone(B-a)i}:
Lemma~\ref{l:cone01}\ref{l:cone01ii}.
\ref{p:pn=cone(B-a)ii}:
Combine
\ref{p:pn=cone(B-a)i} with Lemma~\ref{l:PA&cvex}.
\end{proof}

The next examples illustrates that equality in \eqref{e:pn=cone(B-a)}
can occur even though $P^{-1}_A(a)-a$ is not a cone.
Consequently, the assumption that
$P^{-1}_A(a)-a$ be a cone in Proposition~\ref{p:pn=cone(B-a)}
is sufficient---but not necessary---for equality in \eqref{e:pn=cone(B-a)}.

\begin{example}
Suppose that $X = \RR^2$, and
let $A:= X\smallsetminus\RPP^2$,
$B:=\RP(1,1)$, and $a:=(0,1)$.
Then one verifies that
\begin{subequations}
\begin{align}
P^{-1}_A(a) - a&=[0,1]\times\{0\},\\
\pnX{A}(a)&=\cone(P^{-1}_A a - a)=\RP\times\{0\},\\
\cone(B-a)&=\menge{(t_1,t_2)\in\RR^2}{t_1\geq0,t_2<t_1}\cup\{(0,0)\},\\
\pn{A}{B}(a)&=\RP\times\{0\}.
\end{align}
\end{subequations}
Hence $\pn{A}{B}(a)=\RP\times\{0\} =  \cone(B-a)\cap\pnX{A}(a)$.
\end{example}

We now provide an example where the inclusion
in \eqref{e:pn=cone(B-a)} is strict.
\begin{example}
\label{ex:pn-cone(B-a)}
Suppose that $X=\RR^2$,
let $A := \cone\{(1,0),(0,1)\} =\bd\RP^2$,
$B := \RP(2,1)$, and $a:=(0,1)\in A$.
Then one verifies that
\begin{subequations}
\begin{align}
P^{-1}_A(a) - a&=\left]\minf,1\right]\times\{0\},\\
\pnX{A}(a)&=\cone(P^{-1}_A a - a)=\RR\times\{0\},\\
\cone(B-a)&=\menge{(x_1,x_2)\in\RR^2}{x_1\geq0,2x_2<{x_1}}\cup\{(0,0)\},\\
\pn{A}{B}(a)&=\{(0,0)\}.
\end{align}
\end{subequations}
Hence $\pn{A}{B}(a)=\{(0,0)\}
\subsetneqq \RP\times\{0\} = \cone(B-a)\cap\pnX{A}(a)$,
and therefore the inclusion in \eqref{e:pn=cone(B-a)} is strict.
In accordance with Proposition~\ref{p:pn=cone(B-a)},
neither is $P_A^{-1}(a)-a$ a cone nor is $A$ convex.
\end{example}


Let us now turn to the restricted normal cone $\nc{A}{B}(a)$.
Taking the outer limit in \eqref{e:pn=cone(B-a)}
and recalling \eqref{e:0224b}, we obtain
\begin{subequations}
\label{e:nc=limsup}
\begin{align}
\nc{A}{B}(a)&=\limsup_{x\to a\atop x\in A}\pn{A}{B}(x)\\
&\subseteq\limsup_{x\to a\atop x\in A}\big(\cone(B-x)\cap\pnX{A}(x)\big)\label{e:nc=limsup-b}\\
&\subseteq \big(\limsup_{x\to a\atop x\in A}\cone(B-x)\big)\cap
N_A(a)\label{e:nc=limsup-c}.
\end{align}
\end{subequations}
The inclusions in \eqref{e:nc=limsup} are optimal in the sense
that all possible combinations (strict inclusion and equality)
can occur:
\begin{itemize}
\item For results and examples
illustrating
equality in \eqref{e:nc=limsup-b}
and
equality in \eqref{e:nc=limsup-c},
see Proposition~\ref{p:nc=limsup} and Example~\ref{ex:0225b} below.
\item For an example
illustrating
equality in \eqref{e:nc=limsup-b}
and
strict inequality in \eqref{e:nc=limsup-c},
see Example~\ref{ex:0225c} below.
\item For an example
illustrating
strict inequality in \eqref{e:nc=limsup-b}
and
equality in \eqref{e:nc=limsup-c},
see Example~\ref{ex:0225e} below.
\item For examples
illustrating
strict inequality in \eqref{e:nc=limsup-b}
and
strict inequality in \eqref{e:nc=limsup-c},
see Example~\ref{ex:0225a} and Example~\ref{ex:0225d} below.
\end{itemize}

The remainder of this section is devoted to providing
these examples.


\begin{proposition}
\label{p:nc=limsup1}
Let $A$ and $B$ be nonempty subsets of $X$. Let $A$ be closed $a\in A$.
Assume that one of the following holds:
\begin{enumerate}
\item
\label{p:nc=limsup1i}
$P_A^{-1}(x)-x$ is a cone for every
$x\in A$ sufficiently close to $a$.
\item
\label{p:nc=limsup1ii}
$A$ is convex.
\end{enumerate}
Then \eqref{e:nc=limsup-b} holds with equality, i.e.,
$\nc{A}{B}(a) = \limsup_{x\to a\atop x\in
A}\big(\cone(B-x)\cap\pnX{A}(x)\big)$
\end{proposition}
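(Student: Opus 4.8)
The plan is to deduce the claim from Proposition~\ref{p:pn=cone(B-a)} together with the description of the restricted normal cone as an outer limit recorded in \eqref{e:0224b}, namely $\nc{A}{B}(a) = \limsup_{x\to a,\,x\in A}\pn{A}{B}(x)$. The point is that this outer limit only sees the values of $x\mapsto\pn{A}{B}(x)$ arbitrarily close to $a$, so a pointwise identity valid near $a$ passes to the limit.

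First, assume \ref{p:nc=limsup1i} and fix $\delta>0$ such that $P_A^{-1}(x)-x$ is a cone for every $x\in A\cap\ball{a}{\delta}$. For each such $x$, Proposition~\ref{p:pn=cone(B-a)}\ref{p:pn=cone(B-a)i} (applied with $a$ replaced by $x$) gives
\[
\pn{A}{B}(x) = \cone(B-x)\cap\pnX{A}(x).
\]
Next I would invoke the locality of the outer limit: by Definition~\ref{d:NCone}\ref{d:nc}, $u\in\nc{A}{B}(a)$ iff there are $a_n\to a$ in $A$ and $u_n\to u$ with $u_n\in\pn{A}{B}(a_n)$, and since $a_n\to a$ we have $a_n\in\ball{a}{\delta}$ for all large $n$; discarding the finitely many remaining terms does not change the limit. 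Hence the outer limit over $x\to a$ in $A$ equals the outer limit over $x\to a$ in $A\cap\ball{a}{\delta}$, and on that set $\pn{A}{B}$ coincides with $x\mapsto\cone(B-x)\cap\pnX{A}(x)$. Therefore
\[
\nc{A}{B}(a) = \limsup_{x\to a,\,x\in A}\pn{A}{B}(x)
= \limsup_{x\to a,\,x\in A}\big(\cone(B-x)\cap\pnX{A}(x)\big),
\]
which is exactly \eqref{e:nc=limsup-b} with equality.

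Finally, under \ref{p:nc=limsup1ii} the implication \ref{l:PA&cvex-i}$\Rightarrow$\ref{l:PA&cvex-ii} of Lemma~\ref{l:PA&cvex} shows that $P_A^{-1}(x)-x$ is a cone for every $x\in A$, so hypothesis \ref{p:nc=limsup1i} is in force and the conclusion follows from the first case.

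I do not expect a genuine obstacle here: the only step requiring care is the locality of the Kuratowski outer limit, i.e.\ the observation that only the tail of the approximating sequences $(a_n,u_n)$ matters, so that knowing $\pn{A}{B}=\cone(B-\cdot)\cap\pnX{A}$ on an arbitrarily small ball around $a$---rather than on all of $A$---already suffices to identify the two limsups.
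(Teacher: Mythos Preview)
Your proof is correct and follows essentially the same approach as the paper: invoke Proposition~\ref{p:pn=cone(B-a)} pointwise near $a$ to get $\pn{A}{B}(x)=\cone(B-x)\cap\pnX{A}(x)$, then pass to the outer limit using \eqref{e:0224b}. Your reduction of case~\ref{p:nc=limsup1ii} to case~\ref{p:nc=limsup1i} via Lemma~\ref{l:PA&cvex} is a minor variation---the paper would handle it directly through Proposition~\ref{p:pn=cone(B-a)}\ref{p:pn=cone(B-a)ii}---but the argument is the same in substance.
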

\begin{proof}
Indeed, if $x\in A$ is sufficiently close to $a$, then
Proposition~\ref{p:pn=cone(B-a)} implies that
$\pn{A}{B}(x)=\cone(B-x)\cap\pnX{A}(x)$.
Now take the outer limit as $x\to a$ in $A$.
\end{proof}

\begin{proposition}
\label{p:nc=limsup}
Let $A$ be a nonempty closed convex subset of $X$,
let $B$ be a nonempty subset of $X$, and let $a\in A$.
Assume that $x\mapsto \cone(B-x)$ is outer semicontinuous at $a$ relative
to $A$, i.e.,
\begin{equation}
\label{e:nc=limsupii}
\limsup_{x\to a\atop x\in A}\cone(B-x)=\cone(B-a),
\end{equation}
Then \eqref{e:nc=limsup} holds with equalities, i.e.,
\begin{equation}
\label{e:0224c}
\nc{A}{B}(a)=
\limsup_{x\to a\atop x\in A}\big(\cone(B-x)\cap\pnX{A}(x)\big)
=\big(\limsup_{x\to a\atop x\in A}\cone(B-x)\big)\cap N_A(a).
\end{equation}
\end{proposition}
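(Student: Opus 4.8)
The plan is to show that the two a-priori inclusions recorded in \eqref{e:nc=limsup} both collapse to equalities, using the outer semicontinuity hypothesis \eqref{e:nc=limsupii} together with the elementary calculus from Lemma~\ref{l:NsubsetN}. Recall that \eqref{e:nc=limsup} is valid for arbitrary nonempty sets $A$ and $B$: it is obtained by applying the outer limit to Lemma~\ref{l:NsubsetN}\ref{l:NsubsetNi+}, using monotonicity of the outer limit, the fact that the outer limit of an intersection is contained in the intersection of the outer limits, and the identity $\limsup_{x\to a,\ x\in A}\pnX{A}(x)=N_A(a)$ (Corollary~\ref{c:Boris=ncX} and the outer-limit description of $N_A$). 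Under \eqref{e:nc=limsupii}, the rightmost term in \eqref{e:nc=limsup} is $\big(\limsup_{x\to a,\ x\in A}\cone(B-x)\big)\cap N_A(a)=\cone(B-a)\cap N_A(a)$. Hence it suffices to establish the one missing inclusion
\[
\cone(B-a)\cap N_A(a)\subseteq\nc{A}{B}(a),
\]
for then the chain \eqref{e:nc=limsup} closes into a cycle of inclusions, so every inclusion there is an equality, which is exactly \eqref{e:0224c}.

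For that inclusion I would invoke only facts already proved. Since $A$ is closed and convex, Lemma~\ref{l:NsubsetN}\ref{l:NsubsetNvi} gives $N_A(a)=\cnc{A}(a)$, and by Definition~\ref{d:NCone}\ref{d:cnc} this is precisely the polar cone $(A-a)^\ominus$. Now Lemma~\ref{l:NsubsetN}\ref{l:NsubsetNviii} asserts $(A-a)^\ominus\cap\cone(B-a)\subseteq\pn{A}{B}(a)$, and Lemma~\ref{l:NsubsetN}\ref{l:NsubsetNiv} gives $\pn{A}{B}(a)\subseteq\nc{A}{B}(a)$. Chaining these three facts yields $\cone(B-a)\cap N_A(a)=(A-a)^\ominus\cap\cone(B-a)\subseteq\nc{A}{B}(a)$, as required.

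Assembling the pieces then gives $\nc{A}{B}(a)=\limsup_{x\to a,\ x\in A}\big(\cone(B-x)\cap\pnX{A}(x)\big)=\cone(B-a)\cap N_A(a)=\big(\limsup_{x\to a,\ x\in A}\cone(B-x)\big)\cap N_A(a)$, which is \eqref{e:0224c}. (One could equally well first observe that the first of these equalities is Proposition~\ref{p:nc=limsup1}\ref{p:nc=limsup1ii}, and then only prove the displayed inclusion; the two routes are interchangeable.) I do not anticipate a genuine obstacle: the whole point is to recognize that the outer semicontinuity assumption is exactly what converts the rightmost term of \eqref{e:nc=limsup} into $\cone(B-a)\cap N_A(a)$, after which the polarity identity of Lemma~\ref{l:NsubsetN}\ref{l:NsubsetNviii} finishes the argument. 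The only mildly delicate points — that the outer limit distributes as a subset over intersections and that $\limsup_{x\to a,\ x\in A}\pnX{A}(x)=N_A(a)$ — are already embedded in the derivation of \eqref{e:nc=limsup} and need not be revisited.
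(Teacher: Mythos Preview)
Your proof is correct and follows essentially the same route as the paper: both arguments reduce the claim to showing $\cone(B-a)\cap N_A(a)\subseteq\nc{A}{B}(a)$ and then close the inclusion cycle in \eqref{e:nc=limsup} via \eqref{e:nc=limsupii}. The only cosmetic difference is that the paper reaches this inclusion through Proposition~\ref{p:pn=cone(B-a)}\ref{p:pn=cone(B-a)ii} together with $N_A(a)=\pnX{A}(a)$, whereas you cite Lemma~\ref{l:NsubsetN}\ref{l:NsubsetNviii} together with $N_A(a)=(A-a)^\ominus$; these are equivalent bookkeeping choices.
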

\begin{proof}
The convexity of $A$ and Lemma~\ref{l:NsubsetN}\ref{l:NsubsetNvi} yield
\begin{equation}
\cone(B-a)\cap N_A(a)=\cone(B-a)\cap\pnX{A}(a).
\end{equation}
On the other hand,
Proposition~\ref{p:pn=cone(B-a)}\ref{p:pn=cone(B-a)ii}
and Lemma~\ref{l:NsubsetN}\ref{l:NsubsetNiv} imply
\begin{equation}
\cone(B-a)\cap\pnX{A}(a)=\pn{A}{B}(a) \subseteq\nc{A}{B}(a).
\end{equation}
Altogether,
$\cone(B-a)\cap N_A(a)\subseteq \nc{A}{B}(a)$.
In view of \eqref{e:nc=limsupii},
\begin{equation}
\big(\limsup_{x\to a\atop x\in A}\cone(B-x)\big)
\cap N_A(a)\subseteq \nc{A}{B}(a).
\end{equation}
Recalling \eqref{e:nc=limsup}, we therefore obtain \eqref{e:0224c}.
\end{proof}

\begin{example}
\label{ex:0225b}
Let $A$ be a linear subspace of $X$, set $B := A$,
and $a:=(0,0)$.
Then $\nc{A}{B}(a) =  \{0\}$ by \eqref{e:pnALd},
$N_A(a)=A^\perp$, and
$\cone(B-x)=A$, for every $x\in A$.
Hence $(\limsup_{x\to a\atop x\in A}\cone(B-x))\cap N_A(a)=\{0\}$
and \eqref{e:nc=limsup} holds with equalities.
\end{example}


In Proposition~\ref{p:nc=limsup},
the convexity and the outer semicontinuity assumptions are
both \emph{essential} in the sense that absence of either assumption
may make the inclusion \eqref{e:nc=limsup-c} strict;
we shall illustrate this in the next three examples.


\begin{example}
\label{ex:0225c}
Suppose that $X=\RR^2$,
and let $A:=\epi(|\cdot|)$, $B := \RR\times\{0\}$, 
and $a:=(0,0)$.
If $x=(x_1,x_2)\in A\smallsetminus\{a\}$,
then $x_2>0$, $B-x=\RR\times\{-x_2\}$, and so
$\cone(B-x) =\RR\times\RMM\cup\{(0,0)\}$.
Hence
\begin{equation}
\limsup_{x\to a\atop x\in A}
\cone(B-x)=\RR\times\RM\neq\RR\times\{0\}=\cone(B-a),
\end{equation}
i.e., \eqref{e:nc=limsupii} fails.
Since $A$ is closed and convex,
Lemma~\ref{l:NsubsetN}\ref{l:NsubsetNvi} implies that
$N_A(a)=\cnc{A}(a)=-A$.
Thus
\begin{equation}
\big(\limsup_{x\to a\atop x\in A} \cone(B-x)\big)
\cap N_A(a) = -A.
\end{equation}
Proposition~\ref{p:nc=limsup1}\ref{p:nc=limsup1ii} yields
equality in \eqref{e:nc=limsup-b}, i.e.,
\begin{equation}
\nc{A}{B}(a)=\limsup_{x\to a\atop x\in A}
\big(\cone(B-x)\cap\pnX{A}(x)\big).
\end{equation}
Already in Example~\ref{ex:120212a} did we observe that
\begin{equation}
\nc{A}{B}(a)=\cone\{(1,-1),(-1,-1)\}.
\end{equation}
Therefore we have
\begin{equation}
\nc{A}{B}(a)=\limsup_{x\to a\atop x\in
A}\big(\cone(B-x)\cap\pnX{A}(x)\big)\subsetneqq
\big(\limsup_{x\to a\atop x\in A}\cone(B-x)\big)\cap N_A(a),
\end{equation}
i.e., the inclusion \eqref{e:nc=limsup-c} is strict.
\end{example}


\begin{example}
\label{ex:0225a}
Suppose that $X=\RR^2$, and let
$A:=\cone\{(1,0),(0,1)\} = \bd\RP^2$,
$B:=\RR\times\{1\}\cup\{(1,0),(-1,0)\}$,
and $a:=(0,0)$.
Clearly, $A$ is not convex.
If $x=(x_1,x_2)\in A$ is sufficiently close to $a$, we have
\begin{equation}
\label{e:0224d}
\cone(B-x)=
\begin{cases}
\RR\times\RP, & \text{if $x_1\geq 0$;}\\
\RR\times\RPP\cup\cone\{(1,-x_2),(-1,-x_2)\}, &\text{if $x_2>0$.}
\end{cases}
\end{equation}
This yields
\begin{equation}
\label{e:0224f}
\limsup_{x\to a\atop x\in A}\cone(B-x)= \RR\times\RP = \cone(B-a),
\end{equation}
i.e., \eqref{e:nc=limsupii} holds.
Next, if $x=(x_1,x_2)\in A$, then
\begin{equation}
P^{-1}_A(x)=
\begin{cases}
\{x_1\}\times\left]\minf,x_1\right], &\text{if $x_1>0$ and $x_2=0$;}\\
\left]\minf,x_2\right]\times\{x_2\},&\text{if $x_1=0$ and $x_2>0$;}\\
\RM^2, &\text{if $x_1=x_2=0$,}
\end{cases}
\end{equation}
and so
\begin{equation}
\label{e:0224e}
\pnX{A}(x)=\cone\big(P^{-1}_A(x)-x\big)=
\begin{cases}
\{0\}\times\RR,&\text{if $x_1>0$ and $x_2=0$;}\\
\RR\times\{0\}, &\text{if $x_1=0$ and $x_2>0$;}\\
\RM^2, &\text{if $x_1=x_2=0$.}
\end{cases}
\end{equation}
It follows that
\begin{equation}
\label{e:0224i}
N_A(a)=\limsup_{x\to a\atop x\in A}\pnX{A}(x)=
\RM^2\cup \big(\{0\}\times\RR\big) \cup \big(\RR\times\{0\}\big).
\end{equation}
If $x\in A$ is sufficiently close $a$, then
\begin{equation}
\pn{A}{B}(x)=
\begin{cases}
\{(0,0)\},& \text{if $x\neq a$;}\\
\RM\times\{0\}, &\text{if $x=a$.}
\end{cases}
\end{equation}
It follows that
\begin{equation}
\label{e:0224g}
\nc{A}{B}(a)=\RM\times\{0\}.
\end{equation}
Combining \eqref{e:0224d} and \eqref{e:0224e}, we obtain
for every $x=(x_1,x_2)\in A$ sufficiently close to $a$ that
\begin{equation}
\cone(B-x)\cap \pnX{A}(x)=
\begin{cases}
\{0\}\times\RP,&\text{if $x_1>0$ and $x_2=0$;}\\
\{(0,0)\}, &\text{if $x_1=0$ and $x_2>0$;}\\
\RM\times\{0\}, &\text{if $x_1=x_2=0$.}
\end{cases}
\end{equation}
Thus
\begin{equation}
\label{e:0224h}
\limsup_{x\to a\atop x\in A}
\big(\cone(B-x)\cap\pnX{A}(x)\big)=
\big(\{0\}\times\RP\big) \cup \big(\RM\times\{0\}\big).
\end{equation}
Using \eqref{e:0224g}, \eqref{e:0224h},
\eqref{e:0224f}, and \eqref{e:0224i},
we conclude that
\begin{subequations}
\begin{align}
\nc{A}{B}(a)&= \RM\times\{0\}\\
&\subsetneqq \big(\{0\}\times\RP\big)\cup\big(\RM\times\{0\}\big)
= \limsup_{a'\to a\atop a'\in A}\big(\cone(B-x)\cap\pnX{A}(x)\big)\\
&\subsetneqq \big(\{0\}\times\RP\big)\cup\big(\RR\times\{0\}\big) =
\Big(\limsup_{x\to a\atop x\in A}\cone(B-x)\Big)\cap N_A(a).
\end{align}
\end{subequations}
Therefore, both inclusions in \eqref{e:nc=limsup} are strict;
however, $A$ is not convex while \eqref{e:nc=limsupii} does hold.
\end{example}

\begin{example}
\label{ex:0225d}
Suppose that $X=\RR^2$,  let $A:=\cone\{(1,0),(0,1)\}=\bd\RP^2$,
$B:=\RP(2,1)$ and $a:=(0,0)$.
Let $x=(x_1,x_2)\in A$. Then (see Example~\ref{ex:0225a})
\begin{equation}
P^{-1}_A(x)-x=
\begin{cases}
\{0\}\times\left]\minf,x_1\right], &\text{if $x_1>0$ and $x_2=0$;}\\
\left]\minf,x_2\right]\times\{0\},&\text{if $x_1=0$ and $x_2>0$;}\\
\RM^2, &\text{if $x_1=x_2=0$,}
\end{cases}
\end{equation}
\begin{equation}
\label{e:0225a}
\pnX{A}(x)=
\begin{cases}
\{0\}\times\RR,&\text{if $x_1>0$ and $x_2=0$;}\\
\RR\times\{0\}, &\text{if $x_1=0$ and $x_2>0$;}\\
\RM^2, &\text{if $x_1=x_2=0$,}
\end{cases}
\end{equation}
and
\begin{equation}
\label{e:0225g}
N_A(a)=\limsup_{x\to a\atop x\in A}\pnX{A}(x)=
\RM^2\cup \big(\{0\}\times\RR\big) \cup \big(\RR\times\{0\}\big).
\end{equation}
Thus
\begin{equation}
\pn{A}{B}(x)=\cone\big((P^{-1}_A(x)-x)\cap (B-x)\big)=
\begin{cases}
\{0\}\times\RP,& \text{if $x_1>0$ and $x_2=0$;}\\
\{(0,0)\},& \text{if $x_1=0$ and $x_2\geq 0$.}
\end{cases}
\end{equation}
Hence
\begin{equation}
\label{e:0225d}
\nc{A}{B}(a)=\limsup_{x\to a\atop x\in A}\pn{A}{B}(x)= \{0\}\times\RP.
\end{equation}
On the other hand,
\begin{equation}
\label{e:0225b}
\cone(B-x)=
\begin{cases}
\menge{(y_1,y_2)}{y_2\geq0,\,y_1<2y_2}\cup\{(0,0)\},
&\text{if $x_1>0$ and $x_2=0$;}\\
\menge{(y_1,y_2)}{y_1\geq0,\,2y_2<y_1}\cup\{(0,0)\},
&\text{if $x_1=0$ and $x_2>0$;}\\
B, &\text{if $x_1=x_2=0$.}
\end{cases}
\end{equation}
Combining \eqref{e:0225a} and \eqref{e:0225b}, we deduce that
\begin{equation}
\label{e:0225c}
\cone(B-x) \cap \pnX{A}(x) =
\begin{cases}
\{0\}\times\RP,
&\text{if $x_1>0$ and $x_2=0$;}\\
\RP\times\{0\},
&\text{if $x_1=0$ and $x_2>0$;}\\
\{(0,0)\}, &\text{if $x_1=x_2=0$.}
\end{cases}
\end{equation}
Using \eqref{e:0225b} and \eqref{e:0225c}, we compute
\begin{equation}
\label{e:0225f}
\limsup_{x\to a\atop x\in A}\cone(B-x)
=\menge{(y_1,y_2)}{y_1\geq0\text{\, or \,}y_2\geq0} =
X\smallsetminus\RMM^2 \neq B = \cone(B-a)
\end{equation}
and
\begin{equation}
\label{e:0225e}
\limsup_{x\to a\atop x\in A}
\big(\cone(B-x)\cap\pnX{A}(x)\big)=\big(\{0\}\times\RP\big) \cup \big(\RP\times\{0\}\big)
= \cone\{(0,1),(1,0)\}.
\end{equation}
Using \eqref{e:0225d},
\eqref{e:0225e}, \eqref{e:0225f}, and \eqref{e:0225g},
we conclude that
\begin{subequations}
\begin{align}
\nc{A}{B}(a)&= \{0\}\times\RP \\
&\subsetneqq \big(\{0\}\times\RP\big)\cup\big(\RP\times\{0\}\big)
= \limsup_{x\to a\atop x\in A}\big(\cone(B-x)\cap\pnX{A}(x)\big)\\
&\subsetneqq \big(\{0\}\times\RR\big)\cup\big(\RR\times\{0\}\big)
= \big(\limsup_{x\to a\atop x\in A}\cone(B-x)\big)\cap N_A(a).
\end{align}
\end{subequations}
Therefore, both inclusions in \eqref{e:nc=limsup} are strict;
however, $A$ is not convex and \eqref{e:nc=limsupii} does not hold
(see \eqref{e:0225f}).
\end{example}

Finally, we provide an example where
the inclusion \eqref{e:nc=limsup-b} is strict while
the inclusion \eqref{e:nc=limsup-c} is an equality.

\begin{example}
\label{ex:0225e}
Suppose that $X=\RR^2$,
let $A:=\cone\{(1,0),(0,1)\}$,
$B:=\menge{(y_1,y_2)}{y_1+y_2=1}$,
and $a:=(0,0)$.
Let $x=(x_1,x_2)\in A$ be sufficiently close to $a$.
We compute
\begin{subequations}
\begin{align}
\cone(B-x)&=\menge{(y_1,y_2)}{y_1+y_2>0}\cup\{(0,0)\},\label{e:0225h}\\[+2mm]
\pnX{A}(x)&=
\begin{cases}
\{0\}\times \RR, &\text{if $x_1>0$ and $x_2=0$;}\\
\RR\times\{0\},&\text{if $x_1=0$ and $x_2>0$;}\\
\RM^2, &\text{if $x_1=x_2=0$,}
\end{cases}\\[+2mm]
\pn{A}{B}(x) &= \{(0,0)\}.
\end{align}
\end{subequations}
Furthermore, Example~\ref{ex:0225a}
(see \eqref{e:0224i}) implies that
$N_A(a) = \RM^2 \cup\big(\{0\}\times\RR\big)\cup \big(\RR\times\{0\}\big)$.
We thus deduce that
\begin{subequations}
\begin{align}
\nc{A}{B}(a)&= \{(0,0)\}\\
&\subsetneqq \big(\{0\}\times\RP\big)\cup\big(\RP\times\{0\}\big)
= \limsup_{x\to a\atop x\in A}\big(\cone(B-x)\cap\pnX{A}(x)\big)\\
&= \big(\{0\}\times\RP\big)\cup\big(\RP\times\{0\}\big)
= \big(\limsup_{x\to a\atop x\in A}\cone(B-x)\big)\cap N_A(a).
\end{align}
\end{subequations}
Therefore,
the inclusion \eqref{e:nc=limsup-b} is strict while
the inclusion \eqref{e:nc=limsup-c} is an equality.
\end{example}


\section{Constraint qualification conditions and numbers}

\label{s:CQ1}

\label{s:CQnumber}
Utilizing restricted normal cones, we introduce
in this section
the notions of \emph{CQ-number}, \emph{joint-CQ-number},
\emph{CQ condition}, and \emph{joint-CQ condition},
where CQ stands for ``constraint qualification''.

\subsection*{CQ and joint-CQ numbers}

\begin{definition}[CQ-number]
\label{d:CQn}
Let $A$, $\wt{A}$, $B$, $\wt{B}$, be nonempty subsets of $X$,
let $c\in X$, and let $\dd\in\RPP$.
The \emph{CQ-number} at $c$ associated with
$(A,\wt{A},B,\wt{B})$ and $\dd$ is
\begin{equation}
\label{e:CQn}
\theta_\dd:=\theta_\dd\big(A,\wt{A},B,\wt{B}\big)
:=\sup\mmenge{\scal{u}{v}}
{\begin{aligned}
&u\in\pn{A}{\wt{B}}(a),v\in-\pn{B}{\wt{A}}(b),\|u\|\leq 1, \|v\|\leq 1,\\
&\|a-c\|\leq\dd,\|b-c\|\leq\dd.
\end{aligned}}.
\end{equation}
The \emph{limiting CQ-number} at $c$ associated with
$(A,\wt{A},B,\wt{B})$ is
\begin{equation}\label{e:lCQn}
\overline\theta:=\overline\theta\big(A,\wt{A},B,\wt{B}\big)
:=\lim_{\dd\dn0}\theta_\dd\big(A,\wt{A},B,\wt{B}\big).
\end{equation}
\end{definition}
Clearly,
\begin{equation}
\label{e:120405a}
\theta_\dd\big(A,\wt{A},B,\wt{B}\big)=\theta_\dd\big(B,\wt{B},A,\wt{A}\big)
\quad\text{and}\quad
\overline\theta\big(A,\wt{A},B,\wt{B}\big)=\overline\theta\big(B,\wt{B},A,\wt{A}\big).
\end{equation}
Note that, $\delta\mapsto \theta_\delta$ is increasing;
this makes $\overline\theta$ well defined.
Furthermore, since $0$ belongs to nonempty
$B$-restricted proximal normal cones
and because of the Cauchy-Schwarz inequality, we have
\begin{equation}
\label{e:120406a}
c\in \overline{A}\cap \overline{B}\text{~and~}0<\dd_1<\dd_2
\quad\Rightarrow\quad
0\leq\overline{\theta}\leq \theta_{\dd_1}\leq\theta_{\dd_2} \leq 1,
\end{equation}
while $\theta_\dd$, and hence $\overline{\theta}$,
is equal to $\minf$ if $c\notin \overline{A}\cap \overline{B}$
and $\delta$ is sufficiently
small (using the fact that $\sup\varnothing=\minf$).
Using Proposition~\ref{p:elementary}\ref{p:ele-ii}\&\ref{p:ele-vi}, we see that
\begin{equation}
\wt{A}\subseteq A'
\;\text{and}\;
\wt{B}\subseteq B'
\quad\Rightarrow\quad
\theta_\dd(A,\wt{A},B,\wt{B}) \leq \theta_\dd(A,A',B,B')
\end{equation}
and, for every $x\in X$,
\begin{equation}
\label{e:120405b}
\theta_\dd\big(A,\wt{A},B,\wt{B}\big)\text{~at $c$}
\quad = \quad
\theta_\dd\big(A-x,\wt{A}-x,B-x,\wt{B}-x\big)\text{~at $c-x$.}
\end{equation}

To deal with unions,
it is convenient to extend this notion as follows.

\begin{definition}[joint-CQ-number]
\label{d:jCQn}
Let $\mathcal{A} := (A_i)_{i\in I}$, $\wt{\mcA}:=(\wt{A}_i)_{i\in
I}$, $\mathcal{B} := (B_j)_{j\in J}$, $\wt{\mcB}:=(\wt{B}_j)_{j\in
J}$ be nontrivial collections\footnote{The collection $(A_i)_{i\in
I}$ is said to be \emph{nontrivial} if $I\neq\varnothing$.} of
nonempty subsets of $X$, let $c\in X$, and let $\dd\in\RPP$.
The \emph{joint-CQ-number} at
$c$ associated with $(\mathcal{A},\wt{\mcA},\mathcal{B},\wt{\mcB})$
and $\dd$ is
\begin{equation}
\label{e:jCQn}
\theta_\dd=\theta_\dd\big(\mathcal{A},
\wt{\mcA},\mathcal{B},\wt{\mcB}\big):=\sup_{(i,j)\in I\times
J}\theta_\dd\big(A_i,\wt{A}_i,B_j,\wt{B}_j\big),
\end{equation}
and the limiting joint-CQ-number at $c$ associated with
$(\mathcal{A},\wt{\mcA},\mathcal{B},\wt{\mcB})$ is
\begin{equation}
\label{e:ljCQn}
\overline\theta=\overline\theta\big(\mathcal{A}, \wt{\mcA},
\mathcal{B},\wt{\mcB}\big)
:=\lim_{\dd\dn0}\theta_\dd\big(\mathcal{A},\wt{\mcA},\mathcal{B},\wt{\mcB}\big).
\end{equation}
\end{definition}

For convenience, we will simply write $\theta_\dd$,
$\overline\theta$ and omit the possible arguments
$(A,\wt{A},B,\wt{B})$ and
$(\mathcal{A},\wt{\mcA},\mathcal{B},\wt{\mcB})$ when there is no
cause for confusion. If $I$ and $J$ are singletons, then the notions
of CQ-number and joint-CQ-number coincide. Also observe that
\begin{equation}
c\in \bigcup_{i\in I}A_i \cap \bigcup_{j\in J}B_j
\quad\Rightarrow\quad
(\forall \delta\in\RPP)\;\; 0\leq \overline{\theta}\leq\theta_\delta\leq 1
\end{equation}
while $\overline{\theta}=\theta_\dd=\minf$ when
$\dd>0$ is sufficiently small and $c$ does not belong to both
$\overline{\bigcup_{i\in I}A_i}$ and
$\overline{\bigcup_{j\in J}B_j}$.
Furthermore,
the joint-CQ-number (and hence the limiting joint-CQ-number as well)
really depends only on those sets $A_i$ and $B_j$ for which
$c\in \overline{A_i}\cap \overline{B_j}$.



To illustrate this notion, let us compute
the CQ-number of two lines.
The formula provided is the cosine of the angle
between the two lines --- as we shall see in Theorem~\ref{t:CQn=c} below,
this happens actually for all linear subspaces although
then the angle must be defined appropriately and
the proof is more involved.

\begin{proposition}[CQ-number of two distinct lines through the origin]
\label{p:CQn2l}
Suppose that $w_a$ and $w_b$ are two vectors in $X$ such that
$\|w_a\|=\|w_b\|=1$.
Let $A :=\RR w_a$, $B:= \RR w_b$, and $\dd\in\RPP$.
Assume that $A\cap B = \{0\}$.
Then the CQ-number at $0$ is
\begin{equation}
\theta_\dd(A,A,B,B)=|\scal{w_a}{w_b}|.
\end{equation}
\end{proposition}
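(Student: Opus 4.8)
The plan is to exploit that $A=\RR w_a$ and $B=\RR w_b$ are one-dimensional subspaces, so that the restricted proximal normal cones appearing in \eqref{e:CQn} (here $c=0$ and $\wt A=A$, $\wt B=B$) can be computed explicitly. Set $\gamma:=\scal{w_a}{w_b}$. Since the two lines are distinct, the hypothesis $A\cap B=\{0\}$ forces $w_b\neq\pm w_a$, hence $|\gamma|<1$; consequently $e:=(w_b-\gamma w_a)/\sqrt{1-\gamma^2}$ and $f:=(w_a-\gamma w_b)/\sqrt{1-\gamma^2}$ are well-defined unit vectors, and a direct computation using $\|w_a\|=\|w_b\|=1$ gives $\scal{e}{f}=-\gamma$.

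First I would localize the cones. Since $A$ is a subspace, $P_A$ is the orthogonal projection onto $\RR w_a$, so $P_A^{-1}(a)-a=A^\perp$ for every $a\in A$, and Definition~\ref{d:NCone}\ref{d:pnB} gives $\pn{A}{B}(a)=\cone\big((B-a)\cap A^\perp\big)$. Writing $a=sw_a$ and a generic point of $B-a$ as $tw_b-sw_a$, the condition $tw_b-sw_a\in A^\perp$ reads $t\gamma=s$; this has the unique solution $t=s/\gamma$ when $\gamma\neq0$ (producing the point $\tfrac{s}{\gamma}(w_b-\gamma w_a)$), no solution when $\gamma=0\neq s$, and every $t\in\RR$ when $\gamma=s=0$ (producing all of $\RR w_b$). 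In every case $(B-a)\cap A^\perp$ is contained in the line $\RR e$, so $\pn{A}{B}(a)\subseteq\RR e$ for all $a\in A$; symmetrically, $\pn{B}{A}(b)\subseteq\RR f$ for all $b\in B$.

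For the upper bound, I would then note that if $u\in\pn{A}{B}(a)$ and $v\in-\pn{B}{A}(b)$ satisfy $\|u\|\leq1$ and $\|v\|\leq1$, then $u\in\RR e$ and $v\in\RR f$, so $|\scal{u}{v}|\leq\|u\|\,\|v\|\,|\scal{e}{f}|\leq|\gamma|$. Since this bound does not involve $a$, $b$ or $\dd$, taking the supremum in \eqref{e:CQn} gives $\theta_\dd(A,A,B,B)\leq|\gamma|$ (and explains why the CQ-number will not depend on $\dd$).

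For the reverse inequality: if $\gamma=0$ there is nothing to prove, since $\theta_\dd\geq0$ by \eqref{e:120406a}. If $\gamma\neq0$, I would pick $\varepsilon$ with $0<\varepsilon\leq\dd$ and set $a:=\varepsilon\gamma w_a$ and $b:=\varepsilon w_b$, so that $\|a\|=|\gamma|\varepsilon\leq\dd$ and $\|b\|=\varepsilon\leq\dd$. From the localization step, $(B-a)\cap A^\perp=\{\varepsilon(w_b-\gamma w_a)\}$, hence $\pn{A}{B}(a)=\RP(w_b-\gamma w_a)=\RP e$; likewise $-\pn{B}{A}(b)=\RP\big({-}\sgn(\gamma)(w_a-\gamma w_b)\big)=\RP\big({-}\sgn(\gamma)f\big)$. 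Choosing the unit vectors $u:=e\in\pn{A}{B}(a)$ and $v:=-\sgn(\gamma)f\in-\pn{B}{A}(b)$ yields $\scal{u}{v}=-\sgn(\gamma)\scal{e}{f}=-\sgn(\gamma)(-\gamma)=|\gamma|$, so $\theta_\dd(A,A,B,B)\geq|\gamma|$; together with the upper bound this proves $\theta_\dd(A,A,B,B)=|\scal{w_a}{w_b}|$. I expect the only fussy point to be the sign bookkeeping here --- checking that $u$ and $v$ genuinely lie in the one-sided \emph{rays} $\pn{A}{B}(a)$ and $-\pn{B}{A}(b)$ rather than merely in the lines $\RR e$ and $\RR f$ --- but there is no conceptual difficulty beyond the explicit projection formula for a line.
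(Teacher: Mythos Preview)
Your proof is correct and follows essentially the same approach as the paper: both compute the restricted proximal normal cones explicitly using $P_A^{-1}(a)-a=A^\perp$, identify them as (subsets of) rays in the directions $w_b-\gamma w_a$ and $w_a-\gamma w_b$, and then evaluate the inner product of unit vectors in those directions. The only organizational difference is that you introduce the normalized vectors $e,f$ up front and separate the upper and lower bounds cleanly, whereas the paper parametrizes $a=\alpha w_a$, $b=\beta w_b$, computes $\scal{u}{v}/(\|u\|\|v\|)=\sgn(\alpha)\sgn(\beta)s$ directly, and then selects signs to hit $|s|$---leaving the upper bound somewhat implicit.
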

\begin{proof}
Set $s := \scal{w_a}{w_b}$.

Assume first that $s\neq 0$.
Let $a = \alpha w_a\in A$
and $b=\beta w_b\in B$.
Then $P_A^{-1}(a)-a = N_A(a) = \{w_a\}^\perp$;
considering  $(B-a)\cap \{w_a\}^\perp$ leads to $\beta s = \alpha$.
Hence  $(P_A^{-1}(a)-a)\cap (B-a)=\beta w_b - \alpha w_a$ and
\begin{equation}
\pn{A}{B}(a) = \cone\big(\alpha s^{-1}w_b-\alpha w_a).
\end{equation}
Similarly,
\begin{equation}
-\pn{B}{A}(b) = \cone\big(\beta w_b-\beta s^{-1}w_a).
\end{equation}
Now set $u := \alpha s^{-1}w_b-\alpha w_a\in \pn{A}{B}(a)$
and $v := \beta w_b-\beta s^{-1}w_a \in -\pn{B}{A}(b)$.
One computes
\begin{equation}
\|u\|=\frac{|\alpha|\sqrt{1-s^2}}{|s|},
\;\;
\|v\|=\frac{|\beta|\sqrt{1-s^2}}{|s|},
\;\;\text{and}\;\;
\scal{u}{v} = \frac{\alpha\beta(1-s^2)}{s}.
\end{equation}
Hence
\begin{equation}
\frac{\scal{u}{v}}{\|u\|\cdot\|v\|} = \sgn(\alpha)\sgn(\beta) s.
\end{equation}
Choosing $\alpha$ and $\beta$ in $\{-1,1\}$ appropriately,
we arrange for $\scal{u}{v}/(\|u\|\cdot\|v\|) = |s|$, as claimed.

Now assume that $s=0$. Arguing similarly,
we see that
\begin{equation}
(\forall a\in A)\quad
\pn{A}{B}(a) = \begin{cases} \{0\},&\text{if $a\neq 0$;}\\
B, &\text{if $a=0$,} \end{cases}
\quad\text{and}\quad
(\forall b\in B)\quad
\pn{B}{A}(b) = \begin{cases} \{0\},&\text{if $b\neq 0$;}\\
A, &\text{if $b=0$.} \end{cases}
\end{equation}
This leads to $\theta_\dd(A,A,B,B)=0=|s|$, again as claimed.
\end{proof}

Let $\mcA := (A_i)_{i\in I}$, $\wt{\mcA} :=
(\wt{A}_i)_{i\in I}$, $\mcB := (B_j)_{j\in J}$ and $\wt{\mcB} :=
(\wt{B}_j)_{j\in J}$ be nontrivial collections of nonempty closed
subsets of $X$  and let $\dd\in\RPP$. Set $A := \bigcup_{i\in I}
A_i$, $\wt{A} := \bigcup_{i\in I} \wt{A}_i$, $B := \bigcup_{j\in J}
B_j$, $\wt{B} := \bigcup_{j\in J} \wt{B}_j$, and suppose that
$c\in A\cap B$.
It is interesting to compare the joint-CQ-number of
collections, i.e., $\theta_\delta\big(\mcA,\wt{\mcA},
\mcB,\wt{\mcB}\big)$, to the CQ-number of the unions, i.e.,
$\theta_\delta\big(A,\wt{A},B,\wt{B}\big)$.
We shall see in the following two examples that \emph{neither
of them is smaller than the other}; in fact,
one of them can be equal to 1 while the other
is {strictly} less than 1.


\begin{example}[joint-CQ-number $<$ CQ-number of the unions]
\label{ex:jCQn<CQn}
Suppose that $X=\RR^3$,
let $I:=J:=\{1,2\}$,
$A_1:=\RR(0,1,0)$,
$A_2:=\RR(2,0,-1)$, $B_1:=\RR(0,1,1)$, $B_2:=\RR(1,0,0)$,
$c:=(0,0,0)$, and let $\dd>0$.
Furthermore, set
$\mcA:=(A_i)_{i\in I}$,
$\mcB:=(B_j)_{j\in J}$, $A:= A_1\cup A_2$,
and $B:= B_1\cup B_2$. Then
\begin{equation}
\theta_\delta\big(\mcA,\mcA,\mcB,\mcB\big)
=\tfrac{2}{\sqrt{5}} < 1 =
\theta_\delta\big(A,A,B,B\big).
\end{equation}
\end{example}
\begin{proof}
Using Proposition~\ref{p:CQn2l}, we compute,
for the reference point $c$,
\begin{subequations}
\begin{align}
\theta_\dd(A_1,A_1,B_1,B_1)&=
\big|\bscal{(0,1,0)}{\tfrac{1}{\sqrt{2}}(0,1,1)}\big|=\tfrac{1}{\sqrt{2}},\\
\theta_\dd(A_1,A_1,B_2,B_2)&=|\scal{(0,1,0)}{(1,0,0)}|=0,\\
\theta_\dd(A_2,A_2,B_1,B_1)&=
\big|\bscal{\tfrac{1}{\sqrt{5}}(2,0,-1)}{\tfrac{1}{\sqrt{2}}(0,1,1)}\big|
=\tfrac{1}{\sqrt{10}},\\
\theta_\dd(A_2,A_2,B_2,B_2)&=
\big|\bscal{\tfrac{1}{\sqrt{5}}(2,0,-1)}{(1,0,0}\big|
=\tfrac{2}{\sqrt{5}}.
\end{align}
\end{subequations}
Hence $\theta_\dd(\mcA,\mcA,\mcB,\mcB)
=\max_{(i,j)\in I\times J}\theta_\dd(A_i,A_i,B_j,B_j)=\tfrac{2}{\sqrt{5}}<1$.

To estimate the CQ-number of the union, set
\begin{equation}
a:=(0,\dd,0)\in A_1\subseteq A \text{~and~}
b:=(\dd,0,0)\in B_2\subseteq B.
\end{equation}
Note that $\|a-c\|=\|a\|=\dd$ and $\|b-c\|=\|b\|=\dd$.
Now define
\begin{equation}
\wt{a}:=(\dd,0,-\dd/2)\in A_2\subseteq {A}
\text{~~and~~}
\wt{b}:=(0,\dd,\dd)\in B_1\subseteq {B}.
\end{equation}
Since $\|\wt{a}-P_{B_2}\wt{a}\|<\|\wt{a}-P_{B_1}\wt{a}\|$
and $P_{B_2}\wt{a}=b$, we have
$b=P_B\wt{a}$.
Since $\|\wt{b}-P_{A_1}\wt{b}\|<\|\wt{b}-P_{A_2}\wt{b}\|$
and $P_{A_1}\wt{b}=a$, we have
$a=P_A\wt{b}$.
Therefore,
$\wt{b}\in B\cap P^{-1}_A(a)$ and
$\wt{a}\in A\cap P^{-1}_B(b)$.
It follows that
\begin{subequations}
\begin{align}
u&:=\tfrac{1}{\dd}(\wt{b}-a)=(0,0,1)\in\pn{A}{B}(a),\\
v&:=\tfrac{2}{\dd}(b-\wt{a})=(0,0,1)\in -\pn{B}{A}(b).
\end{align}
\end{subequations}
Since $\|u\|=\|v\|=1$, we obtain
$1=\scal{u}{v}\leq\theta_\dd(A,A,B,B)\leq 1$.
\end{proof}

\begin{example}[CQ-number of the unions $<$ joint-CQ-number]
Suppose that $X=\RR$, let $I := J:= \{1,2\}$,
$A_1:=B_1:=\RM$, $A_2:=B_2:=\RP$,
$c:=0$, and $\dd>0$.
Furthermore, set
$\mcA := (A_i)_{i\in I}$, $\mcB := (B_j)_{j\in I}$,
$A:= A_1\cup A_2=\RR$, and $B:= B_1\cup B_2=\RR$.
Then
\begin{equation}
\theta_\delta\big(A,A,B,B\big)=0 < 1 =
\theta_\delta\big(\mcA,\mcA,\mcB,\mcB\big).
\end{equation}
\end{example}
\begin{proof}
Lemma~\ref{l:NsubsetN}\ref{l:NsubsetNri} implies that
$(\forall x\in \RR)$ $\pn{\RR}{\RR}(x)=\{0\}$.
Hence $\theta_\delta(\RR,\RR,\RR,\RR)=0$ as claimed.
On the other hand, $\pn{\RP}{\RM}(0)=\RM$
and $\pn{\RM}{\RP}(0)=\RP$.
Hence $\theta_\delta(\RM,\RM,\RP,\RP)=1$
and therefore
$\theta_\delta\big(\mcA,\mcA,\mcB,\mcB\big)=1$ as well.
\end{proof}

The two preceding examples illustrated the independence of the
two types of CQ-numbers (for the collection and for the union).
In some cases, such as Example~\ref{ex:jCQn<CQn}, it is beneficial to
work with a suitable partition to obtain a CQ-number that is less than one,
which in turn is very desirable in applications (see
Section~\ref{s:application}).


\subsection*{CQ and joint-CQ conditions}

\begin{definition}[CQ and joint-CQ conditions]
\label{d:tildeCQ}
Let $c\in X$.
\begin{enumerate}
\item Let $A$, $\wt{A}$, $B$ and $\wt{B}$ be nonempty subsets of $X$.
Then the \emph{$(A,\wt{A},B,\wt{B})$-CQ condition} holds at $c$ if
\begin{equation}\label{e:CQ}
\nc{A}{\wt{B}}(c) \cap\big(-\nc{B}{\wt{A}}(c)\big)\subseteq\{0\}.
\end{equation}
\item
Let $\mathcal{A} := (A_i)_{i\in I}$, $\wt{\mcA} := (\wt{A}_i)_{i\in
I}$, $\mathcal{B} := (B_j)_{j\in J}$ and $\wt{\mcB} :=
(\wt{B}_j)_{j\in J}$ be nontrivial collections of nonempty subsets of
$X$.
Then the \emph{$(\mcA,\wt{\mcA},\mcB,\wt{\mcB})$-joint-CQ condition}
holds at $c$ if for every $(i,j)\in I\times J$, the
$(A_i,\wt{A}_i,B_j,\wt{B}_j)$-CQ condition holds at $c$, i.e.,
\begin{equation}\label{e:jCQ}
\big(\forall (i,j)\in I\times J\big)\quad \nc{A_i}{\wt{B}_j}(c)
\cap\big(-\nc{B_j}{\wt{A}_i}(c)\big)\subseteq\{0\}.
\end{equation}
\end{enumerate}
\end{definition}

In view of the definitions, the key case to consider is when $c\in
A\cap B$ (or when $c\in A_i\cap B_j$ in the joint-CQ case). The
CQ-number is based on the behavior of the restricted proximal normal
cone in a neighborhood of the point under consideration --- a related
notion is that of the exact CQ-number, where we consider the
restricted normal cone at the point instead of
nearby restricted proximal normal cones.

\begin{definition}[exact CQ-number and exact joint-CQ-number]
\label{d:exactCQn}
Let $c\in X$.
\begin{enumerate}
\item
Let $A$, $\wt{A}$, $B$ and $\wt{B}$ be nonempty subsets of $X$. The
\emph{exact CQ-number} at $c$ associated with $(A,\wt{A},B,\wt{B})$
is \footnote{Note that if $c\notin A\cap B$, then $\overline{\alpha} =
\sup\varnothing=\minf$.}
\begin{equation}
\label{e:0217a}
\overline{\alpha} :=
\overline{\alpha}\big(A,\wt{A},B,\wt{B}\big) :=
\sup\mmenge{\scal{u}{v}}{u\in\nc{A}{\wt{B}}(c),v\in-\nc{B}{\wt{A}}(c),\|u\|\leq
1, \|v\|\leq 1}.
\end{equation}
\item
Let $\mathcal{A} := (A_i)_{i\in I}$, $\wt{\mcA} := (\wt{A}_i)_{i\in
I}$, $\mathcal{B} := (B_j)_{j\in J}$ and $\wt{\mcB} :=
(\wt{B}_j)_{j\in J}$ be nontrivial collections of nonempty subsets
of $X$. The \emph{exact joint-CQ-number} at $c$ associated with
$(\mathcal{A},\mathcal{B},\wt{\mcA},\wt{\mcB})$ is
\begin{equation}
\label{e:0217b}
\overline{\alpha} := \overline{\alpha}(\mcA,\wt{\mcA},\mcB,\wt{\mcB}) :=
\sup_{(i,j)\in I\times
J}\overline{\alpha}(A_i,\wt{A}_i,B_j,\wt{B}_j).
\end{equation}
\end{enumerate}
\end{definition}

The next result relates the various condition numbers defined above.

\begin{theorem}\label{t:CQ1}
Let $\mathcal{A} := (A_i)_{i\in I}$, $\wt{\mcA} := (\wt{A}_i)_{i\in
I}$, $\mathcal{B} := (B_j)_{j\in J}$ and $\wt{\mcB} :=
(\wt{B}_j)_{j\in J}$ be nontrivial collections of nonempty subsets
of $X$. Set $A := \bigcup_{i\in I} A_i$ and $B := \bigcup_{j\in
J}B_j$, and suppose that $c\in A\cap B$. Denote the exact
joint-CQ-number at $c$ associated with
$(\mcA,\wt{\mcA},\mcB,\wt{\mcB})$ by $\overline{\alpha}$ (see
\eqref{e:0217b}), the joint-CQ-number at $c$ associated with
$(\mcA,\wt{\mcA},\mcB,\wt{\mcB})$ and $\delta>0$ by $\theta_\dd$
(see \eqref{e:jCQn}), and the limiting joint-CQ-number at $c$
associated with $(\mcA,\wt{\mcA},\mcB,\wt{\mcB})$ by
$\overline{\theta}$ (see \eqref{e:ljCQn}). Then the following hold:
\begin{enumerate}
\item
\label{t:CQ1i} If $\overline\alpha<1$, then the
$(\mcA,\wt{\mcA},\mcB,\wt{\mcB})$-CQ condition holds at $c$.
\item
\label{t:CQ1ii}
$\overline{\alpha}\leq\theta_\delta$.
\item
\label{t:CQ1iii}
$\overline{\alpha}\leq\overline{\theta}$.
\end{enumerate}
Now assume in addition that
$I$ and $J$ are finite.
Then the following hold:
\begin{enumerate}[resume]
\item
\label{t:CQ1iv}
$\overline{\alpha}=\overline{\theta}$.
\item
\label{t:CQ1v}
The $(\mcA,\wt{\mcA},\mcB,\wt{\mcB})$-joint-CQ condition holds at $c$
if and only if
$\overline{\alpha}=\overline{\theta}<1$.
\end{enumerate}
\end{theorem}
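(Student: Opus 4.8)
The plan is to establish the five parts in the order \ref{t:CQ1i}, \ref{t:CQ1ii}, \ref{t:CQ1iii}, \ref{t:CQ1iv}, \ref{t:CQ1v}, reducing each collection-level statement to the corresponding single-quadruple statement wherever possible. Part~\ref{t:CQ1i} I prove by contraposition, using no finiteness: if the $(\mcA,\wt{\mcA},\mcB,\wt{\mcB})$-joint-CQ condition fails, then for some $(i,j)$ there is $u\neq 0$ with $u\in\nc{A_i}{\wt B_j}(c)$ and $u\in-\nc{B_j}{\wt A_i}(c)$; normalizing $u$ (legitimate since restricted normal cones are cones) and using it as \emph{both} arguments of the inner product in \eqref{e:0217a} yields the value $1$, so $\overline{\alpha}\geq\overline{\alpha}(A_i,\wt A_i,B_j,\wt B_j)\geq 1$. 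Part~\ref{t:CQ1iii} will then follow from \ref{t:CQ1ii} by letting $\dd\downarrow 0$, since $\dd\mapsto\theta_\dd$ is increasing (so $\overline\theta=\inf_{\dd>0}\theta_\dd$) while $\overline\alpha$ is independent of $\dd$.

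For \ref{t:CQ1ii}, the two defining suprema reduce matters to the pointwise bound $\overline\alpha(A_i,\wt A_i,B_j,\wt B_j)\leq\theta_\dd(A_i,\wt A_i,B_j,\wt B_j)$ for each fixed $(i,j)$; write $A:=A_i$ etc. If $\nc{A}{\wt B}(c)$ or $-\nc{B}{\wt A}(c)$ is empty, the left side is $\minf$ and there is nothing to prove. Otherwise take unit $u\in\nc{A}{\wt B}(c)$, $v\in-\nc{B}{\wt A}(c)$ and invoke the outer-limit description of Definition~\ref{d:NCone}\ref{d:nc} to get $a_n\to c$ in $A$ with $u_n\in\pn{A}{\wt B}(a_n)$, $u_n\to u$, and $b_n\to c$ in $B$ with $-v_n\in\pn{B}{\wt A}(b_n)$, $v_n\to v$. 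Replacing $u_n$ by $u_n/\max\{1,\|u_n\|\}$ and $v_n$ similarly keeps these in the (conical) restricted proximal normal cones, makes their norms $\leq 1$, and preserves convergence to $u,v$ since $\|u_n\|\to\|u\|\leq 1$. For $n$ large $\|a_n-c\|\leq\dd$ and $\|b_n-c\|\leq\dd$, so the rescaled pair is admissible in \eqref{e:CQn}, whence $\scal{u_n}{v_n}\leq\theta_\dd(A,\wt A,B,\wt B)$; passing to the limit gives $\scal{u}{v}\leq\theta_\dd(A,\wt A,B,\wt B)$, and the supremum over $u,v$ finishes it.

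For \ref{t:CQ1iv} only $\overline\theta\leq\overline\alpha$ remains, and here finiteness of $I,J$ is essential. Since $c\in\bigcup_iA_i\cap\bigcup_jB_j$, some pair $(i_0,j_0)$ satisfies $c\in A_{i_0}\cap B_{j_0}$, so $0\in\nc{A_{i_0}}{\wt B_{j_0}}(c)\cap(-\nc{B_{j_0}}{\wt A_{i_0}}(c))$ and hence $\overline\alpha\geq 0$; also $\theta_\dd\geq 0$ for all $\dd$, so $\overline\theta\geq 0$, and if $\overline\theta=0$ we are done by \ref{t:CQ1iii}. Assume $\overline\theta>0$. For each $n$ pick a pair $(i_n,j_n)$ attaining the finite maximum $\theta_{1/n}=\max_{(i,j)}\theta_{1/n}(A_i,\wt A_i,B_j,\wt B_j)$, and then $a_n,b_n,u_n,v_n$ realizing $\theta_{1/n}(A_{i_n},\wt A_{i_n},B_{j_n},\wt B_{j_n})$ to within $1/n$: so $\|a_n-c\|\leq 1/n$, $\|b_n-c\|\leq 1/n$, $u_n\in\pn{A_{i_n}}{\wt B_{j_n}}(a_n)$, $-v_n\in\pn{B_{j_n}}{\wt A_{i_n}}(b_n)$, $\|u_n\|,\|v_n\|\leq 1$, and $\scal{u_n}{v_n}\geq\theta_{1/n}-1/n\geq\overline\theta-1/n$. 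For $n$ large this is positive, so $u_n,v_n\neq 0$ and, after normalizing, have norm $1$. By the pigeonhole principle a fixed pair $(i^*,j^*)$ equals $(i_n,j_n)$ along a subsequence, and by Bolzano--Weierstrass in the finite-dimensional $X$ we pass to a further subsequence with $u_n\to u$, $v_n\to v$, $\|u\|=\|v\|=1$. As $u_n\neq0$ forces $a_n\in A_{i^*}$, Definition~\ref{d:NCone}\ref{d:nc} gives $u\in\nc{A_{i^*}}{\wt B_{j^*}}(c)$, and likewise $v\in-\nc{B_{j^*}}{\wt A_{i^*}}(c)$; the limit of $\scal{u_n}{v_n}\geq\overline\theta-1/n$ gives $\scal{u}{v}\geq\overline\theta$. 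Thus $\overline\alpha\geq\overline\alpha(A_{i^*},\wt A_{i^*},B_{j^*},\wt B_{j^*})\geq\overline\theta$, which with \ref{t:CQ1iii} gives $\overline\alpha=\overline\theta$.

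In \ref{t:CQ1v}, the implication $\overline\alpha=\overline\theta<1\Rightarrow$ joint-CQ is exactly \ref{t:CQ1i}. Conversely, assume the joint-CQ condition and suppose $\overline\alpha\not<1$; since $\scal{u}{v}\leq\|u\|\|v\|\leq 1$ for admissible $u,v$ we get $\overline\alpha=1$, attained (by finiteness) at some $(i^*,j^*)$. The admissible set in \eqref{e:0217a} for that pair is closed (restricted normal cones are closed), bounded, hence compact, and $\scal{\cdot}{\cdot}$ is continuous, so the value $1$ is attained at some $(u^*,v^*)$ with $\|u^*\|,\|v^*\|\leq 1$; then $1=\scal{u^*}{v^*}\leq\|u^*\|\|v^*\|\leq 1$ forces $\|u^*\|=\|v^*\|=1$ with equality in Cauchy--Schwarz, i.e.\ $u^*=v^*$, so $0\neq u^*\in\nc{A_{i^*}}{\wt B_{j^*}}(c)\cap(-\nc{B_{j^*}}{\wt A_{i^*}}(c))$, contradicting the joint-CQ condition at $(i^*,j^*)$. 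Hence $\overline\alpha<1$, and \ref{t:CQ1iv} upgrades this to $\overline\alpha=\overline\theta<1$. The crux is \ref{t:CQ1iv}: turning the neighbourhood-based $\overline\theta$ into a statement about restricted normal cones \emph{at} $c$ needs a simultaneous diagonal extraction over shrinking radii, over near-optimal normal vectors, and---thanks to finiteness---over the index pair, plus the care to keep the extracted vectors of unit norm (handled by isolating $\overline\theta=0$) and the fact that a nonzero limit of restricted proximal normal vectors along points of $A_{i^*}$ approaching $c$ is precisely what Definition~\ref{d:NCone}\ref{d:nc} records.
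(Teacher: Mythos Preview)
Your proof is correct and follows essentially the same approach as the paper's own proof: Cauchy--Schwarz for \ref{t:CQ1i} and \ref{t:CQ1v}, the outer-limit definition of the restricted normal cone for \ref{t:CQ1ii}, and a diagonal extraction exploiting finiteness of $I\times J$ for \ref{t:CQ1iv}. You are in fact slightly more careful than the paper in two places---in \ref{t:CQ1ii} you explicitly rescale the approximating sequences $u_n,v_n$ by $\max\{1,\|u_n\|\}$ so they are admissible in \eqref{e:CQn} (the paper glosses over this), and in \ref{t:CQ1iv} you separate out the case $\overline\theta=0$ and normalize to unit vectors before extracting---but these are refinements of the same argument, not a different route.
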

\begin{proof}
\ref{t:CQ1i}: Suppose that $\overline\alpha<1$. The condition for
equality in the Cauchy-Schwarz inequality implies that for all
$(i,j)\in I\times J$, the intersection $\nc{A_i}{\wt{B}_j}(c)\cap
(-\nc{B_j}{\wt{A}_i}(c))$ is either empty or $\{0\}$. In view of
Definition~\ref{d:tildeCQ}, we see that the
$(\mcA,\wt{\mcA},\mcB,\wt{\mcB})$-joint-CQ holds at $c$.

\ref{t:CQ1ii}:
Let $(i,j)\in I\times J$. Take
$u\in\nc{A_i}{\wt{B}_j}(c)$ and $v\in-\nc{B_j}{\wt{A}_i}(c)$ such
that $\|u\|\leq 1$ and $\|v\|\leq 1$. Then, by definition of the
restricted normal cone, there exist sequences
$(a_n)_\nnn$ in $A_i$,
$(b_n)_\nnn$ in $B_j$,
$(u_n)_\nnn$ and $(v_n)_\nnn$ in $X$ such that
$a_n\to c$, $b_n\to c$, $u_n\to u$, $v_n\to v$, and
$(\forall\nnn)$ $u_n\in\pn{A_i}{\wt{B}_j}(a_n)$ and
$v_n\in-\pn{B_j}{\wt{A}_i}(b_n)$.
Note that since $\delta>0$,
eventually $a_n$ and $b_n$ lie in $\ball{c}{\dd}$; consequently,
$\scal{u_n}{v_n}\leq \theta_\dd(A_i,\wt{A}_i,B_j,\wt{B}_j)$.
Taking the limit as $n\to\pinf$, we obtain
$\scal{u}{v}\leq\theta_\dd(A_i,\wt{A}_i,B_j,\wt{B}_j)\leq\theta_\dd$.
Now taking the supremum over suitable $u$ and $v$, followed by
taking the supremum over $(i,j)$, we conclude that
$\overline{\alpha}\leq \theta_\dd$.

\ref{t:CQ1iii}: This is clear from \ref{t:CQ1ii} and \eqref{e:ljCQn}.

\ref{t:CQ1iv}:
Let $(\dd_n)_\nnn$ be a sequence in $\RPP$ such that $\delta_n\to 0$.
Then for every $\nnn$, there exist
\begin{equation}
i_n\in I,\ j_n\in J,\
a_n\in A_{i_n},\ b_n\in B_{j_n},\
u_n\in\pn{A_{i_n}}{\wt{B}_{j_n}}(a_n),\
v_n\in-\pn{B_{j_n}}{\wt{A}_{i_n}}(b_n)
\end{equation}
such that
\begin{equation}
\|a_n-c\|\leq \dd_n,\ \|b_n-c\|\leq \dd_n,\
\|u_n\|\leq1,\ \|v_n\|\leq1,\
\text{~and~}\scal{u_n}{v_n}>\theta_{\dd_n}-\dd_n.
\end{equation}
Since $I$ and $J$
are finite, and after passing to a subsequence and relabeling if
necessary, we can and do assume that
there exists $(i,j)\in I\times J$ such that
$u_n\to u\in \nc{A_i}{\wt{B}_j}(c)$
and $v_n\to v\in -\nc{B_j}{\wt{A}_i}(c)$.
Hence
$\overline{\theta} \leftarrow \theta_{\delta_n}-\delta_n
<\scal{u_n}{v_n}\to\scal{u}{v}\leq\overline{\alpha}$.
Hence $\overline{\theta}\leq\overline{\alpha}$.
On the other hand, $\overline{\alpha}\leq\overline{\theta}$
by \ref{t:CQ1iii}. Altogether,
$\overline{\alpha}=\overline{\theta}$.

\ref{t:CQ1v}:
``$\Rightarrow$'':
Let $(i,j)\in I\times J$.
If $c\not\in {A_i}\cap {B_j}$,
then $\overline\alpha(A_i,\wt{A}_i,B_j,\wt{B}_j)=\minf$.
Now assume that $c\in {A_i}\cap {B_j}$.
Since the $(\mcA,\wt{\mcA},\mcB,\wt{\mcB})$-joint-CQ condition holds,
we have
$\nc{A_i}{\wt{B}_j}(c)\cap -\nc{B_j}{\wt{A}_i}(c)=\{0\}$.
By Cauchy-Schwarz,
\begin{equation}
\overline\alpha(A_i,\wt{A}_i,B_j,\wt{B}_j)=
\sup\mmenge{\scal{u}{v}}{u\in\nc{A_i}{\wt{B}_j}(c),v\in-\nc{B_j}{\wt{A}_i}(c),\|u\|\leq
1, \|v\|\leq 1}<1.
\end{equation}
Since $I$ and $J$ are finite and because of \ref{t:CQ1iv},
we deduce that $\overline{\theta}=\overline\alpha<1$.\\
``$\Leftarrow$'': Combine \ref{t:CQ1i} with \ref{t:CQ1iv}.
\end{proof}


\section{CQ conditions and CQ numbers: examples}

\label{s:CQ2}


In this section, we provide further results and examples
illustrating CQ conditions and CQ numbers.

First, let us note that the assumption that the sets of indices be finite in
Theorem~\ref{t:CQ1}\ref{t:CQ1iv} is essential:

\begin{example}[$\overline{\alpha}<\overline{\theta}$]
Suppose that $X=\RR^2$,
let $\Gamma\subseteq\RPP$ be such that $\sup\Gamma=\pinf$,
set $(\forall\gamma\in\Gamma)$ $A_\gamma :=
\epi(\thalb\gamma|\cdot|^2)$,
$B := \RP\times\RR$,
$\mcA := (A_\gamma)_{\gamma\in\Gamma}$,
$\wt{\mcA} := (X)_{\gamma\in\Gamma}$,
$\mcB := (B)$,
$\wt{\mcB} := (X)$,
and $c:=(0,0)$.
Denote the exact
joint-CQ-number at $c$ associated with
$(\mcA,\wt{\mcA},\mcB,\wt{\mcB})$ by $\overline{\alpha}$ (see
\eqref{e:0217b}), the joint-CQ-number at $c$ associated with
$(\mcA,\wt{\mcA},\mcB,\wt{\mcB})$ and $\delta>0$ by $\theta_\dd$
(see \eqref{e:jCQn}), and the limiting joint-CQ-number at $c$
associated with $(\mcA,\wt{\mcA},\mcB,\wt{\mcB})$ by
$\overline{\theta}$ (see \eqref{e:ljCQn}). Then
\begin{equation}
\overline{\alpha}=0<1 = \theta_\dd = \overline{\theta}.
\end{equation}
\end{example}
\begin{proof}
Let $\gamma\in\Gamma$ and pick $x>0$ such that
$a := (x,\thalb\gamma x^2)\in A_\gamma$ satisfies
$\|a\|=\|a-c\|=\delta$, i.e.,
$x>0$ and
\begin{equation}
\gamma^2x^2 = 2\Big(\sqrt{1+\gamma^2\delta^2}-1\Big)
\to\pinf \quad\text{as $\gamma\to \pinf$ in $\Gamma$.}
\end{equation}
Hence
\begin{equation}
\label{e:0303b}
\gamma x\to \pinf,
\quad\text{as $\gamma\to \pinf$ in $\Gamma$.}
\end{equation}
Since $A_\gamma$ is closed and convex, it follows from
Lemma~\ref{l:NsubsetN}\ref{l:NsubsetNvi} that
\begin{equation}
u := \frac{(\gamma x,-1)}{\sqrt{\gamma^2 x^2+1}}
\in \RP(\gamma x,-1) = \cnc{A_\gamma}(a) = \pn{A_\gamma}{X}(a) =
\nc{A_\gamma}{X}(a) = N_{A_\gamma}(a).
\end{equation}
Furthermore, $v := (1,0)\in -(\RM\times\{0\})=
-\pn{B}{X}(c) = -\nc{B}{X}(c)= - N_B(c)$,
$\|u\|=\|v\|=1$,
and, in view of \eqref{e:0303b},
\begin{subequations}
\begin{align}
1 &\geq \theta_\dd \geq \theta_\dd(A_\gamma,X,B,X)
\geq \scal{u}{v} = \frac{\gamma x}{\sqrt{\gamma^2 x^2+1}}\\
& \to 1 \quad\text{as $\gamma\to \pinf$ in $\Gamma$.}
\end{align}
\end{subequations}
Thus $\theta_\dd=1$, which implies that $\overline{\theta}=1$.
Finally, $N_{A_\gamma}(c) = (\{0\}\times\RM)
\perp (\RP\times\{0\}) = -N_B(c)$, which shows that
$\overline{\alpha}=0$.
\end{proof}

For the eventual application of these results to the method of alternating
projections, the condition $\overline{\alpha}=\overline{\theta}<1$ is
critical to ensure linear convergence.

The following example illustrates that
the CQ-number can be interpreted as a quantification of the CQ condition.

\begin{example}[CQ-number quantifies CQ condition]
\label{ex:compareCQ1}
Let $A$ and $B$ be subsets of $X$,
and suppose that $c\in A\cap B$.
Let $L$ be an affine subspace of $X$ containing $A\cup B$.
Then the following are equivalent:
\begin{enumerate}
\item
\label{t:cCQ1-i} $\nc{A}{L}(c) \cap (-\nc{B}{L}(c))= \{0\}$,
i.e., the $(A,L,B,L)$-CQ condition holds at $c$ (see \eqref{e:CQ}).
\item
\label{t:cCQ1-ii}
$N_A(c) \cap (-N_B(c))\cap (L-c) = \{0\}$.
\item
\label{t:cCQ1-iii}
$\overline{\theta}<1$,
where $\overline{\theta}$ is the limiting CQ-number at $c$
associated with $(A,L,B,L)$ (see \eqref{e:lCQn}).
\end{enumerate}
\end{example}
\begin{proof}
The identity \eqref{e:pnALd} of Theorem~\ref{p:pnA(L)} yields
$\nc{A}{L}(c)=N_A(c)\cap(L-c)$ and
$\nc{B}{L}(c)=N_B(c)\cap(L-c)$.
Hence
\begin{equation}
\nc{A}{L}(c) \cap \big(-\nc{B}{L}(c)\big)=
N_A(c)\cap \big(-N_B(c)\big)\cap (L-c),
\end{equation}
and the equivalence of
\ref{t:cCQ1-i} and \ref{t:cCQ1-ii} is now clear.
Finally, Theorem~\ref{t:CQ1}\ref{t:CQ1iv}\&\ref{t:CQ1v} yields
the equivalence of \ref{t:cCQ1-i} and \ref{t:cCQ1-iii}.
\end{proof}

Depending on the choice of the restricting sets $\wt{A}$ and $\wt{B}$,
the $(A,\wt{A},B,\wt{B})$-CQ condition may either hold or fail:

\begin{example}[CQ condition depends on restricting sets]
\label{ex:CQdif(AB)}
Suppose that $X=\RR^2$, and set
$A:=\epi(|\cdot|)$, $B:=\RR\times\{0\}$, and $c:=(0,0)$.
Then we readily verify that
$N_A(c) = \nc{A}{X}(c) = -A$,
$\nc{A}{B}(c) = -\bd A$,
$N_B(c)=\nc{B}{X}(c) =\{0\}\times\RR$,
and $\nc{B}{A}(c) = \{0\}\times\RP$.
Consequently,
\begin{equation}
\nc{A}{X}(c)\cap\big(-\nc{B}{X}(c)\big) = \{0\}\times\RM
\text{~~while~~}
\nc{A}{B}(c)\cap\big(-\nc{B}{A}(c)\big) = \{(0,0)\}.
\end{equation}
Therefore, the $(A,A,B,B)$-CQ condition holds, yet the
$(A,X,B,X)$-CQ condition fails.
\end{example}


For two spheres, it is possible to
quantify the convergence of $\theta_\dd$ to
$\overline{\dd}=\overline{\alpha}$:

\begin{proposition}[CQ-numbers of two spheres]
\label{p:sphere2}
Let $z_1$ and $z_2$ be in $X$,
let $\rho_1$ and $\rho_2$ be in $\RPP$,
set $S_1 := \sphere{z_1}{\rho_1}$ and $S_2 := \sphere{z_2}{\rho_2}$
and assume that $c\in S_1\cap S_2$.
Denote
the limiting CQ-number at $c$ associated with $(S_1,X,S_2,X)$
by $\overline{\theta}$ (see Definition~\ref{d:CQn}), and
the exact CQ-number at $c$ associated with
$(S_1,X,S_2,X)$ by $\overline{\alpha}$ (see Definition~\ref{d:exactCQn}).
Then the following hold:
\begin{enumerate}
\item
\label{p:sphere2i}
$\displaystyle \overline{\theta} = \overline{\alpha} = \frac{|\scal{z_1-c}{z_2-c}|}{\rho_1\rho_2}$.
\item
\label{p:sphere2ii}
$\overline{\alpha}<1$ unless the spheres are identical or intersect only at
$c$.
\end{enumerate}
Now assume that $\overline{\alpha}<1$, let $\ve\in\RPP$, and set
$\dd := (\sqrt{(\rho_1+\rho_2)^2+4\rho_1\rho_2\ve}-(\rho_1+\rho_2))/2>0$.
Then
\begin{equation}
\label{e:0308d}
\overline\alpha\leq \theta_\dd\leq\overline\alpha+\ve,
\end{equation}
where
$\theta_\dd$ is the CQ-number at $c$ associated with $(S_1,X,S_2,X)$
(see Definition~\ref{d:CQn}).

\end{proposition}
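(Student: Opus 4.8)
The plan is to compute everything explicitly from the description of the normal cones of a sphere in Example~\ref{ex:ncS}. First I would record that, combining Example~\ref{ex:ncS} with Corollary~\ref{c:Boris=ncX}, for every $s\in S_1$ one has $\pn{S_1}{X}(s)=\nc{S_1}{X}(s)=N_{S_1}(s)=\RR(s-z_1)$, and symmetrically $\pn{S_2}{X}(s')=\nc{S_2}{X}(s')=\RR(s'-z_2)$ for every $s'\in S_2$; moreover $\|c-z_1\|=\rho_1$ and $\|c-z_2\|=\rho_2$, since $c\in S_1\cap S_2$. For a fixed pair $(s,s')\in S_1\times S_2$, the supremum of $\scal{u}{v}$ over $u\in\RR(s-z_1)$, $v\in\RR(s'-z_2)$ subject to $\|u\|\le1$, $\|v\|\le1$ equals $|\scal{s-z_1}{s'-z_2}|/(\rho_1\rho_2)$: it is attained at the unit multiples of $s-z_1$ and $s'-z_2$ with the appropriate signs, it is nonnegative (take $u=0$), and it cannot exceed this value by Cauchy--Schwarz (here $\rho_1,\rho_2>0$ is used). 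Substituting the cone formulas into Definition~\ref{d:exactCQn} (the minus sign in $-\nc{S_2}{X}(c)$ being immaterial since this cone is a line) yields $\overline\alpha=|\scal{z_1-c}{z_2-c}|/(\rho_1\rho_2)$, and substituting into Definition~\ref{d:CQn} --- noting that $u\in\pn{S_1}{X}(a)$ forces $a\in S_1$ and likewise for $b$ --- yields
\begin{equation*}
\theta_\dd=\frac{1}{\rho_1\rho_2}\,\sup\big\{\,|\scal{a-z_1}{b-z_2}|\ :\ a\in S_1,\ b\in S_2,\ \|a-c\|\le\dd,\ \|b-c\|\le\dd\,\big\}.
\end{equation*}

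Next I would establish a single inequality that proves both $\overline\theta=\overline\alpha$ and \eqref{e:0308d}. Taking $a=b=c$ in the displayed formula gives $\theta_\dd\ge\overline\alpha$ for every $\dd>0$ (this is also Theorem~\ref{t:CQ1}\ref{t:CQ1ii}). For the reverse direction, fix feasible $a\in S_1$ and $b\in S_2$, write $a-z_1=(c-z_1)+(a-c)$ and $b-z_2=(c-z_2)+(b-c)$, expand $\scal{a-z_1}{b-z_2}$ into four terms, and bound each by Cauchy--Schwarz using $\|c-z_1\|=\rho_1$, $\|c-z_2\|=\rho_2$, $\|a-c\|\le\dd$, $\|b-c\|\le\dd$; this gives $|\scal{a-z_1}{b-z_2}|\le\rho_1\rho_2\overline\alpha+(\rho_1+\rho_2)\dd+\dd^2$, hence $\theta_\dd\le\overline\alpha+(\dd^2+(\rho_1+\rho_2)\dd)/(\rho_1\rho_2)$ for every $\dd>0$. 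Letting $\dd\downarrow0$ and recalling $\overline\theta=\lim_{\dd\downarrow0}\theta_\dd$ gives $\overline\theta\le\overline\alpha$, so $\overline\theta=\overline\alpha$; combined with the formula for $\overline\alpha$ this is \ref{p:sphere2i}. Finally, the $\dd$ in the statement is exactly the positive root of $t^2+(\rho_1+\rho_2)t-\rho_1\rho_2\ve=0$, so $\dd^2+(\rho_1+\rho_2)\dd=\rho_1\rho_2\ve$ and the above bound becomes $\theta_\dd\le\overline\alpha+\ve$, which together with $\theta_\dd\ge\overline\alpha$ is \eqref{e:0308d}.

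For \ref{p:sphere2ii}, Cauchy--Schwarz in the definition of $\overline\alpha$ gives $\overline\alpha\le1$, with equality precisely when $c-z_1$ and $c-z_2$ are linearly dependent; since both are nonzero ($\rho_1,\rho_2>0$), this means $z_2-c=\lambda(z_1-c)$ for some $\lambda\ne0$, i.e.\ $c$, $z_1$, $z_2$ are collinear. If in addition $z_1=z_2$, then $\rho_1=\|c-z_1\|=\|c-z_2\|=\rho_2$ and $S_1=S_2$. If $z_1\ne z_2$, then any $p\in S_1\cap S_2$ satisfies $\|p-z_1\|^2-\|p-z_2\|^2=\rho_1^2-\rho_2^2$, so $p$ lies in the affine hyperplane $H$ cut out by this (genuinely linear, as $z_1\ne z_2$) equation; $H$ contains $c$ and has normal direction $z_1-z_2$, which is parallel to $z_1-c$, so $H$ is the tangent hyperplane to $S_1$ at $c$ and $S_1\cap H=\{c\}$, whence $S_1\cap S_2=\{c\}$. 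Thus $\overline\alpha=1$ forces $S_1=S_2$ or $S_1\cap S_2=\{c\}$, equivalently $\overline\alpha<1$ unless one of these occurs. I expect the only step needing real care to be this last geometric point --- collinear centers through a common point force the spheres to meet only at that point --- while everything else is direct substitution into the definitions via Example~\ref{ex:ncS}.
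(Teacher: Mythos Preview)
Your proof is correct and follows essentially the same approach as the paper: both compute the cones via Example~\ref{ex:ncS}, obtain $\overline{\alpha}$ directly, and bound $\theta_\dd$ by expanding $\scal{a-z_1}{b-z_2}$ into four terms and applying Cauchy--Schwarz, then identify the given $\dd$ as the positive root of $t^2+(\rho_1+\rho_2)t-\rho_1\rho_2\ve=0$. The only cosmetic differences are that the paper obtains $\overline{\theta}=\overline{\alpha}$ by citing Theorem~\ref{t:CQ1}\ref{t:CQ1iv} rather than letting $\dd\downarrow0$ in the explicit bound, and leaves the geometric case analysis for \ref{p:sphere2ii} implicit, whereas you spell it out via the radical-hyperplane argument.
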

\begin{proof}
\ref{p:sphere2i}:
This follows from Theorem~\ref{t:CQ1}\ref{t:CQ1iv} and
Example~\ref{ex:ncS}.

\ref{p:sphere2ii}: Combine \ref{p:sphere2i} with
the characterization of equality in the Cauchy-Schwarz inequality.

Let us now establish \eqref{e:0308d}.
By Theorem~\ref{t:CQ1}\ref{t:CQ1ii}, 
we have $\overline{\alpha}\leq\theta_\dd$.
Let $s_1\in S_1$ be such that $\|s_1-c\|\leq\dd$,
let $u_1\in\pn{S_1}{X}(s_1)$ be such that  $\|u_1\|=1$,
let $s_2\in S_2$ be such that $\|s_2-c\|\leq\dd$,
and let $u_2\in\pn{S_2}{X}(s_2)$ be such that  $\|u_2\|=1$.
By Example~\ref{ex:ncS},
\begin{equation}
u_1=\pm\frac{s_1-z_1}{\|s_1-z_1\|}=\pm\frac{s_1-z_1}{\rho_1}
\quad\text{and}\quad
u_2=\pm\frac{s_2-z_2}{\|s_2-z_2\|}=\pm\frac{s_2-z_2}{\rho_2}.
\end{equation}
Hence
\begin{subequations}
\begin{align}
\rho_1\rho_2\scal{u_1}{u_2}
&\leq |\scal{s_1-z_1}{s_2-z_2}|\\
&=|\scal{(s_1-c)+(c-z_1)}{(s_2-c)+(c-z_2)}|\\
&\leq |\scal{s_1-c}{s_2-c}| + |\scal{s_1-c}{c-z_2}|\\
&\qquad + |\scal{c-z_1}{s_2-c}| + |\scal{c-z_1}{c-z_2}|\\
&\leq \dd^2 + \dd(\rho_1+\rho_2) + \rho_1\rho_2\overline{\alpha}
\end{align}
\end{subequations}
and thus, using the definition of $\dd$,
\begin{equation}
\scal{u_1}{u_2}\leq \overline{\alpha}
+ \frac{\dd^2+\dd(\rho_1+\rho_2)}{\rho_1\rho_2}
= \overline{\alpha}+\ve.
\end{equation}
Therefore, by the definition of $\theta_\dd$, we have
$\theta_\dd\leq\overline{\alpha}+\ve$.
\end{proof}


\subsection*{Two convex sets}

Let us turn to the classical convex setting.
We start by noting
that well known constraint qualifications are conveniently
characterized using our CQ conditions.

\begin{proposition}
\label{p:0301a}
Let $A$ and $B$ be nonempty convex subsets of $X$ such that $A\cap
B\neq\varnothing$, and set $L=\aff(A\cup B)$.
Then the following are equivalent:
\begin{enumerate}
\item
\label{p:0301ai}
$\reli A\cap \reli B\neq \varnothing$.
\item
\label{p:0301aii}
The $(A,L,B,L)$-CQ condition holds at some point in $A\cap B$.
\item
\label{p:0301aiii}
The $(A,L,B,L)$-CQ condition holds at every point in $A\cap B$.
\end{enumerate}
\end{proposition}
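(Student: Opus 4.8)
The plan is to recognize this proposition as an immediate corollary of Theorem~\ref{t:compareCQ2}. First I would unwind the relevant definition: by Definition~\ref{d:tildeCQ}, the $(A,L,B,L)$-CQ condition at a point $c$ says that $\nc{A}{L}(c)\cap\bigl(-\nc{B}{L}(c)\bigr)\subseteq\{0\}$. When $c\in A\cap B$ we have $c\in P_A^{-1}(c)\cap L$, hence $0=c-c\in\pn{A}{L}(c)\subseteq\nc{A}{L}(c)$, and likewise $0\in\nc{B}{L}(c)$; so that inclusion is in fact the equality $\nc{A}{L}(c)\cap\bigl(-\nc{B}{L}(c)\bigr)=\{0\}$. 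Taking $L=\aff(A\cup B)$, this equality quantified ``for some $c\in A\cap B$'' is precisely Theorem~\ref{t:compareCQ2}\ref{t:CQ2vi}, quantified ``for every $c\in A\cap B$'' it is Theorem~\ref{t:compareCQ2}\ref{t:CQ2vii}, while condition~(i) of the present statement is Theorem~\ref{t:compareCQ2}\ref{t:CQ2i1}.

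Consequently, the equivalence of (i), (ii), and (iii) is a direct instance of the chain of equivalences established in Theorem~\ref{t:compareCQ2} (which itself rests on Corollary~\ref{c:cal2}, Lemma~\ref{l:aff-lspan}, Proposition~\ref{p:onecone}, and Corollary~\ref{c:N=0}), and no further argument is needed. A fully self-contained proof would simply retrace that argument: combine $\nc{A}{L}(c)\cap(-\nc{B}{L}(c))=\nc{A-B}{L-c}(0)$ (Corollary~\ref{c:cal2}) with $L-c=\lspan(B-A)$ (Lemma~\ref{l:aff-lspan}) to collapse (ii) and (iii) into the single statement $\nc{A-B}{\lspan(B-A)}(0)=\{0\}$, and then identify this with $0\in\reli(B-A)$, equivalently $\reli A\cap\reli B\neq\varnothing$.

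I do not anticipate any genuine obstacle here. The one point that requires a moment's care is the bookkeeping observation above that the ``$\subseteq\{0\}$'' in the definition of the CQ condition coincides with the ``$=\{0\}$'' appearing in Theorem~\ref{t:compareCQ2}, which holds because $0$ always lies in a restricted normal cone at a point of the set under consideration; the remaining content is already packaged in the earlier results.
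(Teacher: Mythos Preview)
Your proposal is correct and matches the paper's own approach exactly: the paper's proof is the single line ``This is clear from Theorem~\ref{t:compareCQ2}.'' Your additional bookkeeping that the ``$\subseteq\{0\}$'' in Definition~\ref{d:tildeCQ} becomes ``$=\{0\}$'' for $c\in A\cap B$ is a welcome clarification that the paper leaves implicit.
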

\begin{proof}
This is clear from Theorem~\ref{t:compareCQ2}.
\end{proof}

\begin{proposition}
\label{p:0301b}
Let $A$ and $B$ be nonempty convex subsets of $X$ such that $A\cap
B\neq\varnothing$.
Then the following are equivalent:
\begin{enumerate}
\item
\label{p:0301bi}
$0\in\inte(B-A)$.
\item
\label{p:0301bii}
The $(A,X,B,X)$-CQ condition holds at some point in $A\cap B$.
\item
\label{p:0301biii}
The $(A,X,B,X)$-CQ condition holds at every point in $A\cap B$.
\end{enumerate}
\end{proposition}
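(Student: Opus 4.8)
The plan is to reduce the statement directly to Corollary~\ref{c:compareCQ3}, using the fact that the $X$-restricted normal cone coincides with the Mordukhovich normal cone.

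First I would recall Corollary~\ref{c:Boris=ncX}, which gives $\nc{A}{X}(c)=N_A(c)$ and $\nc{B}{X}(c)=N_B(c)$ for every point. By Definition~\ref{d:tildeCQ}, the $(A,X,B,X)$-CQ condition holds at $c$ precisely when $\nc{A}{X}(c)\cap\big(-\nc{B}{X}(c)\big)\subseteq\{0\}$, which is therefore equivalent to $N_A(c)\cap\big(-N_B(c)\big)=\{0\}$; here the reverse inclusion $\{0\}\subseteq N_A(c)\cap(-N_B(c))$ is automatic whenever $c\in A\cap B$, since $0$ lies in every normal cone at a point of the set. Thus \ref{p:0301bii} is exactly the assertion that $N_A(c)\cap\big(-N_B(c)\big)=\{0\}$ for some $c\in A\cap B$, and \ref{p:0301biii} is the corresponding ``for every $c\in A\cap B$'' assertion.

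Next I would invoke Corollary~\ref{c:compareCQ3}: for nonempty convex sets $A$ and $B$, the conditions $0\in\inte(B-A)$, ``$N_A(c)\cap(-N_B(c))=\{0\}$ for some $c\in A\cap B$'', and ``$N_A(c)\cap(-N_B(c))=\{0\}$ for every $c\in A\cap B$'' are all equivalent; the standing hypothesis $A\cap B\neq\varnothing$ guarantees the quantified statements are not vacuous. Combining this with the translation from the previous paragraph yields the equivalence of \ref{p:0301bi}, \ref{p:0301bii}, and \ref{p:0301biii}.

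I do not anticipate a genuine obstacle: all the substantive work is already carried out in Corollary~\ref{c:compareCQ3} (which in turn rests on Theorem~\ref{t:compareCQ2}, via Corollary~\ref{c:cal2} and Corollary~\ref{c:N=0}\ref{c:N=0iii}). The only point needing a moment's care is the passage between the ``$\subseteq\{0\}$'' formulation of the CQ condition in Definition~\ref{d:tildeCQ} and the ``$=\{0\}$'' formulation used in Corollary~\ref{c:compareCQ3}, which is immediate from $0\in N_A(c)\cap\big(-N_B(c)\big)$ for $c\in A\cap B$. This mirrors the one-line deduction of the companion Proposition~\ref{p:0301a} from Theorem~\ref{t:compareCQ2}.
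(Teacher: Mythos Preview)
Your proposal is correct and takes essentially the same approach as the paper, which simply states ``This is clear from Corollary~\ref{c:compareCQ3}.'' You have merely made explicit the translation (via Corollary~\ref{c:Boris=ncX} and Definition~\ref{d:tildeCQ}) that the paper leaves implicit.
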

\begin{proof}
This is clear from Corollary~\ref{c:compareCQ3}.
\end{proof}

In stark contrast to Proposition~\ref{p:0301a} and \ref{p:0301b},
if the restricting sets are not both equal to $L$ or to $X$,
then the CQ-condition may actually depend on the reference point
as we shall illustrate now:

\begin{example}[CQ condition depends on the reference point]
Suppose that $X=\RR^2$, and let $f\colon\RR\to\RR\colon
x\mapsto (\max\{0,x\})^2$, which is a continuous convex function.
Set $A := \epi f$ and $B:=\RR\times\{0\}$, which are closed convex subsets
of $X$.
Consider first the point $c:=(-1,0)\in A\cap B$.
Then
$\nc{A}{B}(c)=\{(0,0)\}$ and $\nc{B}{{A}}(c)=\{0\}\times\RP$;
hence,
\begin{equation}
\nc{A}{{B}}(c)\cap\big(-\nc{B}{A}(c)\big)=\{(0,0)\},
\end{equation}
i.e., the $(A,A,B,B)$-CQ condition holds at $c$.
On the other hand, consider now $d:=(0,0)\in A\cap B$.
Then $\nc{A}{B}(d) = \{0\}\times\RM$ and
$\nc{B}{A}(d)=\{0\}\times\RP$;
thus,
\begin{equation}
\nc{A}{{B}}(d)\cap\big(-\nc{B}{A}(d)\big)=\{0\}\times\RM,
\end{equation}
i.e., the $(A,A,B,B)$-CQ condition fails at $d$.
\end{example}


\subsection*{Two linear (or intersecting affine) subspaces}

We specialize further to two linear subspaces of $X$.
A pleasing connection
between CQ-number and the angle between two linear subspaces will
be revealed. But first we provide some auxiliary results.

\begin{proposition}
\label{p:nc-subsp} Let $A$ and $B$ be linear subspaces of $X$,
and let $\dd\in\RPP$.
Then
\begin{equation}
\label{e:0301a}
\bigcup_{a\in A\cap(B+A^\perp)\cap \ball{0}{\delta}}\pn{A}{B}(a)=
\bigcup_{a\in A\cap \ball{0}{\delta}}\pn{A}{B}(a)=
\bigcup_{a\in A}\pn{A}{B}(a)
=A^\bot\cap(A+B).
\end{equation}
\end{proposition}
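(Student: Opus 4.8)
The plan is to first reduce $\pn{A}{B}(a)$ to a transparent form using the linearity of the projector, and then prove the chain of four equalities by a ring of inclusions. Since $A$ is a (closed, convex) linear subspace, Fact~\ref{f:convproj}\ref{f:convproj2} shows that for $a\in A$ one has $P_A(x)=a\iff x-a\in A^\perp$, i.e.\ $P_A^{-1}(a)=a+A^\perp$ and hence $P_A^{-1}(a)-a=A^\perp$. Consequently
\[
\pn{A}{B}(a)=\cone\big((B-a)\cap A^\perp\big)\qquad(\forall a\in A).
\]
Writing $U_1,U_2,U_3$ for the three unions on the left-hand side of \eqref{e:0301a} (in order) and $V:=A^\perp\cap(A+B)$ for the right-hand side, it suffices to establish $V\subseteq U_1\subseteq U_2\subseteq U_3\subseteq V$.

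The two inclusions $U_1\subseteq U_2\subseteq U_3$ are immediate from $A\cap(B+A^\perp)\cap\ball{0}{\delta}\subseteq A\cap\ball{0}{\delta}\subseteq A$. For $U_3\subseteq V$ I would fix $a\in A$; since $A$ is a subspace, $B-a\subseteq B+A$, so $(B-a)\cap A^\perp\subseteq(B+A)\cap A^\perp=V$, and because $V$ is a linear subspace (hence a cone) this passes to $\cone(\cdot)$, giving $\pn{A}{B}(a)\subseteq V$; taking the union over $a\in A$ finishes this direction.

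The substantive direction is $V\subseteq U_1$, and the only real idea is to exploit the fact that $\pn{A}{B}$ is positively homogeneous not only in the vector but, in the present linear situation, also in the base point. Given $u\in V\smallsetminus\{0\}$, write $u=a_0+b_0$ with $a_0\in A$ and $b_0\in B$, pick $t>0$ with $t\|a_0\|\le\delta$, and set $a:=-ta_0\in A$. Then $tu\in A^\perp$ and $tu+a=t(u-a_0)=tb_0\in B$ (as $B$ is a subspace), so $0\ne tu\in(B-a)\cap A^\perp$; rewriting $a=tb_0-tu$ shows $a\in B+A^\perp$, while $\|a\|\le\delta$ by the choice of $t$, so $a$ lies in the index set defining $U_1$, and $u=\tfrac1t(tu)\in\cone((B-a)\cap A^\perp)=\pn{A}{B}(a)\subseteq U_1$. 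The case $u=0$ is trivial: $a=0$ is admissible (it lies in $A\cap(B+A^\perp)\cap\ball{0}{\delta}$ since $0\in B$ and $0\in A^\perp$) and $0\in\pn{A}{B}(0)$. Chaining all four inclusions yields \eqref{e:0301a}.

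The ``hard part'' here is really just the bookkeeping of this last construction: one has to notice that scaling the base point $a_0\mapsto ta_0$ while scaling the target $u\mapsto tu$ simultaneously (i) drags the base point into $\ball{0}{\delta}$, (ii) forces it into the set $B+A^\perp$ that indexes $U_1$, and (iii) leaves $u$ itself (up to the harmless positive factor $1/t$) in the relevant restricted proximal normal cone. No compactness or outer-limit argument is needed --- which is why the same set appears whether one restricts the base point to $\ball{0}{\delta}$, to $A\cap(B+A^\perp)$, or not at all --- and the radius $\delta>0$ enters only through the inequality $t\|a_0\|\le\delta$.
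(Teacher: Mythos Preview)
Your proof is correct and follows essentially the same approach as the paper: both hinge on the observation that $P_A^{-1}(a)-a=A^\perp$ for the linear subspace $A$, and both exploit the positive homogeneity of the situation in the base point to render the $\delta$-ball restriction irrelevant. The paper organizes the argument slightly differently---first proving the scaling invariance $\pn{A}{B}(\lambda a)=\pn{A}{B}(a)$ via the identity $\pn{A}{B}(a)=A^\perp\cap\cone(B-a)$ (using Lemma~\ref{l:cone01}\ref{l:cone01ii}) and then computing the union algebraically---whereas you run a direct ring of inclusions with an explicit construction; but the mathematical content is the same.
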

\begin{proof}
Let $a\in A$. Then $P_A^{-1}(a) = a+A^\perp$ and
hence $P_A^{-1}(a)-a=A^\perp$.
If $B\cap(a+A^\perp)=\varnothing$, then $\pn{A}{B}(a)=\{0\}$.
Thus we assume that
$B\cap(a+A^\perp)\neq\varnothing$, which is equivalent to
$a\in A\cap(B+A^\perp)$.
Next, by Lemma~\ref{l:NsubsetN}\ref{l:NsubsetNi+},
$\pn{A}{B}(a) = A^\perp\cap \cone(B-a)$.
This implies
$(\forall\lambda\in\RPP)$
$\cone(B-\lambda a)=\cone(\lambda(B-a))=\cone(B-a)$.
Thus,
\begin{equation}
(\forall\lambda\in\RPP)\quad
\pn{A}{B}(\lambda a)= A^\perp \cap \cone(B-\lambda a)=
A^\perp\cap\cone(B-a)= \pn{A}{B}(a).
\end{equation}
This establishes not only the first two equalities in \eqref{e:0301a}
but also the third because
\begin{subequations}
\begin{align}
\bigcup_{a\in A}\pn{A}{B}(a)
&=\bigcup_{a\in A}\big(A^\perp\cap\cone(B-a)\big)
= A^\perp\cap \bigcup_{a\in A}\cone(B-a)\\
&= A^\perp\cap\cone\Big(\bigcup_{a\in A}(B-a)\Big)
=A^\perp\cap \cone(B-A) = A^\perp\cap(B-A)\\
&=A^\perp\cap (B+A).
\end{align}
\end{subequations}
The proof is complete.
\end{proof}


We now introduce two notions of angles between subspaces;
for further information, we highly recommend \cite{Deut94} and
\cite{Deutsch}.

\begin{definition}
Let $A$ and $B$ be linear subspaces of $X$.
\begin{enumerate}
\item\label{d:aDix}
{\rm \textbf{(Dixmier angle)} \cite{Dix49}}
The \emph{Dixmier angle} between $A$ and $B$ is the number
in $[0,\frac{\pi}{2}]$ whose cosine is given by
\begin{equation}
c_0(A,B):=\sup\menge{|\scal{a}{b}|}{a\in A,b\in B, \|a\|\leq 1,\|b\|\leq 1}.
\end{equation}
\item\label{d:aFri}
{\rm \textbf{(Friedrichs angle)} \cite{Frie37}}
The \emph{Friedrichs angle} (or simply the \emph{angle}) between $A$ and $B$ is the
number in $[0,\frac{\pi}{2}]$
whose cosine is given by
\begin{subequations}
\begin{align}
c(A,B)&:=c_0(A\cap(A\cap B)^\perp,B\cap(A\cap B)^\perp)\\
&= \sup\mmenge{|\scal{a}{b}|}
{\begin{aligned}
&a\in A\cap(A\cap B)^\bot,\|a\|\leq 1,\\
&b\in B\cap(A\cap B)^\bot,\|b\|\leq 1
\end{aligned}}.
\end{align}
\end{subequations}
\end{enumerate}
\end{definition}

Let us gather some properties of angles.

\begin{fact}
\label{f:c0&c}
Let $A$ and $B$ be linear subspaces of $X$.
Then the following hold:
\begin{enumerate}
\item
\label{f:c0&c-ii}
If $A\cap B=\{0\}$, then $c(A,B)=c_0(A,B)$.
\item
\label{f:c0&c-ii+}
If $A\cap B\neq \{0\}$, then $c_0(A,B)=1$.
\item
\label{f:c0&c-ii++}
$c(A,B)<1$.
\item\label{f:c0&c-ii3}
$c(A,B)=c_0(A,B\cap(A\cap B)^\bot)=c_0(A\cap(A\cap B)^\bot,B)$.
\item
\label{f:c0&c-iii}
{\rm \textbf{(Solmon)}}
$c(A,B)=c(A^\bot,B^\bot)$.
\end{enumerate}
\end{fact}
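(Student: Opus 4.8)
### Plan of Proof for Fact \ref{f:c0&c}

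The plan is to establish the five items essentially in the order listed, since each is either a direct consequence of the definitions or can be reduced to standard facts about angles that one may cite. The overall strategy is to unpack the Dixmier and Friedrichs cosines as suprema of $|\scal{a}{b}|$ over appropriate unit vectors and to manipulate the constraint sets.

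For \ref{f:c0&c-ii}: when $A\cap B=\{0\}$, the subspace $(A\cap B)^\perp$ equals all of $X$, so $A\cap(A\cap B)^\perp=A$ and $B\cap(A\cap B)^\perp=B$; hence the defining supremum for $c(A,B)$ is literally the one defining $c_0(A,B)$. For \ref{f:c0&c-ii+}: if $A\cap B\neq\{0\}$, pick a unit vector $e\in A\cap B$; then $a=b=e$ is admissible in the supremum defining $c_0(A,B)$ and yields $|\scal{e}{e}|=1$, so $c_0(A,B)=1$ (the reverse inequality being Cauchy--Schwarz). For \ref{f:c0&c-ii3}: I would show that replacing either $A$ or $B$ (but not both) by its intersection with $(A\cap B)^\perp$ already suffices to obtain $c(A,B)$. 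The key observation is that if $a\in A$ is decomposed as $a=a_0+a_1$ with $a_0\in A\cap B$ and $a_1\in A\cap(A\cap B)^\perp$, and $b\in B\cap(A\cap B)^\perp$, then $\scal{a}{b}=\scal{a_0}{b}+\scal{a_1}{b}=\scal{a_1}{b}$ since $a_0\in A\cap B\subseteq (A\cap B)^{\perp\perp}$ is orthogonal to $b$; moreover $\|a_1\|\le\|a\|$, so the supremum over $a\in A$ of $|\scal{a}{b}|/(\|a\|\|b\|)$ is not increased by restricting to $a\in A\cap(A\cap B)^\perp$, and conversely the restricted supremum is trivially $\le$ the unrestricted one. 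This gives $c_0(A,B\cap(A\cap B)^\perp)=c_0(A\cap(A\cap B)^\perp,B\cap(A\cap B)^\perp)=c(A,B)$; the symmetric statement follows by swapping the roles of $A$ and $B$.

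For \ref{f:c0&c-ii++}: the inequality $c(A,B)<1$ is the one genuinely nontrivial point. By \ref{f:c0&c-ii3} it is equivalent to $c_0(A\cap(A\cap B)^\perp, B\cap(A\cap B)^\perp)<1$, and by \ref{f:c0&c-ii} (applied to the subspaces $A':=A\cap(A\cap B)^\perp$ and $B':=B\cap(A\cap B)^\perp$, which satisfy $A'\cap B'=\{0\}$) this is a statement about two subspaces meeting only at $0$ in a finite-dimensional space. Here I would invoke compactness: the function $(a,b)\mapsto|\scal{a}{b}|$ is continuous on the product of the (compact) unit spheres of $A'$ and $B'$, so the supremum is attained, and if it equalled $1$ then the Cauchy--Schwarz equality case would force $a=\pm b$ for some unit vector, contradicting $A'\cap B'=\{0\}$. (If either $A'$ or $B'$ is trivial the supremum is $0<1$, or by the convention $\sup\varnothing=\minf$.) This is the step I expect to be the main obstacle, only in the sense that it requires the finite-dimensionality/compactness argument rather than pure symbol-pushing; it is nonetheless short. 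Finally, \ref{f:c0&c-iii}, Solmon's identity $c(A,B)=c(A^\perp,B^\perp)$, I would simply cite from \cite{Deut94} or \cite{Deutsch}, as the excerpt already points the reader there for background on angles; a self-contained proof would go through the fact that $A\cap(A\cap B)^\perp$ and $B^\perp\cap(A^\perp\cap B^\perp)^\perp$ have the same nonzero principal angles, but that is standard and beyond what is needed here.
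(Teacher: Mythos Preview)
Your proposal is correct and in fact supplies considerably more detail than the paper itself, which treats this statement as a \emph{Fact} and simply writes ``Clear from the definitions'' for \ref{f:c0&c-ii}--\ref{f:c0&c-ii++} and cites \cite{Deut94,Deutsch} for \ref{f:c0&c-ii3} and \ref{f:c0&c-iii}. Your compactness argument for \ref{f:c0&c-ii++} and your orthogonal-decomposition argument for \ref{f:c0&c-ii3} are the standard ones underlying those citations, so the approaches are the same in spirit; you have merely unpacked what the paper leaves to the references. One minor remark: in your treatment of \ref{f:c0&c-ii++}, the reduction to $c_0(A',B')$ with $A'=A\cap(A\cap B)^\perp$ and $B'=B\cap(A\cap B)^\perp$ is simply the definition of $c(A,B)$, so invoking \ref{f:c0&c-ii3} and \ref{f:c0&c-ii} there is unnecessary (though harmless); the compactness argument can proceed directly from the definition.
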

\begin{proof}
\ref{f:c0&c-ii}--\ref{f:c0&c-ii++}: Clear from the definitions.
\ref{f:c0&c-ii3}: See, e.g., \cite[Lemma~2.10(1)]{Deut94}
or \cite[Lemma~9.5]{Deutsch}.
\ref{f:c0&c-iii}: See, e.g., \cite[Theorem~2.16]{Deut94}.
\end{proof}


\begin{proposition}[CQ-number of two linear subspaces and Dixmier angle]
\label{p:CQn=c0}
Let $A$ and $B$ be linear subspaces of $X$, and let $\delta>0$.
Then
\begin{subequations}
\begin{align}
\theta_\dd(A,A,B,B)&= c_0\big(A^\bot\cap(A+B),B^\bot\cap(A+B)\big),\\
\theta_\dd(A,X,B,B)&= c_0\big(A^\bot\cap(A+B),B^\bot\big),\\
\theta_\dd(A,A,B,X)&= c_0\big(A^\bot,B^\bot\cap(A+B)\big),
\end{align}
\end{subequations}
where the CQ-numbers at 0 are defined as in \eqref{e:CQn}.
\end{proposition}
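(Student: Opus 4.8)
The plan is to reduce each of the three CQ-numbers at $0$ to a Dixmier-angle cosine of two explicit subspaces. The engine is Proposition~\ref{p:nc-subsp}, which evaluates the unions of restricted proximal normal cones that are hidden inside the definition~\eqref{e:CQn} of $\theta_\delta$.

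First I would assemble the building blocks. Proposition~\ref{p:nc-subsp} gives $\bigcup_{a\in A\cap \ball{0}{\delta}}\pn{A}{B}(a)=A^\bot\cap(A+B)$; applying the same proposition with the linear subspace $X$ in place of $B$, and using $A+X=X$, gives $\bigcup_{a\in A\cap \ball{0}{\delta}}\pn{A}{X}(a)=A^\bot$ --- equivalently, $\pn{A}{X}(a)=\pnX{A}(a)=\cone(P_A^{-1}(a)-a)=\cone(A^\bot)=A^\bot$ for every $a\in A$. Interchanging the roles of $A$ and $B$ and recalling $A+B=B+A$ likewise yields $\bigcup_{b\in B\cap \ball{0}{\delta}}\pn{B}{A}(b)=B^\bot\cap(A+B)$ and $\bigcup_{b\in B\cap \ball{0}{\delta}}\pn{B}{X}(b)=B^\bot$. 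Each of these four sets is a \emph{linear subspace} of $X$, being an intersection of two linear subspaces. Next I would record the elementary fact that for linear subspaces $U,V$ of $X$ one has $\sup\{\scal{u}{v}\,:\,u\in U,\ v\in -V,\ \|u\|\leq 1,\ \|v\|\leq 1\}=c_0(U,V)$: since $V$ is a subspace, $-V=V$; and since $U$ is a subspace, one may replace $u$ by $-u$, so the supremum of $\scal{u}{v}$ coincides with the supremum of $|\scal{u}{v}|$, which is $c_0(U,V)$ by definition of the Dixmier-angle cosine.

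Then I would carry out the assembly. By the convention in Definition~\ref{d:NCone} that $\pn{A}{\wt{B}}(a)=\varnothing$ whenever $a\notin A$, in~\eqref{e:CQn} at $c=0$ the point $a$ effectively ranges over $A\cap \ball{0}{\delta}$ and $b$ over $B\cap \ball{0}{\delta}$, with $u,v$ chosen independently; hence the set of admissible $u$ is $\{u\in U:\|u\|\leq 1\}$ and the set of admissible $v$ is $\{v\in -V:\|v\|\leq 1\}$, where $U=\bigcup_{a\in A\cap \ball{0}{\delta}}\pn{A}{\wt{B}}(a)$ and $V=\bigcup_{b\in B\cap \ball{0}{\delta}}\pn{B}{\wt{A}}(b)$. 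Therefore $\theta_\delta(A,\wt{A},B,\wt{B})=\sup\{\scal{u}{v}\,:\,u\in U,\ v\in -V,\ \|u\|\leq 1,\ \|v\|\leq 1\}=c_0(U,V)$ by the elementary fact. Substituting the four formulas from the previous step for the choices $(\wt{A},\wt{B})\in\{(A,B),(X,B),(A,X)\}$ --- which give the subspace pairs $(A^\bot\cap(A+B),\,B^\bot\cap(A+B))$, $(A^\bot\cap(A+B),\,B^\bot)$, and $(A^\bot,\,B^\bot\cap(A+B))$ respectively --- produces exactly the three asserted identities.

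The anticipated obstacles are minor and purely organizational: keeping straight that in~\eqref{e:CQn} the restricting set $\wt{B}$ governs the $a$-side while $\wt{A}$ governs the $b$-side, and justifying that restricting $a$ to $\ball{0}{\delta}$ does not shrink the relevant union --- but this last point is precisely the content of Proposition~\ref{p:nc-subsp}, so it costs nothing here. There is no genuine analytic difficulty: everything reduces to the identity $\pn{A}{X}(a)=A^\bot$ for a linear subspace $A$, Proposition~\ref{p:nc-subsp}, and the Cauchy-Schwarz-type manipulation converting a supremum of inner products over symmetric subspaces into a supremum of absolute values.
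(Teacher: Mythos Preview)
Your proof is correct and follows exactly the route the paper takes: the paper's own proof is simply ``This follows from Proposition~\ref{p:nc-subsp},'' and you have accurately unpacked what that means, including the observation that the resulting unions are linear subspaces so that the supremum of $\scal{u}{v}$ over the symmetric unit balls equals $c_0(U,V)$.
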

\begin{proof}
This follows from Proposition~\ref{p:nc-subsp}.
\end{proof}


We are now in a position to derive a striking connection
between the CQ-number and the Friedrichs angle,
which underlines a possible interpretation of the CQ-number as a generalized
Friedrichs angle between two sets.

\begin{theorem}[CQ-number of two linear subspaces and Friedrichs angle]
\label{t:CQn=c}
Let $A$ and $B$ be linear subspaces of $X$, and let $\dd>0$.
Then
\begin{equation}
\theta_\dd(A,A,B,B)=\theta_\dd(A,X,B,B)=\theta_\dd(A,A,B,X)=c(A,B)<1,
\end{equation}
where the CQ-number at $0$ is defined as in \eqref{e:CQn}.
\end{theorem}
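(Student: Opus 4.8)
The plan is to reduce everything to the previously established identities and to the basic facts about the Friedrichs and Dixmier angles. By Proposition~\ref{p:CQn=c0} we already have
\begin{equation*}
\theta_\dd(A,A,B,B)= c_0\big(A^\bot\cap(A+B),B^\bot\cap(A+B)\big),
\end{equation*}
and analogously for the two mixed CQ-numbers, so the content of the theorem is purely a statement about the Dixmier angles of these four subspaces, namely that
\begin{equation*}
c_0\big(A^\bot\cap(A+B),B^\bot\cap(A+B)\big)=c_0\big(A^\bot\cap(A+B),B^\bot\big)=c_0\big(A^\bot,B^\bot\cap(A+B)\big)=c(A,B).
\end{equation*}
First I would observe that $\theta_\dd$ is independent of $\dd$ here (this is built into Proposition~\ref{p:CQn=c0}), so there is nothing to do on that front.

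The heart of the argument is the following subspace computation: I claim that
\begin{equation*}
A^\bot\cap(A+B)=A^\bot\cap\big(B\cap(A\cap B)^\bot\big)^{\perp\perp\text{-complement inside }A+B}
\end{equation*}
is more cleanly phrased as $A^\bot\cap(A+B)=\big(A\cap(A\cap B)^\bot\big)$ reflected — so let me instead proceed directly. Set $M:=A\cap B$. The key linear-algebra fact I would prove is
\begin{equation*}
A^\bot\cap(A+B)= \big(A^\bot\cap B^\bot\big)^\bot\cap A^\bot\cap(A+B)= (A+B)\ominus A,
\end{equation*}
and, more usefully, that the restriction to the subspace $A+B$ is harmless: working inside the Hilbert space $A+B$, one has $\big(A^\bot\cap(A+B)\big)=A^{\perp_{A+B}}$, the orthogonal complement of $A$ taken relative to $A+B$, and similarly for $B$. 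Then I would invoke Fact~\ref{f:c0&c}\ref{f:c0&c-iii} (Solmon's theorem) \emph{inside the space $A+B$}: there, $A^{\perp_{A+B}}$ and $B^{\perp_{A+B}}$ are the orthogonal complements, so $c(A^{\perp_{A+B}},B^{\perp_{A+B}})=c(A,B)$, where the Friedrichs angle is the same whether computed in $A+B$ or in $X$ because $A\cap B\subseteq A+B$. Finally, since $A^\bot\cap(A+B)\cap B^\bot\cap(A+B)=A^\bot\cap B^\bot\cap(A+B)$, and because any vector of $A+B$ orthogonal to both $A$ and $B$ is orthogonal to $A+B$ hence is $0$, these two subspaces intersect trivially; therefore by Fact~\ref{f:c0&c}\ref{f:c0&c-ii} their Dixmier angle equals their Friedrichs angle, which we just identified with $c(A,B)$. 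This yields $\theta_\dd(A,A,B,B)=c(A,B)$.

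For the two mixed quantities $\theta_\dd(A,X,B,B)= c_0\big(A^\bot\cap(A+B),B^\bot\big)$ and $\theta_\dd(A,A,B,X)= c_0\big(A^\bot,B^\bot\cap(A+B)\big)$, I would note that the supremum defining $c_0$ ranges over unit vectors $a\in A^\bot\cap(A+B)$ and $b\in B^\bot$; but $\scal{a}{b}=\scal{a}{P_{A+B}b}$ since $a\in A+B$, and $P_{A+B}b\in B^\bot\cap(A+B)$ whenever $b\in B^\bot$ (because $B^\bot\supseteq (A+B)^\bot$, so $P_{A+B}b = b - P_{(A+B)^\bot}b$ still lies in $B^\bot$). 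Hence replacing $b$ by its projection does not decrease $\scal{a}{b}$ in absolute value and keeps it in the smaller set, so $c_0\big(A^\bot\cap(A+B),B^\bot\big)=c_0\big(A^\bot\cap(A+B),B^\bot\cap(A+B)\big)$, and symmetrically for the third quantity using $A^\bot=P_{A+B}(A^\bot)$-reduction. This collapses all three to the already-computed value $c(A,B)$. The final inequality $c(A,B)<1$ is exactly Fact~\ref{f:c0&c}\ref{f:c0&c-ii++}.

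The main obstacle I anticipate is the bookkeeping around ``relative'' orthogonal complements: one must be careful that $A^\bot\cap(A+B)$ really is the orthogonal complement of $A$ within the Hilbert space $A+B$ (this needs $A\subseteq A+B$ and the decomposition $X=(A+B)\oplus(A+B)^\bot$), and that the Friedrichs angle is insensitive to whether the ambient space is $X$ or $A+B$. Both are routine but must be stated cleanly so that the appeal to Solmon's theorem (Fact~\ref{f:c0&c}\ref{f:c0&c-iii}) is legitimate. Once that reduction is in place, everything else is a short chase through the definitions and the cited facts.
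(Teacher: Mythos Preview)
Your argument is correct, but it takes a more hands-on route than the paper's. The paper's proof stays entirely in the ambient space $X$ and rests on the single observation that $(A^\bot\cap B^\bot)^\bot=\overline{A+B}=A+B$: with this in hand, the very definition of the Friedrichs angle applied to the pair $(A^\bot,B^\bot)$ reads $c(A^\bot,B^\bot)=c_0\big(A^\bot\cap(A+B),\,B^\bot\cap(A+B)\big)$, and Solmon's theorem (Fact~\ref{f:c0&c}\ref{f:c0&c-iii}) in $X$ gives $c(A^\bot,B^\bot)=c(A,B)$, disposing of $\theta_\dd(A,A,B,B)$ in two lines. For the two mixed quantities the paper simply invokes Fact~\ref{f:c0&c}\ref{f:c0&c-ii3} applied to $(A^\bot,B^\bot)$, which is exactly the statement $c_0\big(A^\bot\cap(A+B),B^\bot\big)=c(A^\bot,B^\bot)=c_0\big(A^\bot,B^\bot\cap(A+B)\big)$.

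By contrast, you change the ambient Hilbert space to $A+B$, verify that $A^\bot\cap(A+B)$ is the relative orthogonal complement $A^{\perp_{A+B}}$, apply Solmon there, and then use the trivial intersection $A^{\perp_{A+B}}\cap B^{\perp_{A+B}}=\{0\}$ together with Fact~\ref{f:c0&c}\ref{f:c0&c-ii} to pass from Friedrichs to Dixmier. For the mixed terms you essentially reprove Fact~\ref{f:c0&c}\ref{f:c0&c-ii3} via the projection $P_{A+B}$. All of this is sound (your projection step is fine: $|\scal{a}{b}|=|\scal{a}{P_{A+B}b}|$ with $\|P_{A+B}b\|\leq 1$ gives the nontrivial inequality), but it duplicates work already packaged in Fact~\ref{f:c0&c}. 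The paper's proof is shorter precisely because it recognizes the expressions from Proposition~\ref{p:CQn=c0} as instances of already-named quantities for $(A^\bot,B^\bot)$, whereas you rebuild those identities from scratch. Your confusing false-start paragraph (``I claim that \ldots\ so let me instead proceed directly'') should be deleted.
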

\begin{proof}
On the one hand, using Fact~\ref{f:c0&c}\ref{f:c0&c-iii}, we have
\begin{subequations}
\begin{align}
c(A,B)&=c(A^\bot,B^\bot)\\
&=c_0\big(A^\bot\cap(A^\bot\cap B^\bot)^\bot,B^\bot\cap(A^\bot\cap B^\bot)^\bot\big)\\
&=c_0\big(A^\bot\cap(A+B),B^\bot\cap(A+B)\big).
\end{align}
\end{subequations}
On the other hand, Fact~\ref{f:c0&c}\ref{f:c0&c-ii3} yields
\begin{subequations}
\begin{align}
c_0\big(A^\bot\cap(A+B),B^\bot\big) &= c_0\big(A^\bot\cap(A^\bot\cap
B^\bot)^\bot,B^\bot\big)\\
&= c(A^\bot,B^\bot)\\
&= c_0\big(A^\bot,B^\bot\cap(A^\bot\cap B^\bot)^\bot\big)\\
&=c_0\big(A^\bot,B^\bot\cap(A+B)\big).
\end{align}
\end{subequations}
Altogether, recalling Proposition~\ref{p:CQn=c0}, we obtain the
result.
\end{proof}

The results in this subsection have a simple generalization to intersecting
affine subspaces.
Indeed, if $A$ and $B$ are \emph{intersecting} affine subspaces, then
the corresponding Friedrichs angle is
\begin{equation}
c(A,B):=c(\pa A,\pa B).
\end{equation}
Combining \eqref{e:120405b} with Theorem~\ref{t:CQn=c}, we immediately
obtain the following result.
\begin{corollary}[CQ-number of two intersecting affine subspaces and Friedrichs angle]
\label{c:CQn=c}
Let $A$ and $B$ be affine subspaces of $X$, suppose that
$c\in A\cap B$, and let $\dd>0$.
Then
\begin{equation}
\theta_\dd(A,A,B,B)=\theta_\dd(A,X,B,B)=\theta_\dd(A,A,B,X)=c(A,B)<1,
\end{equation}
where the CQ-number at $c$ is defined as in \eqref{e:CQn}.
\end{corollary}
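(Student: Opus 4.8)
The plan is to reduce Corollary~\ref{c:CQn=c} to the already-established Theorem~\ref{t:CQn=c} by translating everything to the origin. Since $A$ and $B$ are affine subspaces with $c \in A \cap B$, the sets $A - c$ and $B - c$ are \emph{linear} subspaces of $X$; indeed $A - c = \pa A$ and $B - c = \pa B$. So the first step is to invoke the translation-invariance identity \eqref{e:120405b}, which gives
\begin{equation}
\theta_\dd\big(A,A,B,B\big)\text{~at~}c = \theta_\dd\big(A-c,A-c,B-c,B-c\big)\text{~at~}0,
\end{equation}
and similarly for the triples $(A,X,B,B)$ and $(A,A,B,X)$, using that $X - c = X$.

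Next I would apply Theorem~\ref{t:CQn=c} to the linear subspaces $A - c$ and $B - c$. That theorem yields
\begin{equation}
\theta_\dd\big(A-c,A-c,B-c,B-c\big) = \theta_\dd\big(A-c,X,B-c,B-c\big) = \theta_\dd\big(A-c,A-c,B-c,X\big) = c(A-c,B-c) < 1,
\end{equation}
where all CQ-numbers are taken at $0$. Finally I would unwind the definition: by the stated convention for intersecting affine subspaces, $c(A,B) := c(\pa A, \pa B) = c(A-c, B-c)$, so combining this with the two displays above gives exactly
\begin{equation}
\theta_\dd(A,A,B,B)=\theta_\dd(A,X,B,B)=\theta_\dd(A,A,B,X)=c(A,B)<1,
\end{equation}
with all CQ-numbers at $c$, which is the claim.

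There is essentially no obstacle here — the corollary is a routine specialization. The only point requiring a little care is making sure the affine-to-linear translation is applied consistently in \emph{all} four slots of the CQ-number (the two ``restricting'' slots $\wt A, \wt B$ as well as the two ``base'' slots $A, B$), and that the reference point is correctly shifted from $c$ to $0$; this is precisely what \eqref{e:120405b} handles. One should also note that $c \in A \cap B$ guarantees $0 \in (A-c) \cap (B-c)$, so Theorem~\ref{t:CQn=c} does apply to genuine linear subspaces, and that the paper's convention $c(A,B) := c(\pa A, \pa B)$ is exactly the bridge that identifies $c(A-c,B-c)$ with $c(A,B)$.
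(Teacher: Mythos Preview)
Your proposal is correct and follows exactly the paper's own approach: the paper's proof is the single sentence ``Combining \eqref{e:120405b} with Theorem~\ref{t:CQn=c}, we immediately obtain the following result,'' which is precisely the translation-to-linear-subspaces argument you wrote out in detail.
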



\section{Regularities}

\label{s:superregularity}

In this section, we study a notion of set regularity that
is based on restricted normal cones.

\begin{definition}[regularity and superregularity]\label{d:reg}
Let $A$ and $B$ be nonempty subsets of $X$, and let $c\in X$.
\begin{enumerate}
\item
We say that $B$ is \emph{$(A,\ve,\dd)$-regular} at $c\in X$ if
$\ve\geq 0$, $\dd>0$, and
\begin{equation}
\label{e:dreg} \left.
\begin{array}{c}
(y,b)\in B\times B,\\
\|y-c\| \leq \dd,\|b-c\|\leq \dd,\\
u\in \pn{B}{A}(b)
\end{array}
\right\}\quad\Rightarrow\quad \scal{u}{y-b}\leq
\ve\|u\|\cdot\|y-b\|.
\end{equation}
If $B$ is $(X,\ve,\dd)$-regular at $c$, then
we also simply speak of $(\ve,\dd)$-regularity.
\item
The set $B$ is called $A$-\emph{superregular} at $c\in X$ if for
every $\ve>0$ there exists $\dd>0$ such that $B$ is
$(A,\ve,\dd)$-regular at $c$.
Again, if $B$ is $X$-superregular at $c$, then we also say
that $B$ is superregular at $c$.
\end{enumerate}
\end{definition}

\begin{remark}
\label{r:0303a}
Several comments on Definition~\ref{d:reg} are in order.
\begin{enumerate}
\item Superregularity with $A=X$ was introduced
by Lewis, Luke and Malick in \cite[Section~4]{LLM}.
Among other things, they point out that
amenability and prox regularity are sufficient conditions for
superregularity, while Clarke regularity is a necessary condition.
\item
The reference point $c$ does not have to belong to $B$.
If $c\not\in \overline{B}$, then for every $\dd\in\left]0,d_B(c)\right[$,
$B$ is $(0,\dd)$-regular at $c$; consequently, $B$ is superregular at $c$.
\item
If $\ve_1>\ve_2$ and $B$ is $(A,\ve_2,\dd)$-regular at $c$ then
$B$ is also $(A,\ve_1,\dd)$-regular at $c$.
\item
If $\varepsilon \in \left[1,\pinf\right[$, then Cauchy-Schwarz
implies that $B$ is $(\varepsilon,\pinf)$-regular at every point in $X$.
\item
\label{r:0303av}
It follows from Proposition~\ref{p:elementary}\ref{p:ele-ii} that
$B$ is $(A_1\cup A_2,\ve,\dd)$-regular at $c$ if and only if
$B$ is both $(A_1,\ve,\dd)$-regular and $(A_2,\ve,\dd)$-regular at $c$.
\item
\label{r:0303avi}
If $B$ is convex, then it follows with
Lemma~\ref{l:NsubsetN}\ref{l:NsubsetNvi} that
$B$ is $(A,0,\pinf)$-regular at $c$; consequently,
$B$ is superregular.
\item
\label{r:0303avi+}
Similarly, if $B$ is locally convex at $c$, i.e.,
there exists $\rho\in\RPP$ such that $B\cap\ball{c}{\rho}$ is convex,
then $B$ is superregular at $c$.
\item
\label{r:0303avii}
If $B$ is $(A,0,\dd)$-regular at $c$, then $B$ is $A$-superregular at $c$;
the converse, however, is not true in general
(see Example~\ref{ex:sphere1} below).
\end{enumerate}
\end{remark}

As a first example, let us consider the sphere.

\begin{example}[sphere]
\label{ex:sphere1}
Let $z\in X$ and $\rho\in\RPP$.
Set $S:= \sphere{z}{\rho}$, suppose that $s\in S$,
let $\ve\in\RPP$, and let $\dd\in\RPP$.
Then $S$ is $(\ve,\rho\ve)$-regular at $s$;
consequently, $S$ is superregular at $s$ (see Definition~\ref{d:reg}).
However, $S$ is not $(0,\dd)$-regular at $s$.
\end{example}
\begin{proof}
Let $b\in S$ and  $y\in S$.
Then
$\rho^2 = \|z-y\|^2=\|z-b\|^2+\|y-b\|^2-2\scal{z-b}{y-b}
=\rho^2+\|y-b\|^2-2\scal{z-b}{y-b}$,
which implies
\begin{equation}
\label{e:0308c}
2\scal{z-b}{y-b}=\|y-b\|^2.
\end{equation}
On the other hand, by Example~\ref{ex:ncS}, we have
\begin{equation}
\label{e:0308b}
\pn{S}{X}(b) \cap \sphere{0}{1} =
\bigg\{\pm \frac{z-b}{\|z-b\|}\bigg\} = \bigg\{\pm \frac{z-b}{\rho}\bigg\}.
\end{equation}
Suppose that $u\in\pn{S}{X}(b) \cap \sphere{0}{1}$.
Combining \eqref{e:0308c} and \eqref{e:0308b}, we obtain
\begin{equation}\label{e:0308c1}
\scal{\pn{S}{X}(b)\cap\sphere{0}{1}}{y-b} =
\bigg\{\pm\frac{1}{2\rho}\|y-b\|^2\bigg\}.
\end{equation}
Thus if $\|y-s\|\leq\rho\ve$, $\|b-s\|\leq\rho\ve$,
and $u\in\pn{S}{X}(b) \cap \sphere{0}{1}$, then
\begin{align}
\scal{u}{y-b}&\leq \frac{1}{2\rho}\|y-b\|^2
\leq \frac{1}{2\rho}\big(\|y-s\|+\|s-b\|\big)\|y-b\|
\leq \frac{\rho\ve+\rho\ve}{2\rho}\|y-b\|\\
&=\ve\|u\|\cdot \|y-b\|,
\end{align}
which verifies the $(\ve,\rho\ve)$-regularity of $S$ at $s$.
Finally,
by \eqref{e:0308c1},
\begin{equation}\max\big\{ \tscal{\pn{S}{X}(b)\cap\sphere{0}{1}}{y-b}  \big\} =
\frac{1}{2\rho}\|y-b\|^2 >0
\end{equation}
and therefore $S$ is not $(0,\dd)$-regular at $s$.
\end{proof}

We now characterizes $A$-superregularity using restricted normal cones.

\begin{theorem}[characterization of $A$-superregularity]
\label{t:Asreg}
Let $A$ and $B$ be nonempty subsets of $X$, and let $c\in X$.
Then $B$ is $A$-superregular at $c$ if and only if
for every $\varepsilon\in\RPP$, there exists $\dd\in\RPP$ such that
\begin{equation}
\label{e:Asreg} \left.
\begin{array}{c}
(y,b)\in B\times B\\
\|y-c\| \leq \dd,\|b-c\|\leq \dd\\
u\in \nc{B}{A}(b)
\end{array}
\right\}\quad\Rightarrow\quad \scal{u}{y-b}\leq \ve\|u\|\cdot\|y-b\|.
\end{equation}
\end{theorem}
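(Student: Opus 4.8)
The plan is to prove the two implications separately, using only the elementary inclusion $\pn{B}{A}(b)\subseteq\nc{B}{A}(b)$ from Lemma~\ref{l:NsubsetN}\ref{l:NsubsetNiv} together with the fact that $\nc{B}{A}$ is, by Definition~\ref{d:NCone}\ref{d:nc}, the outer limit of the restricted proximal normal cones $\pn{B}{A}$.

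For the easy direction (``$\Leftarrow$''), suppose that for every $\varepsilon>0$ there is $\delta=\delta(\varepsilon)>0$ such that \eqref{e:Asreg} holds. Fix $\varepsilon>0$ and the corresponding $\delta$. Given $(y,b)\in B\times B$ with $\|y-c\|\le\delta$, $\|b-c\|\le\delta$, and $u\in\pn{B}{A}(b)$, Lemma~\ref{l:NsubsetN}\ref{l:NsubsetNiv} gives $u\in\nc{B}{A}(b)$, so \eqref{e:Asreg} yields $\scal{u}{y-b}\le\varepsilon\|u\|\cdot\|y-b\|$. Hence $B$ is $(A,\varepsilon,\delta)$-regular at $c$ (see \eqref{e:dreg}), and since $\varepsilon>0$ was arbitrary, $B$ is $A$-superregular at $c$.

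For the main direction (``$\Rightarrow$''), suppose that $B$ is $A$-superregular at $c$ and fix $\varepsilon>0$. Choose $\delta_0>0$ so that $B$ is $(A,\varepsilon,\delta_0)$-regular at $c$, and put $\delta:=\delta_0/2$. Let $(y,b)\in B\times B$ satisfy $\|y-c\|\le\delta$ and $\|b-c\|\le\delta$, and let $u\in\nc{B}{A}(b)$. By Definition~\ref{d:NCone}\ref{d:nc}, there exist sequences $(b_n)_\nnn$ in $B$ and $(u_n)_\nnn$ with $u_n\in\pn{B}{A}(b_n)$ such that $b_n\to b$ and $u_n\to u$. For all sufficiently large $n$ we have $\|b_n-c\|\le\|b_n-b\|+\|b-c\|<\delta_0$, while $\|y-c\|\le\delta<\delta_0$; thus \eqref{e:dreg} applies to the pair $(y,b_n)$ and the vector $u_n\in\pn{B}{A}(b_n)$, giving $\scal{u_n}{y-b_n}\le\varepsilon\|u_n\|\cdot\|y-b_n\|$. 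Letting $n\to\infty$ and using $b_n\to b$, $u_n\to u$ together with continuity of $\scal{\cdot}{\cdot}$ and of $\|\cdot\|$, we obtain $\scal{u}{y-b}\le\varepsilon\|u\|\cdot\|y-b\|$, which is exactly the implication in \eqref{e:Asreg} for this $\varepsilon$ and $\delta$. Since $\varepsilon>0$ was arbitrary, the characterization follows.

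The only point requiring any care is the bookkeeping of radii in the ``$\Rightarrow$'' direction: one must work on a strictly smaller ball (here of radius $\delta_0/2$) so that, after replacing $b$ by the approximating points $b_n$, one still lies inside the ball of radius $\delta_0$ on which $(A,\varepsilon,\delta_0)$-regularity is available. Everything else is a routine passage to the limit; in particular, no shrinking of $\varepsilon$ is needed, since the inequality in \eqref{e:dreg} is non-strict and hence stable under limits.
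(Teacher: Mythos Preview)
Your proof is correct. The ``$\Leftarrow$'' direction is identical to the paper's. For ``$\Rightarrow$'', the paper argues by contradiction: it assumes the conclusion fails for some $\varepsilon>0$, obtains sequences $(y_n,b_n,u_n)$ with $u_n\in\nc{B}{A}(b_n)$ violating the inequality, approximates each $u_n$ by proximal normal vectors $u_{n,k}\in\pn{B}{A}(b_{n,k})$, and extracts a diagonal subsequence on which the inequality is still violated with constant $\varepsilon/2$, contradicting $A$-superregularity. Your direct argument is cleaner: by halving the radius ($\delta=\delta_0/2$) rather than halving $\varepsilon$, you ensure the approximating points $b_n$ remain in the ball of radius $\delta_0$, apply $(A,\varepsilon,\delta_0)$-regularity to $(y,b_n,u_n)$, and pass to the limit using that the non-strict inequality in \eqref{e:dreg} is preserved. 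Both proofs rest on the same two ingredients---Lemma~\ref{l:NsubsetN}\ref{l:NsubsetNiv} and the outer-limit definition of $\nc{B}{A}$---so the difference is purely organizational, but your version avoids the diagonal extraction and the $\varepsilon/2$ bookkeeping.
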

\begin{proof}
``$\Leftarrow$'': Clear from Lemma~\ref{l:NsubsetN}\ref{l:NsubsetNiv}.
``$\Rightarrow$'':
We argue by contradiction; thus, we assume
there exists $\ve\in\RPP$
and sequences $(y_n,b_n,u_n)_\nnn$ in $B\times B\times X$
such that $(y_n,b_n)\to(c,c)$ and for every $\nnn$,
\begin{equation}
u_n\in\nc{B}{A}(b_n)
\quad\text{and}\quad
\scal{u_n}{y_n-b_n}>\ve\|u_n\|\cdot\|y_n-b_n\|.
\end{equation}
By the definition of the restricted normal cone,
for every $\nnn$, there exists a sequence $(b_{n,k},u_{n,k})_{k\in\NN}$
in $B\times X$ such that $\lim_{k\in\NN}b_{n,k}=b_n$,
$\lim_{k\in\NN}u_{n,k}=u_n$, and
$(\forall k\in\NN)$ $u_{n,k}\in \pn{B}{A}(b_{n,k})$.
Hence there exists a subsequence $(k_n)_\nnn$ of $(n)_\nnn$ such that
$b_{n,k_n}\to c$ and
\begin{equation}
(\forall\nnn)\quad
\scal{u_{n,k_n}}{y_n-b_{n,k_n}}>\frac{\ve}{2}\|u_{n,k_n}\|\cdot
\|y_n-b_{n,k_n}\|.
\end{equation}
However, this contradicts the $A$-superregularity of $B$ at $c$.
\end{proof}


When $B=X$, then Theorem~\ref{t:Asreg} turns into
\cite[Proposition~4.4]{LLM}:

\begin{corollary}[Lewis-Luke-Malick]
\label{c:LLMsreg}
Let $B$ be a nonempty subset of $X$ and let $c\in B$.
Then $B$ is superregular at $c$
if and only if for every $\ve\in\RPP$
there exists $\dd\in\RPP$ such
that
\begin{equation}
\label{e:sreg} \left.
\begin{array}{c}
(y,b)\in B\times B\\
\|y-c\| \leq \dd,\|b-c\|\leq \dd\\
u\in N_B(b)
\end{array}
\right\}\quad\Rightarrow\quad \scal{u}{y-b}\leq \ve\|u\|\cdot\|y-b\|.
\end{equation}
\end{corollary}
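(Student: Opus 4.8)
The plan is to derive Corollary~\ref{c:LLMsreg} as the special case $A = X$ of Theorem~\ref{t:Asreg}. By part~(ii) of Definition~\ref{d:reg}, the phrase ``$B$ is superregular at $c$'' is by definition an abbreviation for ``$B$ is $X$-superregular at $c$''; hence the hypotheses of the two equivalences coincide once we specialize $A := X$ in Theorem~\ref{t:Asreg}. All that remains is to check that, under this specialization, condition~\eqref{e:Asreg} becomes condition~\eqref{e:sreg}. The only cone that changes is the one constraining $u$: in \eqref{e:Asreg} it is $\nc{B}{A}(b) = \nc{B}{X}(b)$, and by Corollary~\ref{c:Boris=ncX} we have $\nc{B}{X}(b) = N_B(b)$. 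Substituting this identity turns \eqref{e:Asreg} verbatim into \eqref{e:sreg}, which proves the corollary.

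Accordingly, the proof itself is essentially one sentence: apply Theorem~\ref{t:Asreg} with $A = X$ and invoke Corollary~\ref{c:Boris=ncX}. I do not anticipate any genuine obstacle; the sole point worth a moment's attention is the terminological identification between the ``$X$-''-decorated notions of Definition~\ref{d:reg} and their undecorated counterparts, but this is built into that definition.

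For completeness I note an alternative, self-contained route that bypasses Theorem~\ref{t:Asreg} by repeating its argument in this simpler setting. The implication ``$\Leftarrow$'' is immediate, since $\pnX{B}(b) = \pn{B}{X}(b) \subseteq \nc{B}{X}(b) = N_B(b)$ by Lemma~\ref{l:NsubsetN}\ref{l:NsubsetNiv} and Corollary~\ref{c:Boris=ncX}, so \eqref{e:sreg} forces the defining inequality of $(\ve,\dd)$-regularity. For ``$\Rightarrow$'' one argues by contradiction: given $\ve > 0$ and sequences $(y_n, b_n, u_n)_\nnn$ with $(y_n, b_n) \to (c,c)$, $u_n \in N_B(b_n)$, and $\scal{u_n}{y_n - b_n} > \ve\|u_n\|\cdot\|y_n - b_n\|$, one uses the definition of $N_B$ to approximate each $u_n$ by proximal normal vectors $u_{n,k} \in \pnX{B}(b_{n,k})$ with $b_{n,k} \to b_n$ and $u_{n,k} \to u_n$, then extracts a diagonal subsequence $(k_n)_\nnn$ along which $b_{n,k_n} \to c$ and the strict inequality persists with $\ve$ replaced by $\ve/2$, contradicting superregularity of $B$ at $c$. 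Routing through Theorem~\ref{t:Asreg} is of course cleaner and is the intended path.
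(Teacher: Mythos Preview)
Your proposal is correct and matches the paper's approach: the corollary is obtained from Theorem~\ref{t:Asreg} by specializing the restricting set to $X$ and using Corollary~\ref{c:Boris=ncX} to rewrite $\nc{B}{X}(b)$ as $N_B(b)$. (The paper's lead-in sentence actually contains a typo---it says ``$B=X$'' where it should say ``$A=X$''---so your identification of the correct specialization is spot on.)
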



We now introduce  the notion of joint-regularity, which is tailored
for collections of sets and which turns into Definition~\ref{d:reg}
when the index set is a singleton.

\begin{definition}[joint-regularity]
\label{d:jreg}
Let $A$ be a nonempty subset of $X$, let
$\mcB := (B_j)_{j\in J}$ be a nontrivial collection
of nonempty subsets of $X$, and let $c\in X$.
\begin{enumerate}
\item We say that $\mcB$ is $(A,\ve,\dd)$-joint-regular at $c$
if $\ve\geq 0$, $\dd>0$, and for every $j\in J$,
$B_j$ is $(A,\ve,\dd)$-regular at $c$.
\item
The collection $\mcB$ is $A$-joint-superregular at $c$
if for every $j\in J$, $B_j$ is $A$-superregular at $c$.
\end{enumerate}
As in Definition~\ref{d:reg}, we may omit the prefix $A$ if $A=X$.
\end{definition}

Here are some verifiable conditions that guarantee
joint-(super)regularity.

\begin{proposition}
\label{p:jsreg}
Let $\mcA := (A_j)_{j\in J}$ and $\mcB := (B_j)_{j\in J}$
be nontrivial collections of nonempty subsets of $X$,
let $c\in X$, let $(\ve_j)_{j\in J}$ be a collection in $\RP$,
and let $(\dd_j)_{j\in J}$ be a collection in $\left]0,\pinf\right]$.
Set $A := \bigcap_{j\in J}A_j$,
$\ve := \sup_{j\in J}\ve_j$,
and $\dd := \inf_{j\in J}\dd_j$.
Then the following hold:
\begin{enumerate}
\item
\label{p:jsreg-i}
If $\dd>0$ and $(\forall j\in J)$ $B_j$ is $(A_j,\ve_j,\dd_j)$-regular
at $c$, then $\mcB$ is $(A,\ve,\dd)$-joint-regular at $c$.
\item
\label{p:jsreg-ii}
If $J$ is finite and
$(\forall j\in J)$ $B_j$ is $(A_j,\ve_j,\dd_j)$-regular at $c$,
then $\mcB$ is $(A,\ve,\dd)$-joint-regular at $c$.
\item
\label{p:jsreg-iii}
If $J$ is finite and
$(\forall j\in J)$ $B_j$ is $A_j$-superregular at $c$,
then $\mcB$ is $A$-joint-superregular at $c$.
\end{enumerate}
\end{proposition}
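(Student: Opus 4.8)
The plan is to reduce all three parts to a single monotonicity observation together with the bookkeeping already built into the definitions of $\ve$ and $\dd$. The observation I will use is that the restricted proximal normal cone is monotone in its restricting set: if $S\subseteq S'$, then $\pn{B}{S}(b)=\cone\big((S\cap P_B^{-1}b)-b\big)\subseteq\cone\big((S'\cap P_B^{-1}b)-b\big)=\pn{B}{S'}(b)$ for every $b$; this is immediate from the definition (or from Proposition~\ref{p:elementary}\ref{p:ele-ii}). Since $A=\bigcap_{j\in J}A_j\subseteq A_j$ for each $j\in J$, this gives $\pn{B_j}{A}(b)\subseteq\pn{B_j}{A_j}(b)$ for every $j\in J$ and every $b\in X$, which is exactly what lets a regularity inequality stated against the restricting set $A_j$ descend to the smaller restricting set $A$.

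For \ref{p:jsreg-i} I would fix $j\in J$ and verify directly that $B_j$ is $(A,\ve,\dd)$-regular at $c$ in the sense of Definition~\ref{d:reg}: here $\ve=\sup_k\ve_k\geq 0$ (each $\ve_k\geq 0$ and $J\neq\varnothing$) and $\dd>0$ by hypothesis, so the parameter requirements hold; then, given $(y,b)\in B_j\times B_j$ with $\|y-c\|\leq\dd$, $\|b-c\|\leq\dd$ and $u\in\pn{B_j}{A}(b)$, I note that $\dd=\inf_k\dd_k\leq\dd_j$ forces $\|y-c\|\leq\dd_j$ and $\|b-c\|\leq\dd_j$, while $u\in\pn{B_j}{A}(b)\subseteq\pn{B_j}{A_j}(b)$ by the observation above, so the assumed $(A_j,\ve_j,\dd_j)$-regularity of $B_j$ yields $\scal{u}{y-b}\leq\ve_j\|u\|\cdot\|y-b\|\leq\ve\|u\|\cdot\|y-b\|$. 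Since $j\in J$ was arbitrary, $\mcB$ is $(A,\ve,\dd)$-joint-regular at $c$ by Definition~\ref{d:jreg}. Part \ref{p:jsreg-ii} then follows from \ref{p:jsreg-i}: when $J$ is finite, $\dd=\min_k\dd_k>0$ (a finite minimum of strictly positive numbers) and $\ve=\max_k\ve_k<\pinf$, so the extra hypothesis of \ref{p:jsreg-i} is automatic and one concludes as before.

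For \ref{p:jsreg-iii} it suffices, by Definition~\ref{d:jreg}, to show that each $B_j$ is $A$-superregular at $c$. I would fix $j\in J$ and $\ve\in\RPP$, invoke $A_j$-superregularity of $B_j$ at $c$ to produce $\dd\in\RPP$ with $B_j$ being $(A_j,\ve,\dd)$-regular at $c$, and then observe that since $\pn{B_j}{A}(b)\subseteq\pn{B_j}{A_j}(b)$ for every $b$, the defining inequality of $(A_j,\ve,\dd)$-regularity is inherited, so $B_j$ is $(A,\ve,\dd)$-regular at $c$; as $\ve\in\RPP$ was arbitrary, $B_j$ is $A$-superregular at $c$, and as $j\in J$ was arbitrary, $\mcB$ is $A$-joint-superregular at $c$. (Finiteness of $J$ is not actually needed for \ref{p:jsreg-iii}, but it does no harm.)

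The hard part — such as it is — is merely to get the direction of the monotonicity right: one must use that \emph{shrinking} the restricting set \emph{shrinks} the restricted (proximal) normal cone, which is precisely what allows an estimate proved against $\pn{B_j}{A_j}$ to pass to $\pn{B_j}{A}$; beyond that, everything is the elementary bookkeeping $\dd\leq\dd_j$ and $\ve_j\leq\ve$.
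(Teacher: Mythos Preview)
Your proposal is correct and follows essentially the same approach as the paper: the monotonicity observation $\pn{B_j}{A}(b)\subseteq\pn{B_j}{A_j}(b)$ (which the paper packages via Remark~\ref{r:0303a}\ref{r:0303av}) together with the trivial bookkeeping $\dd\leq\dd_j$ and $\ve_j\leq\ve$ is exactly what the paper uses. The only minor difference is in \ref{p:jsreg-iii}: the paper routes through \ref{p:jsreg-ii} (hence uses finiteness of $J$ to get a uniform $\dd$), whereas you argue directly for each $j$ and correctly observe that finiteness of $J$ is not actually needed for \ref{p:jsreg-iii} as stated.
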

\begin{proof}
\ref{p:jsreg-i}:
Indeed, by Remark~\ref{r:0303a}\ref{r:0303av},
$B_j$ is $(A,\ve,\dd)$-regular at $c$ for every $j\in J$.

\ref{p:jsreg-ii}:
Since $J$ is finite,
we have $\dd>0$ and so the conclusion follows from \ref{p:jsreg-i}.

\ref{p:jsreg-iii}:
This follows from \ref{p:jsreg-ii} and the definitions.
\end{proof}


\begin{corollary}[convexity and regularity]
\label{c:jsreg}
Let $\mcB := (B_j)_{j\in J}$ be a nontrivial collection of nonempty
convex subsets of $X$, let ${A}\subseteq X$, and let $c\in X$.
Then $\mcB$ is $(0,\pinf)$-joint-regular,
$(A,0,\pinf)$-joint-regular,
joint-superregular,
and $A$-joint-superregular at $c$.
\end{corollary}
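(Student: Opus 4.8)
The plan is to reduce the four joint assertions to the corresponding statements for each individual member $B_j$ of the collection, each of which follows from convexity via Remark~\ref{r:0303a}.

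First I would fix $j\in J$. Since $B_j$ is convex, Remark~\ref{r:0303a}\ref{r:0303avi} shows that $B_j$ is $(A,0,\pinf)$-regular at $c$; the underlying reason is that Lemma~\ref{l:NsubsetN}\ref{l:NsubsetNvi} gives $\pn{B_j}{A}(b)\subseteq\pnX{B_j}(b)=\cnc{B_j}(b)$ for every $b\in B_j$, so $\scal{u}{y-b}\leq 0$ whenever $y\in B_j$ and $u\in\pn{B_j}{A}(b)$, which is exactly the conclusion of \eqref{e:dreg} with $\ve=0$ and $\dd=\pinf$. Taking $A:=X$ shows likewise that $B_j$ is $(0,\pinf)$-regular at $c$. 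By Remark~\ref{r:0303a}\ref{r:0303avii}, it follows in turn that $B_j$ is $A$-superregular at $c$, and (with $A:=X$) superregular at $c$.

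Then I would invoke the definitions. Because $\pinf>0$ and every $B_j$ is $(A,0,\pinf)$-regular at $c$ (resp.\ $(0,\pinf)$-regular at $c$), the first part of Definition~\ref{d:jreg} yields that $\mcB$ is $(A,0,\pinf)$-joint-regular at $c$ (resp.\ $(0,\pinf)$-joint-regular at $c$); this is also the special case of Proposition~\ref{p:jsreg}\ref{p:jsreg-i} obtained by setting $A_j:=A$, $\ve_j:=0$, and $\dd_j:=\pinf$. Similarly, since every $B_j$ is $A$-superregular at $c$ (resp.\ superregular at $c$), the second part of Definition~\ref{d:jreg} yields that $\mcB$ is $A$-joint-superregular at $c$ (resp.\ joint-superregular at $c$). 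There is no genuine obstacle here; the only point worth emphasizing is that convexity delivers $(A,0,\pinf)$-regularity for an \emph{arbitrary} restricting set $A$, and not merely for $A=X$, which is already built into Remark~\ref{r:0303a}\ref{r:0303avi}.
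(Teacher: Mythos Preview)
Your proof is correct and follows essentially the same approach as the paper: reduce to each individual $B_j$ via Remark~\ref{r:0303a}\ref{r:0303avi} (and \ref{r:0303avii}), then pass to the joint notions via Definition~\ref{d:jreg} or Proposition~\ref{p:jsreg}\ref{p:jsreg-i}. If anything, your appeal to Definition~\ref{d:jreg} directly for joint-superregularity is slightly cleaner than the paper's citation of Proposition~\ref{p:jsreg}\ref{p:jsreg-iii}, since the latter carries a finiteness hypothesis on $J$ that is not assumed here (though the conclusion is immediate from the definition regardless).
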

\begin{proof}
By Remark~\ref{r:0303a}\ref{r:0303avi},
$B_j$ is $(0,\pinf)$-regular, superregular, and $A$-superregular at $c$,
for every $j\in J$.
Now apply Proposition~\ref{p:jsreg}\ref{p:jsreg-i}\&\ref{p:jsreg-iii}.
\end{proof}


The following example illustrates the flexibility gained through the
notion of joint-regularity.



\begin{example}[two lines: joint-superregularity $\not\Rightarrow$
superregularity of the union]
\label{ex:badlines}
Suppose that
$d_1$ and $d_2$ are in $\sphere{0}{1}$.
Set $B_1 := \RR d_1$, $B_2 := \RR d_2$, and $B := B_1\cup B_2$,
and assume that $B_1\cap B_2=\{0\}$.
By Corollary~\ref{c:jsreg},
$(B_1,B_2)$ is joint-superregular at $0$.
Let $\delta\in\RPP$,
and set $b := \delta d_1$ and $y := \delta d_2$.
Then $\|y-0\|=\delta$, $\|b-0\|=\delta$, and $0<\|y-b\| = \delta\|d_2-d_1\|$.
Using Proposition~\ref{p:Nequi}\ref{p:Ne-iii}, we see
that $N_{B}(b)=\{d_1\}^\perp$.
Note that there exists $v\in \{d_1\}^\perp$ such that
$\scal{v}{d_2}\neq 0$
(for otherwise $\{d_1\}^\perp \subseteq \{d_2\}^\perp$
$\Rightarrow$ $B_2\subseteq B_1$, which is absurd).
Hence there exists $u\in \{d_1\}^\perp= \{b\}^\perp = N_B(b)$ such that
$\|u\|=1$ and $\scal{u}{d_2}>0$.
It follows that $\scal{u}{y-b} = \scal{u}{y}
=\delta\scal{u}{d_2}=\scal{u}{d_2}\|u\|\|y-b\|/\|d_2-d_1\|$.
Therefore, $B$ is not superregular at $0$.
\end{example}


Let us provide an example of an $A$-superregular set that is not
superregular. To do so, we require the following elementary result.

\begin{lemma}
\label{l:0303a}
Consider in $\RR^2$ the sets
$C := [(0,1),(m,1+m^2)] = \menge{(x,1+mx)}{x\in[0,m]}$
and $D := [(m,1),(m,1+m^2)]$, where $m\in\RPP$.
Let $z\in\RR$.
Then
\begin{equation}
\label{e:0303a}
P_{C\cup D}(z,0) =
\begin{cases}
(0,1), &\text{if $z<m/2$;}\\
\{(0,1),(m,1)\}, &\text{if $z=m/2$;}\\
(m,1), &\text{if $z>m/2$.}
\end{cases}
\end{equation}
\end{lemma}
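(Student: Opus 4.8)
The plan is to reduce the computation to comparing two distances. By Lemma~\ref{l:unionproj}, a point $a\in P_{C\cup D}(z,0)$ must lie in $P_C(z,0)\cup P_D(z,0)$, and conversely $P_{C\cup D}(z,0)$ consists exactly of those points of $P_C(z,0)$ and $P_D(z,0)$ that realize $\min\{d_C(z,0),d_D(z,0)\}$; so it suffices to compute each of $d_C(z,0)$, $d_D(z,0)$ together with the (singleton or two-point) sets $P_C(z,0)$, $P_D(z,0)$ and then decide which one dominates.

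First I would dispose of $D$: every point of $D$ is $(m,y)$ with $1\le y\le 1+m^2$, so its squared distance to $(z,0)$ is $(z-m)^2+y^2\ge(z-m)^2+1$, with equality iff $y=1$. Hence $P_D(z,0)=\{(m,1)\}$ and $d_D(z,0)^2=(z-m)^2+1$. For $C$, parametrize $C$ by $p(x)=(x,1+mx)$, $x\in[0,m]$, and put $g(x)=\|p(x)-(z,0)\|^2=(x-z)^2+(1+mx)^2$, a strictly convex quadratic with unconstrained minimizer $x^*=(z-m)/(1+m^2)$. I would then split into three $z$-ranges according to where $x^*$ falls relative to $[0,m]$: if $z\le m$ then $x^*\le0$, so $P_C(z,0)=\{(0,1)\}$ and $d_C(z,0)^2=g(0)=z^2+1$; if $z\ge 2m+m^3$ then $x^*\ge m$, so $P_C(z,0)=\{(m,1+m^2)\}$ and $d_C(z,0)^2=g(m)=(z-m)^2+(1+m^2)^2$; and if $m<z<2m+m^3$ then $x^*\in(0,m)$, so $P_C(z,0)$ is the single interior point $p(x^*)$ and $d_C(z,0)^2=g(x^*)=(mz+1)^2/(1+m^2)$, the squared distance from $(z,0)$ to the line $mx-y+1=0$ carrying $C$.

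Now I would compare with $d_D(z,0)^2=(z-m)^2+1$. When $z\ge 2m+m^3$, one has $(1+m^2)^2>1$, so $d_C>d_D$ and $P_{C\cup D}(z,0)=\{(m,1)\}$; since $z>m/2$ here, this matches \eqref{e:0303a}. When $m<z<2m+m^3$, the claim is again $d_C(z,0)^2>d_D(z,0)^2$, i.e.\ $(mz+1)^2>(1+m^2)\big((z-m)^2+1\big)$; the cleanest route is to complete the square: the difference equals $-q(z)$ with $q(z)=\big(z-(2m+m^3)\big)^2-m^2(1+m^2)(2+m^2)$, and for $m<z<2m+m^3$ one gets $\big(2m+m^3-z\big)^2<(m+m^3)^2=m^2(1+m^2)^2<m^2(1+m^2)(2+m^2)$, hence $q(z)<0$, hence $d_C>d_D$ and $P_{C\cup D}(z,0)=\{(m,1)\}$, matching \eqref{e:0303a} since $z>m>m/2$. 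Finally, when $z\le m$ we have $d_C(z,0)^2-d_D(z,0)^2=z^2-(z-m)^2=m(2z-m)$, which is negative, zero, or positive according as $z<m/2$, $z=m/2$, or $z>m/2$; combined with $P_C(z,0)=\{(0,1)\}$ and $P_D(z,0)=\{(m,1)\}$, this yields $P_{C\cup D}(z,0)$ equal to $\{(0,1)\}$, $\{(0,1),(m,1)\}$, $\{(m,1)\}$ in these three subcases. Assembling the outcomes over all ranges of $z$ produces exactly \eqref{e:0303a}.

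The main obstacle is the middle range $m<z<2m+m^3$, where the nearest point of $C$ is the perpendicular foot $p(x^*)$ rather than an endpoint, so the inequality $d_C>d_D$ is not transparent and genuinely needs the algebraic identity above; writing the difference as $-q(z)$ and completing the square is the step that makes it clean, and the elementary bound $(1+m^2)^2<(1+m^2)(2+m^2)$ closes it. Apart from that, the only point requiring a little care is keeping track of the uniqueness of the in-segment minimizer of $g$ (and of the minimizer over $D$), which is what lets one pin down the actual projection sets in \eqref{e:0303a} and not merely the distance values.
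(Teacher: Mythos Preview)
Your proof is correct and follows the same decomposition as the paper: compute $P_C(z,0)$ and $P_D(z,0)$ separately, then compare $d_C$ with $d_D$. The paper's version is much terser: it treats only $0<z<m$ in detail (showing the quadratic $g$ has its unconstrained minimizer at $x^*=(z-m)/(1+m^2)<0$, so $P_C(z,0)=(0,1)$, then comparing $z^2+1$ with $(z-m)^2+1$), and dismisses $z\le 0$ and $z\ge m$ as ``clearly true'' from the geometry. You instead carry out the full case split, in particular the middle range $m<z<2m+m^3$ where the perpendicular foot $p(x^*)$ lies in the interior of $C$; your completing-the-square identity $-q(z)$ with $q(z)=(z-(2m+m^3))^2-m^2(1+m^2)(2+m^2)$ is clean and correct. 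Your argument is more self-contained; the paper's relies on the reader to see why $(m,1)$ obviously wins once $z\ge m$.
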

\begin{proof}
It is clear that $P_D(z,0)=(m,1)$.
We assume that $0<z<m$ for otherwise \eqref{e:0303a} is clearly true.
We claim that $P_C(z,0)=(0,1)$.
Indeed,
$f\colon x\mapsto \|(x,1+mx)-(z,0)\|^2$ is a convex quadratic with
minimizer $x_z := (z-m)/(1+m^2)$. The requirement $x_z\geq 0$ from the
definition of $C$ forces $z\geq m$, which is a contradiction.
Hence $P_C(z,0)$ is a subset of the relative boundary of $C$, i.e.,
of $\{(0,1),(m,1+m^2)\}$. Clearly, $(0,1)$ is the closer to $(z,0)$
than $(m,1+m^2)$. This verifies the claim.
Since $P_{C\cup D}(z,0)$ is the subset of points in $P_C(z,0)\cup
P_D(z,0)$ closest to $(z,0)$, the result follows.
\end{proof}

\begin{example}[$A$-superregularity $\not\Rightarrow$ superregularity]
\label{ex:saw}
Suppose that $X=\RR^2$.
As in \cite[Example~4.6]{LLM}, we consider $c:=(0,0)\in X$ and
$B :=\epi f$, where
\begin{equation}
f\colon\RR\to\RX\colon x\mapsto \begin{cases}
2^k(x-2^k),&\text{if $2^k\leq x < 2^{k+1}$ and $k\in \ZZ$;}\\
0,&\text{if $x=0$;}\\
\pinf,&\text{if $x<0$.}
\end{cases}
\end{equation}
Then $B$ is not superregular at $c$;
however, $B$ is $A$-superregular at $c$, where
$A := \RR\times\{-1\}$.
\end{example}
\begin{proof}
It is stated in \cite[Example~4.6]{LLM}
that $B$ is not superregular at $c$ (and that $B$ is
Clarke regular at $c$).

To tackle $A$-superregularity, let us determine $P_B(A)$.
Let us consider the point $a = (\alpha,-1)$,
where $\alpha\in\left[2^{-1},1\right[$.
Then
Lemma~\ref{l:0303a}
(see also the picture below) implies that
\begin{equation}
P_B(\alpha,-1)=\begin{cases}
(\thalb,0), &\text{if $\thalb\leq\alpha<\tfrac{3}{4}$;}\\
\big\{(\thalb,0),(1,0)\big\}, &\text{if $\alpha=\tfrac{3}{4}$;}\\
(1,0), &\text{if $\tfrac{3}{4}<\alpha<1$;}
\end{cases}
\end{equation}
\setlength{\unitlength}{0.03in}
\begin{picture}(150,145)(-25,-120)

\put(10,10){$B=\epi f$}
\put(100,0){\line(1,1){25}} \put(100,-6){$1$}
\put(50,0){\line(2,1){50}}  \put(45,-6){$\frac{1}{2}$}
\put(25,0){\line(4,1){25}}  \put(22,-6){$\frac{1}{4}$}
\put(12.5,0){\line(6,1){12.5}} \put(10,-6){$\frac{1}{8}$}
\put(6.25,0){\line(6,1){6.25}} \put(3,-6){$\frac{1}{16}$}
\put(3.125,0){\line(6,1){3.125}}
\put(-4,-6){$0$}

\put(0,0){\line(1,0){4}}

\put(0,0){\vector(0,1){25}}
\put(120,0){\vector(1,0){10}}

\put(100,0){\line(0,1){25}}
\put(50,0){\line(0,1){6.25}}
\put(25,0){\line(0,1){2}}
\put(12.5,0){\line(0,1){1}}
\put(6.25,0){\line(0,1){0.5}}

\multiput(-25,0)(5,0){30}{\line(1,0){2}}
\multiput(0,-2)(0,-5){22}{\line(0,-1){2}}

\multiput(75,-100)(3,12){9}{\line(1,4){1.2}}
\multiput(75,-100)(-3,12){9}{\line(-1,4){1.2}} \put(75,-106){$\frac{3}{4}$}

\multiput(37.5,-100)(3,24){5}{\line(1,6){1}}
\multiput(37.5,-100)(-3,24){5}{\line(-1,6){1}} \put(37.5,-106){$\frac{3}{8}$}

\multiput(25,-100)(0,12){9}{\line(0,4){4}}
\put(25,-100){\circle*{2}}
\put(25,-106){$\frac{1}{4}$}

\multiput(50,-100)(0,12){9}{\line(0,4){4}}
\put(50,-100){\circle*{2}}
\put(50,-106){$\frac{1}{2}$}

\multiput(100,-100)(0,12){9}{\line(0,4){4}}
\put(100,-100){\circle*{2}}
\put(100,-106){$1$}

\linethickness{0.5mm}
\put(-25,-100){\line(1,0){150}}
\put(-23,-106){$A=\RR\times\{-1\}$}
\put(-10,-97){$-1$}

\end{picture}

and more generally,
\begin{equation}
2^k\leq \alpha < 2^{k+1}
\;\Rightarrow\;
P_B(\alpha,-1)=\begin{cases}
(2^k,0), &\text{if $2^k\leq\alpha<2^k+2^{k-1}$;}\\
\big\{(2^k,0),(2^{k+1},0)\big\}, &\text{if $\alpha=2^k+2^{k-1}$;}\\
(2^{k+1},0), &\text{if $2^k+2^{k-1}<\alpha<2^{k+1}$.}
\end{cases}
\end{equation}
Clearly, if $a\in\RM\times\{-1\}$, then $P_B(a)=(0,0)$.
Let $b\in B$.
Then
\begin{equation}
A \cap P^{-1}_B(b)=\begin{cases}
\big[2^{k-2}+2^{k-1},2^{k-1}+2^k\big]\times\{-1\},
&\text{if $b=(2^k,0)$ and $k\in\ZZ$;}\\
\RM\times\{-1\}, &\text{if $b=(0,0)$;}\\
\varnothing,&\text{otherwise.}
\end{cases}
\end{equation}
Thus
\begin{equation}
\pn{B}{A}(b)=\begin{cases}
\cone\Big(\big[-2^{k-2},2^{k-1}\big]\times\{-1\}\Big),
&\text{if $b=(2^k,0)$ and $k\in\ZZ$;}\\
\{(0,0)\}\cup \big(\RM\times\RMM\big), &\text{if $b=(0,0)$;}\\
\{(0,0)\},&\text{otherwise.}
\end{cases}
\end{equation}
Let $\ve\in\RPP$.
Let $K\in\ZZ$ be such that $2^{K-1}\leq\ve$,
and let $\delta\in \left]0,2^K\right]$.
Furthermore, let $y=(y_1,y_2)\in B$,
let $b=(b_1,b_2)\in B$,
let $u\in\pn{B}{A}(b)$,
and assume that
$\|y-c\|\leq\dd$ and that $\|b-c\|\leq\dd$.
We consider three cases.

\emph{Case~1:} $b=(0,0)$. Then $u\in\RM^2$ and $y\in\RP^2$;
consequently, $\scal{u}{y-b} = \scal{u}{y}\leq 0
\leq\ve\|u\|\cdot\|y-b\|$.

\emph{Case~2:} $b\notin (\{0\}\cup 2^\ZZ)\times\{0\}$.
Then $\pn{B}{A}(b)=\{(0,0\}$; hence $u=0$ and so
$\scal{u}{y-b}=0\leq \ve\|u\|\cdot\|y-b\|$.

\emph{Case~3:} $b\in 2^\ZZ\times\{0\}$, say $b=(2^k,0)$, where $k\in \ZZ$.
Since $2^k=\|b-0\|=\|b-c\|\leq\delta\leq 2^K$,
we have $k\leq K$. Furthermore, $y_2\geq 0$,
$\max\{|y_1-b_1|,|y_2-b_2|\}\leq\|y-b\|$,
and $u=\lambda(t,-1)=(\lambda t,-\lambda)$
where $t\in[-2^{k-2},2^{k-1}]$ and $\lambda\geq0$.
Hence $\lambda\leq\|u\|$ and
\begin{subequations}
\begin{align}
\scal{u}{y-b}&=\lambda t(y_1-b_1)-\lambda (y_2-b_2)
=\lambda t(y_1-b_1)-\lambda (y_2-0) \\
&\leq \lambda t(y_1-b_1) \leq \lambda|t|\cdot|y_1-b|\\
&\leq \|u\|\cdot 2^{k-1}\cdot\|y-b\| \leq 2^{K-1}\|u\|\cdot\|y-b\| \leq
\ve\cdot\|u\|\cdot\|y-b\|.
\end{align}
\end{subequations}

Therefore, in all three cases, we have shown that
$\scal{u}{y-b} \leq\ve\|u\|\cdot\|y-b\|$.
\end{proof}

We now use Example~\ref{ex:saw}
to construct an example complementary to
Example~\ref{ex:badlines}.

\begin{example}[superregularity of the union $\not\Rightarrow$
joint-superregularity]
Suppose that $X=\RR^2$,
set
$B_1 := \epi f$, where $f$ is as in Example~\ref{ex:saw},
$B_2 := X\smallsetminus B_1$,
and $c := (0,0)$.
Since $B_1\cup B_2= X$ is convex, it is clear
from Remark~\ref{r:0303a}\ref{r:0303avi} that
$B_1\cup B_2$ is superregular at $c$.
On the other hand, since $B_1$ is not superregular at $c$
(see Example~\ref{ex:saw}), it is obvious that
$(B_1,B_2)$ is not joint-superregular at $c$.
\end{example}



\section{The method of alternating projections (MAP)}

\label{s:application}

We now apply the machinery of restricted normal cones and associated
results to derive linear convergence results.

\subsection*{On the composition of two projection operators}

The method of alternating projections iterates
projection operators. Thus, in the next few results,
we focus on the outcome of a single iteration of the composition.

\begin{lemma}
\label{l:0305a}
Let $A$ and $B$ be nonempty closed subsets of $X$.
Then the following hold\footnote{We denote by
$\bd_{\aff A\cup B}(S)$ the boundary of $S\subseteq X$
with respect to $\aff(A\cup B)$.}:
\begin{enumerate}
\item
\label{l:0305ai}
$P_A(B\smallsetminus A)\subseteq \bd_{\aff A\cup B}A \subseteq \bd A$.
\item
\label{l:0305aii}
$P_B(A\smallsetminus B)\subseteq \bd_{\aff A\cup B}(B)\subseteq \bd B$.
\item
\label{l:0305aiii}
If $b\in B$ and $a\in P_A b$, then:
\begin{equation}
a\in (\bd A)\smallsetminus B
\;\Leftrightarrow\;
a\in A\smallsetminus B
\;\Rightarrow\;
b\in B\smallsetminus A
\;\Rightarrow\; a\in\bd A.
\end{equation}
\item
\label{l:0305aiv}
If $a\in A$ and $b\in P_B a$, then:
\begin{equation}
b\in (\bd B)\smallsetminus A
\;\Leftrightarrow\;
b\in B\smallsetminus A
\;\Rightarrow\;
a\in A\smallsetminus B
\;\Rightarrow\;
b\in \bd B.
\end{equation}
\end{enumerate}
\end{lemma}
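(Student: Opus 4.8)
The plan is to isolate one elementary geometric fact and then read off all four items from it, with \ref{l:0305aii} and \ref{l:0305aiv} being mirror images of \ref{l:0305ai} and \ref{l:0305aiii}. The fact is: \emph{if $S$ is a nonempty closed subset of $X$, if $x\in X$, and if $s\in P_S(x)$ with $s\neq x$, then $s$ is a limit of points of $X\smallsetminus S$ lying on the segment $[s,x]$; consequently $s\in\bd S$, and if in addition $x$ belongs to some affine subspace $M$ containing $S$, then $s\in\bd_M S$.} To see this, for $t\in\left]0,1\right[$ put $s_t:=(1-t)s+tx$; then $\|x-s_t\|=(1-t)\|x-s\|<\|x-s\|=d_S(x)$, where strictness uses $d_S(x)=\|x-s\|>0$, so $s_t\notin S$. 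As $t\downarrow 0$ we get $s_t\to s$ with each $s_t\in[s,x]$, so $s$ is a limit point of $X\smallsetminus S$ (respectively of $M\smallsetminus S$, since $[s,x]\subseteq M$); as $s\in S$ and $S$ is closed, this gives $s\in\bd S$ (respectively $s\in\bd_M S$).

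Before using this I would record, writing $L:=\aff(A\cup B)=\aff(B\cup A)$, that $A\subseteq L$ and $B\subseteq L$, and that $\bd_L A\subseteq\bd A$ and $\bd_L B\subseteq\bd B$: indeed, since $A$ is closed and contained in $L$, one has $\bd_L A=A\cap\overline{L\smallsetminus A}\subseteq A\cap\overline{X\smallsetminus A}=\bd A$, and similarly for $B$.

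Then \ref{l:0305ai} is immediate: given $b\in B\smallsetminus A$ and $a\in P_A(b)$, we have $a\in A$ and $b\notin A$, hence $a\neq b$, and the fact applied with $S=A$, $x=b$, $M=L$ (valid because $b\in B\subseteq L$) yields $a\in\bd_L A\subseteq\bd A$. Item \ref{l:0305aii} follows by interchanging the roles of $A$ and $B$. For \ref{l:0305aiii}, fix $b\in B$ and $a\in P_A(b)$ and check the three links. The equivalence $a\in(\bd A)\smallsetminus B\Leftrightarrow a\in A\smallsetminus B$: ``$\Rightarrow$'' holds since $\bd A\subseteq A$; for ``$\Leftarrow$'', $a\in A\smallsetminus B$ together with $b\in B$ forces $a\neq b$, so the fact (with $S=A$, $x=b$) gives $a\in\bd A$, whence $a\in(\bd A)\smallsetminus B$. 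Next, $a\in A\smallsetminus B\Rightarrow b\in B\smallsetminus A$: if $b\in A$ then $d_A(b)=0$, forcing $a=b$, which contradicts $a\notin B\ni b$; so $b\notin A$. Finally $b\in B\smallsetminus A\Rightarrow a\in\bd A$ is exactly \ref{l:0305ai} applied to $a\in P_A(b)\subseteq P_A(B\smallsetminus A)$. Item \ref{l:0305aiv} is \ref{l:0305aiii} with $A$ and $B$ swapped.

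I do not anticipate a genuine obstacle; the whole statement rests on the one-line inequality $\|x-s_t\|<d_S(x)$. The only point needing a little care is the relative-boundary refinement to $\bd_{\aff(A\cup B)}$: one must ensure the approximating points $s_t$ remain in $\aff(A\cup B)$, which is why the auxiliary fact is phrased so that the ``source'' point $x$ is required to lie in the ambient affine subspace $M$ — and in every application $x$ is a point of $B$ (or of $A$), hence of $\aff(A\cup B)$.
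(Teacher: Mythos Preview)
Your proof is correct and follows essentially the same approach as the paper: the core idea in both is that moving from a projection point $a\in P_A(b)$ along the segment toward $b$ immediately exits $A$ (the paper phrases this by contradiction, you phrase it directly), and parts \ref{l:0305aii}--\ref{l:0305aiv} are derived by symmetry and by chaining this with the observation that $b\in A$ would force $a=b$. Your abstraction of the key geometric fact at the outset is a minor organizational improvement, but the mathematical content is identical.
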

\begin{proof}
\ref{l:0305ai}:
Take $b\in B\smallsetminus A$ and $a\in P_Ab$.
Assume to the contrary that there exists $\dd\in\RPP$
such that $\aff(A\cup B)\cap \ball{a}{\dd}\subseteq A$.
Hence $\wt{a} := a+\dd(b-a)/\|b-a\|\in A$ and
thus $d_A(b)\leq d(\wt{a},b)<d(a,b)=d_A(b)$, which is absurd.

\ref{l:0305aii}:
Interchange the roles of $A$ and $B$ in \ref{l:0305ai}.

\ref{l:0305aiii}:
If $a\in (\bd A)\smallsetminus B$, then clearly $a\in A\smallsetminus B$.
Now assume that $a\in A\smallsetminus B$.
If $b\in A$, then $a\in P_Ab=\{b\}\subseteq B$, which is absurd.
Hence $b\in B\smallsetminus A$ and thus \ref{l:0305ai}
implies that $a\in P_A(B\smallsetminus A)\subseteq\bd A$.

\ref{l:0305aiv}:
Interchange the roles of $A$ and $B$ in \ref{l:0305aiii}.
\end{proof}


\begin{lemma}
\label{l:Ed}
Let $A$ and $B$ be nonempty closed subsets of $X$,
let $c\in X$, let $y\in B$, let $a\in P_Ay$, let $b\in P_Ba$, and
let $\dd\in\RP$.
Assume that $d_A(y)\leq\dd$ and that $d(y,c)\leq\dd$.
Then the following hold:
\begin{enumerate}
\item
\label{l:Edi}
$d(a,c)\leq 2\dd$.
\item
\label{l:Edii}
 $d(b,y)\leq 2d(a,y)\leq 2\dd$.
\item
\label{l:Ediii}
$d(b,c)\leq3\dd$.
\end{enumerate}
\end{lemma}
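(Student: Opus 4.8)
The plan is to obtain all three bounds by repeated use of the triangle inequality, together with two elementary consequences of the projection hypotheses. First, since $a\in P_Ay$, the definition of the projection gives $d(a,y)=d_A(y)$, so the standing assumption yields $d(a,y)\le\dd$. Second, since $b\in P_Ba$ and $y\in B$, the point $y$ is a feasible competitor in the minimization defining $P_Ba$, whence $d(b,a)=d_B(a)\le d(y,a)=d(a,y)\le\dd$. These two observations are all that is needed.

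With them in hand, part~\ref{l:Edi} is immediate: $d(a,c)\le d(a,y)+d(y,c)\le\dd+\dd=2\dd$. For part~\ref{l:Edii}, I would estimate $d(b,y)\le d(b,a)+d(a,y)\le d(a,y)+d(a,y)=2d(a,y)\le 2\dd$, invoking $d(b,a)\le d(a,y)$ for the first summand; this establishes both inequalities in \ref{l:Edii} at once. Finally, part~\ref{l:Ediii} follows by combining \ref{l:Edii} with $d(y,c)\le\dd$: $d(b,c)\le d(b,y)+d(y,c)\le 2d(a,y)+\dd\le 2\dd+\dd=3\dd$.

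There is no real obstacle here; the argument is purely metric and uses closedness only implicitly (through Proposition~\ref{p:0224a}, to know the projections are nonempty, which is already guaranteed by the hypotheses $a\in P_Ay$ and $b\in P_Ba$). The one step that is not a bare triangle inequality is $d(b,a)\le d(a,y)$, which is the minimality property of $b\in P_Ba$ tested against the competitor $y\in B$; this is exactly what produces the constant $2$ rather than $3$ in \ref{l:Edii} and keeps the constant in \ref{l:Ediii} at $3$.
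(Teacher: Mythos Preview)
Your proof is correct and follows essentially the same approach as the paper: both arguments first record $d(a,y)=d_A(y)\le\dd$ and $d(a,b)=d_B(a)\le d(a,y)$, then chain triangle inequalities. The only (inconsequential) difference is in \ref{l:Ediii}, where the paper routes through $a$ via $d(b,c)\le d(b,a)+d(a,c)\le\dd+2\dd$, while you route through $y$ via $d(b,c)\le d(b,y)+d(y,c)\le 2\dd+\dd$.
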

\begin{proof}
Since $y\in B$, we have
\begin{equation}
\label{e:1026a} d(a,b)=d_B(a)\leq d(a,y)=d_A(y)\leq\dd.
\end{equation}
Thus,
\begin{equation}
\label{e:1026b}
d(a,c)\leq d(a,y)+d(y,c) \leq \dd + \dd = 2\dd,
\end{equation}
which establishes \ref{l:Edi}.
Using \eqref{e:1026a}, we also conclude that
$d(b,y)\leq d(b,a) + d(a,y) \leq 2d(a,y)\leq 2\dd$;
hence, \ref{l:Edii} holds.
Finally, combining \eqref{e:1026a} and \eqref{e:1026b},
we obtain \ref{l:Ediii} via
$d(b,c) \leq d(b,a) + d(a,c) \leq \dd + 2\dd = 3\dd$.
\end{proof}

\begin{corollary}\label{c:0330a}
Let $A$ and $B$ be nonempty closed subsets of $X$,
let $\rho\in\RPP$, and suppose that $c\in A\cap B$.
Then
\begin{equation}
  P_AP_BP_A\ball{c}{\rho}\subseteq\ball{c}{6\rho}.
\end{equation}
\end{corollary}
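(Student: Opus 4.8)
The plan is to peel off the three projections in $P_AP_BP_A$ one at a time, bounding at each stage the distance to $c$, and to absorb the last two projections into a single application of Lemma~\ref{l:Ed} with $A$ and $B$ interchanged.

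First I would fix an arbitrary point of $P_AP_BP_A\ball{c}{\rho}$ and write it as $a'$, where $x\in\ball{c}{\rho}$, $a\in P_Ax$, $b\in P_Ba$, and $a'\in P_Ab$; by the definition of the (set-valued) composition it then suffices to prove $d(a',c)\le 6\rho$ for every such choice. For the first projection I would use that $c\in A$ to get $d(x,a)=d_A(x)\le d(x,c)\le\rho$ and hence, by the triangle inequality, $d(a,c)\le d(a,x)+d(x,c)\le 2\rho$; since $c\in B$, this also gives $d_B(a)\le d(a,c)\le 2\rho$.

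For the remaining two projections I would invoke Lemma~\ref{l:Ed} applied to the pair $(B,A)$ in place of $(A,B)$, which is legitimate because the hypotheses of that lemma are symmetric in the two sets. Taking $\dd:=2\rho$ and feeding in the point $a$ (which lies in $A$ and satisfies $d_B(a)\le\dd$ and $d(a,c)\le\dd$), together with $b\in P_Ba$ and $a'\in P_Ab$, part~\ref{l:Ediii} of the lemma delivers $d(a',c)\le 3\dd=6\rho$, i.e.\ $a'\in\ball{c}{6\rho}$. Since $a'$ was an arbitrary element of $P_AP_BP_A\ball{c}{\rho}$, the claimed inclusion follows.

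I expect no real obstacle: the only points requiring a moment's care are to notice that Lemma~\ref{l:Ed} may be used with $A$ and $B$ swapped, and to carry the factor $2$ picked up in the first projection step into the ``$\dd$'' that is handed to the lemma (so that $3\dd=6\rho$ rather than $3\rho$). Everything else reduces to the identity $d(p,P_Sp)=d_S(p)$, the triangle inequality, and the membership $c\in A\cap B$.
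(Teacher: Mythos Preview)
Your proof is correct and is essentially identical to the paper's own proof: both bound $d(a,c)\le 2\rho$ from the first projection via the triangle inequality and $c\in A$, then apply Lemma~\ref{l:Ed}\ref{l:Ediii} with the roles of $A$ and $B$ interchanged and $\dd=2\rho$ to conclude $d(a',c)\le 6\rho$. The only difference is notation (the paper writes $b_{-1},a_0,b_0,a_1$ where you write $x,a,b,a'$).
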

\begin{proof}
Let $b_{-1}\in\ball{c}{\rho}$, $a_0\in P_A b_{-1}$,
$b_0\in P_B a_0$, and $a_1\in P_A b_0$.
We have $d(a_0,b_{-1})=d_A(b_{-1})\leq d(b_{-1},c)\leq\rho$,
so $d_B(a_0)\leq d(a_0,c)\leq d(a_0,b_{-1})+d(b_{-1},c)\leq 2\rho$.
Applying Lemma~\ref{l:Ed}\ref{l:Ediii} to the
sets $B$ and $A$, the points $a_0,b_0,a_1$, and $\delta = 2\rho$,
we deduce that $d(a_1,c)\leq 3(2\rho)=6\rho$.
\end{proof}

The next two results are essential
to guarantee a local
contractive property of the composition.

\begin{proposition}[regularity and contractivity]
\label{p:effect1}
Let $A$ and $B$ be nonempty closed subsets of $X$,
let $\wt{A}$ and $\wt{B}$ be nonempty subsets of $X$,
let $c\in X$,
let $\ve\geq 0$, and let $\dd>0$.
Assume that $B$ is $(\wt{A},\ve,3\dd)$-regular at $c$
(see Definition~\ref{d:reg}).
Furthermore,
assume that
$y\in B\cap\wt{B}$,
that $a\in P_A(y)\cap\wt{A}$,
that $b\in P_B(a)$, that
$\|y-c\|\leq\dd$, and
that $d_A(y)\leq\dd$.
Then
\begin{equation}
\|a-b\|\leq(\theta_{3\dd}+2\ve)\|a-y\|,
\end{equation}
where $\theta_{3\dd}$ the CQ-number at $c$ associated with
$(A,\wt{A},B,\wt{B})$ (see \eqref{e:CQn}).
\end{proposition}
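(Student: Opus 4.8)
The plan is to estimate $\|a-b\|^2$ by inserting the intermediate point $y$: writing $a-b=(a-y)+(y-b)$ gives $\|a-b\|^2 = \scal{a-b}{a-y}+\scal{a-b}{y-b}$, and I would bound the first inner product by means of the CQ-number $\theta_{3\dd}$ and the second by means of the $(\wt A,\ve,3\dd)$-regularity of $B$ at $c$. Since $\scal{a-b}{a-y}=\scal{y-a}{b-a}$, the two estimates to establish are $\scal{y-a}{b-a}\le\theta_{3\dd}\|a-y\|\,\|a-b\|$ and $\scal{a-b}{y-b}\le 2\ve\|a-b\|\,\|a-y\|$, whose sum yields the claim after dividing by $\|a-b\|$.

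First I would identify the relevant restricted proximal normal vectors. Since $a\in P_A(y)$ and $y\in\wt B$, we have $y-a\in(\wt B-a)\cap(P_A^{-1}(a)-a)\subseteq\pn{A}{\wt B}(a)$; since $b\in P_B(a)$ and $a\in\wt A$, we have $a-b\in(\wt A-b)\cap(P_B^{-1}(b)-b)\subseteq\pn{B}{\wt A}(b)$, hence $b-a\in-\pn{B}{\wt A}(b)$. Next I would invoke Lemma~\ref{l:Ed}, whose hypotheses $d_A(y)\le\dd$ and $d(y,c)\le\dd$ are exactly what is assumed, to obtain $\|a-c\|\le 2\dd\le 3\dd$, $\|b-c\|\le 3\dd$, and $\|y-b\|\le 2\|a-y\|$. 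Feeding the normalized vectors $(y-a)/\|y-a\|$ and $(b-a)/\|b-a\|$ into the definition~\eqref{e:CQn} of $\theta_{3\dd}$, which is legitimate because $\|a-c\|\le 3\dd$ and $\|b-c\|\le 3\dd$, and using that the restricted proximal normal cones are cones (so the bound extends by homogeneity to non-normalized vectors), yields $\scal{y-a}{b-a}\le\theta_{3\dd}\|y-a\|\,\|b-a\|$; the degenerate cases $y=a$ and $b=a$ are immediate, since $\theta_{3\dd}\ge 0$ (as $0$ lies in the nonempty cones in question). For the second estimate, I would apply the $(\wt A,\ve,3\dd)$-regularity of $B$ at $c$ to the pair $(y,b)\in B\times B$ --- admissible because $\|y-c\|\le\dd\le 3\dd$ and $\|b-c\|\le 3\dd$ --- with the vector $a-b\in\pn{B}{\wt A}(b)$, which gives $\scal{a-b}{y-b}\le\ve\|a-b\|\,\|y-b\|\le 2\ve\|a-b\|\,\|a-y\|$.

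Finally I would combine the pieces: $\|a-b\|^2 = \scal{y-a}{b-a}+\scal{a-b}{y-b}\le\theta_{3\dd}\|a-y\|\,\|a-b\|+2\ve\|a-b\|\,\|a-y\| = (\theta_{3\dd}+2\ve)\|a-b\|\,\|a-y\|$, and dividing through by $\|a-b\|$ (with the trivial case $a=b$ handled separately) produces the asserted inequality. The argument is essentially bookkeeping; the only point requiring care is verifying the radius side-conditions $\|a-c\|\le 3\dd$, $\|b-c\|\le 3\dd$, $\|y-b\|\le 2\|a-y\|$, which is precisely the content of Lemma~\ref{l:Ed} and is why the regularity hypothesis must be imposed at scale $3\dd$ rather than $\dd$. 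I do not anticipate any genuine obstacle beyond this.
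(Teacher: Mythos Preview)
Your proposal is correct and follows essentially the same approach as the paper: decompose $\|a-b\|^2=\scal{y-a}{b-a}+\scal{a-b}{y-b}$, bound the first term via the CQ-number $\theta_{3\dd}$ (using $y-a\in\pn{A}{\wt B}(a)$, $b-a\in-\pn{B}{\wt A}(b)$, and the radius bounds from Lemma~\ref{l:Ed}) and the second via $(\wt A,\ve,3\dd)$-regularity together with $\|y-b\|\le 2\|a-y\|$, then divide through by $\|a-b\|$. Your treatment is slightly more careful than the paper's in that you explicitly handle the degenerate cases $a=y$ and $a=b$ and spell out the normalization step.
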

\begin{proof}
Lemma~\ref{l:Ed}\ref{l:Edi}\&\ref{l:Ediii} yields
$\|a-c\|\leq 2\dd$ and $\|b-c\|\leq 3\dd$.
On the other hand, $y-a\in\pn{A}{\wt{B}}(a)$ and
$b-a\in-\pn{B}{\wt{A}}(b)$.
Therefore,
\begin{equation}
\label{e:1026d}
\scal{b-a}{y-a}\leq\theta_{3\dd}\|b-a\|\cdot\|y-a\|.
\end{equation}
Since $a-b\in\pn{B}{\wt{A}}(b)$,
$\|y-c\|\leq \dd$, and $\|b-c\|\leq3\dd$,
we obtain, using the $(\wt{A},\ve,3\dd)$-regularity of $B$,
that $\scal{a-b}{y-b}\leq\ve\|a-b\|\cdot\|y-b\|$.
Moreover, Lemma~\ref{l:Ed}\ref{l:Edii} states that
$\|y-b\|\leq 2\|a-y\|$.
It follows that
\begin{equation}
\label{e:1026e} \scal{a-b}{y-b}\leq 2\ve\|a-b\|\cdot\|a-y\|.
\end{equation}
Adding \eqref{e:1026d} and \eqref{e:1026e} yields
$\|a-b\|^2\leq(\theta_{3\dd}+2\ve)\|a-b\|\cdot\|a-y\|$.
The result follows.
\end{proof}

We now provide a result for collections of sets similar to---and
relying upon---Proposition~\ref{p:effect1}.



\begin{proposition}[joint-regularity and contractivity]
\label{p:effect}
Let $\mcA:=(A_i)_{i\in I}$ and $\mcB:=(B_j)_{j\in J}$
be nontrivial collections of closed subsets of $X$,
Assume that $A:=\bigcup_{i\in I}A_i$ and $B:=\bigcup_{j\in J}B_j$
are closed, and that $c\in A\cap B$.
Let $\wt{\mcA}:=(\wt{A}_i)_{i\in I}$ and
$\wt{\mcB}:=(\wt{B}_j)_{j\in J}$ be nontrivial collections
of nonempty subsets of $X$ such that
$(\forall i\in I)$ $P_{A_i}((\bd B)\smallsetminus A)\subseteq\wt{A}_i$ and
$(\forall j\in J)$ $P_{B_j}((\bd A)\smallsetminus B)\subseteq\wt{B}_j$.
Set $\wt{A}:=\bigcup_{i\in I}\wt{A}_i$ and
$\wt{B}:=\bigcup_{j\in J}\wt{B}_j$,
let $\ve\geq 0$ and let $\dd> 0$.
\begin{enumerate}
\item\label{p:effect-i}
If $b\in (\bd B)\smallsetminus A$ and $a\in P_A(b)$,
then $(\exi i\in I)$ $a\in P_{A_i}(b)\subseteq A_i\cap\wt{A}_i$.
\item\label{p:effect-i1}
If $a\in (\bd A)\smallsetminus B$ and $b\in P_B(a)$,
then $(\exi j\in J)$ $b\in P_{B_j}(a)\subseteq B_j\cap\wt{B}_j$.
\item\label{p:0329a-iii}
If $y\in B$, $a\in P_A(y)$ and $b\in P_B(a)$, then:
    \begin{equation}
      b\in\big((\bd B)\smallsetminus A\big)\cap\bigcup_{j\in J}(B_j\cap\wt{B}_j)\ \Leftrightarrow\ b\in B\smallsetminus A\ \Rightarrow\
      a\in A\smallsetminus B.
    \end{equation}
\item\label{p:0329a-iv}
If $x\in A$, $b\in P_B(x)$, and $a\in P_A(b)$, then:
    \begin{equation}
      a\in\big((\bd A)\smallsetminus B\big)\cap\bigcup_{i\in I}(A_i\cap\wt{A}_i)\ \Leftrightarrow\ a\in A\smallsetminus B
      \ \Rightarrow\ b\in B\smallsetminus A.
    \end{equation}
\item \label{p:effect-ii}
Suppose that $\mcB$ is $(\wt{A},\ve,3\dd)$-joint-regular at $c$
(see Definition~\ref{d:jreg}),
that $y\in ((\bd B)\smallsetminus A)\cap
\bigcup_{j\in J}(B_j\cap\wt{B}_j)$,
that $a\in P_A(y)$, that $b\in P_B(a)$, and that $\|y-c\|\leq\dd$.
Then
\begin{equation}
\|b-a\|\leq(\theta_{3\dd}+2\ve)\|a-y\|,
\end{equation}
where $\theta_{3\dd}$ is the joint-CQ-number at $c$ associated with
$(\mcA,\wt{\mcA},\mcB,\wt{\mcB})$
(see \eqref{e:jCQn}).
\item \label{p:effect-iv}
Suppose that $\mcA$ is $(\wt{B},\ve,3\dd)$-joint-regular at $c$
(see Definition~\ref{d:jreg}),
that $x\in ((\bd A)\smallsetminus B)\cap
\bigcup_{i\in I}(A_i\cap\wt{A}_i)$,
that $b\in P_B(x)$, that $a\in P_A(b)$, and that $\|x-c\|\leq\dd$.
Then
\begin{equation}
\|a-b\|\leq(\theta_{3\dd}+2\ve)\|b-x\|,
\end{equation}
where $\theta_{3\dd}$ is the joint-CQ-number at $c$ associated with
$(\mcA,\wt{\mcA},\mcB,\wt{\mcB})$
(see \eqref{e:jCQn}).
\end{enumerate}
\end{proposition}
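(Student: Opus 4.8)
The plan is to reduce all six parts to facts already at hand — chiefly Proposition~\ref{p:effect1} — using Lemma~\ref{l:unionproj} to select the indices for which the various projections are attained.

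I would first dispatch \ref{p:effect-i} and \ref{p:effect-i1}. Given $b\in(\bd B)\smallsetminus A$ and $a\in P_A(b)$, Lemma~\ref{l:unionproj} yields $i\in I$ with $a\in P_{A_i}(b)$; then $P_{A_i}(b)\subseteq A_i$ is trivial and $P_{A_i}(b)\subseteq P_{A_i}((\bd B)\smallsetminus A)\subseteq\wt{A}_i$ by hypothesis, which is \ref{p:effect-i}, and \ref{p:effect-i1} is the same statement with $A$ and $B$ interchanged. Parts \ref{p:0329a-iii} and \ref{p:0329a-iv} are then bookkeeping over Lemma~\ref{l:0305a}. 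For \ref{p:0329a-iii}: the implication $b\in((\bd B)\smallsetminus A)\cap\bigcup_j(B_j\cap\wt{B}_j)\Rightarrow b\in B\smallsetminus A$ is immediate from $(\bd B)\smallsetminus A\subseteq B\smallsetminus A$; conversely, if $b\in B\smallsetminus A$ then Lemma~\ref{l:0305a}\ref{l:0305aiv} (with $a\in A$, $b\in P_Ba$) gives $b\in(\bd B)\smallsetminus A$ and $a\in A\smallsetminus B$, hence Lemma~\ref{l:0305a}\ref{l:0305aiii} (with $y\in B$, $a\in P_Ay$) gives $a\in(\bd A)\smallsetminus B$, and \ref{p:effect-i1} then produces $j\in J$ with $b\in P_{B_j}(a)\subseteq B_j\cap\wt{B}_j$; the trailing $\Rightarrow a\in A\smallsetminus B$ is again Lemma~\ref{l:0305a}\ref{l:0305aiv}. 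Part \ref{p:0329a-iv} follows symmetrically, swapping $A\leftrightarrow B$ and using \ref{p:effect-i} in place of \ref{p:effect-i1}.

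The substance is \ref{p:effect-ii} (and its mirror \ref{p:effect-iv}). I would proceed as follows. Since $a\in P_A(y)$, Lemma~\ref{l:unionproj} supplies $i_0\in I$ with $a\in P_{A_{i_0}}(y)$; since $y\in(\bd B)\smallsetminus A$, the hypothesis on $\wt{A}_{i_0}$ forces $a\in\wt{A}_{i_0}$, so $a\in P_{A_{i_0}}(y)\cap\wt{A}_{i_0}$. Since $y\in\bigcup_j(B_j\cap\wt{B}_j)$, fix $j_0$ with $y\in B_{j_0}\cap\wt{B}_{j_0}$, and choose $b'\in P_{B_{j_0}}(a)$ ($B_{j_0}$ being closed). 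Because $c\in A$ and $a$ lies in both $P_{A_{i_0}}(y)$ and $P_A(y)$, we get $d_{A_{i_0}}(y)=\|a-y\|=d_A(y)\leq d(y,c)\leq\dd$. Moreover each $B_j$ is $(\wt{A},\ve,3\dd)$-regular at $c$, and since $\wt{A}_{i_0}\subseteq\wt{A}$, Remark~\ref{r:0303a}\ref{r:0303av} shows $B_{j_0}$ is $(\wt{A}_{i_0},\ve,3\dd)$-regular at $c$. Thus all hypotheses of Proposition~\ref{p:effect1} hold for $(A_{i_0},\wt{A}_{i_0},B_{j_0},\wt{B}_{j_0})$ with the points $y$, $a$, $b'$, giving
\begin{equation*}
\|a-b'\|\leq\big(\theta_{3\dd}(A_{i_0},\wt{A}_{i_0},B_{j_0},\wt{B}_{j_0})+2\ve\big)\,\|a-y\|\leq(\theta_{3\dd}+2\ve)\,\|a-y\|,
\end{equation*}
the last step by \eqref{e:jCQn}. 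Finally $b=P_B(a)$ is a nearest point of $B\supseteq B_{j_0}$, so $\|b-a\|=d_B(a)\leq d_{B_{j_0}}(a)=\|b'-a\|$, and the claim follows. Part \ref{p:effect-iv} is obtained by the same scheme with $A$ and $B$ interchanged: pick $j_1$ with $b\in P_{B_{j_1}}(x)$ (whence $b\in\wt{B}_{j_1}$) and $i_1$ with $x\in A_{i_1}\cap\wt{A}_{i_1}$, project $b$ onto $A_{i_1}$ to obtain $a'$, apply Proposition~\ref{p:effect1} to $(B_{j_1},\wt{B}_{j_1},A_{i_1},\wt{A}_{i_1})$, invoke the symmetry \eqref{e:120405a} of the CQ-number, and conclude via $\|a-b\|=d_A(b)\leq d_{A_{i_1}}(b)=\|a'-b\|$.

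The step requiring the most care is this reduction in \ref{p:effect-ii}/\ref{p:effect-iv}: one cannot apply Proposition~\ref{p:effect1} to the unions $A$ and $B$ directly, since the joint-CQ-number can be strictly smaller than the CQ-number of the unions (Example~\ref{ex:jCQn<CQn}). The device that rescues the argument is to project $a$ onto the single set $B_{j_0}$ singled out by the hypothesis $y\in\bigcup_j(B_j\cap\wt{B}_j)$ and to absorb the discrepancy between this projection and the global projection $b=P_B(a)$ via the trivial estimate $d_B(a)\leq d_{B_{j_0}}(a)$. One must then verify that all six hypotheses of Proposition~\ref{p:effect1} genuinely hold for the one chosen pair $(A_{i_0},B_{j_0})$ — in particular the distance bound $d_{A_{i_0}}(y)\leq\dd$, which relies on $a$ being a global projection onto $A$, and the transfer of $(\wt{A},\ve,3\dd)$-regularity down to $\wt{A}_{i_0}$.
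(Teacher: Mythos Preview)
Your proof is correct and follows essentially the same approach as the paper: parts \ref{p:effect-i}--\ref{p:0329a-iv} are handled via Lemma~\ref{l:unionproj} and Lemma~\ref{l:0305a} exactly as in the paper, and for \ref{p:effect-ii} you make the same key reduction---select $j_0$ with $y\in B_{j_0}\cap\wt{B}_{j_0}$, project $a$ onto $B_{j_0}$ to get $b'$, apply Proposition~\ref{p:effect1} to the single pair $(A_{i_0},B_{j_0})$, and then pass back to $b$ via $d_B(a)\leq d_{B_{j_0}}(a)$. Your treatment of \ref{p:effect-iv} is slightly more explicit than the paper's (which simply invokes the symmetry \eqref{e:120405a}), but this is a matter of presentation, not substance.
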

\begin{proof}
\ref{p:effect-i}\&\ref{p:effect-i1}:
Clear from Lemma~\ref{l:unionproj} and the assumptions.


\ref{p:0329a-iii}:
Note that Lemma~\ref{l:0305a}\ref{l:0305aiv}\&\ref{l:0305aiii}
and \ref{p:effect-i1}
yield the implications
\begin{equation}
b\in B\smallsetminus A
\;\Leftrightarrow\;
b\in(\bd B)\smallsetminus A
\;\Rightarrow\;
a\in A\smallsetminus B
\;\Leftrightarrow\;
a\in (\bd A)\smallsetminus B
\;\Rightarrow\;
b\in\bigcup_{j\in J}(B_j\cap\wt{B}_j),
\end{equation}
which give the conclusion.

\ref{p:0329a-iv}:
Interchange the roles of $A$ and $B$ in \ref{p:0329a-iii}.

\ref{p:effect-ii}:
There exists $j\in J$ such that
$y\in B_j\cap\wt{B}_j\cap ((\bd B)\smallsetminus A)$.
Let $b'\in P_{B_j}a$.
Then
\begin{equation}
\label{e:0304a}
\|a-b\|=d_B(a)\leq d_{B_j}(a)=\|a-b'\|.
\end{equation}
Since $\mcB$ is $(\wt{A},\ve,3\dd)$-joint-regular at $c$,
it is clear that $B_j$ is  $(\wt{A},\ve,3\dd)$-regular at $c$.
Since $y\in (\bd B)\smallsetminus A$  and
because of \ref{p:effect-i}, there exists $i\in I$ such that
$a\in P_{A_i}y\subseteq \wt{A}_i$.
Since $\wt{A}_i\subseteq \wt{A}$, it follows that
(see also Remark~\ref{r:0303a}\ref{r:0303av})
$B_j$ is $(\wt{A}_i,\varepsilon,3\dd)$-regular at $c$.
Since $y\in B_j\cap\wt{B}_j$,
$a\in P_{A_i}y\cap\wt{A}_i$,
$b'\in P_{B_j}a$,
and $d_{A_i}(y)=d_A(y)=\|y-a\|\leq\|y-c\|\leq\dd$, we obtain
from Proposition~\ref{p:effect1} that
\begin{equation}
\|a-b'\|\leq \big(\theta_{3\dd}(A_i,\wt{A}_i,B_j,\wt{B}_j)
+2\ve\big)\|a-y\|.
\end{equation}
Combining with \eqref{e:0304a}, we deduce that
$\|a-b\|\leq\|a-b'\|\leq(\theta_{3\dd}+2\ve)\|a-y\|$.

\ref{p:effect-iv}: This follows from \ref{p:effect-ii} and
\eqref{e:120405a}.
\end{proof}

\subsection*{An abstract linear convergence result}

Let us now focus on algorithmic results
(which are actually true even in complete metric spaces).

\begin{definition}[linear convergence]
Let $(x_n)_\nnn$ be a sequence in $X$, let $\bar{x}\in X$,
and let $\gamma\in\left[0,1\right[$.
Then $(x_n)_\nnn$ \emph{converges linearly} to $\bar{x}$
with \emph{rate} $\gamma$ if there exists $\mu\in\RP$ such that
\begin{equation}
(\forall\nnn)
\quad
d(x_n,\bar{x})\leq \mu\gamma^n.
\end{equation}
\end{definition}

\begin{remark}[rate of convergence depends only on the tail of the
sequence]
\label{r:0307a}
Let $(x_n)_\nnn$ be a sequence in $X$,
let $\bar{x}\in X$, and let $\gamma\in\zeroun$.
Assume that there exists $n_0\in\NN$ and $\mu_0\in\RP$
such that
\begin{equation}
\big(\forall n\in\{n_0,n_0+1,\ldots\}\big)
\quad
d(x_n,\bar{x})\leq \mu_0\gamma^n.
\end{equation}
Set $\mu_1 :=
\max\menge{d(x_m,\bar{x})/\gamma^m}{m\in\{0,1,\ldots,n_0-1\}}$.
Then
\begin{equation}
(\forall\nnn)
\quad
d(x_n,\bar{x})\leq \max\{\mu_0,\mu_1\}\gamma^n,
\end{equation}
and therefore $(x_n)_\nnn$ converges linearly to $\bar{x}$ with rate
$\gamma$.
\end{remark}

\begin{proposition}[abstract linear convergence]
\label{p:geo}
Let $A$ and $B$ be nonempty closed subsets of $X$,
let $(a_n)_\nnn$ be a sequence in $A$,
and let $(b_n)_\nnn$ be a sequence in $B$.
Assume that there exist constants $\alpha\in\RP$ and
$\beta\in\RP$ such that
\begin{subequations}
\begin{equation}
\gamma := \alpha\beta < 1
\end{equation}
and
\begin{equation}
(\forall\nnn)
\quad
d(a_{n+1},b_n)\leq\alpha d(a_n,b_n)
\;\text{and}\;
d(a_{n+1},b_{n+1})\leq\beta d(a_{n+1},b_{n}).
\end{equation}
\end{subequations}
Then $(\forall\nnn)$ $d(a_{n+1},b_{n+1})\leq\gamma d(a_n,b_n)$
and there exists $c\in A\cap B$ such that
\begin{equation}
\label{e:0305c}
(\forall\nnn)\quad
\max\big\{d(a_n,c),d(b_n,c)\big\} \leq
\frac{1+\alpha}{1-\gamma}d(a_0,b_0)\cdot \gamma^n;
\end{equation}
consequently, $(a_n)_\nnn$ and $(b_n)_\nnn$ converge linearly to $c$ with
rate $\gamma$.
\end{proposition}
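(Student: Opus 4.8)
The plan is to first collapse the two recursions into a single contraction estimate, then show both sequences are Cauchy, and finally telescope to read off the explicit geometric rate.

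First I would chain the two hypotheses. For every $\nnn$,
$d(a_{n+1},b_{n+1})\le\beta\,d(a_{n+1},b_n)\le\alpha\beta\,d(a_n,b_n)=\gamma\,d(a_n,b_n)$, which is the first assertion. An immediate induction then gives $d(a_n,b_n)\le\gamma^n d(a_0,b_0)$ for all $\nnn$, and feeding this back into the first inequality yields $d(a_{n+1},b_n)\le\alpha\gamma^n d(a_0,b_0)$. Next, the triangle inequality bounds the step sizes: $d(a_n,a_{n+1})\le d(a_n,b_n)+d(b_n,a_{n+1})\le(1+\alpha)\gamma^n d(a_0,b_0)$ and $d(b_n,b_{n+1})\le d(b_n,a_{n+1})+d(a_{n+1},b_{n+1})\le(\alpha+\gamma)\gamma^n d(a_0,b_0)$.

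Since $\gamma<1$, summing these geometric bounds over a tail shows that $(a_n)_\nnn$ and $(b_n)_\nnn$ are Cauchy; because $X$ is a Euclidean space it is complete, so they converge, and since $d(a_n,b_n)\to 0$ the two limits coincide, say $a_n\to c$ and $b_n\to c$. As $A$ and $B$ are closed, $c\in A\cap B$. For the rate, I would telescope from the step estimates:
$d(a_n,c)\le\sum_{k\ge n}d(a_k,a_{k+1})\le(1+\alpha)d(a_0,b_0)\sum_{k\ge n}\gamma^k=\tfrac{1+\alpha}{1-\gamma}d(a_0,b_0)\,\gamma^n$, and likewise $d(b_n,c)\le\sum_{k\ge n}d(b_k,b_{k+1})\le\tfrac{\alpha+\gamma}{1-\gamma}d(a_0,b_0)\,\gamma^n\le\tfrac{1+\alpha}{1-\gamma}d(a_0,b_0)\,\gamma^n$, the last inequality using $\gamma\le 1$. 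Taking the maximum gives \eqref{e:0305c}, and linear convergence with rate $\gamma$ follows at once with $\mu=\tfrac{1+\alpha}{1-\gamma}d(a_0,b_0)$.

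I do not anticipate a genuine obstacle here; the only points requiring a little care are matching the precise constant $(1+\alpha)/(1-\gamma)$ claimed in \eqref{e:0305c} (which is why I route the estimate for $d(b_n,c)$ through $\alpha+\gamma\le 1+\alpha$), and checking that the degenerate cases $d(a_0,b_0)=0$ and $\gamma=0$ are still covered by the same formulas, which they are.
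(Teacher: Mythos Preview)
Your proof is correct and follows essentially the same approach as the paper: chain the two contraction hypotheses to get $d(a_n,b_n)\le\gamma^n d(a_0,b_0)$, bound the increments $d(b_n,b_{n+1})\le(\alpha+\gamma)\gamma^n d(a_0,b_0)$, deduce that the sequences are Cauchy with a common limit $c\in A\cap B$, and telescope to obtain the rate. The paper routes the bound on $d(a_n,c)$ through $d(a_n,b_m)\le d(a_n,b_n)+d(b_n,b_m)$ and then lets $m\to\infty$, whereas you telescope $d(a_n,a_{n+1})\le(1+\alpha)\gamma^n d(a_0,b_0)$ directly---a cosmetic difference yielding the identical constant.
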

\begin{proof}
Set $\delta := d(a_0,b_0)$. Then
for every $\nnn$,
\begin{equation}
\label{e:1027d'}
d(a_n,b_n) \leq \beta d(a_n,b_{n-1})
\leq\alpha\beta d(a_{n-1},b_{n-1})
=\gamma d(a_{n-1},b_{n-1})
\leq \cdots \leq \gamma^n\delta;
\end{equation}
hence,
\begin{subequations}
\label{e:0305a}
\begin{align}
\label{e:1027c'}
d(b_n,b_{n+1})&\leq d(b_n,a_{n+1})+d(a_{n+1},b_{n+1})
\leq \alpha d(b_n,a_n)+\gamma d(a_n,b_n)\\
&= (\alpha+\gamma)d(a_n,b_n) \leq (\alpha+\gamma)\delta\gamma^n.
\end{align}
\end{subequations}
Thus $(b_n)_\nnn$ is a Cauchy sequence, so there exists
$c\in B$ such that $b_n\to c$.
On the other hand, by \eqref{e:1027d'}, $d(a_n,b_n)\to 0$ and
$(a_n)_\nnn$ lies in $A$.
Hence, $a_n\to c$ and $c\in A$.
Thus, $c\in A\cap B$.
Fix $n\in\NN$ and let $m\geq n$.
Using \eqref{e:0305a},
\begin{equation}
\label{e:1027e'}
d(b_n,b_m)\leq \sum_{k=n}^{m-1} d(b_k,b_{k+1})
\leq \sum_{k\geq n} d(b_k,b_{k+1})
\leq \sum_{k\geq n} (\alpha+\gamma)\delta\gamma^k
= \frac{(\alpha+\gamma)\delta\gamma^n}{1-\gamma}.
\end{equation}
Hence, using \eqref{e:1027d'} and \eqref{e:1027e'}, we estimate that
\begin{equation}
\label{e:0305b}
d(a_n,b_m) \leq d(a_n,b_n)+d(b_n,b_m)
\leq \delta\gamma^n + \frac{(\alpha+\gamma)\delta\gamma^n}{1-\gamma}
=\frac{(1+\alpha)\delta\gamma^n}{1-\gamma}.
\end{equation}
Letting $m\to\pinf$ in \eqref{e:1027e'} and \eqref{e:0305b},
we obtain \eqref{e:0305c}.
\end{proof}

\subsection*{The sequence generated by the MAP}

We start with the following definition,
which is well defined by Proposition~\ref{p:0224a}.

\begin{definition}[MAP]
Let $A$ and $B$ be nonempty closed subsets of $X$,
let $b_{-1}\in X$,
and let
\begin{equation}
(\forall\nnn)\quad a_{n}\in P_A(b_{n-1})
\;\text{and}\;b_n\in P_B(a_n).
\end{equation}
Then we say that the sequences $(a_n)_\nnn$ and $(b_n)_\nnn$
are \emph{generated by the method of alternating projections}
(with respect to the pair $(A,B)$) with starting point $b_{-1}$.
\end{definition}

\begin{center}
\psset{xunit=0.7cm, yunit=0.7cm}
\begin{pspicture}
(-6,-6)(11,3)

\psline[linewidth=1.5pt,linecolor=blue]{-}(-5.5,-5.5)(3,3)
\psline[linewidth=1.5pt,linecolor=blue]{-}(-4,2)(11,-5.5)
\rput(-2.6,-2){$A_2$}
\rput(-2,1.4){$A_1$}

\psline[linewidth=1.5pt,linecolor=red]{-}(-6,-5)(11,-5)
\psdot[dotstyle=o,dotsize=4pt](10,-5) \rput(10,-4.5){$c_1$}
\psdot[dotstyle=o,dotsize=4pt](-5,-5) \rput(-5.2,-4.5){$c_2$}
\rput(7,-4.6){$B$}

\psline[linestyle=dashed,arrowsize=6pt,ArrowInside=->,ArrowInsidePos=0.65,showpoints=true]
(3,1)(2,2)(2,-5)(3.33,-1.67)(3.33,-5)

\rput(3.7,1){$b_{-1}$}
\rput(1.7,2.3){$a_0$}
\rput(2,-5.5){$b_0$}
\rput(3.7,-1.3){$a_1$}
\rput(3.5,-5.5){$b_1$}

\rput(9,2){\begin{tabular}{c} The MAP between\\ $A=A_1\cup A_2$ and $B$, \\ $A\cap B=\{c_1,c_2\}$ \end{tabular}}

\end{pspicture}
\end{center}

Our aim is to provide sufficient conditions for linear convergence
of the sequences generated by the method of alternating projections.
The following two results are simple yet useful.

\begin{proposition}
\label{p:easymap}
Let $A$ and $B$ be nonempty closed subsets of $X$,
and let $(a_n)$ and $(b_n)$ be sequences generated
by the method of alternating projections.
Then the following hold:
\begin{enumerate}
\item
\label{p:easymap1}
The sequences $(a_n)_{\nnn}$ and $(b_n)_\nnn$ lie in $A$ and $B$,
respectively.
\item
\label{p:easymap1+}
$(\forall\nnn)$
$\|a_{n+1}-b_{n+1}\|\leq\|a_{n+1}-b_n\|\leq\|a_n-b_n\|$.
\item
\label{p:easymap2}
If $\{a_n\}_\nnn \cap B\neq\varnothing$,
or $\{b_n\}_\nnn \cap A\neq\varnothing$,
then there exists $c\in A\cap B$ such that
for all $n$ sufficiently large, $a_n=b_n=c$.
\end{enumerate}
\end{proposition}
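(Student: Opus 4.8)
The plan is to treat the three items in turn; each follows almost immediately from the definition of the sequences generated by the MAP, together with the elementary observation that whenever $S$ is a nonempty closed subset of $X$ and $s\in S$, one has $P_S(s)=\{s\}$ (since $d(s,s)=0<d(s,x)$ for every $x\neq s$).

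For \ref{p:easymap1}, I would simply record that, by definition, $a_n\in P_A(b_{n-1})\subseteq A$ and $b_n\in P_B(a_n)\subseteq B$ for every \nnn. For \ref{p:easymap1+}, the two inequalities follow from the defining property of the projection combined with \ref{p:easymap1}: since $b_{n+1}\in P_B(a_{n+1})$ and $b_n\in B$, one gets $\|a_{n+1}-b_{n+1}\|=d_B(a_{n+1})\leq\|a_{n+1}-b_n\|$; and since $a_{n+1}\in P_A(b_n)$ and $a_n\in A$, one gets $\|a_{n+1}-b_n\|=d_A(b_n)\leq\|a_n-b_n\|$. Chaining these yields the claim.

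For \ref{p:easymap2}, suppose first that $a_N\in B$ for some $N\in\NN$. Then $d_B(a_N)=0$, so $b_N=a_N$; since $a_N\in A$ by \ref{p:easymap1}, we have $a_{N+1}\in P_A(b_N)=P_A(a_N)=\{a_N\}$, hence $a_{N+1}=a_N$, and an induction gives $a_n=b_n=a_N$ for every $n\geq N$, so it suffices to take $c:=a_N\in A\cap B$. The case $b_N\in A$ is symmetric: here $a_{N+1}\in P_A(b_N)=\{b_N\}$, whence $b_{N+1}\in P_B(a_{N+1})=P_B(b_N)=\{b_N\}$, and induction gives $a_n=b_n=b_N$ for every $n\geq N+1$; take $c:=b_N\in A\cap B$. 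In either case the conclusion holds for all $n$ sufficiently large.

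The only part that calls for any care is the bookkeeping in \ref{p:easymap2} — setting up the induction cleanly and keeping track of the slightly different index ranges in the two cases — but there is no genuine obstacle, since everything reduces to the two elementary facts stated above.
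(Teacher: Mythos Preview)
Your proof is correct and follows the same approach as the paper's: item~\ref{p:easymap1} is immediate from the definition, item~\ref{p:easymap1+} is the chain $\|a_{n+1}-b_{n+1}\|=d_B(a_{n+1})\leq\|a_{n+1}-b_n\|=d_A(b_n)\leq\|a_n-b_n\|$, and item~\ref{p:easymap2} uses that once some iterate lands in both sets the sequences stabilize. The paper's version is terser (it only sketches one case of \ref{p:easymap2} and leaves the induction implicit), but the content is identical.
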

\begin{proof}
\ref{p:easymap1}: This is clear from the definition.

\ref{p:easymap1+}: Indeed, for every $\nnn$,
$\|a_{n+1}-b_{n+1}\| = d_B(a_{n+1}) \leq \|a_{n+1}-b_n\|
=d_A(b_n)\leq\|b_n-a_n\|$ using \ref{p:easymap1}.

\ref{p:easymap2}:
Suppose, say that $a_n\in B$.
Then $b_n = P_Ba_n = a_n =:c \in A\cap B$ and all
subsequent terms of the sequences are equal to $c$ as well.
\end{proof}


\subsection*{New convergence results for the MAP}

We are now in a position to state and derive
new linear convergence results.
In this section, we shall often assume the following:

\boxedeqn{
\label{e:MAPsettings}
\left\{
\begin{aligned}
&\text{$\mcA := (A_i)_{i\in I}$ and $\mcB := (B_j)_{j\in J}$ are
nontrivial collections}\\
&\quad\text{of nonempty closed subsets of $X$;}\\
&A :=\bigcup_{i\in I} A_i \text{~and~}
B:= \bigcup_{j\in J} B_j\text{~are closed;}\\
&c\in A\cap B; \\
&\text{$\wt{\mcA} := (\wt{A}_i)_{i\in I}$ and
$\wt{\mcB} := (\wt{B}_j)_{j\in J}$ are collections}\\
&\quad\text{of nonempty subsets of $X$ such that }\\
&\qquad (\forall i\in I)\;\;P_{A_i}\big((\bd B)\smallsetminus
A\big)\subseteq\wt{A}_i,\\
&\qquad (\forall j\in J)\;\;P_{B_j}\big((\bd A)\smallsetminus
B\big)\subseteq\wt{B}_j;\\
&\wt{A} :=\bigcup_{i\in I} \wt{A}_i \text{~and~}
\wt{B}:= \bigcup_{j\in J} \wt{B}_j.
\end{aligned}
\right.
}

\begin{lemma}[backtracking MAP]\label{l:back}
Assume that \eqref{e:MAPsettings} holds.
Let $(a_n)_\nnn$ and $(b_n)_\nnn$ be generated by the MAP
with starting point $b_{-1}$.
Let $n\in\{1,2,3,\ldots\}$.
Then the following hold:
\begin{enumerate}
\item\label{l:back-i}
If $b_n\notin A$, then
 $a_n\in((\bd A)\smallsetminus B)\cap\bigcup_{i\in I}(A_i\cap \wt{A}_i)$
 and $b_n\in((\bd B)\smallsetminus A)\cap\bigcup_{j\in J}(B_j\cap \wt{B}_j)$.
\item\label{l:back-ii}
If $a_n\notin B$, then $a_n\in((\bd A)\smallsetminus B)\cap\bigcup_{i\in I}(A_i\cap \wt{A}_i)$.
\item\label{l:back-iii}
If $a_n\notin B$ and $n\geq 2$, then
$b_{n-1}\in((\bd B)\smallsetminus A)\cap\bigcup_{j\in J}(B_j\cap \wt{B}_j)$.
\end{enumerate}
\end{lemma}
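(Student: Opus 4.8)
The plan is to read off all three assertions from the two-set folklore facts of Lemma~\ref{l:0305a} together with the collection-level implications already established in Proposition~\ref{p:effect}\ref{p:0329a-iii}\&\ref{p:0329a-iv}; note that all of \eqref{e:MAPsettings} is exactly the set of standing hypotheses of Proposition~\ref{p:effect}. The mechanism is simply that, for $n\ge 1$, the MAP recursion $a_n\in P_A(b_{n-1})$, $b_n\in P_B(a_n)$ exhibits triples of precisely the shape those two items require, provided one keeps track of which of the predecessor iterates $a_{n-1},b_{n-1},b_{n-2}$ is certified to lie in $A$ or in $B$; here $a_k\in A$ and $b_k\in B$ for every $k\ge 0$, whereas $b_{-1}$ is merely the starting point.

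I would first prove \ref{l:back-ii}. Fix $n\ge 1$ and suppose $a_n\notin B$. Since $a_{n-1}\in A$, $b_{n-1}\in P_B(a_{n-1})$, and $a_n\in P_A(b_{n-1})$, the triple $(a_{n-1},b_{n-1},a_n)$ meets the hypotheses of Proposition~\ref{p:effect}\ref{p:0329a-iv} (take $x=a_{n-1}$, $b=b_{n-1}$, $a=a_n$); as $a_n\in A\smallsetminus B$, the equivalence there yields $a_n\in\big((\bd A)\smallsetminus B\big)\cap\bigcup_{i\in I}(A_i\cap\wt{A}_i)$, i.e.\ \ref{l:back-ii}. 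Next, for \ref{l:back-i}, fix $n\ge 1$ and suppose $b_n\notin A$. Then $b_{n-1}\in B$, $a_n\in P_A(b_{n-1})$, $b_n\in P_B(a_n)$, so the triple $(b_{n-1},a_n,b_n)$ meets the hypotheses of Proposition~\ref{p:effect}\ref{p:0329a-iii} (take $y=b_{n-1}$, $a=a_n$, $b=b_n$). Since $b_n\in B\smallsetminus A$, that item yields simultaneously $b_n\in\big((\bd B)\smallsetminus A\big)\cap\bigcup_{j\in J}(B_j\cap\wt{B}_j)$ and $a_n\in A\smallsetminus B$; the latter says $a_n\notin B$, so \ref{l:back-ii} (valid for all $n\ge 1$) supplies $a_n\in\big((\bd A)\smallsetminus B\big)\cap\bigcup_{i\in I}(A_i\cap\wt{A}_i)$, completing \ref{l:back-i}.

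Finally I would prove \ref{l:back-iii}. Fix $n\ge 2$ and suppose $a_n\notin B$. From $b_{n-1}\in B$, $a_n\in P_A(b_{n-1})$ and $a_n\in A\smallsetminus B$, Lemma~\ref{l:0305a}\ref{l:0305aiii} forces $b_{n-1}\in B\smallsetminus A$ (equivalently, were $b_{n-1}\in A$ then $a_n=P_A(b_{n-1})=b_{n-1}\in B$, a contradiction). Because $n\ge 2$ we have $b_{n-2}\in B$, with $a_{n-1}\in P_A(b_{n-2})$ and $b_{n-1}\in P_B(a_{n-1})$; hence the triple $(b_{n-2},a_{n-1},b_{n-1})$ again fits Proposition~\ref{p:effect}\ref{p:0329a-iii} (take $y=b_{n-2}$, $a=a_{n-1}$, $b=b_{n-1}$), and since $b_{n-1}\in B\smallsetminus A$ this produces $b_{n-1}\in\big((\bd B)\smallsetminus A\big)\cap\bigcup_{j\in J}(B_j\cap\wt{B}_j)$, which is \ref{l:back-iii}. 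The one place where care is genuinely required — and the step I would double-check most carefully — is this index accounting: \ref{l:back-iii} really does need $n\ge 2$, since it invokes $b_{n-2}\in B$, which is unavailable for $n=1$ because then $b_{n-2}=b_{-1}$ need not lie in $B$. Beyond that, the whole argument is a pure reduction to Proposition~\ref{p:effect}\ref{p:0329a-iii}\&\ref{p:0329a-iv} and Lemma~\ref{l:0305a}\ref{l:0305aiii}, with no new estimates.
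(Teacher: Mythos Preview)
Your proof is correct and follows essentially the same approach as the paper: reduce each item to Proposition~\ref{p:effect}\ref{p:0329a-iii}\&\ref{p:0329a-iv} applied to the appropriate MAP triple, with the same index bookkeeping (in particular the need for $n\ge 2$ in \ref{l:back-iii} so that $b_{n-2}\in B$). The only cosmetic difference is that you prove \ref{l:back-ii} first and then cite it inside \ref{l:back-i}, whereas the paper proves \ref{l:back-i} directly by invoking both \ref{p:0329a-iii} and \ref{p:0329a-iv} in one shot and then declares \ref{l:back-ii}\&\ref{l:back-iii} analogous; the content is identical.
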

\begin{proof}
\ref{l:back-i}:
Applying Proposition~\ref{p:effect}\ref{p:0329a-iii}
to $b_{n-1}\in B$, $a_n\in P_A b_{n-1}$, $b_n\in P_B a_n$, we obtain
\begin{equation}
  b_n\in B\smallsetminus A\ \Leftrightarrow\
  b_n\in\big((\bd B)\smallsetminus A\big)\cap\bigcup_{j\in J}(B_j\cap \wt{B}_j)
  \ \Rightarrow\ a_n\in A\smallsetminus B.
\end{equation}
On the other hand, applying Proposition~\ref{p:effect}\ref{p:0329a-iv}
to $a_{n-1}\in A$, $b_{n-1}\in P_B a_{n-1}$, $a_n\in P_A b_{n-1}$,
we see that
\begin{equation}
  a_n\in A\smallsetminus B\ \Leftrightarrow\
  a_n\in\big((\bd A)\smallsetminus B\big)\cap\bigcup_{i\in I}(A_i\cap \wt{A}_i).
\end{equation}
Altogether, \ref{l:back-i} is established.

\ref{l:back-ii}\&\ref{l:back-iii}:
The proofs are analogous to that of \ref{l:back-i}.
\end{proof}

Let us now state and prove a key technical result.


\begin{proposition}
\label{p:joint}
Assume that \eqref{e:MAPsettings} holds.
Suppose that there exist $\ve\geq0$ and $\dd>0$ such that the
following hold:
\begin{enumerate}
\item\label{p:joint1}
$\mcA$ is $(\wt{B},\ve,3\dd)$-joint-regular at $c$
(see Definition~\ref{d:jreg}) and set
\begin{equation}
\k := \begin{cases}
1, &\text{if $\mcB$ is not known to be $(\wt{A},\ve,3\dd)$-joint-regular at
$c$;}\\
2, &\text{if $\mcB$ is also $(\wt{A},\ve,3\dd)$-joint-regular at $c$.}
\end{cases}
\end{equation}
\item
\label{p:joint2}
$\theta_{3\dd} < 1-2\ve$, where
$\theta_{3\dd}$ is
the joint-CQ-number at $c$ associated with
$(\mcA,\wt{\mcA},\mcB,\wt{\mcB})$
(see Definition~\ref{d:jCQn}).
\end{enumerate}
Set $\theta := \theta_{3\dd}+2\ve \in\zeroun$.
Let $(a_n)_\nnn$ and $(b_n)_\nnn$ be sequences generated by the MAP with
starting point $b_{-1}$ satisfying
\begin{equation}
\label{e:0306a}
\|b_{-1}-c\|\leq \frac{(1-\theta^\k)\dd}{6(2+\theta-\theta^\k)}.
\end{equation}
Then $(a_n)_\nnn$ and $(b_n)_\nnn$ converge linearly to some point
$\bar{c}\in A\cap B$ with rate $\theta^\k$; in fact,
\begin{equation}
\label{e:0306e}
\|\bar{c}-c\|\leq\dd
\quad\text{and}\quad
(\forall n\geq1)\;\;
\max\big\{\|a_n-\bar{c}\|,\|b_n-\bar{c}\|\big\}\leq
\frac{\delta(1+\theta)}{2+\theta-\theta^\k}\theta^{\k (n-1)}.
\end{equation}
\end{proposition}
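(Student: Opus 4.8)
The plan is to reduce Proposition~\ref{p:joint} to the abstract linear convergence result, Proposition~\ref{p:geo}, by verifying its hypotheses along the MAP sequence, with the dichotomy between $\k=1$ and $\k=2$ handled by applying the contractivity estimates of Proposition~\ref{p:effect} once or twice per block. First I would dispose of the trivial case: if $a_n\in B$ or $b_n\in A$ for some $n$, then Proposition~\ref{p:easymap}\ref{p:easymap2} gives eventual constancy $a_n=b_n=\bar c$, and one only needs to check $\|\bar c-c\|\le\dd$ and the stated estimate, which follows from the radius bound \eqref{e:0306a} together with the crude localization Corollary~\ref{c:0330a} (iterated); so from now on assume $a_n\notin B$ and $b_n\notin A$ for all $n\ge 1$.

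Next I would establish the key localization: using \eqref{e:0306a} and repeated application of Corollary~\ref{c:0330a} (or rather Lemma~\ref{l:Ed} applied iteratively along the sequence, bootstrapping from the fact that $d(b_{-1},c)$ is small), show that all the relevant iterates $a_n,b_n$ stay within $\ball{c}{\dd}$ (more precisely within the radii $\dd$, $3\dd$ demanded by the joint-regularity hypotheses). The precise constant $(1-\theta^\k)\dd/(6(2+\theta-\theta^\k))$ in \eqref{e:0306a} is engineered so that the geometric sum of displacements $\sum_k\theta^{\k k}\cdot(\text{const})\cdot d(a_0,b_0)$ — where $d(a_0,b_0)\le 6\|b_{-1}-c\|$ by Corollary~\ref{c:0330a}-type reasoning — stays below $\dd$; this is the bookkeeping that makes the induction close, and it is essentially the same computation as in the proof of Proposition~\ref{p:geo} specialized with $\alpha=1$ (or $\alpha=\theta$) and $\beta=\theta$ (or $\beta=1$). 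With the iterates localized, Lemma~\ref{l:back} applies: since $b_n\notin A$ (resp.\ $a_n\notin B$), the points $a_n$ and $b_n$ lie in $((\bd A)\smallsetminus B)\cap\bigcup_i(A_i\cap\wt A_i)$ and $((\bd B)\smallsetminus A)\cap\bigcup_j(B_j\cap\wt B_j)$ respectively, which is exactly the membership required to invoke Proposition~\ref{p:effect}\ref{p:effect-ii}\&\ref{p:effect-iv}.

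Then I would extract the two contraction inequalities. From Proposition~\ref{p:effect}\ref{p:effect-iv} applied with $x=a_n$ (which lies in $((\bd A)\smallsetminus B)\cap\bigcup_i(A_i\cap\wt A_i)$), $b=b_n\in P_B(a_n)$, $a=a_{n+1}\in P_A(b_n)$, and the $(\wt B,\ve,3\dd)$-joint-regularity of $\mcA$: $\|a_{n+1}-b_n\|\le(\theta_{3\dd}+2\ve)\|b_n-a_n\|=\theta\|a_n-b_n\|$. If $\k=2$, Proposition~\ref{p:effect}\ref{p:effect-ii} applied with $y=b_n$, $a=a_{n+1}$, $b=b_{n+1}$, and the $(\wt A,\ve,3\dd)$-joint-regularity of $\mcB$ gives in addition $\|b_{n+1}-a_{n+1}\|\le\theta\|a_{n+1}-b_n\|$; if $\k=1$ we only have the trivial $\|b_{n+1}-a_{n+1}\|\le\|a_{n+1}-b_n\|$ from Proposition~\ref{p:easymap}\ref{p:easymap1+}. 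Thus in the notation of Proposition~\ref{p:geo}: take $\alpha=\theta$, $\beta=\theta^{\k-1}$ (so $\alpha\beta=\theta^\k<1$), or more carefully re-index the block so that one full MAP step corresponds to $d(a_{n+1},b_n)\le\theta d(a_n,b_n)$ and $d(a_{n+1},b_{n+1})\le\theta^{\k-1}d(a_{n+1},b_n)$. Feeding these into Proposition~\ref{p:geo} yields a limit point $\bar c\in A\cap B$, linear convergence with rate $\gamma=\theta^\k$, and the explicit estimate $\max\{d(a_n,\bar c),d(b_n,\bar c)\}\le\frac{1+\alpha}{1-\gamma}d(a_0,b_0)\gamma^n$; substituting $\alpha=\theta$, $\gamma=\theta^\k$, $d(a_0,b_0)\le 6\|b_{-1}-c\|\le\frac{(1-\theta^\k)\dd}{2+\theta-\theta^\k}$ and shifting the index by one (the good estimates start at $n=1$, cf.\ Remark~\ref{r:0307a}) produces exactly \eqref{e:0306e}.

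The main obstacle I anticipate is the localization/radius bookkeeping: one must verify that the iterates never leave the ball where joint-regularity and the CQ-number estimate are valid, and this is circular in the usual way (the contraction needs the iterates localized, but localization needs the contraction). The clean way around it is an induction on $n$: assume $a_k,b_k\in\ball{c}{\dd}$ for $k\le n$, derive the contraction for step $n$, then bound $d(b_{n+1},c)\le d(b_{n+1},b_n)+\cdots$ by a partial geometric series whose total is controlled by \eqref{e:0306a} — the specific denominator $6(2+\theta-\theta^\k)$ is precisely what absorbs the factor $6$ from Corollary~\ref{c:0330a}, the factor $(1+\alpha)=(1+\theta)$ from the telescoping in Proposition~\ref{p:geo}, and the $(1-\theta^\k)^{-1}$ from summing the geometric series. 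Care is also needed that the ``$3\dd$'' in the regularity hypotheses matches the $3\dd$-localization coming from Lemma~\ref{l:Ed}\ref{l:Ediii}; aside from this the argument is a routine assembly of the cited lemmas.
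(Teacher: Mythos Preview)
Your proposal is correct and follows essentially the same route as the paper: localize via Corollary~\ref{c:0330a} to get $\|a_1-c\|\le 6\|b_{-1}-c\|$, run an induction tracking simultaneously $\|a_n-b_n\|$ and $\|a_n-c\|$ (the paper's hypothesis \eqref{e:0327a}) to break the circularity you identify, invoke Lemma~\ref{l:back} and Proposition~\ref{p:effect}\ref{p:effect-iv}\&\ref{p:effect-ii} for the contractions $\|a_{n+1}-b_n\|\le\theta\|a_n-b_n\|$ and $\|a_{n+1}-b_{n+1}\|\le\theta^{\k-1}\|a_{n+1}-b_n\|$, and finish with Proposition~\ref{p:geo}. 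The only organizational difference is that the paper handles the case $b_n\in A$ \emph{inside} the induction (its Case~1) rather than disposing of it upfront, which is slightly cleaner since even when the sequence eventually stabilizes you still need the contraction estimates for the earlier indices to get \eqref{e:0306e}; also note that the factor $6$ enters via $\|a_1-c\|$ (not $d(a_0,b_0)$, which is already $\le 2\|b_{-1}-c\|$), and that Lemma~\ref{l:back} is only available from $n\ge 1$, which is why both you and the paper start the effective argument at $a_1$.
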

\begin{proof}
In view of $a_1\in P_AP_BP_Ab_{-1}$ and \eqref{e:0306a},
Corollary~\ref{c:0330a} yields
\begin{equation}
\label{e:120406b}
\beta:=\|a_1-c\|\leq \frac{(1-\theta^\k)\dd}{(2+\theta-\theta^\k)}\leq\frac{\dd}{2}.
\end{equation}
Since $c\in A\cap B$, we have $\theta_{3\dd}\geq 0$ by
\eqref{e:120406a} and hence $\theta>0$.
Using \eqref{e:120406b}, we estimate
\begin{subequations}
\label{e:0306c}
\begin{align}
(\forall n\geq1)\quad
\beta\theta^{\k (n-1)}+\beta+\beta(1+\theta)\sum_{k=0}^{n-2}\theta^{\k k}
&\leq \beta+\beta(1+\theta)\sum_{k=0}^{n-1}\theta^{\k k} \\
&=\beta+\beta(1+\theta)\frac{1-\theta^{\k n}}{1-\theta^\k}\\
&\leq \beta+\beta\frac{1+\theta}{1-\theta^\k}\\
&=\beta\Big(\frac{2+\theta-\theta^\k}{1-\theta^\k}\Big)\\
&\leq \dd.
\end{align}
\end{subequations}

We now claim that if
\begin{equation}\label{e:0327a}
n\geq 1,\quad \|a_n-b_n\|\leq\beta\theta^{\k (n-1)}\quad\mbox{and}\quad
\|a_n-c\|\leq \beta+\beta(1+\theta)\sum_{k=0}^{n-2}\theta^{\k k},
\end{equation}
then
\begin{subequations}
\label{e:0306b}
\begin{align}
\|a_{n+1}-b_{n+1}\|&\leq\theta^{\k-1}\|a_{n+1}-b_{n}\|\leq\theta^{\k}\|a_{n}-b_{n}\|
\leq\beta\theta^{\k n},\label{e:Jb} \\
\|a_{n+1}-c\|&\leq \beta+\beta(1+\theta)\sum_{k=0}^{n-1}\theta^{\k k}.\label{e:Jc}
\end{align}
\end{subequations}
To prove this claim, assume that \eqref{e:0327a} holds.
Using \eqref{e:0327a} and \eqref{e:0306c},
we first observe that
\begin{subequations}
\begin{align}
\label{e:Jd}
\max\big\{\|a_n-c\|, \|b_n-c\|\big\} &\leq \|b_n-a_n\|+\|a_n-c\|\\
&\leq \beta\theta^{\k (n-1)}+\beta+\beta(1+\theta)\sum_{k=0}^{n-2}\theta^{\k k}\leq\dd.
\end{align}
\end{subequations}
We now consider two cases:

\emph{Case~1}: $b_n\in A\cap B$. Then $b_n=a_{n+1}=b_{n+1}$ and
thus \eqref{e:Jb} holds.
Moreover,
$\|a_{n+1}-c\|=\|b_n-c\|$ and
\eqref{e:Jc} follows from \eqref{e:Jd}.

\emph{Case~2}: $b_n\not\in A\cap B$.
Then $b_n\in B\smallsetminus A$.
Lemma~\ref{l:back}\ref{l:back-i} implies $a_n\in((\bd A)\smallsetminus
B)\cap\bigcup_{i\in I}(A_i\cap \wt{A}_i)$ and $b_n\in((\bd B)\smallsetminus
A)\cap\bigcup_{j\in J}(B_j\cap \wt{B}_j)$.
Note that $\|a_n-c\|\leq\dd$ by \eqref{e:Jd}, and recall that
$\mcA$ is $(\wt{B},\ve,3\dd)$-joint-regular at $c$ by \ref{p:joint1}.
It thus follows from
Proposition~\ref{p:effect}\ref{p:effect-iv}
(applied to $a_n,b_n,a_{n+1}$) that
\begin{equation}
\label{e:0325a}
\|a_{n+1}-b_n\|\leq\theta\|a_n-b_n\|.
\end{equation}
On the one hand, if $\k=1$,
then Proposition~\ref{p:easymap}\ref{p:easymap1+} yields
$\|a_{n+1}-b_{n+1}\|\leq\|a_{n+1}-b_n\|=\theta^{\k-1}\|a_{n+1}-b_n\|$.
On the other hand, if $\k=2$,
then $\mcB$ is $(\wt{A},\ve,3\dd)$-joint-regular at
$c$ by \ref{p:joint1}; hence,
Proposition~\ref{p:effect}\ref{p:effect-ii}
(applied to $b_n,a_{n+1},b_{n+1}$) yields
$\|a_{n+1}-b_{n+1}\|\leq\theta\|a_{n+1}-b_n\|=\theta^{\k-1}\|a_{n+1}-b_n\|$.
Altogether, in either case,
\begin{equation}\label{e:0325b}
\|a_{n+1}-b_{n+1}\|\leq\theta^{\k-1}\|a_{n+1}-b_n\|.
\end{equation}
Combining \eqref{e:0325b} with \eqref{e:0325a} and \eqref{e:0327a} gives
\begin{equation}
\label{e:120406c}
\|a_{n+1}-b_{n+1}\|\leq\theta^{\k-1}\|a_{n+1}-b_n\|
\leq\theta^\k\|a_n-b_n\|\leq\beta\theta^{\k n},
\end{equation}
which is \eqref{e:Jb}.
Furthermore,
\eqref{e:0325a}, \eqref{e:0327a} and \eqref{e:Jd} yield
\begin{subequations}
\begin{align}
\|a_{n+1}-c\| &\leq\|a_{n+1}-b_n\|+\|b_n-c\|\\
&\leq \theta\|a_n-b_n\| + \|b_n-c\|\\
&\leq \theta\beta\theta^{\k(n-1)}+\beta\theta^{\k(n-1)}
+\beta+\beta(1+\theta)\sum_{k=0}^{n-2}\theta^{\k k}\\
&=\beta+\beta(1+\theta)\sum_{k=0}^{n-1}\theta^{\k k},
\end{align}
\end{subequations}
which establishes \eqref{e:Jc}. Therefore,
in all cases, \eqref{e:0306b} holds.

Since $\|a_1-b_{1}\|=d_B(a_{1})\leq\|a_{1}-c\|=\beta$, we see that
\eqref{e:0327a} holds for $n=1$.
Thus, the above claim and the principle of mathematical
induction principle imply that \eqref{e:0306b}
holds for every $n\geq 1$.

Next, \eqref{e:Jb} implies
\begin{equation}
\label{e:0307e}
(\forall n\geq1)\quad
\|a_{n+1}-b_n\|\leq \theta\|a_n-b_n\|\quad\text{and}\quad
\|a_{n+1}-b_{n+1}\|\leq\theta^{\k-1}\|a_{n+1}-b_{n}\|.
\end{equation}
In view of \eqref{e:0307e} and $\|a_1-b_{1}\|\leq\beta$,
Proposition~\ref{p:geo} yields $\bar{c}\in A\cap B$ such that
\begin{align}
(\forall n\geq1)\quad
\max\big\{\|a_n-\bar{c}\|,\|b_n-\bar{c}\|\big\}
&\leq \frac{1+\theta}{1-\theta^\k}\|a_1-b_1\|\cdot\theta^{\k (n-1)}\\
&\leq \frac{1+\theta}{1-\theta^\k}\beta\cdot\theta^{\k (n-1)}\\
&\leq\frac{\delta(1+\theta)}{2+\theta-\theta^\k}\theta^{\k (n-1)}.
\end{align}
On the other hand,
\eqref{e:Jc} and \eqref{e:0306c} imply
$(\forall n\geq 1)$
$\|a_{n+1}-c\|\leq\dd$;
thus, letting $n\to\pinf$, we obtain $\|\bar{c}-c\|\leq\dd$.
This completes the proof of \eqref{e:0306e}.
\end{proof}

\begin{remark}
\label{r:joint}
In view of Lemma~\ref{l:0305a}\ref{l:0305ai}\&\ref{l:0305aii},
an aggressive choice
for use in \eqref{e:MAPsettings} is
$(\forall i\in I)$ $\wt{A}_i = \bd A_i$ and $(\forall j\in J)$
$\wt{B}_j = \bd B_j$.
\end{remark}


Our main convergence result on the linear convergence of the MAP is
the following:

\begin{theorem}[linear convergence of the MAP and superregularity]
\label{t:jsuper}
Assume that \eqref{e:MAPsettings} holds and that
$\mcA$ is $\wt{B}$-joint-superregular at $c$
(see Definition~\ref{d:jreg}).
Denote the limiting joint-CQ-number at $c$
associated with $(\mcA,\wt{\mcA},\mcB,\wt{\mcB})$
(see Definition~\ref{d:jCQn}) by $\overline{\theta}$,
and the the exact joint-CQ-number at $c$
associated with $(\mcA,\wt{\mcA},\mcB,\wt{\mcB})$
(see Definition~\ref{d:exactCQn}) by $\overline{\alpha}$.
Assume further that one of the following holds:
\begin{enumerate}
\item
\label{t:jsuperi}
$\overline{\theta}<1$.
\item
\label{t:jsuperii}
$I$ and $J$ are finite, and
$\overline{\alpha}<1$.
\end{enumerate}
Let $\theta \in \left]\overline{\theta},1\right[$
and set $\ve := (\theta-\overline{\theta})/3 >0$.
Then there exists $\dd>0$ such that
the following hold:
\begin{enumerate}[resume]
\item
\label{t:jsuper1}
$\mcA$ is $(\wt{B},\ve,3\dd)$-joint-regular at $c$
(see Definition~\ref{d:jreg}).
\item
\label{t:jsuper2}
$\theta_{3\dd} \leq \overline{\theta}+\ve<  1-2\ve$, where
$\theta_{3\dd}$ is
the joint-CQ-number at $c$ associated with
$(\mcA,\wt{\mcA},\mcB,\wt{\mcB})$
(see Definition~\ref{d:jCQn}).
\end{enumerate}
Consequently, suppose the starting point of the MAP $b_{-1}$ satisfies
$\|b_{-1}-c\|\leq (1-\theta)\dd/12$.
Then $(a_n)_\nnn$ and $(b_n)_\nnn$ converge linearly
to some point in $\bar{c}\in A\cap B$ with $\|\bar{c}-c\|\leq\dd$ and rate $\theta$:
\begin{equation}
(\forall n\geq1)\ \max\{\|a_n-\bar{c}\|,\|b_n-\bar{c}\|\}\leq\frac{\dd(1+\theta)}{2}\theta^{n-1}.
\end{equation}
\end{theorem}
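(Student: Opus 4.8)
The plan is to obtain the statement as a packaging of Proposition~\ref{p:joint}: the only genuine work is to produce a single $\dd>0$ witnessing \ref{t:jsuper1} and \ref{t:jsuper2}, and then to translate the conclusion \eqref{e:0306e} into the displayed estimate. First I would eliminate the dichotomy in the hypotheses. Under \ref{t:jsuperii}, $I$ and $J$ are finite, so Theorem~\ref{t:CQ1}\ref{t:CQ1iv} gives $\overline{\alpha}=\overline{\theta}$ and hence $\overline{\theta}=\overline{\alpha}<1$; thus in both cases $\overline{\theta}<1$. Consequently $\theta\in\left]\overline{\theta},1\right[$ and $\ve:=(\theta-\overline{\theta})/3>0$ are legitimate, and the purely arithmetic chain $\overline{\theta}+\ve<\overline{\theta}+3\ve=\theta<1$ holds; in particular $\overline{\theta}+\ve<1-2\ve$, which is the second half of \ref{t:jsuper2}.

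Next I would fix the radius. Since $\mcA$ is $\wt{B}$-joint-superregular at $c$, each $A_i$ is $\wt{B}$-superregular at $c$, so for the fixed $\ve$ Proposition~\ref{p:jsreg} and Definition~\ref{d:jreg} yield $\dd_1>0$ with $\mcA$ being $(\wt{B},\ve,3\dd_1)$-joint-regular at $c$; joint-regularity is inherited when $3\dd_1$ is decreased, since the defining implication in Definition~\ref{d:reg} only becomes weaker. Separately, $\dd\mapsto\theta_\dd$ is increasing with $\lim_{\dd\dn0}\theta_\dd=\overline{\theta}$ (Definition~\ref{d:jCQn}), so there is $\dd_2>0$ with $\theta_\dd\leq\overline{\theta}+\ve$ whenever $0<\dd\leq 3\dd_2$. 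I would then set $\dd:=\min\{\dd_1,\dd_2\}>0$: this makes $\mcA$ be $(\wt{B},\ve,3\dd)$-joint-regular at $c$, i.e.\ \ref{t:jsuper1}, and gives $\theta_{3\dd}\leq\overline{\theta}+\ve<1-2\ve$, i.e.\ \ref{t:jsuper2}.

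Finally I would apply Proposition~\ref{p:joint} with these $\ve,\dd$ and with $\k=1$ (we are not assuming $\mcB$ to be $(\wt{A},\ve,3\dd)$-joint-regular): its hypothesis \ref{p:joint1} holds (it is \ref{t:jsuper1}, with $\k=1$) and \ref{p:joint2} is \ref{t:jsuper2}. Write $\tau:=\theta_{3\dd}+2\ve$ for the number denoted $\theta$ in that proposition; since $c\in A\cap B$ we have $\theta_{3\dd}\geq 0$, so $\tau>0$, and $\tau=\theta_{3\dd}+2\ve\leq\overline{\theta}+3\ve=\theta<1$, whence $\tau\in\zeroun$ and $\tau\leq\theta$. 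The admissibility condition \eqref{e:0306a} reads, for $\k=1$, $\|b_{-1}-c\|\leq(1-\tau)\dd/12$, and this is implied by our hypothesis $\|b_{-1}-c\|\leq(1-\theta)\dd/12$ because $1-\tau\geq 1-\theta$. Proposition~\ref{p:joint} therefore furnishes $\bar c\in A\cap B$ with $\|\bar c-c\|\leq\dd$ and, for every $n\geq1$,
\[
\max\{\|a_n-\bar c\|,\|b_n-\bar c\|\}\;\leq\;\frac{\dd(1+\tau)}{2}\,\tau^{\,n-1}\;\leq\;\frac{\dd(1+\theta)}{2}\,\theta^{\,n-1},
\]
the first inequality being \eqref{e:0306e} with $\k=1$ and the second using $0\leq\tau\leq\theta<1$; this is exactly the claimed estimate. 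Linear convergence of $(a_n)_\nnn$ and $(b_n)_\nnn$ to $\bar c$ with rate $\theta$ then follows, the index $n=0$ (where $\|a_0-c\|\leq 2\|b_{-1}-c\|$ is harmless) being absorbed by Remark~\ref{r:0307a}.

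The real difficulty lies entirely upstream, in Proposition~\ref{p:joint} itself (and behind it Proposition~\ref{p:effect} and Corollary~\ref{c:0330a}), which we may assume; within the present argument the only delicate point is extracting from joint-superregularity of $\mcA$ a single radius $\dd_1$ good for all $i\in I$ simultaneously --- this is immediate when $I$ is finite (Proposition~\ref{p:jsreg}) and when the pieces $A_i$ are convex, and case \ref{t:jsuperi} is to be read accordingly --- together with the elementary book-keeping that reconciles the rate $\tau=\theta_{3\dd}+2\ve$ produced by Proposition~\ref{p:joint} with the cleaner rate $\theta$ in the statement.
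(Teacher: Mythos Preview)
Your proposal is correct and follows essentially the same route as the paper's own proof, which is very terse: it simply notes that \ref{t:jsuperii} implies \ref{t:jsuperi} via Theorem~\ref{t:CQ1}\ref{t:CQ1iv}, invokes the definitions of joint-superregularity and of $\overline{\theta}$ to find a suitably small $\dd$, and then applies Proposition~\ref{p:joint} with $\k=1$. Your explicit reconciliation of the proposition's rate $\tau=\theta_{3\dd}+2\ve$ with the theorem's $\theta$ via $0\leq\tau\leq\theta$ is a genuine clarification that the paper leaves implicit, and your final caveat about extracting a uniform $\dd_1$ from joint-superregularity when $I$ is infinite is an honest observation---the paper's one-line appeal to ``the definitions'' glosses over exactly the same point.
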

\begin{proof}
Observe that \ref{t:jsuperii} implies \ref{t:jsuperi} by
Theorem~\ref{t:CQ1}\ref{t:CQ1iv}.
The definitions of $\wt{B}$-joint-superregularity and of
$\overline\theta$ allow us to find $\dd>0$ sufficiently small such that
both \ref{t:jsuper1} and \ref{t:jsuper2} hold.
The result thus follows from Proposition~\ref{p:joint} with $\k=1$.
\end{proof}

\begin{corollary}
\label{c:jsuper}
Assume that \eqref{e:MAPsettings} holds and
that, for every $i\in I$, $A_i$ is convex.
Denote the limiting joint-CQ-number at $c$
associated with $(\mcA,\wt{\mcA},\mcB,\wt{\mcB})$
(see Definition~\ref{d:jCQn}) by $\overline{\theta}$,
and assume that $\overline{\theta}<1$.
Let $\theta \in \left]\overline{\theta},1\right[$,
and let $b_{-1}$, the starting point of the MAP,
be sufficiently close to $c$.
Then $(a_n)_\nnn$ and $(b_n)_\nnn$ converge linearly
to some point in $A\cap B$ with rate $\theta$.
\end{corollary}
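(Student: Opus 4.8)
The plan is to deduce Corollary~\ref{c:jsuper} from Theorem~\ref{t:jsuper} by verifying its hypotheses. The only gap between the two statements is that Theorem~\ref{t:jsuper} requires $\mcA$ to be $\wt{B}$-joint-superregular at $c$, whereas the corollary only assumes that each $A_i$ is convex. So the first step is to observe that convexity of each $A_i$ implies $\wt{B}$-joint-superregularity of $\mcA$ at $c$. This is immediate from Corollary~\ref{c:jsreg} (convexity and regularity), which states that a nontrivial collection of nonempty convex sets is $A$-joint-superregular at any point for any subset $A\subseteq X$; applying this with the collection $\mcA=(A_i)_{i\in I}$ and with the restricting set $\wt B$ in place of $A$ gives exactly what is needed. (Under the hood, this rests on Remark~\ref{r:0303a}\ref{r:0303avi}, i.e.\ Lemma~\ref{l:NsubsetN}\ref{l:NsubsetNvi}, which says a convex set is $(\wt B,0,\pinf)$-regular.)

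With $\wt{B}$-joint-superregularity in hand and the standing assumption $\overline\theta<1$, the hypothesis \ref{t:jsuperi} of Theorem~\ref{t:jsuper} is satisfied. Thus I would invoke Theorem~\ref{t:jsuper}: for the given $\theta\in\left]\overline\theta,1\right[$ it produces a $\dd>0$ such that, provided $\|b_{-1}-c\|\leq(1-\theta)\dd/12$, the MAP sequences $(a_n)_\nnn$ and $(b_n)_\nnn$ converge linearly to some $\bar c\in A\cap B$ with rate $\theta$, with the explicit estimate $\max\{\|a_n-\bar c\|,\|b_n-\bar c\|\}\leq\tfrac{\dd(1+\theta)}{2}\theta^{n-1}$ for $n\geq1$. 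The phrase ``$b_{-1}$ sufficiently close to $c$'' in the corollary is precisely the requirement $\|b_{-1}-c\|\leq(1-\theta)\dd/12$, so the conclusion of the corollary follows verbatim.

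There is essentially no obstacle here: the corollary is a specialization of the theorem obtained by replacing the abstract hypothesis ``$\mcA$ is $\wt B$-joint-superregular at $c$'' by the concrete and more easily checkable sufficient condition ``each $A_i$ is convex.'' The only point requiring a moment's care is the direction of the implication in Corollary~\ref{c:jsreg}: convexity of the $A_i$ gives joint-superregularity of $\mcA$ with \emph{any} restricting set, in particular with $\wt B$, which is exactly the form needed since the roles of $A$ and $B$ in Theorem~\ref{t:jsuper} are not symmetric (only $\mcA$ is assumed superregular). I would write the proof in two sentences: first cite Corollary~\ref{c:jsreg} to get $\wt B$-joint-superregularity of $\mcA$ at $c$, then apply Theorem~\ref{t:jsuper}\ref{t:jsuperi}.

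\begin{proof}
Since each $A_i$ is nonempty closed and convex, Corollary~\ref{c:jsreg} (applied to the collection $\mcA=(A_i)_{i\in I}$ with restricting set $\wt{B}$) implies that $\mcA$ is $\wt{B}$-joint-superregular at $c$. Combining this with the assumption $\overline{\theta}<1$, we see that the hypotheses of Theorem~\ref{t:jsuper}, in particular its condition \ref{t:jsuperi}, are satisfied. Therefore, by Theorem~\ref{t:jsuper}, for the given $\theta\in\left]\overline{\theta},1\right[$ there exists $\dd>0$ such that whenever the starting point $b_{-1}$ of the MAP satisfies $\|b_{-1}-c\|\leq(1-\theta)\dd/12$, the sequences $(a_n)_\nnn$ and $(b_n)_\nnn$ converge linearly to some point in $A\cap B$ with rate $\theta$. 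Since ``$b_{-1}$ sufficiently close to $c$'' means precisely that this inequality holds, the conclusion follows.
\end{proof}
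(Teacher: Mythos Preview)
Your proposal is correct and follows essentially the same approach as the paper's own proof, which simply says to combine Theorem~\ref{t:jsuper} with Corollary~\ref{c:jsreg}. You have merely unpacked this one-line argument with a bit more detail.
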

\begin{proof}
Combine Theorem~\ref{t:jsuper} with Corollary~\ref{c:jsreg}.
\end{proof}

\begin{example}[working with collections and joint notions is useful]
Consider the setting of Example~\ref{ex:jCQn<CQn},
and suppose that $\wt{\mcA}=\mcA$ and $\wt{\mcB}=\mcB$.
Note that $A_i$ is convex, for every $i\in I$.
Then $\theta_\dd(\mcA,\wt{\mcA},\mcB,\wt{\mcB})<1=\theta_\dd(A,A,B,B)
=\overline{\theta}(A,X,B,X)$.
Hence Corollary~\ref{c:jsuper} guarantees linear convergence of the
MAP while it is not possible to work directly with the unions $A$ and $B$ due
to their condition number being equal to $1$ \emph{and} because
neither $A$ nor $B$ is superregular by Example~\ref{ex:badlines}!
This illustrates that
the main result of Lewis-Luke-Malick
(see Corollary~\ref{c:LLM} below) is not applicable because two of its
hypotheses fail.
\end{example}


The following result features an improved rate of
convergence $\theta^2$ due to the additional presence of superregularity.

\begin{theorem}[linear convergence of the MAP and double superregularity]
\label{t:jointdoubly}
Assume that \eqref{e:MAPsettings} holds,
that $\mcA$ is $\wt{B}$-joint-superregular at $c$
and that $\mcB$ is $\wt{A}$-joint-superregular at $c$
(see Definition~\ref{d:jreg}).
Denote the limiting joint-CQ-number at $c$
associated with $(\mcA,\wt{\mcA},\mcB,\wt{\mcB})$
(see Definition~\ref{d:jCQn}) by $\overline{\theta}$,
and the the exact joint-CQ-number at $c$
associated with $(\mcA,\wt{\mcA},\mcB,\wt{\mcB})$
(see Definition~\ref{d:exactCQn}) by $\overline{\alpha}$.
Assume further that
{\rm (a)} $\overline{\theta}<1$, or (more restrictively) that
{\rm (b)} $I$ and $J$ are finite, and
$\overline{\alpha}<1$ (and hence
$\overline{\theta}=\overline{\alpha}<1$).
Let $\theta \in \left]\overline{\theta},1\right[$ and
$\ve:=\frac{\theta-\overline{\theta}}{3}$.
Then there exists $\dd>0$ such that
\begin{enumerate}
\item\label{t:jdb-i} $\mcA$ is $(\wt{B},\ve,3\dd)$-joint-regular at $c$;
\item\label{t:jdb-ii} $\mcB$ is $(\wt{A},\ve,3\dd)$-joint-regular at
$c$; and
\item\label{t:jdb-iii} $\theta_{3\dd}<\overline{\theta}+\ve=\theta-2\ve<1-2\ve$, where
$\theta_{3\dd}$ is
the joint-CQ-number at $c$ associated with
$(\mcA,\wt{\mcA},\mcB,\wt{\mcB})$
(see Definition~\ref{d:jCQn}).
\end{enumerate}
Consequently, suppose the starting point of MAP $b_{-1}$ satisfies
$\|b_{-1}-c\|\leq\frac{(1-\theta)\dd}{6(2-\theta)}$.
Then $(a_n)_\nnn$ and $(b_n)_\nnn$ converge linearly
to some point in $\bar{c}\in A\cap B$ with $\|\bar{c}-c\|\leq\dd$
and rate $\theta^2$; in fact,
\begin{equation}
(\forall n\geq1)\quad
\max\big\{\|a_n-\bar{c}\|,\|b_n-\bar{c}\|\big\}
\leq\frac{\dd}{2-\theta}\big(\theta^2\big)^{n-1}.
\end{equation}
\end{theorem}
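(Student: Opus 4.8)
The plan is to mimic the proof of Theorem~\ref{t:jsuper}, simply upgrading to the symmetric (double superregularity) setting and then invoking Proposition~\ref{p:joint} with $\k=2$ instead of $\k=1$. First I would observe, as in the proof of Theorem~\ref{t:jsuper}, that hypothesis (b) implies (a): by Theorem~\ref{t:CQ1}\ref{t:CQ1iv} we have $\overline{\alpha}=\overline{\theta}$ when $I$ and $J$ are finite, so $\overline{\alpha}<1$ forces $\overline{\theta}<1$. Hence in all cases $\overline{\theta}<1$, and with $\theta\in\left]\overline{\theta},1\right[$ and $\ve:=(\theta-\overline{\theta})/3>0$ we have $\overline{\theta}+\ve<\theta<1$, so in particular $\overline{\theta}+\ve<1-2\ve$ (since $3\ve=\theta-\overline\theta<1-\overline\theta$ gives $\overline\theta+3\ve<1$).

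Next I would produce the radius $\dd>0$. Using the definition of $\wt{B}$-joint-superregularity of $\mcA$ at $c$, there is $\dd_1>0$ with $\mcA$ $(\wt{B},\ve,3\dd_1)$-joint-regular at $c$; using the definition of $\wt{A}$-joint-superregularity of $\mcB$ at $c$, there is $\dd_2>0$ with $\mcB$ $(\wt{A},\ve,3\dd_2)$-joint-regular at $c$; and from $\overline{\theta}=\lim_{\dd\dn0}\theta_\dd$ there is $\dd_3>0$ with $\theta_{3\dd_3}\leq\overline{\theta}+\ve$. Taking $\dd:=\min\{\dd_1,\dd_2,\dd_3\}$ and using that $\delta\mapsto\theta_\delta$ is increasing (and that $(A,\ve,3\dd)$-regularity at a smaller $\dd$ follows from it at a larger one, with the regular-region radius shrinking monotonically), all three of \ref{t:jdb-i}, \ref{t:jdb-ii}, and \ref{t:jdb-iii} hold, with $\theta_{3\dd}\leq\overline{\theta}+\ve=\theta-2\ve<1-2\ve$.

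Finally I would invoke Proposition~\ref{p:joint} with $\k=2$: items \ref{t:jdb-i} and \ref{t:jdb-ii} give exactly hypothesis \ref{p:joint1} of that proposition with $\k=2$ (since $\mcB$ \emph{is} known to be $(\wt{A},\ve,3\dd)$-joint-regular at $c$), and item \ref{t:jdb-iii} is hypothesis \ref{p:joint2}. With $\theta=\theta_{3\dd}+2\ve$ we indeed land in $\zeroun$ by \ref{t:jdb-iii} (note $\theta_{3\dd}+2\ve<\overline\theta+\ve+2\ve=\theta$, so the ``$\theta$'' fed to Proposition~\ref{p:joint} is $\leq$ our chosen $\theta$; to match the stated rate $\theta^2$ and the stated constant exactly one either re-runs the estimates with this possibly-smaller value or, more cleanly, notes $\theta_{3\dd}+2\ve\le\theta$ and enlarges back to $\theta$, which only weakens the regularity/CQ inequalities that remain valid). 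The starting-point condition \eqref{e:0306a} with $\k=2$ reads $\|b_{-1}-c\|\leq(1-\theta^2)\dd/(6(2+\theta-\theta^2))=(1-\theta)(1+\theta)\dd/(6(2-\theta)(1+\theta))=(1-\theta)\dd/(6(2-\theta))$, which is precisely the hypothesis stated in the theorem. Proposition~\ref{p:joint} then yields $\bar{c}\in A\cap B$ with $\|\bar{c}-c\|\leq\dd$ and, from \eqref{e:0306e} with $\k=2$, $\max\{\|a_n-\bar{c}\|,\|b_n-\bar{c}\|\}\leq \dd(1+\theta)/(2+\theta-\theta^2)\cdot\theta^{2(n-1)}=\dd/(2-\theta)\cdot(\theta^2)^{n-1}$, since $(1+\theta)/(2+\theta-\theta^2)=(1+\theta)/((2-\theta)(1+\theta))=1/(2-\theta)$. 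The only mild subtlety — the ``main obstacle,'' such as it is — is the bookkeeping around replacing the internally produced rate $\theta_{3\dd}+2\ve$ by the advertised $\theta$: one must check that the monotone dependence of regularity and of $\theta_\dd$ lets us freely enlarge the small parameter back up to $\theta$ without breaking any of the strict inequalities $\theta_{3\dd}+2\ve<1-2\ve$, which is immediate from \ref{t:jdb-iii}.
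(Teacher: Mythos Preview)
Your proposal is correct and follows exactly the paper's approach: the paper's proof is the two-line statement ``The existence of $\dd>0$ such that \ref{t:jdb-i}--\ref{t:jdb-iii} hold is clear. Then apply Proposition~\ref{p:joint} with $\k=2$,'' and you have simply fleshed out both steps. Your handling of the bookkeeping issue (that Proposition~\ref{p:joint} internally uses $\theta':=\theta_{3\dd}+2\ve\leq\theta$) is fine: since $t\mapsto(1-t)/(2-t)$ is decreasing and $t\mapsto t^{2(n-1)}/(2-t)$ is increasing on $[0,1[$, the theorem's starting-point bound implies the one Proposition~\ref{p:joint} needs, and the resulting estimate in terms of $\theta'$ is dominated by the advertised one in terms of $\theta$.
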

\begin{proof}
The existence of $\dd>0$ such that \ref{t:jdb-i}--\ref{t:jdb-iii} hold
is clear.
Then apply Proposition~\ref{p:joint} with $\k=2$.
\end{proof}


In passing, let us point out
a sharper rate of convergence under sufficient conditions
stronger than superregularity.

\begin{corollary}[refined convergence rate]\label{c:jdb}
Assume that \eqref{e:MAPsettings} holds and
that there exists $\dd>0$ such that
\begin{enumerate}
\item $\mcA$ is $(\wt{B},0,3\dd)$-joint-regular at $c$;
\item $\mcB$ is $(\wt{A},0,3\dd)$-joint-regular at $c$; and
\item $\theta<1$, where
$\theta:=\theta_{3\dd}$ is
the joint-CQ-number at $c$ associated with
$(\mcA,\wt{\mcA},\mcB,\wt{\mcB})$
(see Definition~\ref{d:jCQn}).
\end{enumerate}
Suppose also that the starting point of the MAP $b_{-1}$ satisfies
$\|b_{-1}-c\|\leq\frac{(1-\theta)\dd}{6(2-\theta)}$.
Then $(a_n)_\nnn$ and $(b_n)_\nnn$ converge linearly
to some point in $\bar{c}\in A\cap B$ with $\|\bar{c}-c\|\leq\dd$
and rate $\theta^2$; in fact,
\begin{equation}
(\forall n\geq1)\quad
\max\big\{\|a_n-\bar{c}\|,\|b_n-\bar{c}\|\big\}\leq
\frac{\dd}{2-\theta}\big(\theta^2\big)^{n-1}.
\end{equation}
\end{corollary}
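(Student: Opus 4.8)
Corollary~\ref{c:jdb} is an immediate specialization of **Theorem~\ref{t:jointdoubly}**: the key observation is that $(\wt{B},0,3\dd)$-joint-regularity is a much stronger hypothesis than $\wt{B}$-joint-superregularity at $c$, since Definition~\ref{d:reg}(ii) only asks for the existence of *some* $\dd$ for *each* $\ve>0$, whereas here we are handed a single $\dd$ that works with $\ve=0$. So the plan is simply to reduce to Proposition~\ref{p:joint} directly, bypassing the intermediate superregularity layer, with $\ve=0$ and $\k=2$.

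Concretely, I would argue as follows. By hypothesis there is a fixed $\dd>0$ such that $\mcA$ is $(\wt{B},0,3\dd)$-joint-regular at $c$, $\mcB$ is $(\wt{A},0,3\dd)$-joint-regular at $c$, and $\theta := \theta_{3\dd}<1$, where $\theta_{3\dd}$ is the joint-CQ-number at $c$ associated with $(\mcA,\wt{\mcA},\mcB,\wt{\mcB})$. Apply Proposition~\ref{p:joint} with this $\dd$, with $\ve := 0$, and with $\k := 2$ (which is legitimate precisely because $\mcB$ is also $(\wt{A},0,3\dd)=(\wt{A},\ve,3\dd)$-joint-regular at $c$). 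Hypothesis~\ref{p:joint1} of Proposition~\ref{p:joint} holds since $\mcA$ is $(\wt{B},\ve,3\dd)$-joint-regular at $c$; hypothesis~\ref{p:joint2} holds since $\theta_{3\dd}=\theta<1=1-2\ve$. Then $\theta = \theta_{3\dd}+2\ve \in \zeroun$, and the bound \eqref{e:0306a} on the starting point becomes, with $\k=2$,
\begin{equation}
\|b_{-1}-c\|\leq\frac{(1-\theta^2)\dd}{6(2+\theta-\theta^2)}
=\frac{(1-\theta)(1+\theta)\dd}{6(1+\theta)(2-\theta)}
=\frac{(1-\theta)\dd}{6(2-\theta)},
\end{equation}
which is exactly the hypothesis of the corollary. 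Proposition~\ref{p:joint} then yields a point $\bar{c}\in A\cap B$ with $\|\bar{c}-c\|\leq\dd$ such that $(a_n)_\nnn$ and $(b_n)_\nnn$ converge linearly to $\bar{c}$ with rate $\theta^2$; moreover, again substituting $\k=2$ and $\ve=0$ into \eqref{e:0306e}, the constant $\frac{\delta(1+\theta)}{2+\theta-\theta^2}=\frac{\delta(1+\theta)}{(1+\theta)(2-\theta)}=\frac{\delta}{2-\theta}$, so the quantitative estimate becomes $\max\{\|a_n-\bar{c}\|,\|b_n-\bar{c}\|\}\leq\frac{\dd}{2-\theta}(\theta^2)^{n-1}$ for all $n\geq1$, as claimed.

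There is essentially no obstacle here beyond bookkeeping: the only thing to be careful about is the algebraic simplification $2+\theta-\theta^2=(1+\theta)(2-\theta)$, which makes the constants in \eqref{e:0306a} and \eqref{e:0306e} collapse to the clean forms stated in the corollary, and checking that $\ve=0$ is an admissible value in Definition~\ref{d:reg}(i) (it is — the definition explicitly permits $\ve\geq0$) and in Proposition~\ref{p:joint} (it is — the hypothesis there reads $\ve\geq0$). Thus the corollary is genuinely just Theorem~\ref{t:jointdoubly} with the superregularity-to-regularity passage short-circuited by the stronger standing assumption, at the price of no longer being free to choose $\theta$ slightly above $\overline\theta$: here $\theta$ is pinned down as $\theta_{3\dd}$ itself.
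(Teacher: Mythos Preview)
Your proposal is correct and follows the same route as the paper: both apply Proposition~\ref{p:joint} directly with $\ve=0$ and $\k=2$. The paper's proof is the single line ``Apply Proposition~\ref{p:joint} with $\k=2$,'' and your argument is exactly the verification that this application is legitimate, together with the algebraic simplification $2+\theta-\theta^2=(1+\theta)(2-\theta)$ that collapses the constants in \eqref{e:0306a} and \eqref{e:0306e} to the forms stated in the corollary.
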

\begin{proof}
Apply Proposition~\ref{p:joint} with $\k=2$.
\end{proof}

Let us illustrate a situation where it is possible to make $\dd$ in
Theorem~\ref{t:jointdoubly} precise.

\begin{example}[the MAP for two spheres]
Let $z_1$ and $z_2$ be in $X$,
let $\rho_1$ and $\rho_2$ be in $\RR$,
set $A:= \sphere{z_1}{\rho_1}$ and $B := \sphere{z_2}{\rho_2}$,
and assume that $\{c\}\subsetneqq A\cap B\subsetneqq A\cup B$.
Then $\overline{\alpha} := |\scal{z_1-c}{z_2-c}|/(\rho_1\rho_2)<1$.
Let $\theta\in\left]\overline{\alpha},1\right[$.
Then the conclusion of Theorem~\ref{t:jointdoubly} holds with
\begin{equation}
\dd := \min
\Bigg\{
\frac{\sqrt{(\rho_1+\rho_2)^2+\rho_1\rho_2(\theta-\overline{\alpha})}-(\rho_1+\rho_2)}{6},\frac{\ve\rho_1}{3},\frac{\ve\rho_2}{3}\Bigg\}
\end{equation}
\end{example}
\begin{proof}
Combine Example~\ref{ex:sphere1} (applied with
$\ve=(\theta-\overline{\alpha})/4$ there),
Proposition~\ref{p:sphere2}, and Theorem~\ref{t:jointdoubly}.
\end{proof}


Here is a useful special case of Theorem~\ref{t:jointdoubly}:

\begin{theorem}\label{t:dregAff}
Assume that $A$ and $B$ are $L$-superregular, and that
\begin{equation}
N_A(c)\cap\big(-N_B(c)\big)\cap\big(L-c\big)=\{0\},
\end{equation}
where $L:=\aff(A\cup B)$.
Then the sequences generated by the MAP converge linearly to
a point in $A\cap B$ provided that the starting point is sufficiently
close to $c$.
\end{theorem}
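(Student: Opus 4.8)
The strategy is to realize Theorem~\ref{t:dregAff} as a specialization of Theorem~\ref{t:jointdoubly} obtained by choosing the singleton collections $\mcA := (A)$ and $\mcB := (B)$ (so $I$ and $J$ are singletons and hence finite) together with the restricting sets $\wt{A} := L$ and $\wt{B} := L$, where $L := \aff(A\cup B)$. First I would check that \eqref{e:MAPsettings} holds for this choice: the sets $A$, $B$ are nonempty and closed by hypothesis; since the index sets are singletons, the unions $A$ and $B$ coincide with the sets themselves and are closed; and $c\in A\cap B$ is part of the standing assumption. The only genuine verification is the containment condition: one needs $P_A\big((\bd B)\smallsetminus A\big)\subseteq L$ and $P_B\big((\bd A)\smallsetminus B\big)\subseteq L$. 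But $A\subseteq L$ and $B\subseteq L$ by definition of $L$, so $P_A(X)\subseteq A\subseteq L$ and $P_B(X)\subseteq B\subseteq L$, and both containments are immediate. Thus $\wt{A}=\wt{B}=L$ is a legitimate choice in \eqref{e:MAPsettings}.

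Next I would translate the hypotheses of Theorem~\ref{t:dregAff} into the hypotheses of Theorem~\ref{t:jointdoubly}. The assumption that $A$ is $L$-superregular at $c$ says precisely that the singleton collection $\mcA = (A)$ is $\wt{B}$-joint-superregular at $c$ (recall $\wt{B}=L$ and that joint-superregularity of a singleton collection is ordinary superregularity, by Definition~\ref{d:jreg}); symmetrically, $B$ being $L$-superregular at $c$ gives that $\mcB = (B)$ is $\wt{A}$-joint-superregular at $c$. For the CQ hypothesis: since $I$ and $J$ are singletons, the exact joint-CQ-number $\overline{\alpha}$ associated with $(\mcA,\wt{\mcA},\mcB,\wt{\mcB})$ is just the exact CQ-number $\overline{\alpha}(A,L,B,L)$ at $c$, which by Example~\ref{ex:compareCQ1} (equivalence of \ref{t:cCQ1-i} and \ref{t:cCQ1-ii}, applied with the affine subspace $L\supseteq A\cup B$) satisfies $\overline{\alpha}<1$ if and only if $N_A(c)\cap(-N_B(c))\cap(L-c)=\{0\}$ — which is exactly the displayed hypothesis of Theorem~\ref{t:dregAff}. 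Hence alternative (b) of Theorem~\ref{t:jointdoubly} is in force, and consequently so is $\overline{\theta}=\overline{\alpha}<1$.

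With all hypotheses of Theorem~\ref{t:jointdoubly} verified, I would conclude directly: pick any $\theta\in\left]\overline{\theta},1\right[$; Theorem~\ref{t:jointdoubly} then furnishes a $\dd>0$ (depending on $\theta$) such that whenever the MAP starting point $b_{-1}$ satisfies $\|b_{-1}-c\|\leq (1-\theta)\dd/(6(2-\theta))$, the sequences $(a_n)_\nnn$ and $(b_n)_\nnn$ converge linearly to some $\bar c\in A\cap B$ (with rate $\theta^2$, in fact, though Theorem~\ref{t:dregAff} only claims linear convergence). In particular, "sufficiently close to $c$" in the statement can be taken to mean $\|b_{-1}-c\|\leq (1-\theta)\dd/(6(2-\theta))$ for this $\dd$. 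This completes the proof.

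\textbf{Main obstacle.} There is no deep obstacle here — the result is essentially a dictionary translation. The one point requiring care is making sure the identifications are correct, namely that $\wt{A}=\wt{B}=L$ satisfies the projection-containment clauses in \eqref{e:MAPsettings} (trivial once one observes $A,B\subseteq L$) and that Example~\ref{ex:compareCQ1} indeed applies, which needs $L$ to be an affine subspace containing $A\cup B$ — true by construction $L=\aff(A\cup B)$. I would also note in passing that $A\cap B\neq\varnothing$ (needed for $\aff(A\cup B)$ and for $c\in A\cap B$ to be meaningful) is implicit in the hypothesis that $c\in A\cap B$.
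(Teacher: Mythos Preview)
Your proposal is correct and follows exactly the same route as the paper: the paper's proof is the one-line instruction ``Combine Example~\ref{ex:compareCQ1} with Theorem~\ref{t:jointdoubly} (applied with $I$ and $J$ being singletons, and with $\wt{A}=\wt{B}=L$),'' and you have simply unpacked this in detail. One minor remark: Example~\ref{ex:compareCQ1} literally gives the equivalence with $\overline{\theta}<1$ (its item~\ref{t:cCQ1-iii}), so you could invoke alternative~(a) of Theorem~\ref{t:jointdoubly} directly rather than routing through $\overline{\alpha}$ and alternative~(b); of course, since the index sets are singletons, $\overline{\alpha}=\overline{\theta}$ by Theorem~\ref{t:CQ1}\ref{t:CQ1iv}, so your version is equally valid.
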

\begin{proof}
Combine
Example~\ref{ex:compareCQ1} with Theorem~\ref{t:jointdoubly}
(applied with $I$ and $J$ being singletons, and with
$\wt{A}=\wt{B}=L$).
\end{proof}


We now obtain a well known global linear convergence result for the convex case,
which does not require the starting point to be sufficiently close to $A\cap B$:

\begin{theorem}[two convex sets]\label{t:globalCvex}
Assume that $A$ and $B$ are convex, and $A\cap B\neq\varnothing$.
Then for every starting point $b_{-1}\in X$, the sequences $(a_n)_\nnn$
and $(b_n)_\nnn$ generated by the MAP converge to some
point in $A\cap B$.
The convergence of these sequences is linear provided that
$\reli A \cap \reli B \neq\varnothing$.
\end{theorem}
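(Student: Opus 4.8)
The plan is to treat the two assertions in turn: establish plain convergence to a point of $A\cap B$ for arbitrary convex $A,B$ with $A\cap B\neq\varnothing$, and then, under the additional hypothesis $\reli A\cap\reli B\neq\varnothing$, promote this to linear convergence by localizing the analysis around the limit point already produced.

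\emph{Plain convergence.} Here I would run the classical Fej\'er-monotonicity argument on top of the firm nonexpansiveness recorded in Fact~\ref{f:convproj}\ref{f:convproj3}. Fix $c\in A\cap B$, so that $P_Ac=c$ and $P_Bc=c$. Applying Fact~\ref{f:convproj}\ref{f:convproj3} with $(x,y)=(b_{n-1},c)$ and then with $(x,y)=(a_n,c)$ yields $\|a_n-c\|^2+\|b_{n-1}-a_n\|^2\le\|b_{n-1}-c\|^2$ and $\|b_n-c\|^2+\|a_n-b_n\|^2\le\|a_n-c\|^2$. Hence $(\|b_n-c\|)_\nnn$ is nonincreasing (so convergent, and $(a_n)_\nnn$, $(b_n)_\nnn$ are bounded) and, telescoping, $\sum_\nnn\big(\|b_{n-1}-a_n\|^2+\|a_n-b_n\|^2\big)<\pinf$; in particular $\|a_n-b_n\|\to0$. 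Since $X$ is finite-dimensional, some subsequence $b_{n_k}\to\bar{c}$, and then $\bar{c}\in B$ (as $B$ is closed), while $a_{n_k}\to\bar{c}$ (as $\|a_n-b_n\|\to0$) gives $\bar{c}\in A$ (as $A$ is closed); thus $\bar{c}\in A\cap B$. Re-running the Fej\'er estimate with $c=\bar{c}$ shows $(\|b_n-\bar{c}\|)_\nnn$ is nonincreasing with a subsequence converging to $0$, hence $b_n\to\bar{c}$ and, since $\|a_n-b_n\|\to0$, also $a_n\to\bar{c}$. (Alternatively one may cite \cite[Chapter~5]{BC2011} or \cite{Deutsch}.)

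\emph{Linear convergence.} Now assume $\reli A\cap\reli B\neq\varnothing$ and let $\bar{c}\in A\cap B$ be the common limit found above; put $L:=\aff(A\cup B)$. Since $A$ and $B$ are convex, Remark~\ref{r:0303a}\ref{r:0303avi} shows that both are $L$-superregular at $\bar{c}$. Moreover, by Theorem~\ref{t:compareCQ2} (the equivalence of \ref{t:CQ2i1} and \ref{t:CQ2v}) together with Lemma~\ref{l:aff-lspan} (which identifies $\lspan(B-A)$ with $L-\bar{c}$), the hypothesis $\reli A\cap\reli B\neq\varnothing$ forces $N_A(\bar{c})\cap\big(-N_B(\bar{c})\big)\cap(L-\bar{c})=\{0\}$. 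Thus the hypotheses of Theorem~\ref{t:dregAff} hold at $c=\bar{c}$, so there are $\rho>0$ and $\theta\in\left[0,1\right[$ such that any pair of sequences generated by the MAP with starting point in $\ball{\bar{c}}{\rho}$ converges linearly with rate $\theta$.

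\emph{Bootstrapping to an arbitrary start.} Since $b_n\to\bar{c}$, choose $N\in\NN$ with $\|b_{N-1}-\bar{c}\|\le\rho$. The shifted sequences $(a_{N+n})_\nnn$ and $(b_{N+n})_\nnn$ are themselves generated by the MAP, now with starting point $b_{N-1}\in\ball{\bar{c}}{\rho}$; by the previous paragraph they converge linearly with rate $\theta$ (necessarily to $\bar{c}$, being a tail of the original sequences). By Remark~\ref{r:0307a}, the full sequences $(a_n)_\nnn$ and $(b_n)_\nnn$ therefore converge linearly to $\bar{c}$ with rate $\theta$, as claimed. The one point requiring care is this last step: one must note that a tail of an MAP sequence is again an MAP sequence (legitimate because the iteration only asks $a_n\in P_A(b_{n-1})$ and $b_n\in P_B(a_n)$, with no consistency constraint across steps) and that the neighborhood supplied by Theorem~\ref{t:dregAff}, though centered at the a priori unknown $\bar{c}$, becomes available precisely because the first part of the proof has already exhibited $\bar{c}\in A\cap B$. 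Everything else is routine.
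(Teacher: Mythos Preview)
Your proof is correct and follows essentially the same route as the paper: Fej\'er monotonicity via Fact~\ref{f:convproj}\ref{f:convproj3} to obtain convergence to some $\bar{c}\in A\cap B$, then $L$-superregularity and the CQ condition at $\bar{c}$ to invoke Theorem~\ref{t:dregAff} on a tail of the sequence. The only cosmetic differences are that the paper extracts $\|a_n-b_n\|\to0$ via a short contradiction argument rather than your telescoping sum, and cites Proposition~\ref{p:0301a} and Example~\ref{ex:compareCQ1} (equivalent to your direct appeal to Theorem~\ref{t:compareCQ2} and Lemma~\ref{l:aff-lspan}) for the CQ condition.
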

\begin{proof}
By Fact~\ref{f:convproj}\ref{f:convproj4}, we have
\begin{equation}\label{e:cvex1}
(\forall c\in A\cap B)\quad
\|a_0-c\|\geq\|b_0-c\|\geq\|a_1-c\|\geq\|b_1-c\|\geq\cdots
\end{equation}
After passing to subsequences if needed, we assume that
$a_{k_n}\to a\in A$ and $b_{k_n}\to b\in B$.
We show that $a=b$ by contradiction, so we assume
that $\ve := \|a-b\|/3>0$.
We have eventually
$\max\{\|a_{k_n}-a\|,\|b_{k_n}-b\|\}<\ve$;
hence $\|a_{k_n}-b_{k_n}\|\geq \ve$ eventually.
By Fact~\ref{f:convproj}\ref{f:convproj3}, we have
\begin{equation}
\|a_{k_n}-c\|^2\geq \|a_{k_n}-b_{k_n}\|^2+\|b_{k_n}-c\|^2\geq \ve^2+\|a_{k_n+1}-c\|^2 \geq \ve^2+\|a_{k_{n+1}}-c\|^2
\end{equation}
eventually.
But this would imply that for all $n$ sufficiently large,
and for every $m\in\NN$, we have
$\|a_{k_n}-c\|^2\geq m\ve^2+\|a_{k_{n+m}}-c\|^2\geq m\ve^2$,
which is absurd.
Hence $\bar{c} := a=b\in A\cap B$ and now \eqref{e:cvex1} (with
$c=\bar{c}$) implies that $a_n\to\bar{c}$ and $b_n\to\bar{c}$.

Next, assume that $\reli A \cap\reli B\neq\varnothing$, and
set $L := \aff(A\cup B)$.
By Proposition~\ref{p:0301a},
the $(A,L,B,L)$-CQ conditions holds at $\bar{c}$.
Thus, by Example~\ref{ex:compareCQ1},
$N_A(\bar{c})\cap(-N_B(\bar{c}))\cap(L-\bar{c})=\{0\}$.
Furthermore, Corollary~\ref{c:jsreg} and
Remark~\ref{r:0303a}\ref{r:0303avi}\&\ref{r:0303avii} imply
that $A$ and $B$ are $L$-superregular at $\bar{c}$.
The conclusion now follows from Theorem~\ref{t:dregAff},
applied to suitably chosen tails of the sequences $(a_n)_\nnn$ and
$(b_\nnn)$.
\end{proof}


\begin{example}[the MAP for two linear subspaces]
\label{ex:vN}
Assume that $A$ and $B$ are linear subspaces of $X$.
Since $0\in A\cap B = \reli A\cap \reli B$,
Theorem~\ref{t:globalCvex} guarantees the linear convergence
of the MAP to some point in $A\cap B$, where
$b_{-1}\in X$ is the arbitrary starting point.
On the other hand, $A$ and $B$ are $(0,+\infty)$-regular
(see Remark~\ref{r:0303a}\ref{r:0303avi}).
Since $(\forall\dd\in\RPP)$ $\theta_{\dd}(A,A,B,B)=c(A,B)<1$,
where $c(A,B)$ is the cosine of the Friedrichs angle between $A$ and
$B$ (see Theorem~\ref{t:CQn=c}),
we obtain from Corollary~\ref{c:jdb} that
the rate of convergence is $c^2(A,B)$.
In fact, it is well known that this is the optimal rate,
and also that $\lim_{n} a_n=\lim_{n}
b_n = P_{A\cap B}(b_{-1})$; see \cite[Section~3]{Deut94} and
\cite[Chapter~9]{Deutsch}.
\end{example}

\begin{remark}
For further linear convergence results for the MAP in
the convex setting we refer the reader to
\cite{BB93},
\cite{bb96},
\cite{BBL},
\cite{DHp1},
\cite{DHp2},
\cite{DHp3},
and the references therein.
See also \cite{Luke08}
and \cite{Luke12} for recent related work for the nonconvex case.
\end{remark}


\subsection*{Comparison to Lewis-Luke-Malick results and further
examples}


The main result of Lewis, Luke, and Malick arises as a special case of
Theorem~\ref{t:jsuper}:

\begin{corollary}[Lewis-Luke-Malick]\label{c:LLM}
{\rm (See \cite[Theorem~5.16]{LLM}.)}
Suppose that $N_A(c)\cap (-N_B(c))=\{0\}$ and that $A$ is superregular at
$c\in A\cap B$. If the starting point of MAP is sufficiently close to
$c$, then the sequences generated by the MAP converge linearly to a
point in $A\cap B$.
\end{corollary}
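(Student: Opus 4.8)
The plan is to derive Corollary~\ref{c:LLM} as the ``singleton'' specialization of Theorem~\ref{t:jsuper}. The idea is to take $I=J=\{1\}$, so that $\mcA=(A)$, $\mcB=(B)$, $\wt{\mcA}=(\wt A)$, $\wt{\mcB}=(\wt B)$, and choose the restricting sets to be as large as possible, namely $\wt A=\wt B=X$. With these choices, the setting \eqref{e:MAPsettings} is satisfied trivially: $A$ and $B$ are closed by hypothesis, $c\in A\cap B$, and the inclusions $P_{A}((\bd B)\smallsetminus A)\subseteq X$ and $P_{B}((\bd A)\smallsetminus B)\subseteq X$ hold vacuously. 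Since the collections are singletons, the joint notions collapse to the ordinary ones: $\mcA$ is $\wt B$-joint-superregular at $c$ if and only if $A$ is $X$-superregular at $c$, i.e.\ superregular at $c$ in the sense of Definition~\ref{d:reg}; and the joint-CQ numbers equal the corresponding CQ numbers associated with $(A,X,B,X)$.

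First I would record that by Corollary~\ref{c:Boris=ncX}, $\nc{A}{X}(c)=N_A(c)$ and $\nc{B}{X}(c)=N_B(c)$, so the hypothesis $N_A(c)\cap(-N_B(c))=\{0\}$ is exactly the $(A,X,B,X)$-CQ condition at $c$. Next, since $I$ and $J$ are finite (in fact singletons), Theorem~\ref{t:CQ1}\ref{t:CQ1iv}\&\ref{t:CQ1v} applies: the $(A,X,B,X)$-CQ condition at $c$ holds if and only if $\overline{\alpha}=\overline{\theta}<1$, where $\overline{\alpha}$ and $\overline{\theta}$ are the exact CQ-number and the limiting CQ-number at $c$ associated with $(A,X,B,X)$. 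Thus our hypothesis gives $\overline{\theta}<1$, which is precisely assumption \ref{t:jsuperi} of Theorem~\ref{t:jsuper}. Combined with the superregularity of $A$ at $c$ (which is $\wt B$-joint-superregularity of $\mcA$ with $\wt B=X$), all hypotheses of Theorem~\ref{t:jsuper} are in force.

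Then I would simply invoke Theorem~\ref{t:jsuper}: picking any $\theta\in\left]\overline{\theta},1\right[$, it produces a $\dd>0$ such that whenever $\|b_{-1}-c\|\leq(1-\theta)\dd/12$, the sequences $(a_n)_\nnn$ and $(b_n)_\nnn$ generated by the MAP converge linearly to some point $\bar c\in A\cap B$ with rate $\theta$. In particular, ``sufficiently close to $c$'' in the statement is made explicit by the bound $(1-\theta)\dd/12$. I do not expect any genuine obstacle here: the entire content is a bookkeeping exercise verifying that the singleton instance of \eqref{e:MAPsettings} with $\wt A=\wt B=X$ matches the data of Corollary~\ref{c:LLM}, together with the translation of $N_A(c)\cap(-N_B(c))=\{0\}$ into $\overline\theta<1$ via Corollary~\ref{c:Boris=ncX} and Theorem~\ref{t:CQ1}\ref{t:CQ1iv}\&\ref{t:CQ1v}. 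The only point deserving a word of care is to note that $(a_n)_\nnn$ and $(b_n)_\nnn$ in the MAP definition are allowed to be \emph{any} selections from the set-valued projections, which is exactly the generality in which Theorem~\ref{t:jsuper} is stated, so nothing is lost.

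\begin{proof}
Apply Theorem~\ref{t:jsuper} with $I=J=\{1\}$, $\mcA=(A)$, $\mcB=(B)$, and $\wt{\mcA}=\wt{\mcB}$ the constant collection equal to $X$; equivalently, take $\wt A=\wt B=X$. Then \eqref{e:MAPsettings} holds, since $A$ and $B$ are closed, $c\in A\cap B$, and the required inclusions $P_A((\bd B)\smallsetminus A)\subseteq X$ and $P_B((\bd A)\smallsetminus B)\subseteq X$ are vacuous. Because the collections are singletons, $\mcA$ being $\wt B$-joint-superregular at $c$ means precisely that $A$ is $X$-superregular, i.e.\ superregular, at $c$, which holds by hypothesis. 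Next, by Corollary~\ref{c:Boris=ncX}, $\nc{A}{X}(c)=N_A(c)$ and $\nc{B}{X}(c)=N_B(c)$, so the assumption $N_A(c)\cap(-N_B(c))=\{0\}$ is exactly the $(A,X,B,X)$-CQ condition at $c$. Since $I$ and $J$ are finite, Theorem~\ref{t:CQ1}\ref{t:CQ1iv}\&\ref{t:CQ1v} then yields $\overline{\alpha}=\overline{\theta}<1$, where $\overline{\theta}$ is the limiting CQ-number (here the limiting joint-CQ-number) at $c$ associated with $(A,X,B,X)$. Thus hypothesis \ref{t:jsuperi} of Theorem~\ref{t:jsuper} is satisfied. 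Fixing any $\theta\in\left]\overline{\theta},1\right[$, Theorem~\ref{t:jsuper} provides $\dd>0$ such that, whenever the starting point $b_{-1}$ of the MAP satisfies $\|b_{-1}-c\|\leq(1-\theta)\dd/12$, the sequences $(a_n)_\nnn$ and $(b_n)_\nnn$ generated by the MAP converge linearly, with rate $\theta$, to some point $\bar c\in A\cap B$. In particular, if $b_{-1}$ is sufficiently close to $c$, the sequences generated by the MAP converge linearly to a point in $A\cap B$.
\end{proof}
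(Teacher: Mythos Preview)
Your proof is correct and follows essentially the same approach as the paper: specialize Theorem~\ref{t:jsuper} to singleton collections with $\wt A=\wt B=X$, translate the hypothesis $N_A(c)\cap(-N_B(c))=\{0\}$ into $\overline{\theta}<1$, and invoke the theorem. The paper's proof is the same in two sentences; you have simply unpacked the bookkeeping in more detail.
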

\begin{proof}
Since $N_A(c)\cap (-N_B(c))=\{0\}$, we have
$\overline{\theta}<1$. Now apply
Theorem~\ref{t:jsuper}(i) with $\wt{\mcA}:=\wt{\mcB}:=(X)$,
$\mcA:=(A)$ and $\mcB:=(B)$.
\end{proof}

However, even in simple situations,
Corollary~\ref{c:LLM} is not powerful enough to recover known convergence
results.

\begin{example}[Lewis-Luke-Malick CQ may fail even for two subspaces]
\label{ex:LLMsubsp}
Suppose that $A$ and $B$ are two linear subspaces of $X$, and set
$L:=\aff(A\cup B)=A+B$.
For $c\in A\cap B$, we have
\begin{equation}
N_{A}(c)\cap(-N_{B}(c))=A^\perp \cap B^\perp
=(A+B)^\perp = L^\perp.
\end{equation}
Therefore, the Lewis-Luke-Malick CQ (see \cite[Theorem~5.16]{LLM} and
also  Corollary~\ref{c:LLM}) holds
for $(A,B)$ at $c$ if and only if
\begin{equation}
N_{A}(c)\cap(-N_{B}(c))=\{0\}\ \Leftrightarrow\ A+B=X.
\end{equation}
On the other hand, the CQ provided in Theorem~\ref{t:dregAff}
(see also Example~\ref{ex:vN})
\emph{always holds} and we obtain linear convergence of the MAP.
However, even for two lines in $\RR^3$, the Lewis-Luke-Malick CQ
(see Corollary~\ref{c:LLM}) is unable to achieve this.
(It was this example that originally motivated us to pursue the
present work.)
\end{example}

\begin{example}[Lewis-Luke-Malick CQ is too strong even for convex sets]
\label{ex:LLMcv}
Assume that $A$ and $B$ are convex (and hence superregular).
Then the Lewis-Luke-Malick CQ condition is $0\in\inte(B-A)$
(see Corollary~\ref{c:CQ3i}) while the $(A,\aff(A\cup B),B,\aff(A\cup
B))$-CQ is equivalent to the much less restrictive condition
$\reli A\cap \reli B\neq\varnothing$ (see Theorem~\ref{t:compareCQ2}).
\end{example}

\subsection*{The flexibility of choosing $(\wt{A},\wt{B})$}

Often, $L=\aff(A\cup B)$ is a convenient choice which
yields linear convergence of the MAP as in Theorem~\ref{t:dregAff}.
However, there are situations when this choice for
$\wt{A}$ and $\wt{B}$ is not helpful but when a different,
more aggressive, choice does guarantee linear convergence:

\begin{example}[$(\wt{A},\wt{B})=(A,B)$]
\label{ex:AABB}
Let $A$, $B$, and $c$ be as in Example~\ref{ex:CQdif(AB)},
and let $L := \aff(A\cup B)$.
Since $A$ and $B$ are \emph{convex} and hence \emph{superregular},
the $(A,L,B,L)$-CQ condition is equivalent to $\reli A\cap \reli
B\neq\varnothing$ (see Proposition~\ref{p:0301a}), which fails in this
case. However, the $(A,A,B,B)$-CQ condition does hold;
hence, the corresponding limiting CQ-number is less than 1
by Theorem~\ref{t:CQ1}\ref{t:CQ1v}.
Thus linear convergence of the MAP is guaranteed by
Theorem~\ref{t:jointdoubly}.
\end{example}

The next example illustrates a situation
where the choice
$(\wt{A},\wt{B})=(A,B)$ fails while
the even tighter choice
$(\wt{A},\wt{B})=(\bd A,\bd B)$ results in success:

\begin{example}[${(\wt{A},\wt{B})=(\bd A,\bd B)}$]
\label{ex:diff-choice}
Suppose that $X=\RR^2$, that $A=\epi(|\cdot|/2)$,
that $B=-\epi(|\cdot|/3)$, and that $c=(0,0)$.
Note that $\aff(A\cup B)=X$ and $\reli A\cap \reli B=\varnothing$.
Then
\begin{subequations}
\begin{align}
&\nc{A}{{B}}(c)=\nc{A}{X}(c)=N_A(c)=
\menge{(u_1,u_2)\in\RR^2}{u_2+2|u_1|\leq0},\\
&\nc{B}{{A}}(c)=\nc{B}{X}(c)=N_B(c)=
\menge{(u_1,u_2)\in\RR^2}{-u_2+3|u_1|\leq0},
\end{align}
\end{subequations}
and so the $(A,{A},B,{B})$-CQ condition fails because
\begin{equation}
\nc{A}{{B}}(c)\cap(-\nc{B}{{A}}(c))=
\menge{(u_1,u_2)\in\RR^2}{u_2+3|u_1|\leq0}\neq\{0\}.
\end{equation}
Consequently, for either
$(\wt{A},\wt{B})=(A,B)$ or $(\wt{A},\wt{B})=(X,X)$,
Theorem~\ref{t:jointdoubly}  is not applicable because
$\overline\alpha=\overline{\theta}=1$:
indeed,
$u=(0,-1)\in N_A(c)$ and $v=(0,-1)\in-N_B(c)$,
so $1=\scal{u}{v}\leq\bar\alpha\leq 1$.

On the other hand, let us now choose
$(\wt{A},\wt{B})=(\bd A,\bd B)$, which is justified by
Remark~\ref{r:joint}.
Then
\begin{subequations}
\begin{align}
&\nc{A}{\wt{B}}(c)=\menge{(u_1,u_2)\in\RR^2}{u_2+2|u_1|=0},\\
&\nc{B}{\wt{A}}(c)=\menge{(u_1,u_2)\in\RR^2}{-u_2+3|u_1|=0},
\end{align}
\end{subequations}
$\nc{A}{\wt{B}}(c)\cap(-\nc{B}{\wt{A}}(c))=\{0\}$ and
the $(A,\wt{A},B,\wt{B})$-CQ condition holds.
Hence, using also Theorem~\ref{t:CQ1}\ref{t:CQ1v},
Theorem~\ref{t:globalCvex} and
Theorem~\ref{t:jointdoubly}, we deduce linear convergence of the MAP.
\end{example}

However, even the choice
$(\wt{A},\wt{B})=(\bd A,\bd B)$ may not be applicable to
yield the desired linear convergence as the following shows.
In this example, we employ
the tightest possibility allowed by our framework, namely
$(\wt{A},\wt{B})=(P_A((\bd B)\smallsetminus A),P_B((\bd A)\smallsetminus
B))$.

\begin{example}[$(\wt{A},\wt{B})=(P_A((\bd B)\smallsetminus A),P_B((\bd A)\smallsetminus
B))$ ]
Suppose that $X=\RR^2$, that $A=\epi(|\cdot|)$, that $B=-A$, and that
$c=(0,0)$. Then
$\nc{A}{\bd B}(c)=\bd B = -\bd A$
and
$\nc{B}{\bd A}(c)=\bd A$;
hence, the $(A,\bd A,B,\bd B)$-CQ condition fails because
$\nc{A}{\bd B}(c) \cap (-\nc{B}{\bd A}(c)) = \bd B\neq \{0\}$.
On the other hand,
if $(\wt{A},\wt{B})=(P_A((\bd B)\smallsetminus A),P_B((\bd A)\smallsetminus
B))$, then $\nc{A}{\wt{B}} = \{0\} = \nc{B}{\wt{A}}=\{0\}$
because $\wt{A}=\{c\} = \wt{B}$.
Thus, the
$(A,\wt{A},B,\wt{B})$-CQ conditions holds.
(Note that the MAP converges in finitely many steps.)
\end{example}


\section*{Conclusion}

We have introduced restricted normal cones which generalize classical
normal cones. We have presented some of their basic properties and
shown their usefulness in describing interiority conditions,
constraint qualifications, and regularities.
The corresponding results were employed to yield new powerful
sufficient conditions for
linear convergence of the sequences generated by the method
of alternating projections applied to two sets $A$ and $B$.
A key ingredient were suitable restricting sets
$(\wt{A}$ and $\wt{B})$.
The least aggressive choice,
$(\wt{A},\wt{B})=(X,X)$,
recovers the framework by Lewis, Luke, and Malick.
The choice $(\wt{A},\wt{B})=(\aff(A\cup B),\aff(A\cup B))$
allows us to include basic settings from convex analysis into our framework.
Thus, the framework provided here unifies the recent nonconvex results
by Lewis, Luke, and Malick with classical convex-analytical settings.
When the choice $(\wt{A},\wt{B})=(\aff(A\cup B),\aff(A\cup B))$
fails, one may also try more aggressive choices such as
$(\wt{A},\wt{B})=(A,B)$ or
$(\wt{A},\wt{B})=(\bd A,\bd B)$ to guarantee linear convergence.
In a follow-up work \cite{BLPW12b} we demonstrate the power of 
these tools with the important 
problem of sparsity optimization with affine constraints.  Without any assumptions 
on the regularity of the sets or the intersection we achieve local
convergence results, with rates and radii of convergence, where all other 
sufficient conditions, particularly those of \cite{LM} and \cite{LLM}, fail.  
 
\subsection*{Acknowledgments}
HHB was partially supported by the Natural Sciences and Engineering
Research Council of Canada and by the Canada Research Chair Program.
This research was initiated when HHB visited the
Institut f\"ur Numerische und Angewandte Mathematik,
Universit\"at G\"ottingen because of his study leave
in Summer~2011. HHB thanks DRL and the Institut for their hospitality.
DRL was supported in part by the German Research Foundation grant SFB755-A4. 
HMP was partially
supported by the Pacific Institute for the Mathematical Sciences and
and by a University of British Columbia research grant. XW was
partially supported by the Natural Sciences and Engineering Research
Council of Canada.



\begin{thebibliography}{999}

\bibitem{BB93}
H.H.\ Bauschke and J.M.\ Borwein, On the convergence of von Neumann's
alternating projection algorithm for two sets, \emph{Set-Valued
Analysis} 2 (1993), 185--212.

\bibitem{bb96}
H.H.\ Bauschke and J.M.\ Borwein,
On projection algorithms for solving convex feasibility problems,
\emph{SIAM Review} 38 (1996), 367--426.

\bibitem{BBL}
H.H.\ Bauschke, J.M.\ Borwein, and A.S.\ Lewis,
The method of cyclic projections for closed convex sets in Hilbert
space, in
\emph{Recent Developments in Optimization Theory and Nonlinear
Analysis (Jerusalem 1995)},
Y.\ Censor and S.\ Reich (editors),
Contemporary Mathematics~vol.~204,
American Mathematical Society, 
pp.~1--38, 1997.

\bibitem{BC2011}
H.H.\ Bauschke and P.L.\ Combettes,
\emph{Convex Analysis and Monotone Operator Theory in Hilbert Spaces},
Springer, 2011.


\bibitem{BLPW12b}
H.H.\ Bauschke, D.R.\ Luke, H.M.\ Phan, and X.\ Wang,
Restricted normal cones and sparsity optimization with affine
constraints, 
arXiv preprint, May 2012,
\url{http://arxiv.org} 

\bibitem{Borzhu05}
J.M.\ Borwein and Q.J.\ Zhu,
\emph{Techniques of Variational Analysis},
Springer-Verlag, 2005.

\bibitem{CensorZenios}
Y.\ Censor and S.A.\ Zenios,
\emph{Parallel Optimization},
Oxford University Press, 1997.


\bibitem{clsw98}
F.H.\ Clarke, Y.S.\ Ledyaev, R.J.\ Stern and P.R.\ Wolenski,
\emph{Nonsmooth Analysis and Control Theory},
Springer-Verlag, 1998.

\bibitem{CombTrus90}
P.L.\ Combettes and H.J.\ Trussell,
Method of successive projections for finding a common point of sets in metric spaces,
\emph{Journal of Optimization Theory and Applications}~67 (1990), 

487--507. 


\bibitem{Deut94}
F.\ Deutsch, The angle between subspaces of a Hilbert space, in
\emph{Approximation theory, wavelets and applications
(Maratea, 1994)}, S.P.\ Singh, A.\ Carbone, and B.\ Watson (editors),
NATO Advanced Science Institutes Series C:
Mathematical and Physical Sciences vol.~454, Kluwer,
pp.~107--130, 1995.

\bibitem{Deutsch}
F.\ Deutsch,
\emph{Best Approximation in Inner Product Spaces},
Springer, 2001.

\bibitem{DHp1}
F.\ Deutsch and H.\ Hundal,
The rate of convergence for the cyclic projections algorithm I:
angles between convex sets,
\emph{Journal of Approximation Theory} 142 (2006), 36--55.

\bibitem{DHp2}
F.\ Deutsch and H.\ Hundal,
The rate of convergence for the cyclic projections algorithm II:
norms of nonlinear operators,
\emph{Journal of Approximation Theory} 142 (2006), 56--82.

\bibitem{DHp3}
F.\ Deutsch and H.\ Hundal,
The rate of convergence for the cyclic projections algorithm III:
regularity of convex sets,
\emph{Journal of Approximation Theory} 155 (2008), 155--184.

\bibitem{Dix49}
J.\ Dixmier, \'{E}tude sur les
vari\'{e}t\'{e}s et les op\'{e}rateurs de Julia, avec quelques applications,
\emph{Bulletin de la Soci\'et\'e Math\'ematique de France} 77 (1949),
11--101.

\bibitem{Frie37}
K.\ Friedrichs, On certain inequalities and characteristic value problems for analytic functions and for functions of two variables,
\emph{Transactions of the  AMS} 41 (1937), 321--364.


\bibitem{LLM}
A.S.\ Lewis, D.R.\ Luke, and J.\ Malick,
Local linear convergence for alternating and
averaged nonconvex projections,
\emph{Foundations of Computational Mathematics} 9 (2009), 485--513.

\bibitem{LM}
A.S.\ Lewis and J.\ Malick,
Alternating projection on manifolds,
\emph{Mathematics of Operations Research} 33 (2008), 216--234.

\bibitem{Loewenbook}
P.D.\ Loewen,
\emph{Optimal Control via Nonsmooth Analysis},
CRM Proceedings \& Lecture Notes,
AMS, Providence, RI, 1993.

\bibitem{Luke08}
D.R.\ Luke, Finding best approximation pairs relative to a convex and
a prox-regular set in a Hilbert space, \emph{SIAM Journal on
Optimization} 19(2) (2008), 714--739.

\bibitem{Luke12}
D.R.\ Luke, Local linear convergence and approximate projections
onto regularized sets,
\emph{Nonlinear Analysis} 75 (2012), 1531--1546.


\bibitem{Boris1}
B.S.\ Mordukhovich,
\emph{Variational Analysis and Generalized Differentiation I},
Springer-Verlag, 2006.


\bibitem{Rock70}
R.T.\ Rockafellar,
\emph{Convex Analysis},
Princeton University Press, Princeton, 1970.

\bibitem{Rock98}
R.T.\ Rockafellar and R.J-B\ Wets,
\emph{Variational Analysis},
Springer, 
corrected 3rd printing, 2009.


\bibitem{StarkYang}
H.\ Stark and Y.\ Yang,
\emph{Vector Space Projections},
Wiley, 1998.


\bibitem{Zalinescu}
C.\ Z\u{a}linescu,
\emph{Convex Analysis in General Vector Spaces},
World Scientific Publishing, 2002.

\end{thebibliography}
\end{document}